\begin{document}

 \baselineskip 16.6pt
\hfuzz=6pt

\widowpenalty=10000

\newtheorem{cl}{Claim}
\newtheorem{theorem}{Theorem}[section]
\newtheorem{proposition}[theorem]{Proposition}
\newtheorem{corollary}[theorem]{Corollary}
\newtheorem{lemma}[theorem]{Lemma}
\newtheorem{definition}[theorem]{Definition}
\newtheorem{assum}{Assumption}[section]
\newtheorem{example}[theorem]{Example}
\newtheorem{remark}[theorem]{Remark}
\renewcommand{\theequation}
{\thesection.\arabic{equation}}

\def\SL{\sqrt H}

\newcommand{\mar}[1]{{\marginpar{\sffamily{\scriptsize
        #1}}}}

\newcommand{\as}[1]{{\mar{AS:#1}}}

\newcommand\R{\mathbb{R}}
\newcommand\RR{\mathbb{R}}
\newcommand\CC{\mathbb{C}}
\newcommand\NN{\mathbb{N}}
\newcommand\ZZ{\mathbb{Z}}
\newcommand\hDelta{{\bf L}}
\def\RN {\mathbb{R}^n}
\renewcommand\Re{\operatorname{Re}}
\renewcommand\Im{\operatorname{Im}}

\newcommand{\mc}{\mathcal}
\newcommand\D{\mathcal{D}}
\def\hs{\hspace{0.33cm}}
\newcommand{\la}{\alpha}
\def \l {\alpha}
\def\ls{\lesssim}
\def\su{{\sum_{i\in\nn}}}
\def\lz{\lambda}
\newcommand{\eps}{\varepsilon}
\newcommand{\pl}{\partial}
\newcommand{\supp}{{\rm supp}{\hspace{.05cm}}}
\newcommand{\x}{\times}
\newcommand{\lag}{\langle}
\newcommand{\rag}{\rangle}

\newcommand\wrt{\,{\rm d}}
\newcommand{\botimes}{\bar{\otimes}}
\def\nn{{\mathbb N}}
\def\bx{{\mathbb X}}
\def\fz{\infty}
\def\r{\right}
\def\lf{\left}
\def\cm{{\mathcal M}}
\def\cs{{\mathcal S}}
\def\rr{{\mathbb R}}
\def\lm{\mathcal{N}}
\def\rn{{{\rr}^n}}
\def\zz{{\mathbb Z}}
\def\cl{{\mathcal L}}
\def\cq{{\mathcal Q}}
\def\cd{{\mathcal D}}

\theoremstyle{assumption}
\newtheorem{assumption}[theorem]{Assumption}

\title[Operator-Valued Hardy spaces and BMO Spaces on Spaces of Homogeneous Type]{Operator-Valued Hardy spaces and BMO spaces on Spaces of Homogeneous Type}

\author{Zhijie Fan}
\address{Zhijie Fan, School of Mathematics and Information Science,
Guangzhou University, Guangzhou 510006, China}
\email{fanzhj3@mail2.sysu.edu.cn}

\author{Guixiang Hong}
\address{Guixiang Hong, Institute for Advanced Study in Mathematics, Harbin Institute of Technology, Harbin 150001, China}
\email{gxhong@hit.edu.cn}

\author{Wenhua Wang}
\address{Wenhua Wang, Institute for Advanced Study in Mathematics, Harbin Institute of Technology, Harbin 150001, China}
\email{whwangmath@whu.edu.cn}


  \date{\today}
 \subjclass[2010]{46L52, 42B30, 46E30}
\keywords{Non-commutative $L_p$-space, operator-valued, Hardy space, BMO space, spaces of homogeneous type, }

\begin{abstract}
Let $\mathcal{M}$ be a von Neumann algebra equipped with a normal semifinite faithful trace, $(\mathbb{X},\,d,\,\mu)$ be a space of homogeneous type in the sense of Coifman and Weiss, and $\mathcal{N}=L_\infty(\mathbb{X})\overline{\otimes}\mathcal{M}$. In this paper, we introduce and then conduct a systematic study on the operator-valued Hardy space $\mathcal{H}_p(\mathbb{X},\,\mathcal{M})$ for all $1\leq p<\infty$ and operator-valued BMO space $\mathcal{BMO}(\mathbb{X},\,\mathcal{M})$. The main results of this paper include  $H_1$--$BMO$ duality theorem, atomic decomposition of $\mathcal{H}_1(\mathbb{X},\,\mathcal{M})$, interpolation between these Hardy spaces and BMO spaces, and equivalence between mixture Hardy spaces and $L_p$-spaces. 
In particular, without the use of non-commutative martingale theory as in Mei's seminal work \cite{m07}, we provide a direct proof for the interpolation theory.
Moreover, under our assumption on Calder\'{o}n representation formula, these results are even new when going back to the commutative setting for spaces of homogeneous type which fails to satisfy reverse doubling condition. As an application,  we obtain the $L_p(\mathcal{N})$-boundedness of operator-valued Calder\'{o}n-Zygmund operators.
\end{abstract}

\maketitle


\tableofcontents\newpage

\section{Introduction}
\setcounter{equation}{0}
\subsection{Background and motivation}

The theory of Hardy spaces and BMO spaces on the Euclidean space $\rr^n$, which was originated from the seminal works of Stein--Weiss \cite{s60} and John--Nirenberg \cite{jn61}, respectively, has been developed rapidly and plays an essential role in harmonic analysis and \textbf{PDE}s   (see e.g. \cite{c76,c74,cr80,cw77,fs72,g14,s60}). As endpoint substitutes of Lebesgue spaces $L_1(\mathbb{R}^n)$ and $L_\infty(\mathbb{R}^n)$, respectively, Hardy space and BMO space have been proven widely applicable  to obtain $L_p$-boundedness ($1<p<\infty$) of many singular integral operators ({\bf SIO}s) on $\mathbb{R}^n$ by interpolation. However, it is less effective to investigate those ones falling beyond Euclidean world. Motivated by the study of $L_p$-boundedness of {\bf SIO}s beyond Euclidean framework, generalizing the function space theory from Euclidean setting to more general settings has became a central theme in modern harmonic analysis.

In the early 1970s, Coifman and Weiss introduced spaces of homogeneous type \cite{cw71,cw77}, which generalizes  Euclidean space and includes many important spaces beyond standard Euclidean setting in analysis, such as Lie group with polynomial growth \cite{VSC}, Riemannian manifold with non-negative Ricci curvature \cite[Section 5]{MR1103113} and Euclidean space equipped with Bessel measure \cite{MS1965}. During the past half century, this generalization has achieved lots of significant progress in the theory of singular integrals and function spaces  (see e.g. \cite{c77,dy03,dy05,hmy06,HS1994,HHLLYY,MS} and the reference therein). In particular, R. Coifman and G. Weiss \cite{cw77} introduced the BMO space on space of homogeneous type  $\bx$, and obtain its predual space---the atomic Hardy space $H_1^{\mathrm{at}}(\bx)$. In 2006, Y. Han et al. \cite{hmy06} further introduced the Hardy space $H_1(\bx)$, defined via the Lusin area integral function, and proved that its dual space is the BMO space $\mathrm{BMO}(\bx)$. Recently, motivated by the celebrated construction of orthonormal wavelet basis on $L_2(\bx)$ due to P. Auscher and T. Hyt\"{o}nen \cite{AH}, Z. He et al. \cite{HHLLYY} established a real-variable theory of Hardy spaces on
spaces of homogeneous type.

On the other hand, motivated by the works on matrix-valued harmonic analysis (operator-weighted norm inequalities, operator Hilbert transform), the development of the non-commutative martingale inequalities (see e.g. \cite{j02,jx03,jx08,MR2235948,px97,MR1929141,MR2319715}) and the Littlewood-Paley-Stein theory of quantum Markov semigroup (see e.g.  \cite{MR3920831,jl06,jm10,jm12}), T. Mei \cite{m07} initiated a remarkable work on the theory of operator-valued Hardy space, defined via the operator-valued Lusin area integral function associated to the Poisson semigroup. To further enrich this theory, some classical characterizations, including wavelet characterization \cite{HWW,hy13} and Lusin area function characterization associated to test function \cite{xxx16}, and the theory of several important classical function spaces \cite{MR4525615,xx18,MR4015958} were extended to this operator-valued setting. These operator-valued function spaces have been proven to be very useful in operator-valued Calder\'{o}n-Zygmund theory (see e.g. \cite{MR3734984,hx21,MR2476951,MR4525615,xxx16}) and some central topics (function space theory and Fourier multiplier theory) in several completely non-commutative models (see e.g. \cite{FHW,MR3679616,MR4320770,MR3283931,xxx16}), including matrix algebra, group von Neumann algebra, quantum tori and quantum Euclidean space.

However, although the results in harmonic analysis on spaces of homogeneous type in the commutative setting are fruitful, it is still almost a blank in the non-commutative setting except for the non-commutative maximum inequality established in \cite{hlw21}. Motivated by the development of non-commutative {\bf SIO}s and the subsequent problems about non-commutative {\bf PDE}s on spaces of homogeneous type, the main goal of this paper is to fill in this blank by initiating the study of operator-valued Hardy spaces and BMO spaces on spaces of homogeneous type.

%

%



\subsection{Organization and our main results}
Let $\cm$ be a von Neumann algebra equipped with a normal semifinite faithful trace $\tau$ and $\mathcal{N}=L_\infty(\bx)\overline{\otimes}\mathcal{M}$ be the von Neumann algebra tensor product equipped with tensor trace ${\rm Tr}=\int\otimes\tau$.
 Moreover, $L_p(\mathcal{N})$ denotes the  non-commutative $L_p$-space associated with $(\mathcal{N},\,{\rm Tr})$ (see Section \ref{nclp} and \cite{PX} for more details about non-commutative integration theory). Let $\dagger\in\{c,\,r,\,cr\}$ (the set represents column, row and mixture), then the organization of our paper is as follows.

In Section \ref{s2}, we first recall some  definitions and properties of spaces of homogeneous type and non-commutative $L_p$-space. We then introduce operator-valued BMO spaces $\mathcal{BMO}^\dagger(\bx,\,\cm)$, and the Hardy spaces $\mathcal{H}_p^\dagger(\bx,\,\cm)$ for $1\leq p<\infty$, via suitable Lusin area integral function.

In Section \ref{s3}, we establish $\cm$-valued Carleson measure characterization for $\mathcal{BMO}^\dagger(\bx,\,\cm)$, and the Fefferman-Stein duality theorem between the operator-valued Hardy spaces and BMO spaces introduced in Section \ref{s2}. To establish this duality theorem, our strategy is to combine the techniques in \cite{m07} with \cite{xxx16}. This combination sometimes is quite straightforward, but sometimes requires significantly extra efforts due to the lack of analogue of harmonicity and geometric structure on spaces of homogeneous type.

In Section \ref{s31}, as an application of Theorem \ref{duua}, we introduce  $\cm^\dagger$-atom Hardy spaces
$\mathcal{H}_{1}^{\dagger,\mathrm{at}}(\bx,\,\cm)$ and then establish
the atomic characterization of operator-valued Hardy spaces $\mathcal{H}_1^{\dagger}(\bx,\,\cm)$. As a byproduct, we obtain an analogue of Coifman-Weiss Theorem \cite{cw77} in the operator-valued setting.
%

In Section \ref{s4}, we introduce $L_p$-space analogues of the BMO spaces $L_p\mathcal{MO}^{\dagger}(\bx,\,\cm)$ for  $p\in(2,\,\infty)$. Then we combine the argument in the proof of Theorem \ref{duua} with some techniques on non-commutative maximal inequalities and collection of adjacent systems of dyadic cubes developed in \cite{hk12} to establish the duality between $\mathcal{H}_{p'}^{\dagger}(\bx,\,\cm)$ and $L_p\mathcal{MO}^{\dagger}(\bx,\,\cm)$ for $p\in(2,\infty)$.

In Section \ref{s5}, we apply a recent breakthrough in \cite{hlw21} about non-commutative Hardy-Littlewood maximal inequality on spaces of homogeneous type to establish the equivalence between $\mathcal{H}_p^{\dagger}(\bx,\,\cm)$
 and $L_p\mathcal{MO}^{\dagger}(\bx,\,\cm)$ for $p\in(2,\,\infty)$, and the duality between $\mathcal{H}_p^{\dagger}(\bx,\,\cm)$ and $\mathcal{H}_{p'}^{\dagger}(\bx,\,\cm)$ for $p\in(1,\,\infty)$.

In Section \ref{s6}, we first establish the relationship between $L_p\mathcal{MO}^{\dagger}(\bx,\,\cm)$ (resp. $\mathcal{H}_p^{\dagger}(\bx,\,\cm)$) and its dyadic version. Then we use the above relationship to show the equivalence between mixture Hardy space $\mathcal{H}_p^{cr}(\bx,\,\cm)$ and $L_p(\lm)$ for $p\in(1,\,\infty)$.

In Section \ref{s7}, based on several duality theorems and equivalence theorems established in the previous sections,
we establish the interpolation theorem between operator-valued BMO spaces and Hardy spaces. This implies that $\mathcal{H}_{1}^{\dagger}(\bx,\,\cm)$ and $\mathcal{BMO}^{\dagger}(\bx,\,\cm)$ are suitable substitutes of $L_1(\mathcal{N})$ and $L_\infty(\mathcal{N})$, respectively, and therefore, they are naturally expected to be useful in obtaining the $L_p(\mathcal{N})$-boundedness ($1<p<\infty$) of operator-valued {\bf SIO}s as in the commutative setting. It is worthwhile to mention that non-commutative martingale theory usually play crucial roles in the proof of interpolation theorem between non-commutative Hardy spaces and BMO spaces (see e.g. \cite{jm12,m07}), but we provide a more direct proof without involving the use of non-commutative martingale.

In Section \ref{s8}, as an application of operator-valued Hardy spaces and BMO spaces studied in this paper, we obtain the $L_p(\mathcal{N})$ boundedness of non-commutative Calder\'{o}n-Zygmund operators, with operator-valued kernel, by establishing $L_\infty(\mathcal{N})\rightarrow \mathcal{BMO}^{\dagger}(\bx,\,\cm)$ boundedness of these operators.

Let $(\mathcal{X}_0,\,\mathcal{X}_1)$ be a
compatible couple of quasi-normed spaces. For $0<\theta<1$ and $q\in(0,\,\infty]$, we let $[\mathcal{X}_0,\,\mathcal{X}_1]_{\theta}$ and $[\mathcal{X}_0,\,\mathcal{X}_1]_{\theta,\,q}$ be the {\it complex interpolation space} and {\it real interpolation space}, respectively. See e.g. \cite{bl76} for these definitions. Then for the convenience of the readers, we summarize our main results as follows.
\begin{theorem}\label{duua}
Let $\dagger \in \{c,r,cr\}$, then
\begin{enumerate}
  \item $(\mathcal{H}_1^{\dagger}(\bx,\,\cm))^{\ast}\backsimeq \mathcal{BMO}^{\dagger}(\bx,\,\cm)$;
   \item $\mathcal{H}_1^{\dagger}(\bx,\,\cm)=\mathcal{H}_{1}^{\dagger,\mathrm{at}}(\bx,\,\cm)$;
  \item $(\mathcal{H}_p^{\dagger}(\bx,\,\cm))^{\ast}\backsimeq \mathcal{H}_{p'}^{\dagger}(\bx,\,\cm),$ where $1<p<\infty$;
  \item $\mathcal{H}_{p}^{\dagger}(\bx,\,\cm)= L_{p}\mathcal{MO}^{\dagger}(\bx,\,\cm),$ where $2< p<\infty$;

  \item $\mathcal{H}_p^{cr}(\bx,\,\cm)= L_p(\mathcal{N})$, where $1<p<\infty$.
\end{enumerate}

\end{theorem}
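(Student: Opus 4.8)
Theorem~\ref{duua} gathers the paper's principal conclusions, and the plan is to prove its five items across Sections~\ref{s3}--\ref{s6} in the order $(1)\Rightarrow(2)$, then $(4)\Rightarrow(3)$, and finally $(5)$, each step resting on the ones before it. The cornerstone is item~(1), the Fefferman--Stein duality. I would first establish an $\cm$-valued Carleson measure characterization of $\mathcal{BMO}^{c}(\bx,\,\cm)$: a function $g$ belongs to $\mathcal{BMO}^{c}$ exactly when $|Q_tg|^{2}\,\frac{d\mu\,dt}{t}$ is an operator-valued Carleson measure, uniformly over all balls, where $(Q_t)_{t>0}$ are the building blocks furnished by the assumed Calder\'on representation formula. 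Granting this, the embedding $\mathcal{BMO}^{\dagger}(\bx,\,\cm)\hookrightarrow(\mathcal{H}_1^{\dagger}(\bx,\,\cm))^{\ast}$ follows by pairing $f$ against $g$ through the reproducing formula together with a non-commutative tent-space Cauchy--Schwarz/H\"older estimate, as in \cite{m07,xxx16}; for the reverse, I would realize $\mathcal{H}_1^{c}(\bx,\,\cm)$ isometrically as a closed subspace of the column tent space over $\bx\times(0,\infty)$ via $f\mapsto(Q_tf)_{t>0}$, extend a given bounded functional to the ambient tent space by Hahn--Banach, identify the dual of that tent space as a weak-$\ast$ $L_\infty$-with-column-values space, and read off the representing function in $\mathcal{BMO}^{c}$ by applying the Calder\'on formula once more. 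The hard part lies precisely here: a general space of homogeneous type carries neither a harmonic (Poisson) extension nor any dilation or group structure, so the smoothing and orthogonality bounds that come for free in \cite{m07} must be replaced by quantitative almost-orthogonality estimates for $Q_tQ_s^{\ast}$ extracted from the hypotheses on the Calder\'on formula, and the geometric manipulations with cones and tents (for instance controlling $\int_{\Gamma(\cdot)}$ by averages over balls) must be pushed through the dyadic-cube machinery of \cite{AH,hk12}. The case $\dagger=r$ is the adjoint of the column case, and $\dagger=cr$ is handled by the usual sum/intersection formalism.

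Item~(2) is then obtained from~(1) in the Coifman--Weiss manner: one checks directly that $\cm^{c}$-atoms have uniformly bounded $\mathcal{H}_1^{c}$-norm, so that $\mathcal{H}_1^{c,\mathrm{at}}(\bx,\,\cm)\subseteq\mathcal{H}_1^{c}(\bx,\,\cm)$, and for the reverse inclusion one verifies that $(\mathcal{H}_1^{c,\mathrm{at}}(\bx,\,\cm))^{\ast}\backsimeq\mathcal{BMO}^{c}(\bx,\,\cm)$ as well, so the two versions of $\mathcal{H}_1$ share a dual and a Hahn--Banach plus density argument forces their norms to agree up to constants.

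For the $L_p$-range I would proceed in two stages. Stage one (Section~\ref{s4}) repeats the proof of~(1) but pairs in $L_p$ rather than $L_1$, now invoking the non-commutative Doob/Hardy--Littlewood maximal inequalities and the adjacent systems of dyadic cubes of \cite{hk12}, to obtain $(\mathcal{H}_{p'}^{\dagger}(\bx,\,\cm))^{\ast}\backsimeq L_p\mathcal{MO}^{\dagger}(\bx,\,\cm)$ for $2<p<\infty$. Stage two (Section~\ref{s5}) uses the recent non-commutative Hardy--Littlewood maximal inequality on spaces of homogeneous type of \cite{hlw21} to upgrade, when $p>2$, the square-function norm to the $L_p\mathcal{MO}^{\dagger}$-norm, i.e. $\mathcal{H}_p^{\dagger}(\bx,\,\cm)=L_p\mathcal{MO}^{\dagger}(\bx,\,\cm)$, which is item~(4). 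Combining the two stages gives $(\mathcal{H}_q^{\dagger}(\bx,\,\cm))^{\ast}\backsimeq\mathcal{H}_{q'}^{\dagger}(\bx,\,\cm)$ for $1<q<2$; at $q=2$ the Calder\'on formula identifies $\mathcal{H}_2^{\dagger}(\bx,\,\cm)$ with $L_2(\mathcal{N})$; and the range $2<p<\infty$ then follows by reflexivity (these spaces embed complementably into column/row $L_p$-modules), which is item~(3).

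Finally, for item~(5) I would pass to the dyadic model (Section~\ref{s6}): one first shows that $\mathcal{H}_p^{\dagger}(\bx,\,\cm)$ and $L_p\mathcal{MO}^{\dagger}(\bx,\,\cm)$ are each comparable to their dyadic analogues built over the Hyt\"onen--Kairema cubes, and that the dyadic mixture Hardy space is precisely the martingale Hardy space of the dyadic filtration, which equals $L_p(\mathcal{N})$ for $1<p<\infty$ by the non-commutative Burkholder--Gundy inequalities. Transferring this identity back through the Calder\'on decomposition yields $\mathcal{H}_p^{cr}(\bx,\,\cm)=L_p(\mathcal{N})$; the role of the $cr$-mixture (a sum of the column and row spaces for $p<2$, an intersection for $p\ge2$) is exactly that it makes both of the required one-sided estimates available at once, as in the classical Littlewood--Paley--Stein and Burkholder--Gundy duality. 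Throughout, I expect the genuinely difficult step to be the construction of the $\mathcal{BMO}$-representative in item~(1): once the Carleson characterization and the tent-space duality are in place on a space of homogeneous type lacking harmonicity and reverse doubling, items~(2)--(5) follow from the combination of duality, the maximal inequality of \cite{hlw21}, and the dyadic transference just described.
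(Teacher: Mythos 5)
Your proposal tracks the paper's proof essentially step for step: the $\cm$-valued Carleson characterization together with the tent-space embedding and Hahn--Banach argument yield item~(1); checking that atoms land in $\mathcal{H}_1^{\dagger}$ and identifying the dual of the atomic space in the Coifman--Weiss manner yields item~(2); the $L_p\mathcal{MO}$ duality (Theorem~\ref{t4.1}) followed by the maximal-inequality upgrade $\mathcal{H}_p^{\dagger}=L_p\mathcal{MO}^{\dagger}$ for $p>2$ (Theorem~\ref{t5.1}) yields items~(3)--(4); and transference to the Hyt\"onen--Kairema adjacent dyadic systems together with the non-commutative martingale identity yields item~(5). The one cosmetic departure is closing item~(3) for $2<p<\infty$ by a reflexivity argument, whereas the paper reruns the Hahn--Banach/$\Psi$ scheme with the $L_p$-boundedness of $\Psi$ (Corollary~\ref{io44}); since showing $\mathcal{H}_p^{\dagger}$ is complemented in $L_p(\lm;L_2^c(\bx\times\RR_+))$ rests on precisely that same bound, the two routes are equivalent in substance.
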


\begin{theorem}
Let $\dagger \in \{c,r,cr\}$ and $1\leq q<p<\infty$, then
\begin{enumerate}
  \item $\lf[\mathcal{BMO}^{\dagger}(\bx,\,\cm),\,
\mathcal{H}_q^{\dagger}(\bx,\,\cm)\r]_{\frac{q}{p}}=\lf[\mathcal{BMO}^{\dagger}(\bx,\,\cm),\,\mathcal{H}_q^\dagger(\bx,\,\cm)\r]_{\frac{q}{p},\,p}=
\mathcal{H}_p^{\dagger}(\bx,\,\cm)$;
  \item $
\lf[\mathcal{X},\,\mathcal{Y}\r]_{\frac{1}{p}}=\lf[\mathcal{X},\,\mathcal{Y}\r]_{\frac{1}{p},\,p}=
L_p(\lm),\\
$
where
$\mathcal{X}=\mathcal{BMO}^{cr}(\bx,\,\cm)$ or $L_{\infty}(\lm)$,
$\mathcal{Y}=\mathcal{H}_{1}^{cr}(\bx,\,\cm)$ or $L_1(\lm)$.
\end{enumerate}
\end{theorem}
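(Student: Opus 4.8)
The plan is to deduce the interpolation theorem from the duality and equivalence results already collected in Theorem \ref{duua}, following the now-standard route of Hanner/Wolff-type reiteration for Hardy--BMO couples, but carried out by hand so as to avoid any appeal to non-commutative martingale theory. First I would treat the complex interpolation scale. The key input is the pair of endpoint facts: $\mathcal{H}_1^\dagger(\bx,\cm)$ embeds (via the duality in Theorem \ref{duua}(1)) into the predual side, and for $2<p<\infty$ we already know $\mathcal{H}_p^\dagger(\bx,\cm)=L_p\mathcal{MO}^\dagger(\bx,\cm)$ and $(\mathcal{H}_p^\dagger)^\ast\simeq\mathcal{H}_{p'}^\dagger$. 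Writing $\theta=q/p$, I would first prove the inclusion $[\mathcal{BMO}^\dagger,\mathcal{H}_q^\dagger]_\theta\hookrightarrow\mathcal{H}_p^\dagger$ by a direct estimate on analytic families: given $f$ in the interpolation space with an analytic extension $F(z)$ bounded into $\mathcal{BMO}^\dagger$ on $\Re z=0$ and into $\mathcal{H}_q^\dagger$ on $\Re z=1$, one tests $\langle f,g\rangle$ against $g\in\mathcal{H}_{p'}^\dagger\cap L_2$ (a dense class), builds the scalar analytic function $z\mapsto\langle F(z),G(z)\rangle$ with a compatible analytic family $G(z)$ interpolating between $\mathcal{H}_1^\dagger$ and $\mathcal{H}_{q'}^\dagger$ (legitimate since $1/p'=(1-\theta)\cdot 1+\theta/q'$), and applies the three-lines lemma together with the $H_1$--$BMO$ duality on the line $\Re z=0$ and the $\mathcal{H}_q$--$\mathcal{H}_{q'}$ duality on $\Re z=1$. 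The reverse inclusion $\mathcal{H}_p^\dagger\hookrightarrow[\mathcal{BMO}^\dagger,\mathcal{H}_q^\dagger]_\theta$ I would get by exhibiting, for $f\in\mathcal{H}_p^\dagger\cap L_2(\mathcal{N})$, an explicit analytic extension; the natural choice is $F(z)=f\,|f|_{?}^{\,\cdots}$-type functional calculus, but on spaces of homogeneous type it is cleaner to use the atomic decomposition from Theorem \ref{duua}(2) together with the dyadic model so that one interpolates the discrete coefficient sequences, where the elementary fact $[\ell_\infty,\ell_q]_\theta=\ell_p$ (with the appropriate operator-space, i.e.\ column/row, structure) does the work.

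Second, I would handle the real interpolation identity $[\mathcal{BMO}^\dagger,\mathcal{H}_q^\dagger]_{\theta,p}=\mathcal{H}_p^\dagger$. Here the standard argument is Wolff's reiteration theorem combined with the already-established complex-interpolation endpoints: one knows $[\mathcal{H}_q^\dagger,\mathcal{H}_r^\dagger]_{\eta}=\mathcal{H}_s^\dagger$ for $q<s<r<\infty$ (which itself reduces, through Theorem \ref{duua}(4), to interpolation of the $L_p\mathcal{MO}$ scale, and that is a genuine non-commutative $L_p$-type interpolation handled by Pisier--Xu theory of vector-valued/operator-valued $L_p$), so a Wolff reiteration between the triple $\mathcal{BMO}^\dagger$, $\mathcal{H}_q^\dagger$, $\mathcal{H}_r^\dagger$ upgrades the two-sided complex result into the real one with the sharp exponent $p$ on the $K$-functional side. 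Alternatively, and perhaps more transparently, I would prove $\mathcal{H}_p^\dagger=[\mathcal{BMO}^\dagger,\mathcal{H}_q^\dagger]_{\theta,p}$ directly by estimating the $K$-functional: for $f\in\mathcal{H}_p^\dagger$ with Calderón reproducing/atomic decomposition $f=\sum_k f_k$, one splits at a threshold depending on $t$ to bound $K(t,f)$ from above, and bounds it from below by testing against $\mathcal{H}_{p'}^\dagger$ using Theorem \ref{duua}(3); the column/row square-function structure means these $K$-functional manipulations are the same ones Mei performs, only with the Poisson semigroup replaced by the Calderón kernel.

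Third, part (2) of the theorem—where one of $\mathcal{X},\mathcal{Y}$ may be replaced by $L_\infty(\lm)$ or $L_1(\lm)$—I would reduce to part (1) together with Theorem \ref{duua}(5), namely $\mathcal{H}_p^{cr}(\bx,\cm)=L_p(\mathcal{N})$ for $1<p<\infty$. Since the mixture Hardy space coincides with $L_p(\mathcal{N})$ in the open range, the only new content is at the two endpoints: one must check that $\mathcal{BMO}^{cr}(\bx,\cm)$ and $L_\infty(\mathcal{N})$ (resp.\ $\mathcal{H}_1^{cr}$ and $L_1(\mathcal{N})$) are interpolation-equivalent endpoints for this scale, i.e.\ that swapping one for the other does not change the complex or real interpolation space. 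This follows formally once part (1) is known for $\dagger=cr$: the four possible choices of $(\mathcal{X},\mathcal{Y})$ all have the same interpolation spaces because they all contain $L_p(\lm)$ densely in the middle and the interpolation functor only sees the middle plus the norms, which are comparable by Theorem \ref{duua}(1) and (5)—one invokes the Bergh-type fact that if $\mathcal{X}_0\hookrightarrow\mathcal{X}_1$ with $[\mathcal{X}_0,\mathcal{Y}]_\theta=[\mathcal{X}_1,\mathcal{Y}]_\theta$ then either endpoint may be used.

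The main obstacle I expect is the lower bound for the $K$-functional / the reverse inclusion in the complex case, i.e.\ constructing the analytic family realizing $f\in\mathcal{H}_p^\dagger$ as a boundary value with controlled norms in $\mathcal{BMO}^\dagger$ and $\mathcal{H}_q^\dagger$. In Mei's setting the harmonic extension and the identity $|P_t f|^2$-type pointwise inequalities make the analytic interpolation of the square function essentially automatic; on a space of homogeneous type we have no harmonicity, so the analytic family has to be built from the Calderón reproducing formula (our standing assumption) and the functional calculus on $L_p(\mathcal{N})$, and one must carefully control the off-diagonal/almost-orthogonality terms that replace the semigroup property. Concretely, the delicate point is that the natural "complex power" $f\mapsto f|S_c(f)|^{z-1}$ uses the column square function $S_c(f)$, which is an operator in $\mathcal{N}$ rather than a scalar, so the functional calculus must be done in the column sense and one needs the Carleson-measure characterization from Section \ref{s3} to see that the $\Re z=0$ boundary values land in $\mathcal{BMO}^\dagger$ with the right norm bound; verifying this uniform bound, rather than the interpolation bookkeeping, is where the real work lies.
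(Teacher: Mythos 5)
Your proposal gets the scaffolding right but misses the paper's key technical device and leaves the hard step unresolved; you in fact flag the gap yourself in your final paragraph.

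What you have correctly: the reduction of the general exponent range $1\le q<p<\infty$ to the case $2<q<p<\infty$ via Wolff reiteration (this is precisely Lemma \ref{l7.1}); the reduction of the real-interpolation identity to the complex one via a reiteration lemma (this is precisely Lemma \ref{l7.21} and the proofs of Theorems \ref{t7.2x}--\ref{t7.2}); and the reduction of part (2) to part (1) combined with $\mathcal{H}_p^{cr}=L_p(\lm)$ (this is essentially the Case~3/Case~4 bookkeeping via Lemma \ref{l7.2} and Wolff). These are the routine parts.

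The gap is the core identity $[\mathcal{BMO}^{c},L_q\mathcal{MO}^{c}]_{q/p}=L_p\mathcal{MO}^{c}$ for $2\le q<p<\infty$. The paper does \emph{not} prove this by a three-lines argument, atomic decomposition, or any functional-calculus construction of an analytic family. Instead it exhibits an \emph{isometric} embedding $\mathfrak{R}$ of $L_s\mathcal{MO}^{c}(\bx,\cm)$, for all $2<s\le\infty$, into the non-commutative maximal space $L_s(\lm\overline{\otimes}B(\ell_2);\ell_\infty^c)$, built from the Hilbert $\cm$-module isomorphisms $u_B:L_\infty(B)\overline{\otimes}\cm\to C(\cm)$. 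Combined with Musat's interpolation of the scale $L_p(\cm;\ell_\infty^c)$ (Lemma \ref{l7.1x0}) and with the complementation of $\mathcal{H}_p^c$ inside $L_p(\lm;L_2^c)$ (the $\Phi,\Psi$ maps of Lemma \ref{d3.1} plus Mei's interpolation of column spaces, Lemma \ref{l7.0}) followed by a duality passage, this yields both inclusions. Your proposed alternatives do not reach this point. The three-lines approach for $[\mathcal{BMO}^\dagger,\mathcal{H}_q^\dagger]_\theta\hookrightarrow\mathcal{H}_p^\dagger$ requires, to build the compatible family $G(z)$ for $g\in\mathcal{H}_{p'}^\dagger$, precisely the inclusion $\mathcal{H}_{p'}^\dagger\subseteq[\mathcal{H}_1^\dagger,\mathcal{H}_{q'}^\dagger]_\theta$ that you are trying to establish, so the argument becomes circular unless one first proves the Hardy-scale interpolation by another route (in the paper, the $\Phi$-embedding). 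The atomic-decomposition route for the reverse inclusion does not produce a bounded linear retraction onto sequence spaces, so ``interpolating the coefficients'' is not a valid substitute. And the functional-calculus construction $F(z)=f\,|\cs^c(f)|^{z-1}$ is exactly the kind of thing the embedding $\mathfrak{R}$ was introduced to avoid: as you note yourself, $\cs^c(f)$ is an operator in $\lm$, $\mathcal{BMO}^c$ is a non-reflexive dual space, and nobody has checked that such an analytic family lands in $\mathcal{BMO}^c$ uniformly on the critical line. The paper's retraction/corestriction picture is the concrete mechanism that makes the whole interpolation argument run, and it is missing from your proposal.
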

There are two comments about our results.
\begin{remark}{\rm
(1) From Remark \ref{a2.5} one shall see that via appropriate choices of approximation of identity, our result can be regarded as a starting point of studying operator-valued function spaces and singular integrals associated with general differential operators beyond classical Euclidean framework, which has achieved a lot of milestone progress in the commutative setting (see e.g. \cite{dy05,dy05b,HLMMY,HM2009}) but is still a blank in the non-commutative setting.

(2) The reverse doubling condition is sometimes needed for considering the real variable theory of Hardy space on spaces of homogeneous type (see e.g. \cite{gly08,hmy06,hmy08}) even when going back to the commutative setting, but the novel strategy developed in the more general non-commutative setting enables us to get rid of this assumption.}
\end{remark}


\subsection{Notation}
Conventionally, we set $\nn:=\{1,\, 2,\,\ldots\}$, $\zz_+:=\{0\}\cup\nn$ and $\rr_+:=(0,\,\infty)$. Throughout the whole paper, we denote by $C$ a positive constant which is independent of the main parameters, but it may
vary from line to line. We use $A\lesssim B$ to denote the statement that $A\leq CB$ for some constant $C>0$, and $A\thicksim B$ to denote the statement that $A\lesssim B$ and $B\lesssim A$. If $D\ls F$ and $F\ls D$, then we write $D\sim F$. For any $\lambda>0$ and ball $B:=B_d(x_B,\,r_B)\subset \bx$, we use the notation $\lambda B$ to denote the ball centred at $x_B$ with radius $\lambda r_B$. For any $1\leq p\leq\infty$, we denote by $p'$ the conjugate of $p$, which satisfies $\frac{1}{p}+\frac{1}{p'}=1$. We denote the average of a function $f$ over a ball $B$ by
\begin{align*}
f_B:=\fint_{B}f(x)d\mu(x):=\frac{1}{\mu(B)}\int_{B}f(x)d\mu(x).
\end{align*}
We denote
 \begin{align}\label{UjB}
 U_j(B)=\left\{\begin{array}{ll}2B, &j=0,\\2^{j+1}B\setminus2^jB, &j\geq 1.\end{array}\right.
 \end{align}
For a measurable set $E \subset\bx$, let $E^\complement:=\bx\setminus E$ be the complement of $E$ and we denote by $\chi_E$ its characteristic function.

\bigskip




\section{Preliminaries \label{s2}}
\setcounter{equation}{0}

\subsection{Space of homogeneous type}\label{homogene}
A function $d:\bx\times\bx\rightarrow[0,\,\fz)$ is said to be a quasi-metric function if it satisfies
\begin{enumerate}
  \item $d(x,\,y)=d(y,\,x)\geq 0$ for all $x,\,y\in\bx$;
  \item $d(x,\,y)=0$ if and only if $x=y$;
  \item There is a constant $A_d\in[1,\infty)$ such that for all $x,\,y,\,z\in\bx$,
 \begin{align}\label{noimp}d(x,\,y)\leq A_d[d(x,\,z)+d(z,\,y)].\end{align}
\end{enumerate}
Moreover, $\mu$ is said to be a doubling measure if there is a constant $C>0$ such that
\begin{align}\label{e2.1}
\mu(B_d(x,\,2r))\leq C\mu(B_d(x,\,r)).
\end{align}
$(\bx,\, d,\, \mu)$ is said to be a {\it space of
homogeneous type} in the sense of Coifman and Weiss \cite{cw71,cw77} if
$\bx$ is a metric space with a quasi-metric function $d$ and a nonnegative, Borel, doubling measure $\mu$. Note that the constant $A_d$ on the right-hand side of inequality \eqref{noimp} plays no essential role in our proof below, so we shall only present our proof for the case of $A_d=1$.

In what follows, for any ball $B_d(x,\,r):=\{y\in\bx:d(x,\,y)<r\}$, we define the volume functions
\begin{align*}
V_r(x):=\mu(B_d(x,\,r)) \ \ \mathrm{and} \ \ V(x,\,y):=\mu(B_d(x,\,d(x,\,y))).
\end{align*}
It follows from \eqref{e2.1} that the following strong homogeneity property holds:
\begin{align}\label{e2.2}
V_{\lambda r}(x)\leq \widetilde{C_0}\lambda^nV_{r}(x)
\end{align}
for some constants $\widetilde{C_0},n>0$  independent of all $\lambda\geq 1$ and $x\in\bx$. The smallest constant $n$ satisfying the above inequality is said to be the homogeneous dimension of $\bx$.

\begin{lemma}\label{l2.0}
Assume that $(\bx,\, d,\, \mu)$ is a space of
homogeneous type. Then
\begin{enumerate}
\item[\rm{(i)}]{\rm \cite[Lemma 2.1]{hmy08}} For any $x\in\bx$ and $r>0$, one has
$V(x,\,y)\thicksim V(y,\,x)$ and
$$V_r(x)+V_r(y)+V(x,\,y)\thicksim V_r(y)+V(x,\,y)\thicksim V_r(x)+V(x,\,y)
\thicksim \mu(B(x,\,r+d(x,\,y))).$$
\item[\rm{(ii)}] {\rm\cite[Proposition 3.2]{ghl09}} There exist two constants $C>0$ and $0\leq\gamma\leq n$ such that
$$V_{r_1}(x)\lesssim \lf[\frac{r_1+d(x,\,y)}{r_2}\r]^{\gamma}V_{r_2}(y).$$
Here all the implicit constants are independently of $x,\,y\in\bx$ and $r>0$.
\end{enumerate}
\end{lemma}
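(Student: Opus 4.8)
My plan is to obtain both parts directly from the two structural ingredients already recorded above: the quasi-triangle inequality \eqref{noimp}, which under the agreed normalization $A_d=1$ reads $d(x,y)\le d(x,z)+d(z,y)$, and the strong homogeneity estimate \eqref{e2.2}, which packages the iterated doubling property \eqref{e2.1}. Both assertions are classical --- they are \cite[Lemma 2.1]{hmy08} and \cite[Proposition 3.2]{ghl09} --- so the point is just to indicate the mechanism, which is a routine juggling of ball inclusions.

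For (i) I would begin with the symmetry $V(x,y)\thicksim V(y,x)$. Since any $z\in B_d(x,d(x,y))$ satisfies $d(y,z)\le d(y,x)+d(x,z)<2d(x,y)$, we have $B_d(x,d(x,y))\subset B_d(y,2d(x,y))$, and two applications of doubling give $V(x,y)\le\mu(B_d(y,2d(x,y)))\lesssim\mu(B_d(y,d(x,y)))=V(y,x)$; the reverse bound follows by interchanging $x$ and $y$. For the chain of comparabilities the only non-trivial estimate is $V_r(x)\lesssim V_r(y)+V(x,y)$ together with its mirror image, the opposite inequalities being immediate since the larger expressions merely carry extra summands. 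I would prove it by splitting according to which of $r$ and $d(x,y)$ is the larger: if $r\ge d(x,y)$ then $B_d(x,r)\subset B_d(y,2r)$ and doubling gives $V_r(x)\lesssim V_r(y)$, while if $r<d(x,y)$ then $B_d(x,r)\subset B_d(x,d(x,y))$ outright, so $V_r(x)\le V(x,y)$. Running the same dichotomy on the ball $B_d(x,r+d(x,y))$ --- which is contained in $B_d(x,2r)$ in the first case and in $B_d(x,2d(x,y))$ in the second, while always containing both $B_d(x,r)$ and $B_d(x,d(x,y))$ --- then yields $\mu(B(x,r+d(x,y)))\thicksim V_r(x)+V(x,y)$, pinning the four quantities together.

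For (ii) the starting point is the inclusion $B_d(x,r_1)\subset B_d(y,r_1+d(x,y))$, which gives $V_{r_1}(x)\le V_{r_1+d(x,y)}(y)$. When $r_1+d(x,y)\ge r_2$ --- the regime relevant to the applications, since the reverse doubling condition is not assumed --- I would apply \eqref{e2.2} at the point $y$ with dilation factor $\lambda=(r_1+d(x,y))/r_2\ge 1$, obtaining $V_{r_1+d(x,y)}(y)\lesssim\big[(r_1+d(x,y))/r_2\big]^{n}V_{r_2}(y)$, which is the claimed bound with $\gamma=n$; in the complementary regime $r_1+d(x,y)<r_2$ one has simply $V_{r_1+d(x,y)}(y)\le V_{r_2}(y)$, and the sharp form of the inequality is recorded in \cite[Proposition 3.2]{ghl09}.

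I do not expect a genuine obstacle: the argument is bookkeeping around elementary ball inclusions together with \eqref{e2.1} and \eqref{e2.2}. The two points that deserve attention are the clean separation into the two regimes according to whether the radius or the mutual distance dominates, and the fact that in part (ii) it is the quantitative form \eqref{e2.2}, not the bare doubling inequality \eqref{e2.1}, that produces the polynomial factor $[\,\cdot\,]^{\gamma}$.
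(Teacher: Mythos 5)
The paper does not actually prove this lemma; it imports both parts by citation (part (i) from \cite[Lemma 2.1]{hmy08} and part (ii) from \cite[Proposition 3.2]{ghl09}), so there is no internal proof to compare against. Your sketch therefore supplies an argument rather than reproducing one, and the argument is sound where it commits. For part (i) the ball inclusions are correct: from $A_d=1$ one gets $B_d(x,d(x,y))\subset B_d(y,2d(x,y))$, and one application of doubling yields $V(x,y)\lesssim V(y,x)$; your dichotomy $r\ge d(x,y)$ versus $r<d(x,y)$ indeed collapses all four quantities to $\mu(B_d(x,r+d(x,y)))$, since in each case that ball is comparable to whichever of $B_d(x,r)$ or $B_d(x,d(x,y))$ is the larger, and it always contains both.

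For part (ii) you are right to single out the regime $r_1+d(x,y)\ge r_2$: there the inclusion $B_d(x,r_1)\subset B_d(y,r_1+d(x,y))$ followed by \eqref{e2.2} with $\lambda=(r_1+d(x,y))/r_2\ge 1$ produces exactly the claimed polynomial factor with $\gamma=n$, and this is the only regime in which the estimate is used in the paper. Your remark that the complementary regime $r_1+d(x,y)<r_2$ is the one where reverse doubling would be needed is accurate and worth keeping: in that range $[(r_1+d(x,y))/r_2]^{\gamma}<1$ for $\gamma>0$, so the displayed inequality becomes a lower-volume-growth statement for $V_{\lambda r_2}(y)$ relative to $V_{r_2}(y)$, which plain doubling does not give. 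Your fallback $V_{r_1+d(x,y)}(y)\le V_{r_2}(y)$ proves the $\gamma=0$ version in that regime, but does not yield a single $\gamma>0$ valid for both regimes; so, as you indicate, one should either read the lemma with the implicit restriction $r_2\lesssim r_1+d(x,y)$ (as in \cite{ghl09}) or interpret $\gamma$ as a reverse-doubling exponent. Since the paper only invokes the estimate when $r_2$ is dominated by $r_1+d(x,y)$, your argument covers what is actually needed.
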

The following lemma can be regarded as a substitution of dyadic cube system in the Euclidean space $\rn$.
\begin{lemma} \cite[Corollary 7.4]{hk12}\label{l6.1}
Let $(\mathbb{X},\,d,\,\mu)$ be a space of homogeneous type. Then there exists a finite collection of families  $\mathcal{Q}^{(1)}, \mathcal{Q}^{(2)},\cdots, \mathcal{Q}^{(I_0)}$ such that $\mathcal{Q}^{(i)}:=\cup_{k\in\mathbb{Z}}\mathcal{Q}^{(i)}_k$ and that each $\{\mathcal{Q}^{(i)}_k\}_{k\in\zz}$ is a sequence of partitions of $\bx$ and the following
conditions hold:
\begin{enumerate}
\item[\rm{(i)}] for each integer $1 \leq i \leq I_0$ and for each $k\in\zz$, the partition $\mathcal{Q}^{(i)}_{k+1}$ is a refinement of
the partition $\mathcal{Q}^{(i)}_{k}$;
\item[\rm{(ii)}] there exists a constant $C>0$ such that for any ball $B:=B_d(x,\,r)\subset\bx$ with $\delta^{k+3}<r\leq \delta^{k+2}$, $k\in\mathbb{Z}$, there exist integers
$i\in\{1,2,\cdots,I_0\}$ and $Q\in\mathcal{Q}^{(i)}_{k}$ such that
$B_d(x,\,r)\subset Q\subset B_d(x,\,Cr)$.
Conversely, for every
dyadic cube $Q\in \mathcal{Q}^{(i)}_k$, there exists a ball $B\subset\bx$ such that $Q\subset B$ and $\mu(B)\leq C\mu(Q)$.
\end{enumerate}
\end{lemma}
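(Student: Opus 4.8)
The plan is to recover the Hyt\"onen--Kairema system from two ingredients: a Christ-type construction of one family of dyadic cubes adapted to a net of reference points at each scale $\delta^k$, and a pigeonhole argument---driven by the geometric doubling consequence of \eqref{e2.1}---showing that boundedly many such families (the ``adjacent'' systems) together trap every ball inside a cube of comparable size. Throughout, fix $\delta\in(0,1)$ small (say $12\delta\le1$; recall $A_d=1$ by the standing convention), which supplies all numerical room used below, in particular $\delta^{2}\le c_1/2$ for the absolute constant $c_1$ produced in Step 1.

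\emph{Step 1: one dyadic grid.} For each $k\in\zz$ choose, via Zorn's lemma, a maximal $\delta^k$-separated set $\{z^k_\alpha\}_{\alpha\in\mathcal A_k}\subset\bx$; maximality yields simultaneously the separation $d(z^k_\alpha,z^k_\beta)\ge\delta^k$ for $\alpha\ne\beta$ and the covering $\bx=\bigcup_\alpha B_d(z^k_\alpha,\delta^k)$. Impose a tree structure on $\mathcal A:=\bigcup_k\mathcal A_k$ by attaching each $z^{k+1}_\beta$ to one ``parent'' $z^k_{\widehat\beta}$ realizing $\min_\alpha d(z^{k+1}_\beta,z^k_\alpha)$. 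Then, following the classical construction of dyadic cubes in a doubling quasi-metric space, one assembles Borel sets $Q^k_\alpha$ (recursively assigning each finer cube to the cube of its ancestor and taking appropriate closures) satisfying: (a) for each fixed $k$, $\{Q^k_\alpha\}_\alpha$ is a partition of $\bx$; (b) for $\ell\ge k$, either $Q^\ell_\beta\subseteq Q^k_\alpha$ or $Q^\ell_\beta\cap Q^k_\alpha=\emptyset$, so $\{Q^{k+1}_\alpha\}_\alpha$ refines $\{Q^k_\alpha\}_\alpha$, which is assertion (i); and (c) the sandwich bound
\begin{align*}
B_d(z^k_\alpha,c_1\delta^k)\subseteq Q^k_\alpha\subseteq B_d(z^k_\alpha,C_1\delta^k),
\end{align*}
with $0<c_1\le C_1<\infty$ depending only on $\delta$ (concretely $c_1\sim\tfrac13$, $C_1\sim1$ when $A_d=1$). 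Feeding the two balls in (c) into the doubling inequality \eqref{e2.1} already settles the second half of (ii): with $B:=B_d(z^k_\alpha,C_1\delta^k)$ one has $Q^k_\alpha\subseteq B$ and $\mu(B)\lesssim\mu(B_d(z^k_\alpha,c_1\delta^k))\le\mu(Q^k_\alpha)$.

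\emph{Step 2: adjacent grids and the counting bound.} The defect of a single grid is its boundary: a ball centered near $\partial Q^k_\alpha$ lies in no single level-$k$ cube. To cure this we run Step 1 a bounded number $I_0$ of times, choosing the nets $\{{}^{i}z^k_\alpha\}$ so that, at every scale $\delta^k$, the shrunken balls $\{B_d({}^{i}z^k_\alpha,\tfrac{c_1}{2}\delta^k)\}_{i,\alpha}$ still cover $\bx$ while each individual net remains $\delta^k$-separated (hence is legitimate input for Step 1). Such a choice exists with $I_0$ finite and uniform in $k$: take a maximal $\tfrac{c_1}{2}\delta^k$-separated set $P_k$ (automatically a $\tfrac{c_1}{2}\delta^k$-net); form the graph on $P_k$ joining points at distance $<\delta^k$; this graph has degree at most a constant $N$ depending only on the doubling constant in \eqref{e2.1}, because a ball of radius $\delta^k$ contains at most $N$ many $\tfrac{c_1}{2}\delta^k$-separated points (volume packing); $N$-colour it; each colour class is $\delta^k$-separated, and after enlarging it to a maximal $\delta^k$-separated set it becomes a net as in Step 1, while the union of the classes is still the $\tfrac{c_1}{2}\delta^k$-net $P_k$. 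This produces $I_0\le N$ grids, and we set $\mathcal Q^{(i)}:=\bigcup_k\{{}^{i}Q^k_\alpha\}_\alpha$. Now, given a ball $B_d(x,r)$ with $\delta^{k+3}<r\le\delta^{k+2}$, pick $i$ and $\alpha$ with $d(x,{}^{i}z^k_\alpha)\le\tfrac{c_1}{2}\delta^k$; then $x\in B_d({}^{i}z^k_\alpha,c_1\delta^k)\subseteq{}^{i}Q^k_\alpha$, so ${}^{i}Q^k_\alpha$ is \emph{the} level-$k$ cube of system $i$ through $x$, and (using $A_d=1$ and $\delta^{k+2}\le\tfrac{c_1}{2}\delta^k$)
\begin{align*}
B_d(x,r)\subseteq B_d(x,\delta^{k+2})\subseteq B_d\big({}^{i}z^k_\alpha,\tfrac{c_1}{2}\delta^k+\delta^{k+2}\big)\subseteq B_d({}^{i}z^k_\alpha,c_1\delta^k)\subseteq{}^{i}Q^k_\alpha,
\end{align*}
whereas in the other direction ${}^{i}Q^k_\alpha\subseteq B_d({}^{i}z^k_\alpha,C_1\delta^k)\subseteq B_d(x,2C_1\delta^k)\subseteq B_d(x,Cr)$ with $C:=2C_1\delta^{-3}$, since $\delta^k<\delta^{-3}r$. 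This is exactly (ii).

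The main obstacle is Step 2: arranging the $I_0$ nets at all scales simultaneously so that their ``deep interiors'' cover $\bx$, and extracting from measure doubling the scale-uniform bound on $I_0$; Step 1, by contrast, is the classical dyadic-cube lemma, whose only subtlety here is bookkeeping the dependence of $c_1,C_1$ on the quasi-metric constant, which is innocuous once $A_d=1$ is imposed as in the excerpt. It is worth noting that \eqref{e2.1} is used twice in two different guises---geometric (packing) doubling for the counting in Step 2, and measure doubling for the comparison $\mu(B)\le C\mu(Q)$ in Step 1---and that reverse doubling is never invoked, consistent with the paper's standing hypotheses.
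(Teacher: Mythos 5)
Your proposal is sound, but it is worth noting that the paper offers no argument at all here: Lemma \ref{l6.1} is simply quoted from Hyt\"onen--Kairema \cite[Corollary 7.4]{hk12}. What you do is re-derive that result by a two-step route: (1) the classical Christ/Hyt\"onen--Kairema single-grid construction from one maximal $\delta^k$-separated net per scale (no cross-scale compatibility of the nets is needed, so your scale-by-scale choices are legitimate input), giving nestedness, the inner/outer ball sandwich, and hence the ``converse'' half of (ii) by doubling; and (2) a scale-by-scale packing-and-colouring argument producing boundedly many nets whose $\tfrac{c_1}{2}\delta^k$-balls cover $\bx$, from which the trapping property in (ii) follows by your containment chain (the numerology $\delta^2\le c_1/2$, $C=2C_1\delta^{-3}$ checks out). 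This is genuinely lighter than the route in \cite{hk12}, where the adjacent systems are built with additional structure relative to a fixed master grid; your pigeonhole version proves exactly the statement quoted in the paper and nothing more, which is all that is needed. Two small points to tidy: a graph of maximum degree $N$ is guaranteed only $(N+1)$-colourable by the greedy argument, so $I_0\le N+1$ (immaterial); and ``taking appropriate closures'' in Step 1 does not literally yield partitions of $\bx$, since closures overlap along boundaries --- to get the genuine partitions asserted in the lemma (rather than partitions up to null sets, as in Christ's original construction) you should invoke the half-open cube construction of \cite[Theorem 2.2]{hk12}, which is precisely the ``classical construction'' you are deferring to, so this is a matter of phrasing rather than a gap.
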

For each integer $1 \leq i \leq I_0$ and $k\in\zz$, denote by $\sigma^{(i)}_k$
 the $\sigma$-algebra generated by $\mathcal{Q}^{(i)}_k$. Then
$ L_{\infty}(\bx,\,\sigma^{(i)}_k,\,d\mu(x))\overline{\otimes} \cm$ is a von
Neumann subalgebra of $\lm$.
Moreover, $ \{L_{\infty}(\bx,\,\sigma^{(i)}_k,\,d\mu(x))\overline{\otimes} \cm\}_{k\in\zz}$ is a sequence of increasing von Neumann subalgebras such that its union is $w^*$-dense in $\lm$.
\subsection{Non-commutative $L_p$ space}\label{nclp}
Let us recall the definition and some properties of non-commutative $L_p$-spaces (see \cite{PX} for more details about non-commutative integration theory). To begin with, let $S_{\cm}^+$ be the set of all positive element $x\in\cm$ such that
$$\tau(s(x))<\infty,$$
where $s(x)$ denotes the least projection $e\in\cm$, called the support of $x$, such that
$exe=x.$
Let $S_\cm$ be the linear span of $S_{\cm}^+$. For any $p\in(0,\,\infty)$, we define
$$\|x\|_{L_p(\cm)}:=(\tau(|x|^p))^{1/p}, \ \ \ x\in S_\cm,$$
where $|x|:=(x^*x)^{1/2}$. We define the {\it non-commutative $L_p$-space} associated with $(\cm,\,\tau)$, denoted by $L_p(\cm)$, be the completion
of $(S_\cm, \|\cdot\|_{L_p(\cm)})$. For convenience, we usually set $L_{\infty}(\cm)=\cm$ equipped
with the operator norm $\|\cdot\|_{\cm}$.

Let $(\Omega,\,d\nu)$ be a measurable space, then $f$ is said to be an $S_{\cm}$-valued simple function on $\Omega$ if
 $$f=\sum_{j=1}^Nm_j\cdot\chi_{E_j},$$
where $m_j\in S_{\cm}$ and $E_j$'s are measurable disjoint subsets of $\Omega$
with $\nu(E_j)<\infty$.
For any $S_{\cm}$-valued simple function $f$, we define
$$\|f\|_{L_p(\cm;\,L_2^c(\Omega))}:=\lf\|\lf(\int_{\Omega}f(x)^*f(x)\,d\nu(x)\r)^{1/2}
\r\|_{L_p(\cm)}$$
and
$$\|f\|_{L_p(\cm;\,L_2^r(\Omega))}:=\lf\|\lf(\int_{\Omega}f(x)f(x)^*\,d\nu(x)\r)^{1/2}\r\|_{L_p(\cm)}.$$
Furthermore, we define the {\it  column space} $L_p(\cm;\,L_2^c(\Omega))$ (resp. {\it row space} $L_p(\cm;\,L_2^r(\Omega))$) be the completion of the space of all $S_{\cm}$-valued simple functions with respect to the topology of $L_p(\cm;\,L_2^c(\Omega))$ (resp. $L_p(\cm;\,L_2^r(\Omega))$).

The following lemma establish the duality between the above spaces.
\begin{lemma}{\rm\cite[p.9]{m07}}
For any $p\in[1,\,\infty)$, we have
$$\lf(L_{p}(\cm;\,L_2^c(\Omega))\r)^*=
L_{p'}(\cm;\,L_2^c(\Omega)),$$
where the duality bracket is given by
$$\langle f,\,g\rangle:=\tau\int f(x)g^*(x)\,d\mu(x),$$
 for any $f\in L_p(\cm;\,L_2^c(\Omega))$ and $g\in L_{p'}(\cm;\,L_2^c(\Omega))$.
Similar result also holds for row spaces.
\end{lemma}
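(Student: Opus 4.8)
The plan is to realize the column space $L_p(\cm;\,L_2^c(\Omega))$ as a \emph{corner} of the noncommutative $L_p$-space of a larger semifinite von Neumann algebra, and then read off the duality from the standard semifinite duality $L_p(\mathcal R)^\ast=L_{p'}(\mathcal R)$ together with the elementary duality theory of norm-one idempotents. We may assume $L_2(\Omega)\ne\{0\}$, the statement being trivial otherwise. For $u,v\in L_2(\Omega)$ let $\theta_{u,v}$ denote the rank-one operator $\eta\mapsto\langle v,\eta\rangle u$ on $L_2(\Omega)$; fix a unit vector $\xi\in L_2(\Omega)$, put $e:=\theta_{\xi,\xi}$, set $\mathcal R:=\cm\,\overline{\otimes}\,B(L_2(\Omega))$ with the normal semifinite faithful trace $\mathrm{Tr}_{\mathcal R}:=\tau\otimes\mathrm{Tr}$, and write $q:=1\otimes e\in\mathcal R$.

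First I would construct an isometric identification $T\colon L_p(\cm;\,L_2^c(\Omega))\xrightarrow{\ \sim\ }L_p(\mathcal R)q$ for every $p\in[1,\infty)$, and likewise for $p=\infty$ with $L_\infty(\mathcal R)q=\mathcal R q$. For an $S_\cm$-valued simple function $f=\sum_j m_j\chi_{E_j}$ on $\Omega$, set $T(f):=\sum_j m_j\otimes\theta_{\chi_{E_j},\,\xi}$. Since the $E_j$ are disjoint, one computes $T(f)^\ast T(f)=\big(\int_\Omega f(x)^\ast f(x)\,d\mu(x)\big)\otimes e$, and since $e$ has trace one this gives $\|T(f)\|_{L_p(\mathcal R)}=\|f\|_{L_p(\cm;\,L_2^c(\Omega))}$; moreover $T(f)q=T(f)$, so $T$ extends to an isometric embedding of $L_p(\cm;\,L_2^c(\Omega))$ into the corner $L_p(\mathcal R)q$. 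Surjectivity onto that corner follows from the density of $S_\cm$-valued step functions in $L_p(\cm;\,L_2^c(\Omega))$ together with the density of the corresponding simple tensors in $L_p(\mathcal R)q$ (using density of step functions in $L_2(\Omega)$ and of $S_\cm$ in $L_p(\cm)$).

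Next I would invoke the standard duality $L_p(\mathcal R)^\ast=L_{p'}(\mathcal R)$, valid for semifinite $\mathcal R$ and $1\le p<\infty$ (see \cite{PX}), realized by the bilinear pairing $(x,y)\mapsto\mathrm{Tr}_{\mathcal R}(xy)$. Right multiplication $P(x):=xq$ is a norm-one idempotent on $L_p(\mathcal R)$ with range $L_p(\mathcal R)q$, and $\mathrm{Tr}_{\mathcal R}(xq\,y)=\mathrm{Tr}_{\mathcal R}(x\,qy)$ shows that its Banach-space adjoint is the norm-one idempotent $y\mapsto qy$ on $L_{p'}(\mathcal R)$, with range $qL_{p'}(\mathcal R)$. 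By the elementary fact that $(PZ)^\ast=P^\ast Z^\ast$ isometrically for any norm-one idempotent $P$ on a Banach space $Z$ — restrict the pairing one way, and extend a functional on $PZ$ by precomposition with $P$ the other way — we obtain $\big(L_p(\mathcal R)q\big)^\ast=qL_{p'}(\mathcal R)$, the functional attached to $w\in qL_{p'}(\mathcal R)$ being $x\mapsto\mathrm{Tr}_{\mathcal R}(xw)$. Finally, the adjoint operation $z\mapsto z^\ast$ is a conjugate-linear isometry of $L_{p'}(\mathcal R)$ carrying $L_{p'}(\mathcal R)q$ onto $qL_{p'}(\mathcal R)$; composing it with $T$ for the exponent $p'$ shows that every bounded functional on $L_p(\cm;\,L_2^c(\Omega))$ equals $f\mapsto\mathrm{Tr}_{\mathcal R}\big(T(f)\,T(g)^\ast\big)$ for a unique $g\in L_{p'}(\cm;\,L_2^c(\Omega))$, with matching norms. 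A direct computation gives $\mathrm{Tr}_{\mathcal R}\big(T(f)T(g)^\ast\big)=\tau\!\int_\Omega f(x)g(x)^\ast\,d\mu(x)$, which is precisely the asserted pairing, so $\big(L_p(\cm;\,L_2^c(\Omega))\big)^\ast=L_{p'}(\cm;\,L_2^c(\Omega))$ (the identification being conjugate-linear in $g$, as usual). The row case is identical after replacing the corner $L_p(\mathcal R)q$ by $qL_p(\mathcal R)$ throughout, or by transferring the column result along $x\mapsto f(x)^\ast$.

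The inclusion $L_{p'}(\cm;\,L_2^c(\Omega))\hookrightarrow\big(L_p(\cm;\,L_2^c(\Omega))\big)^\ast$ (contractively) is in fact immediate from the noncommutative H\"older inequality combined with the Cauchy--Schwarz inequality in the $\cm$-module, so the substance of the lemma lies in surjectivity with norm control; in the scheme above this is supplied entirely by the density claim of the first step and the semifinite duality $L_p(\mathcal R)^\ast=L_{p'}(\mathcal R)$. Accordingly, the main obstacle is essentially bookkeeping: checking that $T$ maps \emph{onto} the corner $L_p(\mathcal R)q$ (with the appropriate weak$^\ast$ statement when $p'=\infty$), and tracking the pairing $\mathrm{Tr}_{\mathcal R}(xy)$ through the isometry $T$ and the conjugate-linear map $z\mapsto z^\ast$ so that it lands on exactly $\tau\int_\Omega f g^\ast\,d\mu$. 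Everything else is standard noncommutative integration theory.
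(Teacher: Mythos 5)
Your argument is correct, and it is essentially the standard proof: the paper itself does not prove this lemma but quotes it from Mei \cite[p.~9]{m07}, where the column space is likewise identified isometrically with the corner $L_p\bigl(\cm\overline{\otimes}B(L_2(\Omega))\bigr)q$, $q=1\otimes e$ a rank-one projection, and the duality is then read off from $L_p(\mathcal R)^{\ast}=L_{p'}(\mathcal R)$ via the contractive projection $x\mapsto xq$. The only point worth flagging is the case $p=1$: there $L_\infty(\cm;L_2^c(\Omega))$ must be understood as the full corner $\mathcal Rq$ (equivalently, functions of finite norm), not the norm-completion of simple functions, which is exactly the weak-$\ast$ caveat you already note.
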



Recall the following Kadison-Schwarz inequality, which follows from the operator convexity of $t\rightarrow |t|^2$ (see e.g. \cite[p.12]{m07}).
\begin{lemma}\label{e2.5}
Let $(\Omega,\,d\nu)$  be a measure space. Assume that $f:\Omega\rightarrow \mathbb{R}_+$ and $g:\Omega\rightarrow\cm$ such that all members of the below inequality make sense. Then we have
\begin{align*}
\lf|\int_{\Omega}f(x)g(x)\,d\nu(x)\r|^2\leq\int_{\Omega}f(x)^2\,d\nu(x)\int_{\Omega}|g(x)|^2\,d\nu(x),
\end{align*}
where `$\leq$' is understood as the partial order in the positive cone of $\mathcal{M}$.
\end{lemma}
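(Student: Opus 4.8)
The plan is to deduce the inequality from the positivity, in $M_2(\cm)$, of the matrix obtained by integrating the pointwise rank-one positive matrices attached to the pair $(f,g)$, followed by a Schur-complement computation; this is the natural operator-matrix reformulation of the operator convexity of $t\mapsto|t|^2$. First I would dispose of the trivial cases. Put $a:=\int_\Omega f(x)^2\,d\nu(x)$; since $f$ is scalar-valued this is a nonnegative number, and the hypothesis that all members make sense forces $a<\infty$ and $\int_\Omega|g|^2\,d\nu$ to be a positive element (of $\cm$, or of the relevant space of affiliated operators). If $a=0$ then $f=0$ $\nu$-a.e., so $\int_\Omega fg\,d\nu=0$ and there is nothing to prove; hence one may assume $a>0$.

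For each $x\in\Omega$ let $v(x)\in\cm\oplus\cm$ be the column with entries $f(x)$ and $g(x)^*$. Using that $f(x)$ is a scalar, one has
\[
v(x)v(x)^*=\begin{pmatrix} f(x)^2 & f(x)g(x)\\ f(x)g(x)^* & g(x)^*g(x)\end{pmatrix}\ge 0\qquad\text{in }M_2(\cm),
\]
pointwise in $x$ (it is self-adjoint since $f(x)g(x)^*=(f(x)g(x))^*$). Integrating in $x$ — using that an integral of pointwise positive elements is positive, which one checks by testing against an arbitrary positive normal functional on $M_2(\cm)$ and moving it inside the now-scalar integral, or by approximating $f$ and $g$ by $S_\cm$-valued simple functions as in Section~\ref{nclp} — gives
\[
\begin{pmatrix} a & \displaystyle\int_\Omega fg\,d\nu\\ \displaystyle\int_\Omega fg^*\,d\nu & \displaystyle\int_\Omega |g|^2\,d\nu\end{pmatrix}\ge 0\qquad\text{in }M_2(\cm).
\]
This $2\times2$ positivity is exactly the content one would otherwise extract from the operator Jensen inequality $\left|\int h\,d\rho\right|^2\le\int|h|^2\,d\rho$ applied to the probability measure $d\rho=a^{-1}f^2\,d\nu$ and $h=g/f$ on $\{f>0\}$, and it is precisely here that the operator convexity of $t\mapsto|t|^2$ enters.

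Finally, since the $(1,1)$-entry $a$ is a strictly positive scalar, hence central and invertible, the Schur-complement criterion for positivity of a $2\times2$ operator matrix with invertible corner yields
\[
\int_\Omega|g|^2\,d\nu\;\ge\;\left(\int_\Omega fg\,d\nu\right)^{*}a^{-1}\left(\int_\Omega fg\,d\nu\right)\;=\;\frac1a\left|\int_\Omega fg\,d\nu\right|^2,
\]
and multiplying through by the positive scalar $a=\int_\Omega f^2\,d\nu$ gives the asserted inequality. I do not expect any genuine obstacle: the only step requiring care is the passage from pointwise positivity to positivity of the $M_2(\cm)$-valued integral, and this is routine once one has fixed the class of $\cm$-valued functions for which all the integrals are defined, exactly as for the scalar-valued Kadison--Schwarz and operator Jensen inequalities in \cite{m07}.
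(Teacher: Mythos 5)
Your proof is correct. The paper does not supply its own argument for this lemma: it is stated with a citation to \cite[p.12]{m07} and the remark that it ``follows from the operator convexity of $t\mapsto|t|^2$,'' i.e.\ the intended route is the operator Jensen inequality applied to the probability measure $a^{-1}f^2\,d\nu$ and the function $g/f$ on $\{f>0\}$ --- precisely the argument you mention in your aside. Your main route is a genuinely different and more self-contained derivation: pointwise positivity of the rank-one matrix $v(x)v(x)^{*}\in M_2(\cm)^{+}$, integration (tested against positive normal functionals, or reduced to $S_\cm$-valued simple functions) to conclude positivity of the integrated matrix, and then the Schur-complement criterion for a positive $2\times2$ operator matrix whose $(1,1)$-corner is the strictly positive scalar $a$. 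This avoids invoking the operator convexity of $|t|^2$ as a black box, replacing it with the elementary fact that $C\ge B^{*}A^{-1}B$ whenever $\begin{pmatrix}A&B\\ B^{*}&C\end{pmatrix}\ge0$ in $M_2(\cm)$ and $A$ is a positive invertible scalar, and it makes transparent that the inequality is just Cauchy--Schwarz for the $\cm$-valued sesquilinear form $(u,w)\mapsto\int u^{*}w\,d\nu$. One small stylistic remark: since $f(x)\in\RR_{+}$ is a scalar, the first entry of $v(x)$ should literally be $f(x)\cdot 1_{\cm}$; this of course changes nothing. The only step that genuinely needs care, which you correctly flag, is the passage from pointwise positivity to positivity of the $M_2(\cm)$-valued integral, and the two mechanisms you propose for it are both standard and adequate here.
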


\subsection{Non-commutative $H_p$ and $BMO$ spaces}
Now we introduce the operator-valued column, row and mixture BMO spaces on spaces of homogeneous type. In what follows,
let $ L_\infty(\mathcal{M};\,L_2^{{\rm loc},c}(\bx))$ denote the set of all locally $L_2$-integrable functions on $\bx$ with values in $\cm$.

\begin{definition}\label{d2.4}
The {\it{column $\mathrm{BMO}$ space}} $\mathcal{BMO}^{c}(\bx,\,\cm)$ is defined as
\begin{eqnarray*}
\mathcal{BMO}^{c}(\bx,\,\cm):=\lf\{g\in L_\infty(\mathcal{M};\,L_2^{{\rm loc},c}(\bx)):\|g\|_{\mathcal{BMO}^{c}(\mathbb{X},\,\cm)}<\infty\r\},
\end{eqnarray*}
where
$$\|g\|_{\mathcal{BMO}^{c}(\mathbb{X},\,\cm)}:=\sup_{x\in \mathbb{X},\,r>0}\lf\|\lf(
\fint_{B_d(x,\,r)}|g(y)-g_{B_d(x,\,r)}|^2\,d\mu(y)\r)^{1/2}\r\|_{\cm}.$$
Similarly, we define the {\it  row $\mathrm{BMO}$ space} $\mathcal{BMO}^r(\bx,\,\cm)$ as the space of all $g\in L_\infty(\mathcal{M};\,L_2^{{\rm loc},c}(\bx))$ such that $g^{\ast}\in\mathcal{BMO}^c(\bx,\,\cm)$ with the norm
$\|g\|_{\mathcal{BMO}^r(\bx,\,\cm)}:=\|g^{\ast}\|_{\mathcal{BMO}^c(\bx,\,\cm)}$, and the {\it mixture {\rm BMO} space}
$$\mathcal{BMO}^{cr}(\bx,\,\cm):=\mathcal{BMO}^c(\bx,\,\cm)\cap\mathcal{BMO}^r(\bx,\,\cm)$$ with the norm
$$\|g\|_{\mathcal{BMO}^{cr}(\bx,\,\cm)}:=\max\lf\{\|g\|_{\mathcal{BMO}^c(\bx,\,\cm)},\,
\|g\|_{\mathcal{BMO}^r(\bx,\,\cm)}\r\}.$$
\end{definition}

\begin{remark}{\rm
\begin{enumerate}
\item[\rm{(i)}] When $(\bx,\,d,\,\mu):=(\rr^d,\,|\,\cdot\,|,\,dx)$, these $\mathrm{BMO}$ spaces goes back to the $\mathrm{BMO}$ spaces studied in \cite[p.14]{m07}.
\item[\rm{(ii)}]  When
 $\cm:=\mathbb{C}$, these spaces coincide with the $\rm BMO$ space $\mathrm{BMO}(\bx)$ studied in \cite{cw77}.
\end{enumerate}}
\end{remark}
Next, we shall introduce the definition of operator-valued Hardy space via Lusin area function. To begin with, we impose the following assumptions throughout the whole paper.
\begin{assumption}\label{a2.5}
\rm Let $(\bx,\,d,\,\mu)$ be a space of homogeneous type (see Section \ref{homogene} for definition) with $\mu(\bx)=+\infty$.
Assume that there exists a {\it Calder\'{o}n reproducing formula of order $(\epsilon_1,\,\epsilon_2)$} on $L_2({\bx})$ for some $\epsilon_1\in(0,\,1]$ and $\epsilon_2\in(0,\,\fz)$, that is, there exists a family of self-adjoint bounded linear operators
$\{\mathbf{D}_t\}_{t>0}$ on $L_2(\bx)$ such that
for all $f\in L_2(\bx)$,
\begin{align}\label{e2.50}
f=\int_0^{\infty}\mathbf{D}_t^2(f)\frac{dt}{t},
\end{align}
and moreover, for all $f\in L_2({\bx})$ and $x\in\bx$,
$$\mathbf{D}_t(f)(x)=\int_{\bx}
\mathbf{D}_t(x,\,y)f(y)\,d\mu(y),$$
where
$\mathbf{D}_t(\cdot,\,\cdot)$ is a measurable function from $\bx\times\bx$ to $\mathbb{C}$ satisfying the following conditions:
there exists a positive constant $C_1$ such that, for all $t\in\rr_+$ and all $x,\,x',\,y\in\bx$ with
$d(x,\,x')\leq\frac{[t + d(x, y)]}{2}$,
\begin{enumerate}
\item[\rm $(\mathbf{H}_1)$]
 $\lf|\mathbf{D}_t(x,\,y)\r|\leq C_1
\frac{1}{V_t(x)+V_t(y)+V(x,\,y)}\lf[\frac{t}{t+d(x,\,y)}\r]^{\epsilon_2}$;\smallskip

\item[\rm $(\mathbf{H}_2)$] $\lf|\mathbf{D}_t(x,\,y)-\mathbf{D}_t(x',\,y)\r|\leq C_1
\lf[\frac{d(x,\,x')}{t+d(x,\,y)}\r]^{\epsilon_1}
\frac{1}{V_t(x)+V_t(y)+V(x,\,y)}\lf[\frac{t}{t+d(x,\,y)}\r]^{\epsilon_2}$;\smallskip

\item[\rm $(\mathbf{H}_3)$]
Property ($\textbf{H}_2$) still holds true with the roles of $x$ and $y$ interchanged;\smallskip

\item[\rm $(\mathbf{H}_4)$] $\int_{\bx}\mathbf{D}_t(x,\,y)\,d\mu(x)=\int_{\bx}\mathbf{D}_t(x,\,y)\,d\mu(y)=0.$
\end{enumerate}
\end{assumption}

\begin{remark}{\rm In the following, we list several ways for the construction of Calder\'{o}n reproducing formula:

(1) Let $L$ be a non-negative self-adjoint operator on $L_2(\bx)$. Then imposing suitable conditions on the heat kernel associated with $L$ yields {\bf CRF} (see e.g. \cite[Proposition 2.6]{hmy06}). A typical non-Euclidean example is the Lie group $G$ with polynomial growth, in which $L$ is taken to be the sub-Laplacian operator and $\mathbf{D}_t$ is taken to be $ct^2Le^{-t^2L}$ or $ct^2Le^{-t\sqrt{L}}$ (see \cite[Theorem VIII.2.7]{VSC}), where $c>0$ is an absolute normalized constant. Furthermore, if $G$ is a nilpotent Lie group, one may also take $\mathbf{D}_t:=c\Phi(t^2L)$, whose convolution kernel is a Schwartz function (see \cite{Hulanicki}), for any smooth function $\Phi:\mathbb{R}_+\rightarrow \mathbb{R}$ supported in $[1/2,2]$;

(2) In Ahlfors $n$-regular metric spaces, which include some fractals and Lipschitz manifolds (see \cite{TH2011}), a Calder\'{o}n reproducing formula was constructed in \cite{Han1994} via some geometry methods;

(3) In the Euclidean space $\mathbb{R}^n$, the Fourier transform can be also applied to construct Calder\'{o}n reproducing formula.}
\end{remark}
For any $S_{\cm}$-valued simple function $f$, the column and row {\it Lusin area functions} of $f$ are defined by
$$\cs^c(f)(x):=\lf(\iint_{\Gamma_x}|\mathbf{D}_t(f)(y)|^2\,\frac{d\mu(y)dt}
{V_t(x)t}\r)^{1/2} \ \
\mathrm{and} \ \ \cs^r(f)(x):=\lf(\iint_{\Gamma_x}\lf|(\mathbf{D}_t(f)(y))^*\r|^2\,\frac{d\mu(y)dt}
{V_t(x)t}\r)^{1/2},$$
respectively, where $\Gamma_x:=\{(y,\,t)\in\bx\times\rr_+:d(x,\,y)<t\}$ with $x\in\bx$. Here and in what follows, we abuse the notation $\mathbf{D}_t$ frequently to denote $\mathbf{D}_t\otimes id_\mathcal{M}$ for simplicity.

For any $p\in[1,\,\infty)$, we define the $\mathcal{H}_p^{c}(\bx,\,\cm)$ and $\mathcal{H}_p^{r}(\bx,\,\cm)$ norms of $f$ by
$$\|f\|_{\mathcal{H}_p^{c}(\bx,\,\cm)}:=
\lf\|\cs^c(f)\r\|_{L_p(\lm)}\ \ \  {\rm and}\ \ \ \|f\|_{\mathcal{H}_p^{r}(\bx,\,\cm)}:=
\lf\|\cs^r(f)\r\|_{L_p(\lm)},$$
respectively.
\begin{definition}
We define the {\it column Hardy space} $\mathcal{H}_p^{c}(\bx,\,\cm)$ (resp. {\it row Hardy space} $\mathcal{H}_p^{r}(\bx,\,\cm)$) to be the completion of the space of all $S_{\cm}$-valued simple functions with finite $\mathcal{H}_p^{c}(\bx,\,\cm)$ (resp. $\mathcal{H}_p^{r}(\bx,\,\cm)$) norm. Moreover, we define the {\it mixture space} $\mathcal{H}^{cr}_p(\bx,\,\cm)$ as follows: when $p\in[1,\,2)$,
$$\mathcal{H}^{cr}_p(\bx,\,\cm):=\mathcal{H}^{c}_p(\bx,\,\cm)+
\mathcal{H}^{r}_p(\bx,\,\cm),$$
equipped with the norm
\begin{align*}
\|f\|_{\mathcal{H}^{cr}_p(\bx,\,\cm)}:=&\inf\lf\{\|f_1\|_{\mathcal{H}^{c}_p(\bx,\,\cm)}+
\|f_2\|_{\mathcal{H}^{r}_p(\bx,\,\cm)}:\,f=f_1+f_2,\,
f_1\in\mathcal{H}^{c}_p(\bx,\,\cm),\,f_2\in\mathcal{H}^{r}_p(\bx,\,\cm)\r\},
\end{align*}
where the infimum is taken over all the decompositions of $f$ as above.
When $p\in[2,\,\infty)$, define
$$\mathcal{H}^{cr}_p(\bx,\,\cm):=\mathcal{H}^{c}_p(\bx,\,\cm)
\cap\mathcal{H}^{r}_p(\bx,\,\cm),$$
equipped with the intersection norm
$$\|f\|_{\mathcal{H}^{cr}_p(\bx,\,\cm)}:=
\max\lf\{\|f\|_{\mathcal{H}^{c}_p(\bx,\,\cm)},\,
\|f\|_{\mathcal{H}^r_p(\bx,\,\cm)}\r\}.$$
\end{definition}
\begin{remark}\label{r2.9}{\rm
\begin{enumerate}
\item[\rm{(i)}]
When it comes back to the commutative setting, i.e.,
 $\cm:=\mathbb{C}$, these spaces go back to the Hardy space $H_p(\bx)$ studied in \cite{hmy06}, where $p\in[1,\,\infty)$.
\item[\rm{(ii)}]
 When $\bx:=\rr^d$,
 the operator-valued Hardy spaces $\mathcal{H}_{p}^c(\bx,\,\cm)$, $\mathcal{H}_{p}^r(\bx,\,\cm)$ and $\mathcal{H}_p^{cr}(\bx,\, \cm)$ are equivalent to the operator-valued Hardy spaces $\mathcal{H}_p^{c}(\rr^d,\,\cm)$, $\mathcal{H}_p^{r}(\rr^d,\,\cm)$ and $\mathcal{H}^{cr}_p(\rr^d,\,\cm)$ in \cite[p.11]{m07}, respectively, for $p\in[1,\,\infty)$, which can be seen from the combination of Theorem \ref{t3.1} with \cite[Theorem 2.4]{m07}.
\item[\rm{(iii)}] It follows from Fubini's theorem, Lemma \ref{l2.0} and representation formula \eqref{e2.50} that for any $S_{\cm}$-valued simple function $f$, there holds:
    $$\mathcal{H}^{c}_2(\bx,\,\cm)=\mathcal{H}^{r}_2(\bx,\,\cm)=\mathcal{H}^{cr}_2(\bx,\,\cm)=L_2(\lm).$$

\end{enumerate}}
\end{remark}

\bigskip

\section{Duality between $H_1$ and $BMO$}\label{s3}
\setcounter{equation}{0}

This section is devoted to establishing the dualities between $\mathcal{H}_1^{\dagger}(\bx,\,\cm)$ and $\mathcal{BMO}^\dagger(\bx,\,\cm)$ for $\dagger\in\{c,r,cr\}$, which can be stated specifically as follows:
\begin{theorem}\label{t3.1}
Let $\dagger\in\{c,r,cr\}$, then
$$(\mathcal{H}_1^{\dagger}(\bx,\,\cm))^{\ast}\backsimeq\mathcal{BMO}^\dagger(\bx,\,\cm)$$
in the following sense:
\begin{enumerate}
\item[\rm{(i)}]
Each $g\in \mathcal{BMO}^\dagger(\bx,\,\cm)$ defines a continuous linear
functional $\mathcal{L}_g$ on $\mathcal{H}_1^{\dagger}(\bx,\,\cm)$ given by
\begin{align}\label{dua}
\mathcal{L}_g(f):=\tau\int_{\bx}f(x)g^{*}(x)\,d\mu(x),\ \ \ for\ any \ S_{\cm}{\rm -}valued\ simple\ function\ f.
\end{align}
\item[\rm{(ii)}] For any $\mathcal{L}\in(\mathcal{H}_1^{\dagger}(\bx,\,\cm))^{*}$, there exists
$g\in\mathcal{BMO}^\dagger(\bx,\,\cm)$ such that $\mathcal{L}=\mathcal{L}_g$.
\end{enumerate}
Moreover, there exists a positive constant $C$ such that
$$C^{-1}\|g\|_{\mathcal{BMO}^\dagger(\bx,\,\cm)}
\leq\|\mathcal{L}_g\|_{(\mathcal{H}_1^{\dagger}(\bx,\,\cm))^{*}}\leq C\|g\|_{\mathcal{BMO}^\dagger(\bx,\,\cm)}.$$

\end{theorem}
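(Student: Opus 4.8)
The plan is to prove the duality $(\mathcal{H}_1^{c}(\bx,\,\cm))^{\ast}\backsimeq\mathcal{BMO}^c(\bx,\,\cm)$ first, and then deduce the row case by taking adjoints and the mixture case by the standard sum/intersection duality (if $\mathcal{H}_1^{cr}=\mathcal{H}_1^c+\mathcal{H}_1^r$ then its dual is $\mathcal{BMO}^c\cap\mathcal{BMO}^r=\mathcal{BMO}^{cr}$). The conceptual engine is the identification of $\mathcal{H}_1^c(\bx,\,\cm)$ with a closed subspace of the column Hilbert-space-valued space $L_1(\cm;\,L_2^c(\bx\times\rr_+,\,d\mu\,dt/(V_t(x)t)))$ via the embedding $f\mapsto (y,t)\mapsto \mathbf{D}_t(f)(y)$ restricted to the tent/cone structure, exactly as in Mei's argument \cite{m07}; then $(\mathcal{H}_1^c)^\ast$ is a quotient of the dual of that big space, and one uses the Calder\'on reproducing formula \eqref{e2.50} to pull functionals back to functions on $\bx$.

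First I would prove part (i): given $g\in\mathcal{BMO}^c(\bx,\,\cm)$, I would show $|\mathcal{L}_g(f)|\lesssim \|g\|_{\mathcal{BMO}^c}\|f\|_{\mathcal{H}_1^c}$ for simple $f$. Using \eqref{e2.50} one writes $\mathcal{L}_g(f)=\tau\int_\bx\int_0^\infty \mathbf{D}_t(f)(x)(\mathbf{D}_t(g)(x))^*\,\frac{dt}{t}\,d\mu(x)$ (the operators $\mathbf{D}_t$ are self-adjoint). The key point is an $\cm$-valued Carleson measure estimate: Section \ref{s3} promises exactly this, namely that $g\in\mathcal{BMO}^c$ iff $|\mathbf{D}_t(g)(y)|^2\,\frac{d\mu(y)\,dt}{t}$ is an $\cm$-valued Carleson measure with norm comparable to $\|g\|_{\mathcal{BMO}^c}^2$. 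Granting that, one runs the standard tent-space pairing argument of Coifman--Meyer--Stein adapted to the operator-valued setting: decompose $\bx\times\rr_+$ over the level sets $\Omega_k=\{x:\, \cs^c_{*}(f)>2^k\}$ or work directly with the pairing between the cone space and Carleson measures, invoke Lemma \ref{e2.5} (Kadison--Schwarz) to pass from scalar Cauchy--Schwarz to the operator partial order, and apply the trace together with a non-commutative Cauchy--Schwarz in $L_1$--$L_\infty$ form. This mirrors \cite[proof of Theorem 2.4]{m07}; the homogeneous-type substitutes for the geometric facts are supplied by Lemma \ref{l2.0}.

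For part (ii), given $\mathcal{L}\in(\mathcal{H}_1^c)^\ast$, extend it by Hahn--Banach to the ambient column space $L_1(\cm;\,L_2^c(\bx\times\rr_+))$; its dual is $L_\infty(\cm;\,L_2^c(\bx\times\rr_+))$ by the duality lemma quoted from \cite[p.9]{m07} (in the $p=1$ endpoint form, which is where one must be slightly careful — one works with $L_1(\cm;L_2^c)$ whose dual is the corresponding $L_\infty$-column space, valid here). So there is $G\in L_\infty(\cm;\,L_2^c(\bx\times\rr_+))$ with $\mathcal{L}(f)=\tau\int\int_0^\infty \mathbf{D}_t(f)(y)G(y,t)^*\,\frac{d\mu(y)\,dt}{t}$ for simple $f$. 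Define $g:=\int_0^\infty \mathbf{D}_t(G(\cdot,t))\,\frac{dt}{t}$ (interpreted weakly against simple functions, using $(\mathbf{H}_4)$ to ensure convergence/cancellation); then by \eqref{e2.50} and self-adjointness of $\mathbf{D}_t$ one checks $\mathcal{L}=\mathcal{L}_g$ on the dense set of simple functions, and the reverse Carleson-measure characterization gives $\|g\|_{\mathcal{BMO}^c}\lesssim \|G\|_{L_\infty(\cm;L_2^c)}\lesssim\|\mathcal{L}\|$.

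The main obstacle is the $\cm$-valued Carleson-embedding / Carleson-measure characterization on $\bx$ without harmonicity and without the reverse doubling condition: in Mei's work the Poisson semigroup's harmonicity and subordination are used heavily, and in \cite{hmy06} reverse doubling enters the scalar tent-space machinery. Here one has only the size/smoothness/cancellation axioms $(\mathbf{H}_1)$--$(\mathbf{H}_4)$ of the Calder\'on reproducing formula. The remedy, which I expect the paper to carry out in Section \ref{s3}, is to replace analytic tools by the reproducing formula together with almost-orthogonality ($\mathbf{D}_s\mathbf{D}_t$ has a Schur-type kernel bound from $(\mathbf{H}_1)$--$(\mathbf{H}_3)$), the dyadic-cube substitute from Lemma \ref{l6.1}, and the geometric comparability estimates of Lemma \ref{l2.0}; the operator-valued layer is then handled as in \cite{xxx16} via Kadison--Schwarz (Lemma \ref{e2.5}) and the non-commutative Cauchy--Schwarz inequality, so that the only genuinely new labor is checking that the scalar tent-space duality goes through using $V_t(x)\sim\mu(B(x,t))$-type volume bounds in place of the Euclidean $t^n$. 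A secondary technical point is the justification of the formal manipulations (Fubini, convergence of the $\int_0^\infty\frac{dt}{t}$ integrals) on $S_\cm$-valued simple functions, which is routine but must be done with care because $g$ itself need not be simple.
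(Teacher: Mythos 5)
Your overall structure matches the paper's: the column case is handled by the Carleson-measure bound (Proposition \ref{l2.1}) together with a noncommutative Cauchy--Schwarz pairing for part (i), and by the $\Phi$-embedding, Hahn--Banach, and a $\Psi$-retraction for part (ii); row follows by adjoints and mixture by sum/intersection duality. But two of your steps contain genuine gaps. For (i), the proposed level-set decomposition $\Omega_k=\{x:\cs^c_*(f)(x)>2^k\}$ is not well-posed here: $\cs^c_*(f)(x)$ is an element of the positive cone of $\cm$, not a scalar, so these are not measurable subsets of $\bx$. The paper (following Mei) replaces the stopping-time mechanism by a weight-splitting Cauchy--Schwarz with the operator-valued auxiliary square functions $S_{\mathbf{D}}^c(f)(x,t)$ and $\widetilde{S}_{\mathbf{D}}^c(f)(x,t)$, an integration-by-parts/telescoping argument in $t$ for the term $\mathrm{I}$, and a discretization over Christ's dyadic cubes (Lemma \ref{l3.1}) with a telescoping sequence $\mathbb{W}(x,j)$ for the term $\mathrm{J}$. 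Your phrase ``work directly with the pairing'' gestures at this, but that replacement is the entire noncommutative content and is left unspecified.

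More seriously, part (ii) as written is circular. After Hahn--Banach you obtain $h\in L_\infty(\lm;L_2^c(\bx\times\rr_+,d\mu(y)dt))$; note the ambient space must be over $\lm=L_\infty(\bx)\overline{\otimes}\cm$, not $\cm$, since $\Phi(f)(x,y,t)=\mathbf{D}_t(f)(y)\chi_{\Gamma_x}(y,t)/[V_t(x)t]^{1/2}$ depends on all three of $(x,y,t)$, with $x$ carrying the $L_1(\lm)$-integration. The function $g$ is then produced by the retraction $\Psi$ of Lemma \ref{d3.1}, which averages over $x$ with weight $[V_t(x)t]^{-1/2}$; your formula $g=\int_0^\infty\mathbf{D}_t(G(\cdot,t))\frac{dt}{t}$ loses the $x$-variable entirely. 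But the decisive issue is your claim that ``the reverse Carleson-measure characterization gives $\|g\|_{\mathcal{BMO}^c}\lesssim\|G\|$''. This fails twice: (a) $h$ is an arbitrary element of $L_\infty(\lm;L_2^c)$, not one of the form $\mathbf{D}_t(g_0)(y)\chi_{\Gamma_x}(y,t)$, so no Carleson-measure characterization applies to it directly; and (b) the direction ``$dm_g$ Carleson $\Rightarrow g\in\mathcal{BMO}^c$'' (Proposition \ref{a111}) is deduced in the paper \emph{from} Theorem \ref{t3.1}, so invoking it inside the proof of Theorem \ref{t3.1} is circular. The paper instead proves Lemma \ref{l3.2}, the boundedness of $\Psi$ from $L_\infty(\lm;L_2^c)$ to $\mathcal{BMO}^c$, by a direct kernel estimate that splits the input near and far from $4B$ and uses the smoothness $(\mathbf{H}_2)$ and the volume bounds of Lemma \ref{l2.0}. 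That lemma carries real work and cannot be bypassed by a soft appeal to the Carleson characterization.
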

We will give the proof of Theorem \ref{t3.1} after showing ${\rm (i)}\Rightarrow {\rm (ii)}$ in Theorem \ref{carl}.

For any set $E\subset\bx$, we denote by $T(E):=\{(y,\,t)\in\bx\times\rr_+:B(y,\,t)\subset E\}$ the tent over $E$. Now we recall the definition of $\cm$-valued Carleson measure.
\begin{definition}
An $\cm$-valued measure $ dm $ on $\bx\times\rr_+$ is called an {\it{$\cm$-valued Carleson measure}}, if
\begin{eqnarray*}
\|dm\|_{{\bf C}}:=\sup_{\mathrm{ball} \, B\subset\bx}\lf\|\frac{1}{\mu(B)}\int_{ T(B)}dm(x,\,t)\r\|_{\cm}<\infty,
\end{eqnarray*}
where the supremum is taken over all balls $B\subset\bx$.
\end{definition}
The $\cm$-valued Carleson measure connects closely to the associated operator-valued BMO spaces. For the space $\mathcal{BMO}^{\dagger}(\bx,\,\cm)$, $\dagger\in\{c,r\}$, one has the following Carleson measure characterization.

\begin{theorem}\label{carl}
The following conditions are equivalent:
\begin{enumerate}
\item[\rm{(i)}]
$g\in\mathcal{BMO}^{c}(\bx,\,\cm)$;
\item[\rm{(ii)}] $dm_g(x,\,t)=
|\mathbf{D}_t(g)(x)|^2\,\frac{d\mu(x)dt}{t}$ is an $\cm$-valued Carleson measure on $\bx\times\rr_+$.
\end{enumerate}
Moreover, there exists a positive constant $C$ such that  $$\frac{1}{C}\|dm_g\|_{ \mathrm{C}}\leq\|g\|_{\mathcal{BMO}^{c}(\bx,\,\cm)}^2\leq{C}\|dm_g\|_{\rm C}.$$
Similar result also holds for the row space.
\end{theorem}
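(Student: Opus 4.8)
The plan is to prove the two inequalities separately, both resting on a Carleson measure manipulation adapted to the space of homogeneous type. The content is the equivalence of the BMO norm of $g$ with the Carleson constant of $dm_g(x,t)=|\mathbf{D}_t(g)(x)|^2\frac{d\mu(x)dt}{t}$, and throughout one works entirely with the positive operator $|\mathbf{D}_t(g)(x)|^2\in\cm$ and partial order in $\cm$, so scalar Carleson-measure arguments transfer verbatim once one is careful that $\mathbf{D}_t$ acts as $\mathbf{D}_t\otimes\mathrm{id}_\cm$.

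First I would prove $\|dm_g\|_{\mathbf{C}}\lesssim\|g\|_{\mathcal{BMO}^c(\bx,\cm)}^2$. Fix a ball $B=B_d(x_B,r_B)$. Since each $\mathbf{D}_t$ annihilates constants by $(\mathbf{H}_4)$, replace $g$ by $g-g_{2B}$ inside $\mathbf{D}_t$. Decompose $g-g_{2B}=\sum_{j\geq 0}(g-g_{2B})\chi_{U_j(B)}$ using the annuli $U_j(B)$ from \eqref{UjB}. For the tent $T(B)$ one has $t<r_B$, and for $(y,t)\in T(B)$ the kernel bound $(\mathbf{H}_1)$ gives, for $j\geq 1$, $|\mathbf{D}_t(y,z)|\lesssim \frac{1}{V_t(y)+V(y,z)}\big(\frac{t}{t+d(y,z)}\big)^{\epsilon_2}\lesssim 2^{-j\epsilon_2}\frac{1}{\mu(2^jB)}$ for $z\in U_j(B)$, using Lemma~\ref{l2.0}(i) and the strong homogeneity \eqref{e2.2}; for $j=0$ one estimates directly. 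Then by the operator Cauchy--Schwarz inequality (Lemma~\ref{e2.5}) applied with $f$ the normalized kernel,
\begin{align*}
|\mathbf{D}_t((g-g_{2B})\chi_{U_j(B)})(y)|^2
\lesssim 2^{-j\epsilon_2}\fint_{2^{j+1}B}|g(z)-g_{2B}|^2\,d\mu(z),
\end{align*}
and a standard telescoping estimate controls $\|\fint_{2^{j+1}B}|g-g_{2B}|^2\|_\cm\lesssim (j+1)^2\|g\|_{\mathcal{BMO}^c}^2$. Integrating $\frac{d\mu(y)dt}{t}$ over $T(B)$ — which has $\mu$-measure-times-$\log$ bounded by $\mu(B)$ after the $\frac{dt}{t}$ integral over $(0,r_B)$ is absorbed into the decay $2^{-j\epsilon_2}$ via a further splitting of the $t$-variable into dyadic scales — and summing the geometric-times-polynomial series in $j$ yields $\frac{1}{\mu(B)}\|\int_{T(B)}dm_g\|_\cm\lesssim\|g\|_{\mathcal{BMO}^c}^2$.

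Second I would prove the reverse, $\|g\|_{\mathcal{BMO}^c(\bx,\cm)}^2\lesssim\|dm_g\|_{\mathbf{C}}$. Fix $B=B_d(x_B,r_B)$ and estimate $\fint_B|g-g_B|^2$; it suffices to bound $\fint_B|g(y)-c_B|^2\,d\mu(y)$ for a suitable constant $c_B\in\cm$, since $g_B$ is the minimizer up to a factor $2$ in the positive cone. Use the reproducing formula \eqref{e2.50}: $g=\int_0^\infty\mathbf{D}_t^2(g)\frac{dt}{t}$, split the inner $\mathbf{D}_t$ argument at scale $\sim r_B$ and split $g$ spatially into the part over $CB$ and the far part. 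The local-in-$t$, local-in-space piece is handled by $\cs^c$-type bounds together with the tent structure: one reduces $\fint_B|\int_0^{r_B}\mathbf{D}_t(\mathbf{D}_t(g)\chi_{CB})(y)\frac{dt}{t}|^2$ to $\frac{1}{\mu(B)}\iint_{T(CB)}|\mathbf{D}_t(g)(z)|^2\frac{d\mu(z)dt}{t}\lesssim\|dm_g\|_{\mathbf{C}}$ via a duality/Cauchy--Schwarz argument using $(\mathbf{H}_1)$ and $(\mathbf{H}_4)$ to control the $L_2\to L_2$ action of the outer operators. The remaining pieces (large $t$, or far-away spatial part) are controlled by the smoothness/decay $(\mathbf{H}_2)$--$(\mathbf{H}_3)$: subtracting the value at $x_B$ and using Hölder-type kernel decay with exponents $\epsilon_1,\epsilon_2$ gives absolutely summable tails, again in the positive cone. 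The row case follows by replacing $g$ with $g^*$ and $\mathbf{D}_t$ with its transpose kernel, which satisfies the same bounds by $(\mathbf{H}_3)$.

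The main obstacle is the reverse inequality: without harmonicity (no mean value property, no subharmonicity of $|\mathbf{D}_t g|^2$) and without the reverse doubling condition, one cannot use the classical stopping-time or good-$\lambda$ arguments freely, so the reconstruction of $g-g_B$ from the Carleson measure must be done by a direct quantitative pairing that keeps everything at the level of operator inequalities in $\cm$; matching the off-diagonal decay coming from $(\mathbf{H}_1)$--$(\mathbf{H}_3)$ against the volume growth controlled only by \eqref{e2.2} and Lemma~\ref{l2.0}, while keeping the $t$-integrals convergent, is where the extra effort referred to in the introduction is spent. This is the step I expect to occupy the bulk of the proof.
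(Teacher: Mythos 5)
Your proposal has one genuine gap in the first direction and takes a genuinely different --- and substantially harder --- route in the second.

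For $\|dm_g\|_{\mathrm{C}}\lesssim\|g\|_{\mathcal{BMO}^c}^2$, your annuli decomposition of $g-g_{2B}$ is fine for $j\geq 1$: the kernel decay $(\mathbf{H}_1)$ produces a factor $(t/(2^j r_B))^{\epsilon_2}$ whose $t$-integral over $(0,r_B)$ gives the summable $2^{-2j\epsilon_2}$. But your claim that ``for $j=0$ one estimates directly'' does not work. For $h=(g-g_{2B})\chi_{2B}$, the Kadison--Schwarz estimate plus $(\mathbf{H}_1)$ gives
\[
\iint_{T(B)}|\mathbf{D}_t(h)(y)|^2\,\frac{d\mu(y)\,dt}{t}\lesssim\int_{2B}|h(z)|^2\int_0^{r_B}\frac{dt}{t}\,dz,
\]
and the $dt/t$ integral diverges --- there is no kernel decay to absorb it when $z$ and $y$ are in the same ball. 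The paper's Proposition~\ref{l2.1} instead uses the $L_2$-Plancherel identity coming from the Calder\'on reproducing formula \eqref{e2.50}: by Fubini, $\iint_{T(B)}|\mathbf{D}_t(g_1)|^2\frac{dtd\mu}{t}\leq\int_\bx\int_0^\infty g_1^*\mathbf{D}_t^2(g_1)\frac{dtd\mu}{t}=\int_\bx|g_1|^2\,d\mu$, where $g_1$ is the local piece. This cancellation in $t$ is essential for the local term and cannot be replaced by pointwise kernel estimates.

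For the reverse inequality $\|g\|_{\mathcal{BMO}^c}^2\lesssim\|dm_g\|_{\mathrm{C}}$, the paper does not attempt the reconstruction of $g-g_B$ at all. Its Proposition~\ref{a111} reruns the pairing argument from the proof of Theorem~\ref{t3.1}(i) --- the Christ-cube discretization of the cone, the auxiliary square functions $S^c_{\mathbf{D}}$ and $\mathbb{S}^c_{\mathbf{D}}$, and the Abel-summation trick with $\mathbb{W}(x,j)$ --- noting that the only place the BMO norm of $g$ entered was through the bound on $\iint_{T(B)}|\mathbf{D}_t(g)|^2\frac{d\mu\,dt}{t}$, which is exactly $\|dm_g\|_{\mathrm{C}}\mu(B)$. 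Thus $|\mathcal{L}_g(f)|\lesssim\|dm_g\|_{\mathrm{C}}^{1/2}\|f\|_{\mathcal{H}_1^c}$, and then the \emph{surjectivity} half of the $H_1$--BMO duality (Theorem~\ref{t3.1}(ii), proved via the factorization maps $\Phi,\Psi$ and Hahn--Banach) produces a $\tilde g\in\mathcal{BMO}^c$ with $\mathcal{L}_g=\mathcal{L}_{\tilde g}$, forcing $g=\tilde g$. This is precisely how the paper sidesteps the obstacles you correctly anticipate (no harmonicity, no reverse doubling, no stopping-time argument): the reconstruction you propose --- splitting $\int_0^\infty\mathbf{D}_t^2\frac{dt}{t}$ at scale $r_B$ and spatially at $CB$, then estimating each piece against $\|dm_g\|_{\mathrm{C}}$ --- is not carried out in the paper, and while it is not obviously impossible, the paper's architecture (prove one half of the Carleson theorem, deduce the $H_1$--BMO duality from it, then deduce the other half of the Carleson theorem from the duality) is the mechanism the authors chose precisely to avoid committing to that direct quantitative pairing.
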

The proof of Theorem \ref{carl} is a combination of Proposition \ref{l2.1} and Proposition \ref{a111}. Now we provide a proof for ${\rm (i)}\Rightarrow {\rm (ii)}$ in Theorem \ref{carl}.
\begin{proposition}\label{l2.1}
For any $g\in\mathcal{BMO}^c(\bx,\,\cm)$, $dm_g:=|\mathbf{D}_t(g)(x)|^2\,\frac{d\mu(x)dt}{t}$ is an $\cm$-valued Carleson measure on $\bx\times\rr_+$. Moreover,  there exists a constant $C>0$ such that
$$\lf\|dm_g\r\|_{\mathrm{C}}\leq C\|g\|_{\mathcal{BMO}^c(\bx,\,\cm)}^2.$$
\end{proposition}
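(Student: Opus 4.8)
The plan is to estimate the Carleson-box average of $dm_g$ over an arbitrary ball $B = B_d(x_B, r_B)$ by the standard splitting of $g$ into its average over a dilate of $B$, its local part, and its tail. Fix $B$ and write $g = g_{2B} + (g - g_{2B})\chi_{2B} + (g - g_{2B})\chi_{(2B)^\complement} =: g_1 + g_2 + g_3$. Since $(\mathbf{H}_4)$ gives $\mathbf{D}_t(1) = 0$, the constant part $g_1$ contributes nothing: $\mathbf{D}_t(g_1) = 0$. Hence on $T(B)$ we have $\mathbf{D}_t(g)(x) = \mathbf{D}_t(g_2)(x) + \mathbf{D}_t(g_3)(x)$, and by the operator inequality $|a+b|^2 \le 2|a|^2 + 2|b|^2$ it suffices to bound, in $\cm$-norm,
\[
\frac{1}{\mu(B)}\int_{T(B)} |\mathbf{D}_t(g_2)(x)|^2\,\frac{d\mu(x)\,dt}{t}
\qquad\text{and}\qquad
\frac{1}{\mu(B)}\int_{T(B)} |\mathbf{D}_t(g_3)(x)|^2\,\frac{d\mu(x)\,dt}{t}
\]
each by $C\|g\|_{\mathcal{BMO}^c(\bx,\,\cm)}^2$.

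For the local term $g_2$, I would drop the tent restriction, extend the integral to all of $\bx \times (0,\infty)$, and invoke the $L_2$ Calder\'on reproducing formula \eqref{e2.50} in its operator-valued (column) form: $\int_0^\infty \mathbf{D}_t^2(h)\,\frac{dt}{t} = h$ yields, after pairing with simple functions and using Fubini together with the self-adjointness of $\mathbf{D}_t$, the square-function identity $\int_0^\infty \int_\bx |\mathbf{D}_t(h)(x)|^2\,\frac{d\mu(x)\,dt}{t} = \int_\bx |h(x)|^2\,d\mu(x)$ as an identity of positive operators in $\cm$ (here using that $h = g_2$ is supported in $2B$, so all quantities are finite). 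Taking $\cm$-norms and dividing by $\mu(B)$ gives
\[
\frac{1}{\mu(B)}\Bigl\| \int_{T(B)} |\mathbf{D}_t(g_2)(x)|^2\,\frac{d\mu(x)\,dt}{t} \Bigr\|_{\cm}
\le \frac{1}{\mu(B)}\Bigl\| \int_{2B} |g(x) - g_{2B}|^2\,d\mu(x) \Bigr\|_{\cm}
\le \frac{\mu(2B)}{\mu(B)}\,\|g\|_{\mathcal{BMO}^c(\bx,\,\cm)}^2,
\]
and $\mu(2B)/\mu(B)$ is bounded by the doubling constant.

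For the tail term $g_3$, the point is that for $(x,t) \in T(B)$ we have $x \in B$ and $t < r_B$, so any $y \in (2B)^\complement$ satisfies $d(x,y) \gtrsim r_B \gg t$; the decay factor $[t/(t+d(x,y))]^{\epsilon_2}$ in $(\mathbf{H}_1)$ then produces a convergent geometric sum over the annuli $U_j(B)$ defined in \eqref{UjB}. Concretely, using $(\mathbf{H}_1)$, Lemma \ref{e2.5} (Kadison–Schwarz) to pull the $\cm$-valued integrand apart, and the size estimate $|\mathbf{D}_t(x,y)| \lesssim V_t(x)^{-1} 2^{-j\epsilon_2} (t/2^j r_B)^{\epsilon_2 - \text{(something)}}$ controlled via Lemma \ref{l2.0}(i)–(ii) to compare $V_t(x)$ with $\mu(2^{j+1}B)$, one estimates $|\mathbf{D}_t(g_3)(x)|^2$ by a sum over $j \ge 1$ of $2^{-j\varepsilon}$ times $\mu(2^{j+1}B)^{-1}\int_{U_j(B)}|g - g_{2B}|^2\,d\mu$, times an integrable-in-$t$ factor. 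The $t$-integral over $(0,r_B)$ converges, the $x$-integral over $B$ contributes a factor $\mu(B)$ which cancels the normalization, and $\mu(2^{j+1}B)^{-1}\int_{U_j(B)}|g - g_{2B}|^2 \le \mu(2^{j+1}B)^{-1}\int_{2^{j+1}B}|g - g_{2B}|^2 \lesssim (j+1)^2\|g\|_{\mathcal{BMO}^c}^2$ by the standard telescoping bound $\|g_{2^{j+1}B} - g_{2B}\|_\cm \lesssim (j+1)\|g\|_{\mathcal{BMO}^c}$; summing $2^{-j\varepsilon}(j+1)^2$ converges. The main obstacle is precisely this tail estimate: unlike the Euclidean Poisson case in \cite{m07}, one cannot use harmonicity, and one must manage the interplay between the kernel bound $(\mathbf{H}_1)$, the non-doubling-friendly volume comparisons in Lemma \ref{l2.0}, and the operator ordering — every scalar inequality must be promoted to a partial-order inequality in $\cm^+$, which is exactly what Lemma \ref{e2.5} is for. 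Assembling the two pieces and taking the supremum over $B$ completes the proof.
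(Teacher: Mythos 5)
Your proposal is correct and follows essentially the same route as the paper's proof: decompose $g$ into constant, local, and tail pieces, kill the constant with $(\mathbf{H}_4)$, invoke the $L_2$ reproducing formula \eqref{e2.50} via Fubini for the local piece, and handle the tail with Kadison--Schwarz (Lemma \ref{e2.5}), the annuli $U_j(B)$, the decay from $(\mathbf{H}_1)$, and the telescoping estimate on $|g_{2^{j+1}B}-g_{4B}|$. The one detail you leave as ``(something)''---the specific scalar weight $\mu(B_d(x_B,d(y,x_B)))\,d(x,y)^{\epsilon_2}$ inserted into Kadison--Schwarz to factor $|\mathbf{D}_t(g_{\mathrm{tail}})(x)|^2$ into a scalar kernel integral $\mathrm{I}_1$ and an operator-valued BMO piece $\mathrm{I}_2$---is exactly the device the paper supplies in \eqref{e2.6}, and your plan slots it into the correct place.
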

\begin{proof}
Let $g\in\mathcal{BMO}^c(\bx,\,\cm)$ and  $B:=B_d(x_B,\,r_B)\subset\bx$ with $x_B\in\bx,\,r_B>0$. We write
\begin{align*}
g&=(g-g_{4B})\chi_{4B}
+(g-g_{4B})\chi_{(4B)^{\complement}}
+g_{4B}\\
&=:g_1+g_2+g_3.
\end{align*}
By the condition ($\textbf{H}_4$), we obtain that, for any $x\in\bx$ and $t>0$,
$$\mathbf{D}_t(g_3)(x)=\int_{\bx}\mathbf{D}_t(x,\,y)g_3(y)\,d\mu(y)
=g_{4B}\int_{\bx}\mathbf{D}_t(x,\,y)\,d\mu(y)=0.$$
Therefore, we have, for any $t>0$,
$\mathbf{D}_t(g)=\mathbf{D}_t(g_1)+\mathbf{D}_t(g_2).$
Define $$dm_{g_1}:=|\mathbf{D}_t(g_1)|^2\,\frac{d\mu(x)dt}{t} \ \ \mathrm{and} \ \ dm_{g_2}:=|\mathbf{D}_t(g_2)(x)|^2\,\frac{d\mu(x)dt}{t}.$$
Then
$$\lf\|dm_g\r\|_{\mathrm{C}}\lesssim \lf\|dm_{g_1}\r\|_{\mathrm{C}}+\lf\|dm_{g_2}\r\|_{\mathrm{C}}.$$

For the term $\lf\|dm_{g_1}\r\|_{\mathrm{C}}$, by the Fubini theorem and \eqref{e2.50}, we have
\begin{align}\label{rep1}
&\lf\|\frac{1}{\mu(B)}\iint_{T(B)}|\mathbf{D}_t(g_1)(x)|^2\,
\frac{dtd\mu(x)}{t}\r\|_{\cm}\nonumber\\
\leq& \lf\|\frac{1}{\mu(B)}\int_{\bx}\int_0^{\fz}\mathbf{D}_t(g_1^*)(x)
\mathbf{D}_t(g_1)(x)\,\frac{dtd\mu(x)}{t}\r\|_{\cm}\nonumber\\
=& \lf\|\frac{1}{\mu(B)}\int_{\bx}\int_0^{\fz}g_1^*(x)
\mathbf{D}_t^2(g_1)(x)\,\frac{dtd\mu(x)}{t}\r\|_{\cm}\nonumber\\
=& \lf\|\frac{1}{\mu(B)}\int_{\bx}|g_1(x)|^2\,d\mu(x)\r\|_{\cm}\nonumber\\
=& \lf\|\frac{1}{\mu(B)}\int_{4B}|g(x)-g_{4B}|^2\,d\mu(x)\r\|_{\cm}\\
\lesssim&\|g\|_{\mathcal{BMO}^c(\bx,\,\cm)}^2\nonumber.
\end{align}
Therefore,
$$\lf\|dm_{g_1}\r\|_{\mathrm{C}}\lesssim\|g\|_{\mathcal{BMO}^c(\bx,\,\cm)}^2.$$

To estimate $\lf\|dm_{g_2}\r\|_{\mathrm{C}}$, we apply  Lemma
 \ref{e2.5} to deduce that for any
$x\in B_d(x_B,\,r_B)$ and $t>0$,
\begin{align}\label{e2.6}
|\mathbf{D}_t(g_2)(x)|^2&=\lf|\int_{(4B)^{\complement}}
\mathbf{D}_t(x,\,y)g_2(y)\,d\mu(y)\r|^2\nonumber\\
&\leq \int_{(4B)^{\complement}}
\lf|\mathbf{D}_t(x,\,y)\r|^2\mu(B_d(x_B,\,d(y,\,x_B)))d(x,\,y)^{\epsilon_2}\,d\mu(y)\nonumber\\
&\hspace{1.0cm} \times\int_{(4B)^{\complement}}\lf|g_2(y)\r|^2\frac{1}
{\mu(B_d(x_B,\,d(y,\,x_B)))d(x,\,y)^{\epsilon_2}}\,d\mu(y)\nonumber\\
&=:\mathrm{I}_1\times\mathrm{I}_2.
\end{align}
It follows from Lemma \ref{e2.5} and \eqref{e2.1} that for all $j\geq2$,
\begin{align}\label{e2.6x}
\lf|g_{2^{j+1}B}-g_{4B}\r|^2
\leq& j\sum_{k=2}^j\lf|g_{2^{k+1}B}-g_{2^{k}B}\r|^2\nonumber\\
\lesssim&j\sum_{k=2}^j
\fint_{2^{k+1}B}\lf|g(y)-g_{2^{k+1}B}\r|^2\,d\mu(y)\\
\lesssim &j^2\|g\|_{\mathcal{BMO}^c(\bx,\,\cm)}^2.\nonumber
\end{align}
From the above inequality and \eqref{e2.1}, we deduce that
\begin{align}\label{rep2}
\mathrm{I}_2=&\int_{(4B)^{\complement}}
|g(y)-g_{4B}|^2\frac{1}{\mu(B_d(x_B,\,d(y,\,x_B)))d(x,\,y)^{\epsilon_2}}\,d\mu(y)\nonumber\\
\lesssim&\sum_{j=2}^{\infty}\frac{1}{(2^{j}r_B)^{\epsilon_2}}\fint_{2^{j+1}B}
\lf|g(y)-g_{4B}\r|^2\,d\mu(y)\nonumber\\
\lesssim&\sum_{j=2}^{\infty}\frac{1}{(2^{j}r_B)^{\epsilon_2}}
\fint_{2^{j+1}B}
\lf|g(y)-g_{2^{j+1}B}\r|^2+\lf|g_{2^{j+1}B}-g_{4B}\r|^2\,d\mu(y)\\
\lesssim&\sum_{j=2}^{\infty}\frac{1+j^2}{(2^{j}r_B)^{\epsilon_2}}
\|g\|_{\mathcal{BMO}^c(\bx,\,\cm)}^2\nonumber\\
\thicksim& \frac{1}{r_B^{\epsilon_2}}
\|g\|_{\mathcal{BMO}^c(\bx,\,\cm)}^2.\nonumber
\end{align}

Next we estimate the term $\mathrm{I}_1$. From the condition $(\mathbf{H}_1)$, \eqref{e2.1} and Lemma \ref{l2.0}, we deduce that
\begin{align} \label{e2.7}
\mathrm{I}_1=&\int_{(4B)^{\complement}}
\lf|\mathbf{D}_t(x,\,y)\r|^2d(x,\,y)^{\epsilon_2}\mu(B_d(x_B,\,d(y,\,x_B)))\,d\mu(y)\nonumber\\
\lesssim&\sum_{j=2}^{\fz}\int_{U_j(B)}\frac{1}{\lf[V_t(x)+V_t(y)+V(x,\,y)\r]^2}\lf[\frac{t}{t+d(x,\,y)}\r]^{2\epsilon_2}
d(x,\,y)^{\epsilon_2}\mu(B_d(x_B,\,d(y,\,x_B)))\,d\mu(y)\nonumber\\
\lesssim&t^{2\epsilon_2}\sum_{j=2}^{\fz}\lf(\frac{1}{2^j r_B}\r)^{\epsilon_2}\int_{U_j(B)}\frac{1}{\lf[V(x,\,y)\r]^2}
\mu(B_d(x_B,\,d(y,\,x_B)))\,d\mu(y)\nonumber\\
\lesssim&r_B^{\epsilon_2}\lf(\frac{t}{r_B}\r)^{2\epsilon_2}.
\end{align}

From \eqref{e2.6} and the estimates of $\mathrm{I}_1$ and $\mathrm{I}_2$, we conclude that
\begin{align*}
\lf\|dm_{g_2}\r\|_{\mathrm{C}}
\lesssim&\sup_{\mathrm{ball} \,B\subset\bx}\frac{1}{\mu(B)}\int_{B}\int_0^{r_B}
\lf(\frac{t}{r_B}\r)^{2\epsilon_2}
\frac{dtd\mu(x)}{t}\|g\|_{\mathcal{BMO}^c(\bx,\,\cm)}^2
\lesssim\|g\|_{\mathcal{BMO}^c(\bx,\,\cm)}^2.
\end{align*}
Therefore, we complete the proof of Proposition \ref{l2.1}.
\end{proof}
To prove Theorem \ref{t3.1},
we need the following construction given by Christ \cite{c90}, which provides an analogue of the grid of
Euclidean dyadic cubes on spaces of homogeneous type.
\begin{lemma}\label{l3.1}
Let $\bx$ be a space of homogeneous type. Then there exists a collection
$$\mathcal{Q}:=\lf\{Q^k_\alpha\subset\bx:k\in\zz,\,\alpha\in I_k\r\}$$
of open subsets, where $I_k$ is certain index set, and constants $\delta\in(0,\,1)$ and $C_3,\,C_4>0$ such that
\begin{enumerate}
\item[\rm{(i)}] for any fixed $k\in\zz$, $\mu(\bx \backslash \bigcup_\alpha Q^k_\alpha)=0$ and $Q^k_\alpha\cap Q^k_\beta=\emptyset$ for any  $\alpha\neq\beta$;
\item[\rm{(ii)}] for any $\alpha$, $\beta,k$, $\ell$ with $\ell\geq k$, either $Q^k_\alpha\cap Q^\ell_\beta=\emptyset$ or
$Q^\ell_\alpha\subset Q^k_\beta$;
\item[\rm{(iii)}] for each $(\ell,\,\beta)$ and each $k<\ell$, there exists a unique $\alpha$ such that
$Q^\ell_\beta\subset Q^k_\alpha$;
\item[\rm{(iv)}] for any $k\in\zz,\,\alpha\in I_k$, diam$(Q_{\alpha}^k)\leq C_4\delta^k$;
\item[\rm{(v)}] for any
$k\in\zz$ and $\alpha\in I_k$, $Q^k_\alpha$ contains some ball $B(z_{\alpha}^k,\,C_3\delta^k)$,
where $z_{\alpha}^k\in Q^k_\alpha$.
\end{enumerate}
\end{lemma}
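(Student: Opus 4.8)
\textbf{Proof plan for Lemma \ref{l3.1}.}

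The plan is to follow Christ's original construction \cite{c90}, adapting it to the present quasi-metric setting (with the convention $A_d=1$). First I would fix, for each $k\in\zz$, a maximal $\delta^k$-separated set $\{z_\alpha^k\}_{\alpha\in I_k}\subset\bx$; such a set exists by Zorn's lemma, and maximality gives that the balls $B(z_\alpha^k,\delta^k)$ cover $\bx$ while the balls $B(z_\alpha^k,\delta^k/2)$ are pairwise disjoint. A key technical point here is to choose $\delta\in(0,1)$ small enough (depending only on $A_d$ and the doubling constant) so that the nesting and smallness estimates below can be forced; this is exactly where one exploits having $\delta^k\gg\delta^{k+1}$ rather than a dyadic ratio $2$. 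I would also arrange a consistent ``parent map'': for each $k$ and each $\alpha\in I_{k+1}$, pick a unique $\beta\in I_k$ with $z_\alpha^{k+1}\in B(z_\beta^k, c\delta^k)$ for an appropriate absolute constant $c$, breaking ties by a fixed well-ordering of the index sets, so that conditions (ii) and (iii) have a chance to hold by construction.

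Next I would build the cubes $Q_\alpha^k$ not directly but through a ``partition-refinement'' scheme run on the dyadic scales simultaneously. Concretely, define a pre-partition at each level by assigning each center $z_\alpha^{k+1}$ to its parent $z_\beta^k$, and then let $Q_\beta^k$ be the union of the (eventually constructed) $Q_\alpha^{k+1}$ over children $\alpha$ of $\beta$; the construction is carried out by a limiting/nesting argument, taking at level $k$ the points that are ``closer to $z_\alpha^k$ than to any other center, through the whole genealogy of descendants.'' The standard outcome is: (i) disjointness at a fixed level and a.e.-covering follow from the separation and maximality plus the fact that the measure of the set of points equidistant between two centers is zero (using that $\mu$ has no atoms on the relevant scales, which is automatic since $\mu(\bx)=\infty$ and $\mu$ is doubling); (ii) and (iii) are built in by the parent map and the genealogical definition; (iv) $\mathrm{diam}(Q_\alpha^k)\le C_4\delta^k$ comes from iterating the triangle inequality along descendants, with $C_4$ a geometric series in powers of $\delta$ (here is where $\delta$ small matters again); and (v) the inner ball $B(z_\alpha^k,C_3\delta^k)\subset Q_\alpha^k$ follows because any point that close to $z_\alpha^k$ is, at every scale, strictly closer to the genealogical line of $z_\alpha^k$ than to any competing center, provided $C_3$ is chosen small relative to the separation.

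The main obstacle is the bookkeeping needed to make the nesting property (ii)–(iii) genuinely consistent across \emph{all} pairs of levels at once, rather than just between consecutive ones: one must commit to the parent map before building the sets, verify that the genealogical ``basins'' are well-defined (the intersection over all descendant generations is nonempty and has the right size), and check that the exceptional null sets accumulated over the countably many scale-pairs still have measure zero. Since this is Christ's theorem verbatim in the space-of-homogeneous-type setting, I would present the construction in the standard form and refer to \cite{c90} (and, e.g., the expositions in \cite{AH,hk12}) for the detailed verification, noting only the places where the quasi-metric constant $A_d$ and the requirement $\mu(\bx)=\infty$ enter. No extra hypothesis beyond ``$(\bx,d,\mu)$ is a space of homogeneous type'' is needed, so the statement is exactly Christ's lemma and the proof is a citation with the constants $\delta,C_3,C_4$ produced as above.
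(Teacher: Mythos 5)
Your approach coincides with the paper's: the paper offers no proof of Lemma \ref{l3.1} at all, simply attributing the construction to Christ \cite{c90}, and you likewise defer to \cite{c90} (and \cite{AH,hk12}) while adding a plausible sketch of how the nets, parent map, and genealogical basins are assembled. One side remark in your sketch is incorrect, though: doubling together with $\mu(\bx)=\infty$ does \emph{not} force $\mu$ to be non-atomic (take $\bx=\zz$ with counting measure and the usual metric), so the a.e.-covering claim in (i) cannot be justified by "equidistant sets are null." In Christ's argument the null exceptional set is instead controlled via a small-boundary estimate, $\mu\{x\in Q_\alpha^k:\,d(x,\bx\setminus Q_\alpha^k)\le t\delta^k\}\lesssim t^\eta\,\mu(Q_\alpha^k)$, summed over scales; this works with atoms present. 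Since you are ultimately citing \cite{c90} rather than carrying the verification out, this does not break the proposal, but the justification you give for (i) should be replaced by the small-boundary argument if you were to write the proof out.
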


Next, we shall apply the one-sided $\cm$-valued Carleson measure characterization theorem (Proposition \ref{l2.1}) to provide a proof of (i) in Theorem \ref{t3.1}.
\begin{proof}[Proof of (i) in Theorem \ref{t3.1}]
By the density argument and approximation, it suffices to show that for any $S_{\cm}$-valued simple function $f$ and $g\in\mathcal{BMO}^c(\bx,\,\cm)$,
\begin{align}\label{duugoa}
\lf|\cl_g(f)\r|\lesssim\|f\|_{\mathcal{H}_1^{c}(\bx,\,\cm)}\|g\|_{\mathcal{BMO}^{c}(\bx,\,\cm)}.
\end{align}
By \eqref{e2.1}, \eqref{e2.50}, the Fubini theorem, Lemma \ref{l2.0} and the Cauchy-Schwarz inequality, we have
\begin{align}\label{e3.1}
\lf|\cl_g(f)\r|^2&=\lf|\tau\int_{\bx}f(x)g^{*}(x)\,d\mu(x)\r|^2\nonumber\\
&=\lf|\tau\int_{\bx}\int_0^{\fz}\mathbf{D}_t\mathbf{D}_t(f)(x)[g(x)]^{*}\,\frac{dtd\mu(x)}{t}\r|^2 \nonumber\\
&=\lf|\tau\int_{\bx}\int_0^{\fz}\mathbf{D}_t(f)(z)\lf[\mathbf{D}_t(g)(z)\r]^{*}
\,\frac{dtd\mu(z)}{t}\r|^2\nonumber\\
&=\lf|\tau\int_{\bx}\int_0^{\fz}\int_{B_d(x,\,\frac{t}{2})}\mathbf{D}_t(f)(z)\lf[\mathbf{D}_t(g)(z)\r]^{*}
\,\frac{d\mu(z)dt}{V_{\frac{t}{2}}(z)t}d\mu(x)\r|^2.
\end{align}
By the Cauchy-Schwartz inequality, the right-hand side above is dominated by
\begin{align}\label{jjj1}
&\leq \tau\int_{\bx}\int_0^{\fz}\lf[S_{\mathbf{D}}^c(f)(x,\,t)\r]^{-1}
\int_{B_d(x,\,\frac{t}{2})}\lf|\mathbf{D}_t(f)(z)\r|^2
\,\frac{d\mu(z)dt}{V_{\frac{t}{2}}(x)t}d\mu(x)\nonumber\\
&\hspace{1.0cm} \times \tau\int_{\bx}\int_0^{\fz}S_{\mathbf{D}}^c(f)(x,\,t)\int_{B_d(x,\,\frac{t}{2})}
\lf|\mathbf{D}_t(g)(z)\r|^2
\,\frac{d\mu(z)dt}{V_{\frac{t}{2}}(x)t}d\mu(x)\nonumber\\
&=:\mathrm{I} \times \mathrm{J},
\end{align}
where the definition of $S_{\mathbf{D}}^c(f)(x,\,t)$ is as follows: for $x\in\bx$ and $t>0$,
\begin{align}\label{ggg1}
S_{\mathbf{D}}^c(f)(x,\,t):=\lf(\int_t^{\infty}\int_{B_{d}
(x,\,s-\frac{t}{2})}|\mathbf{D}_s(f)(z)|^2\,\frac{d\mu(z)ds}{V_{\frac{s}{2}}(x)s}\r)^{1/2}.
\end{align}

For the term $\mathrm{I}$, we introduce another auxiliary square function $\widetilde{S}_{\mathbf{D}}^c(f)$ defined by
\begin{align}\label{ggg2}
\widetilde{S}_{\mathbf{D}}^c(f)(x,\,t):=\lf(\int_t^{\infty}
\int_{B_{d}(x,\,\frac{s}{2})}|\mathbf{D}_s(f)(z)|^2\,\frac{d\mu(z)ds}
{V_{\frac{s}{2}}(x)s}\r)^{1/2}\ {\rm for}\ {\rm any}\ (x,\,t)\in\bx\times\mathbb{R}_+.
\end{align}
It can be verified that these auxiliary square functions satisfy the following property:
\begin{enumerate}
\item[\rm{(i)}] $S_{\mathbf{D}}^c(f)(x,\,t)$ and $\widetilde{S}_{\mathbf{D}}^c(f)(x,\,t)$ are decreasing in $t$;
 \item[\rm{(ii)}] for any $(x,\,t)\in\bx \times\rr_+$,
$\widetilde{S}_{\mathbf{D}}^c(f)(x,\,t)\leq C_n\cs^c(f)(x)$ and
$\widetilde{S}_{\mathbf{D}}^c(f)(x,\,t)\leq S_{\mathbf{D}}^c(f)(x,\,t)$;
\item[\rm{(iii)}] $S_{\mathbf{D}}^c(f)(x,\,t)$ converges to $0$ in the weak-$*$ topology as $t\rightarrow+\infty$.
\end{enumerate}
By approximation, we can assume that $\tau$ is finite and
$S_{\mathbf{D}}^c(f)(x,\,t)$, $\widetilde{S}^c_{\mathbf{D}}(f)(x,\,t)$ are invertible for any $(x,\,t)\in\bx \times\rr_+$. It follows from the fact (ii) that $S_{\mathbf{D}}^{-1}(f)(x,\,t)\leq\widetilde{S}_{\mathbf{D}}^{-1}(f)(x,\,t)$ for any $(x,\,t)\in\bx \times\rr_+$. This, in combination with the fact (ii), yields that
\begin{align}\label{jjj2}
\mathrm{I}&\leq\tau\int_{\bx}\int_0^{\fz}\lf[\widetilde{S}_{\mathbf{D}}^{c}
(f)(x,\,t)\r]^{-1}\int_{B_{d}(x,\,\frac{t}{2})}|\mathbf{D}_t(f)(z)|^2\,
\frac{d\mu(z)dt}{V_{\frac{t}{2}}(x)t}d\mu(x)\nonumber\\
&=-\tau\int_{\bx}\int_0^{\fz}\lf[\widetilde{S}_{\mathbf{D}}^{c}
(f)(x,\,t)\r]^{-1}\frac{\partial}{\partial t}\lf[\widetilde{S}_{\mathbf{D}}^{c}
(f)(x,\,t)\r]^2\,dtd\mu(x)\nonumber\\
&\thicksim-\tau\int_{\bx}\int_0^{\fz}\frac{\partial}{\partial t}\widetilde{S}_{\mathbf{D}}^{c}
(f)(x,\,t)\,dtd\mu(x)\nonumber\\
&\thicksim\tau\int_{\bx}\widetilde{S}_{\mathbf{D}}^{c}
(f)(x,\,0)\,d\mu(x)\nonumber\\
&\lesssim\tau\int_{\bx}\cs^c(f)(x)\,d\mu(x)\nonumber\\
&\thicksim\|f\|_{\mathcal{H}_1^{c}(\bx,\,\cm)}.
\end{align}

Now we deal with the term $\mathrm{J}$. We have
\begin{align}\label{e3.3}
\mathrm{J}&=\tau\int_{\bx}\int_0^{\fz}S_{\mathbf{D}}^c(f)(x,\,t)
\int_{B_d(x,\,\frac{t}{2})}\lf|\mathbf{D}_t(g)(z)\r|^2
\,\frac{d\mu(z)}{V_{\frac{t}{2}}(x)t}dtd\mu(x)\\
&=\tau\sum_{k\in\zz}\sum_{\alpha\in I_k}\int_{Q_{\alpha}^k}\int_{2C_4c_\delta\delta^k}
^{2C_4c_\delta\delta^{k-1}}S_{\mathbf{D}}^c(f)(x,\,t)\int_{B_d(x,\,\frac{t}{2})}
\lf|\mathbf{D}_t(g)(z)\r|^2
\,\frac{d\mu(z)}{V_{\frac{t}{2}}(x)t}dtd\mu(x),\nonumber
\end{align}
where $Q_{\alpha}^k\in\mathcal{Q}$, $I_k$, $\mathcal{Q}$, $C_4$, $\delta$ are given in Lemma \ref{l3.1} and $c_\delta:=1+(1-\delta)^{-1}$.

To estimate \eqref{e3.3}, we define
\begin{align}\label{mathS}
\mathbb{S}_{\mathbf{D}}^c(f)(x,\,k):=\lf(\int_{2C_4c_\delta\delta^k}^{\fz}
\int_{B_{d}(z_{\alpha}^k,\,s-\frac{C_4}{1-\delta}\delta^k)}
|\mathbf{D}_t(f)(z)|^2\,\frac{d\mu(z)ds}{V_{\frac{s}{2}}(z)s}\r)^{1/2},\ {\rm for}\ {\rm any}\ (x,\,k)\in Q_\alpha^k\times \mathbb{Z},
\end{align}
where $z_\alpha^k$ is given in Lemma \ref{l3.1}. Note that
\begin{align*}
B_{d}(x,\,s-\frac{t}{2})\subset B_{d}(z_{\alpha}^k,\,s-\frac{C_4}{1-\delta}\delta^k),\
\mathrm{for}\ \mathrm{any}\ x\in Q_{\alpha}^k \ \mathrm{and}\ s\geq t\geq 2C_4c_\delta\delta^k.
\end{align*}
This implies that
\begin{align*}
S_{\mathbf{D}}^c(f)(x,\,t)\leq \mathbb{S}_{\mathbf{D}}^c(f)(x,\,k),\ \ \
\mathrm{for}\ \mathrm{any}\ x\in Q_{\alpha}^k \ \mathrm{and}\ t\geq 2C_4c_\delta\delta^k.
\end{align*}
This, together with \eqref{e3.3}, implies that
\begin{align}\label{e3.5}
\mathrm{J}&\leq\tau\sum_{k\in\zz}\sum_{\alpha\in I_k}\int_{Q_{\alpha}^k}\int_{2C_4c_\delta\delta^k}
^{2C_4c_\delta\delta^{k-1}}\mathbb{S}_{\mathbf{D}}^c(f)(x,\,k)\int_{B_d(x,\,\frac{t}{2})}
\lf|\mathbf{D}_t(g)(z)\r|^2
\,\frac{d\mu(z)}{V_{\frac{t}{2}}(x)t}dtd\mu(x)\nonumber\\
   &=\tau\int_{\bx}\sum_{k\in\zz}\mathbb{S}_{\mathbf{D}}^c(f)(x,\,k)\int_{2 C_4c_\delta\delta^k}
^{2C_4c_\delta\delta^{k-1}}\int_{B_d(x,\,\frac{t}{2})}\lf|\mathbf{D}_t(g)(z)\r|^2
\,\frac{d\mu(z)}{V_{\frac{t}{2}}(x)t}dtd\mu(x)\\
&=\tau\int_{\bx}\sum_{k\in\zz}\sum_{j\leq k}\mathbb{W}(x,\,j)\int_{2C_4c_\delta \delta^k}
^{2C_4c_\delta\delta^{k-1}}\int_{B_d(x,\,\frac{t}{2})}\lf|\mathbf{D}_t(g)(z)\r|^2
\,\frac{d\mu(z)}{V_{\frac{t}{2}}(x)t}dtd\mu(x)\nonumber\\
&=\tau\int_{\bx}\sum_{j\in\zz}\mathbb{W}(x,\,j)\sum_{k\geq j}\int_{2 C_4c_\delta\delta^k}
^{2C_4c_\delta\delta^{k-1}}\int_{B_d(x,\,\frac{t}{2})}\lf|\mathbf{D}_t(g)(z)\r|^2
\,\frac{d\mu(z)}{V_{\frac{t}{2}}(x)t}dtd\mu(x), \nonumber
\end{align}
where $\mathbb{W}(x,\,j):=\mathbb{S}_{\mathbf{D}}^c(f)(x,\,j)-\mathbb{S}_{\mathbf{D}}^c(f)(x,\,j-1)$ for any $x\in\bx$ and $j\in\zz$. Now we claim that for any $x\in\bx$ and $j\in\mathbb{Z}$, one has $\mathbb{W}(x,\,j)\geq 0$. To show this, for any $x\in Q_\alpha^j\subset Q_\beta^{j-1}$ and $y\in B(z_\beta^{j-1},s-\frac{C_4}{1-\delta}\delta^{j-1})$, one has
\begin{align*}
d(y,\,z_\alpha^j)\leq d(y,\,z_\beta^{j-1})+d(z_\alpha^j,\,z_\beta^{j-1})\leq s-\frac{C_4}{1-\delta}\delta^{j-1}+C_4\delta^{j-1}=s-\frac{C_4}{1-\delta}\delta^j.
\end{align*}
This indicates that $B(z_\beta^{j-1},\,s-\frac{C_4}{1-\delta}\delta^{j-1})\subset B(z_\alpha^j,\,s-\frac{C_4}{1-\delta}\delta^j)$, which finishes the proof of the claim.

By the above claim and the fact that $\mathbb{W}(x,\,j)$ is a constant on
$Q_{\alpha}^j$ for any $\alpha\in I_j$, together with Proposition \ref{l2.1}, we obtain that
\begin{align}\label{rept1}
\mathrm{J}&\leq\tau\sum_{j\in\zz}\sum_{\alpha\in I_k}\mathbb{W}(z_{\alpha}^j,\,j)
\int_{Q_{\alpha}^j}\int_{0}
^{2C_4c_\delta\delta^{j-1}}\int_{B_d(x,\,\frac{t}{2})}\lf|\mathbf{D}_t(g)(z)\r|^2
\,\frac{d\mu(z)}{V_{\frac{t}{2}}(x)t}dtd\mu(x) \nonumber\\
&\lesssim\tau\sum_{j\in\zz}\sum_{\alpha\in I_k}\mathbb{W}(z_{\alpha}^j,\,j)
\int_{0}^{2C_4c_\delta\delta^{j-1}}\int_{B_d(z_{\alpha}^j,\,2C_4c_\delta\delta^{j-1})}\lf|\mathbf{D}_t(g)(z)\r|^2
\,\frac{d\mu(z)dt}{t} \\ \nonumber
&\lesssim\tau\sum_{j\in\zz}\sum_{\alpha\in I_k}\mathbb{W}(z_{\alpha}^j,\,j)
\mu(B_d(z_{\alpha}^j,\,2C_4c_\delta\delta^{j-1}))\,\|g\|_{\mathcal{BMO}^{c}(\bx,\,\cm)}^2.\nonumber
\end{align}
From this, \eqref{e2.1}, Lemma \ref{l3.1} and the fact that $\mathbb{W}(x,\,j)$ is a constant on $Q_{\alpha}^j$, we deduce that
\begin{align*}
\mathrm{J}
&\lesssim\|g\|_{\mathcal{BMO}^{c}(\bx,\,\cm)}^2
\tau\sum_{j\in\zz}\sum_{\alpha\in I_j}\mathbb{W}(z_{\alpha}^j,\,j)
\mu(B_d(z_{\alpha}^j,\,C_3\delta^{j}))\nonumber\\
&\lesssim\|g\|_{\mathcal{BMO}^{c}(\bx,\,\cm)}^2
\tau\sum_{j\in\zz}\sum_{\alpha\in I_j}\int_{Q_{\alpha}^j}\mathbb{W}(x,\,j)\,d\mu(x)\nonumber\\
&\thicksim\|g\|_{\mathcal{BMO}^{c}(\bx,\,\cm)}^2
\tau\int_{\bx}\sum_{j\in\zz}\mathbb{W}(x,\,j)\,d\mu(x)\nonumber\\
&\thicksim\|g\|_{\mathcal{BMO}^{c}(\bx,\,\cm)}^2
\tau\int_{\bx}\mathbb{S}_{\mathbf{D}}^c(f)(x,\,+\infty)\,d\mu(x)\nonumber\\
&\thicksim\|g\|_{\mathcal{BMO}^{c}(\bx,\,\cm)}^2\tau\int_{\bx}
\cs^c(f)(x)\,d\mu(x)\nonumber\\
&\thicksim\|g\|_{\mathcal{BMO}^{c}(\bx,\,\cm)}^2\|f\|_{\mathcal{H}_1^{c}(\bx,\,\cm)}.
\end{align*}
Combining the estimates of $\mathrm{I}$ and $\mathrm{J}$ completes the proof of  \eqref{duugoa} and then Theorem \ref{t3.1} (i).
\end{proof}

The following Lemma provides a connection between $\mathcal{H}^{c}_1(\bx,\,\cm)$ and $L_1(\lm;\,L_2^c(\bx\times \mathbb{R}_+,\,d\mu(y)dt))$ via two maps $\Phi$ and $\Psi$ introduces as follows.
\begin{lemma}\label{d3.1}
(1) The operator
$$\Phi(f)(x,\,y,\,t):=\frac{\mathbf{D}_t(f)(y)}{[V_t(x)t]^{1/2}}\chi_{\Gamma_x}(y,\,t),$$
initially defined on the set of $S_{\cm}$-valued  simple function, extends to an isometry operator from $\mathcal{H}_p^{c}(\bx,\,\cm)$ to $L_{p}(\lm;$ $\,L^c_2(\bx\times \mathbb{R}_+,\,d\mu(y)dt))$ for any $1\leq p<\infty$.

(2) The operator
$$\Psi(g)(z):=\int_{\bx}\iint_{\Gamma_x}g(x,\,y,\,t)
\overline{\mathbf{D}_t(y,\,z)}\,\frac{d\mu(y)dt}{[V_t(x)t]^{1/2}}d\mu(x),$$
initially defined on the set of $S_{\lm}$-valued  simple function, extends to a bounded operator from  $ L_{2}(\lm;\,L_2^c(\bx\times \mathbb{R}_+,\,d\mu(y)dt))$ to $\mathcal{H}^{c}_2(\bx,\,\cm)$.

(3) Let $f\in\mathcal{H}^{c}_2(\bx,\,\cm)$. Then we have
$$\Psi\Phi(f)=f.$$
\end{lemma}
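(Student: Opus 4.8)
The plan is to prove the three assertions of Lemma~\ref{d3.1} in order, since each one feeds into the next. For part (1), I would first verify the isometry identity on $S_{\cm}$-valued simple functions and then invoke density. By Fubini's theorem, for such an $f$ we have
\[
\|\Phi(f)\|_{L_p(\lm;\,L_2^c(\bx\times\rr_+))}
=\lf\|\lf(\int_{\bx}\iint_{\Gamma_x}\frac{|\mathbf{D}_t(f)(y)|^2}{V_t(x)t}\,d\mu(y)dt\,d\mu(x)\r)^{1/2}\r\|_{L_p(\lm)}.
\]
The inner integral over $(x,\,y,\,t)$, once we recognize that the integrand is supported on $\Gamma_x$, is exactly $\cs^c(f)(x)^2$ after integrating in $x$ — wait, more precisely one reorganizes: for fixed $(y,\,t)$ the set of admissible $x$ is $B_d(y,\,t)$, but the definition of $\cs^c$ already has the normalization $V_t(x)$, so the cleanest route is to note directly that the function $(x,\,y,\,t)\mapsto \Phi(f)(x,\,y,\,t)$ has, as its $L_2^c(\bx\times\rr_+)$-valued norm-squared at the point $x$, precisely $\iint_{\Gamma_x}|\mathbf{D}_t(f)(y)|^2\frac{d\mu(y)dt}{V_t(x)t}=\cs^c(f)(x)^2$. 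Hence $\|\Phi(f)\|=\|\cs^c(f)\|_{L_p(\lm)}=\|f\|_{\mathcal{H}_p^c(\bx,\,\cm)}$, which is the isometry. The extension to all of $\mathcal{H}_p^c(\bx,\,\cm)$ is then immediate from the definition of that space as the completion of simple functions.

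For part (2), I would compute $\langle \Psi(g),\,h\rangle$ against a test function $h$ and reduce to part (1) plus the reproducing formula. Unwinding the definitions and using the self-adjointness of $\mathbf{D}_t$ (so that $\overline{\mathbf{D}_t(y,\,z)}=\mathbf{D}_t(z,\,y)$), one finds
\[
\langle \Psi(g),\,h\rangle
=\tau\int_{\bx}\iint_{\Gamma_x} g(x,\,y,\,t)\,\overline{\Phi(h)(x,\,y,\,t)\,[V_t(x)t]^{1/2}}\cdot\frac{1}{[V_t(x)t]^{1/2}}\,d\mu(y)dt\,d\mu(x),
\]
which, after the normalizations cancel, is just $\langle g,\,\Phi(h)\rangle$ in $L_2(\lm;\,L_2^c(\bx\times\rr_+))$. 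Since $\Phi$ is an isometry on $\mathcal{H}_2^c$ by part (1) (and $\mathcal{H}_2^c(\bx,\,\cm)=L_2(\lm)$ by Remark~\ref{r2.9}(iii)), Cauchy--Schwarz gives $|\langle\Psi(g),\,h\rangle|\le \|g\|\,\|h\|_{\mathcal{H}_2^c}$, proving boundedness of $\Psi$. For part (3), I would take $f\in\mathcal{H}_2^c(\bx,\,\cm)=L_2(\lm)$ and compute $\Psi\Phi(f)$ directly: substituting $g=\Phi(f)$ into the formula for $\Psi$, the characteristic functions $\chi_{\Gamma_x}$ from $\Phi$ and the domain of integration $\Gamma_x$ in $\Psi$ combine, the normalizing factors $[V_t(x)t]^{-1/2}$ multiply to $[V_t(x)t]^{-1}$, and the integration in $x$ over $B_d(y,\,t)$ produces $\int_{B_d(y,t)}\frac{d\mu(x)}{V_t(x)}$; using $V_t(x)\thicksim V_t(y)$ for $d(x,\,y)<t$ from Lemma~\ref{l2.0}, this integral is $\thicksim 1$, and one is left with $\int_0^\infty \mathbf{D}_t^2(f)(z)\frac{dt}{t}=f(z)$ by the Calderón reproducing formula \eqref{e2.50}. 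One must be slightly careful here: the integral $\int_{B_d(y,t)} V_t(x)^{-1}\,d\mu(x)$ need not be exactly $1$, so one either normalizes the definition to make it so, or — more honestly — checks that $\Psi\Phi$ agrees with the operator $f\mapsto \int_0^\infty \mathbf{D}_t^2(f)\frac{dt}{t}$ up to this harmless bounded factor; since the paper writes $\Psi\Phi(f)=f$, presumably the intended reading absorbs the constant or uses the exact identity that the weight integrates to one, and I would follow whichever convention is fixed in Section~\ref{s2}.

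The main obstacle is the bookkeeping in part (3): making the Fubini interchange rigorous (the integrand is in $L_1$ for simple $f$, so Fubini applies, but one should say so), and handling the factor $\int_{B_d(y,t)}V_t(x)^{-1}d\mu(x)$ correctly rather than glibly setting it to $1$. A secondary technical point running through all three parts is that everything is first proved for $S_{\cm}$-valued simple functions and then extended by density/continuity, so I would state once at the beginning that all manipulations are justified on simple functions where all integrals converge absolutely, and the general case follows since $\Phi$ is an isometry (part 1) and $\Psi$ is bounded (part 2), with $\Psi\Phi=\mathrm{id}$ passing to the completion by continuity.
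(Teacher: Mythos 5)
Your approach to all three parts is the only sensible one, and it supplies more detail than the paper itself, whose entire proof of Lemma~\ref{d3.1} is the one line ``It is a direct consequence of the definitions of these two maps.'' Parts (1) and (2) are handled correctly: (1) is a tautology once one observes that the $L_2^c(\bx\times\rr_+)$-norm of $\Phi(f)(x,\cdot,\cdot)$ at the base point $x$ is exactly $\cs^c(f)(x)$, and (2) follows from the identity $\langle\Psi(g),h\rangle_{L_2(\lm)}=\langle g,\Phi(h)\rangle_{L_2(\lm;\,L_2^c)}$ which exhibits $\Psi$ as the formal $L_2$-adjoint of $\Phi$.

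Where you should be more careful is part (3), and you half-spot the problem but then underplay it. The quantity $w(y,t):=\int_{B_d(y,t)}\frac{d\mu(x)}{V_t(x)}$ is \emph{not} a constant that can be absorbed: it is a genuine function of $(y,t)$, bounded above and below by positive constants via Lemma~\ref{l2.0} but not identically $1$ on a general space of homogeneous type. Carrying out Fubini as you do and using $\overline{\mathbf{D}_t(y,z)}=\mathbf{D}_t(z,y)$, one finds
$$\Psi\Phi(f)(z)=\int_0^{\infty}\int_{\bx}w(y,t)\,\mathbf{D}_t(z,y)\,\mathbf{D}_t(f)(y)\,\frac{d\mu(y)\,dt}{t},$$
which is $\int_0^\infty \mathbf{D}_t\, M_{w(\cdot,t)}\,\mathbf{D}_t(f)\,\frac{dt}{t}$ with the multiplication operator $M_{w(\cdot,t)}$ inserted between the two copies of $\mathbf{D}_t$ --- it does not collapse to $\int_0^\infty\mathbf{D}_t^2(f)\frac{dt}{t}=f$ by~\eqref{e2.50}, and it is not $cf$ for any constant $c$ either. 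So the exact identity $\Psi\Phi(f)=f$ as stated in the lemma fails in general. It becomes an exact identity if one replaces the normalization $V_t(x)$ by $V_t(y)$ in the definitions of $\Phi$ and $\Psi$ (since then $\int_{B_d(y,t)}d\mu(x)/V_t(y)\equiv 1$), but that change of normalization breaks the exact isometry in part (1) down to an equivalence of norms, because the Lusin area function~$\cs^c$ in Section~\ref{s2} is defined with $V_t(x)$, not $V_t(y)$. In other words, (1) and (3) of the lemma cannot both be exactly true with the stated definitions; one of them is exact and the other holds up to two-sided bounded equivalence, and which one depends on the choice of $V_t(x)$ versus $V_t(y)$. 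This does not affect anything downstream in the paper, since $V_t(x)\thicksim V_t(y)$ on $\Gamma_x$ and every subsequent use only needs norm equivalence and the adjoint relation $\Psi=\Phi^*$; but your proposal should state this honestly rather than suggest the discrepancy is a ``harmless bounded constant factor'' that washes out, because as written it presents a proof of an identity that is not literally true.
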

\begin{proof}
It is a direct consequence of the definitions of these two maps.
\end{proof}

\begin{lemma}\label{l3.2}
Let $\Psi$ be the map given in Lemma \ref{d3.1}. Then $\Psi$ is bounded from $L_{\infty}(\lm;\,$ $L_2^c(\bx\times \mathbb{R}_+,\,d\mu(y)dt))$ to $\mathcal{BMO}^c(\bx,\,\cm)$. Similar result also holds for the row space.
\end{lemma}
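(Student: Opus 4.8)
The plan is to reduce to an a priori bound and then run the classical ``adjoint embedding $\Psi$ maps the tent $L_\infty$ space into $\mathrm{BMO}$'' argument, now in the operator‑valued, homogeneous‑type setting. It suffices, for $g$ in a dense class of $S_{\lm}$‑valued simple functions, to prove $\|\Psi(g)\|_{\mathcal{BMO}^c(\bx,\,\cm)}\lesssim\|g\|$, where $\|g\|:=\|g\|_{L_\infty(\lm;\,L_2^c(\bx\times\RR_+,\,d\mu(y)dt))}$; the general case follows by extension. Fix a ball $B=B_d(x_B,\,r_B)$ and split $g=g_1+g_2$ with $g_1:=g\,\chi_{\widehat B}$, $\widehat B:=\{(x,y,t):d(x,y)<t,\ d(x_B,y)+t<\kappa r_B\}$ the Carleson box over a fixed dilate ($\kappa$ a large absolute constant); on $\widehat B$ one automatically has $x,y\in\kappa B$, $t<\kappa r_B$, while on the region $\{d(x,y)<t\}$ that $\Psi$ is sensitive to, $g_2$ is supported where $d(x_B,y)+t\ge\kappa r_B$. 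Since $|a+b|^2\le 2|a|^2+2|b|^2$ as positive operators and $|\fint_B(h-c)|^2\le\fint_B|h-c|^2$ for any $c\in\cm$ by Kadison--Schwarz (Lemma \ref{e2.5}), one has $\fint_B|h-h_B|^2\,d\mu\le 4\fint_B|h-c|^2\,d\mu$, so it is enough to bound $\fint_B|\Psi(g_1)|^2\,d\mu$ (with $c=0$) and $\fint_B|\Psi(g_2)-(\Psi g_2)_B|^2\,d\mu$ by $C\|g\|^2\,\mathbf 1_\cm$.

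For $g_1$, the point is that the $L_2$‑boundedness $\Psi\colon L_2(\lm;\,L_2^c(\bx\times\RR_+))\to\mathcal H_2^c(\bx,\,\cm)=L_2(\lm)$ (Lemma \ref{d3.1}(2), Remark \ref{r2.9}(iii)) is proved via $\cm$‑module almost‑orthogonality — essentially $\|\mathbf D_s\mathbf D_t\|_{2\to2}\lesssim\min(s/t,t/s)^\epsilon$, a consequence of $(\mathbf H_1)$--$(\mathbf H_4)$ — hence it upgrades to the operator inequality $\int_\bx|\Psi(g_1)(z)|^2\,d\mu(z)\lesssim\int_\bx\iint_{\bx\times\RR_+}|g_1(x,y,t)|^2\,d\mu(y)dt\,d\mu(x)$. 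Since $\iint|g_1(x,\cdot,\cdot)|^2\le\|g\|^2\,\mathbf 1_\cm$ for every $x$ and $g_1$ vanishes for $x\notin\kappa B$, the right‑hand side is $\le\mu(\kappa B)\,\|g\|^2\,\mathbf 1_\cm$, and dividing by $\mu(B)\sim\mu(\kappa B)$ gives $\fint_B|\Psi(g_1)|^2\,d\mu\lesssim\|g\|^2\,\mathbf 1_\cm$.

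For $g_2$, interchange the order of integration: $\Psi(g_2)(z)=\iint_{\bx\times\RR_+}\overline{\mathbf D_t(y,z)}\,G_2(y,t)\,d\mu(y)dt$ with $G_2(y,t):=\int_{B_d(y,t)}g_2(x,y,t)\,[V_t(x)t]^{-1/2}\,d\mu(x)$, supported where $d(x_B,y)+t\ge\kappa r_B$. This integral converges absolutely for $z\in B$ (the estimates below show it), so
\[
\Psi(g_2)(z)-(\Psi g_2)_B=\fint_B\!\left[\iint\overline{\big[\mathbf D_t(y,z)-\mathbf D_t(y,z')\big]}\,G_2(y,t)\,d\mu(y)dt\right]d\mu(z'),\qquad z\in B,
\]
and Kadison--Schwarz in $z'$ gives $\fint_B|\Psi(g_2)(z)-(\Psi g_2)_B|^2d\mu(z)\le\fint_B\fint_B|\iint\overline{\Delta_t(y)}\,G_2(y,t)d\mu(y)dt|^2d\mu(z')d\mu(z)$, where $\Delta_t(y):=\mathbf D_t(y,z)-\mathbf D_t(y,z')$. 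For fixed $z,z'\in B$, apply Kadison--Schwarz once more to $\iint\overline{\Delta_t(y)}G_2(y,t)d\mu(y)dt$ with the weight $t^{-1}$, factoring the square as $\big(\iint|\Delta_t(y)|\,t^{-1}d\mu(y)dt\big)\cdot\big(\iint|G_2(y,t)|^2|\Delta_t(y)|\,t\,d\mu(y)dt\big)$. In the first factor the integral runs over the support of $G_2$, i.e. over $d(x_B,y)+t\ge\kappa r_B$, so the Hölder estimate $(\mathbf H_3)$ for $|\Delta_t(y)|$ applies (since $d(z,z')<2r_B\le\tfrac12[t+d(y,z)]$ there, once $\kappa$ is large enough) and, together with $(\mathbf H_1)$, Lemma \ref{l2.0} and \eqref{e2.2}, makes it converge uniformly in $z,z',B$: the extra $t^{-1}$ turns the would‑be divergent $\int_0^{r_B}\tfrac{dt}{t}$ into $\int_0^{r_B}t^{\epsilon_2-1}dt$ (the $[t/(t+d(y,z))]^{\epsilon_2}$ factor survives on that region) and the large‑$t$ part into $\int_{r_B}^\infty t^{-\epsilon_1-1}dt$. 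In the second factor, Kadison--Schwarz in $x$ gives $|G_2(y,t)|^2\,t\lesssim\int_{B_d(y,t)}|g_2(x,y,t)|^2d\mu(x)$ (using $\int_{B_d(y,t)}[V_t(x)]^{-1}d\mu(x)\lesssim1$), so after Fubini it is $\lesssim\int_\bx\big(\sup_{(y,t)\in\Gamma_x^B}|\Delta_t(y)|\big)\big(\iint_{\Gamma_x}|g_2(x,y,t)|^2d\mu(y)dt\big)d\mu(x)\le\|g\|^2\mathbf 1_\cm\int_\bx\sup_{(y,t)\in\Gamma_x^B}|\Delta_t(y)|\,d\mu(x)$, where $\Gamma_x^B:=\{(y,t)\in\Gamma_x:d(x_B,y)+t\ge\kappa r_B\}$; and because $(y,t)\in\Gamma_x^B$ forces $t+d(y,z)\gtrsim\max(r_B,d(x,x_B))=:D(x)$, the $\epsilon_1$‑regularity yields $\sup_{(y,t)\in\Gamma_x^B}|\Delta_t(y)|\lesssim(r_B/D(x))^{\epsilon_1}/V_{D(x)}(x_B)$, whose integral over $\bx$ is $\lesssim\sum_{j\ge0}2^{-j\epsilon_1}<\infty$ by decomposing into the annuli $U_j(B)$ and using Lemma \ref{l2.0}. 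Combining the two uniform factors and integrating $z,z'$ over $B$ gives $\fint_B|\Psi(g_2)-(\Psi g_2)_B|^2d\mu\lesssim\|g\|^2\mathbf 1_\cm$. The row case follows by passing to adjoints.

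The main obstacle is the $g_2$ estimate. Since $\|g\|$ is only an essential supremum over the ``centre'' variable $x$ of an $L_2$‑norm in $(y,t)$, the crude bound $\iint_{\Gamma_x}|g_2(x,\cdot,\cdot)|^2\le\|g\|^2\mathbf 1_\cm$ must be paid for by a genuinely convergent $x$‑integral; unlike in Proposition \ref{l2.1}, where the kernel bounds alone suffice for a function with no extra variable, here one must (i) weight the area‑type integral by $t^{-1}$ so that the kernel pieces become integrable at both ends, and (ii) exploit that the Carleson‑box complement confines $(y,t)\in\Gamma_x$ to scale $\gtrsim d(x,x_B)$, so that the $\epsilon_1$‑Hölder decay of the difference kernel produces the summable factor $2^{-j\epsilon_1}$ that absorbs $\int_\bx d\mu(x)$ — all without any harmonicity or reverse‑doubling assumption. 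The local term $g_1$ is routine once one runs the $L_2$‑argument at the level of $\cm$‑valued quantities.
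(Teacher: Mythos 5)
Your proof is correct and takes a genuinely different route from the paper. The paper splits $g(x,y,t)=g\chi_{4B}(x)+g\chi_{(4B)^\complement}(x)$ by the location of the outer variable $x$, uses a Kadison--Schwarz weight $V(x,z)d(x,z)^{\epsilon_1}$ on the far $x$-integral, and has to split the inner $t$-integral into $\mathrm{II}_{11}$ (small $t$) and $\mathrm{II}_{12}$ (large $t$) because the constraint $x\notin 4B$ does not by itself prevent small scales. You instead split by the Carleson box $\widehat B=\{d(x,y)<t,\ d(x_B,y)+t<\kappa r_B\}$, which in the far term forces $t+d(y,z)\gtrsim\max(r_B,d(x,x_B))$ on the region $\Psi$ sees, and you take the kernel difference $|\Delta_t(y)|t^{-1}$ itself as the Kadison--Schwarz weight on $(y,t)$; the two resulting scalar factors $\iint|\Delta_t(y)|t^{-1}$ and $\int_\bx\sup_{\Gamma_x^B}|\Delta_t(y)|\,d\mu(x)$ then converge for the same geometric reason, so the small-$t$/large-$t$ case analysis is absorbed into a single estimate. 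For the local term, you use that $\Psi$ is a right $\cm$-module map with scalar kernel, so the $L_2(\lm)$-boundedness upgrades automatically to the operator inequality $\int|\Psi(g_1)|^2\lesssim\iiint|g_1|^2$, whereas the paper arrives at the same $\cm$-norm bound through an explicit duality argument with a density $|a|^{1/2}$. Both routes are sound; yours buys a somewhat cleaner far-term estimate (one convergence mechanism instead of two subcases) at the cost of invoking the module-map upgrade for the local term, and it also localizes the reproducing formula differently (the paper's $g_1$ needs only $x\in 4B$, while yours uses the full Carleson box). One cosmetic remark: the two scalar factors you bound do not literally come from integrating ``$\int_0^{r_B}t^{\epsilon_2-1}dt$ and $\int_{r_B}^\infty t^{-\epsilon_1-1}dt$''; in the small-$t$ regime the $y$-integral is confined to $d(y,z)\gtrsim r_B$, which produces the compensating $r_B^{-\epsilon_1-\epsilon_2}$ that makes the $t$-integral uniformly $O(1)$ — the mechanism you describe is right, but the exposition glosses over where the $r_B$-scaling actually cancels.
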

\begin{proof}
By a density argument, it suffices to show that for any $S_{\lm}$-valued simple function $g$,
$$\lf\|\Psi(g)\r\|_{\mathcal{BMO}^c(\bx,\,\cm)}\lesssim\|g\|_
{L_{\infty}(\lm;\,L_2^c(\bx\times \mathbb{R}_+,\,d\mu(y)dt))}.$$
Let $B:=B_d(x_B,\,r_B)\subset\bx$ with $x_B\in\bx$ and $r_B\in\rr_+$.
Then we write
\begin{align*}
g(x,\,y,\,t)&=g(x,\,y,\,t)\chi_{4B}(x)+g(x,\,y,\,t)
\chi_{(4B)^{\complement}}(x)\\
&=:g_1(x,\,y,\,t)+g_2(x,\,y,\,t).
\end{align*}
For simplicity, we define
$$C_{B}:=\int_{\bx}\iint_{\Gamma_x}
g_2(x,\,y,\,t)\fint_{B}\overline{\mathbf{D}_t(y,\,z)}\,d\mu(z)
\frac{d\mu(y)dt}{\lf[V_t(x)t\r]^{1/2}}d\mu(x).$$
Then
\begin{align}\label{iopiopiop}
&\fint_{B}\lf|\Psi(g)(z)-C_{B}\r|^2\,d\mu(z)\nonumber\\
\leq&2\fint_{B}\lf|
\int_{(4B)
^{\complement}}
\iint_{\Gamma_x}g_2(x,\,y,\,t)
\lf[\mathbf{D}_t(z,\,y)-Q_{B}(y)\r]\,\frac{d\mu(y)dt}{\lf[V_t(x)t\r]^{1/2}}d\mu(x)\r|^2\,d\mu(z)\nonumber\\
&\ \ +2\fint_{B}\lf|\int_{\bx}\iint_{\Gamma_x}g_1(x,\,y,\,t)
\overline{\mathbf{D}_t(y,\,z)}\,\frac{d\mu(y)dt}{\lf[V_t(x)t\r]^{1/2}}d\mu(x)\r|^2\,d\mu(z)\nonumber\\
=&:\mathrm{II}_1+\mathrm{II}_2,
\end{align}
where $Q_{B}(y):=\fint_{B}\mathbf{D}_t(z,\,y)\,d\mu(z)$ for $y\in\bx$.

To estimate $\mathrm{II}_1$, we apply Lemma \ref{e2.5} to deduce that for any $z\in B$,
\begin{align}\label{eeeee000}
&\lf|\int_{(4B)
^{\complement}}
\iint_{\Gamma_x}g_2(x,\,y,\,x)
\lf[\overline{\mathbf{D}_t(y,\,z)}-Q_{B}(y)\r]\,\frac{d\mu(y)dt}
{\lf[V_t(x)t\r]^{1/2}}d\mu(x)\r|^2\nonumber\\
\leq&\int_{(4B)^{\complement}}\frac{1}{V(x,\,z)[d(x,\,z)]^{\epsilon_1}}\,d\mu(x)\int_{(4B)^{\complement}}V(x,\,z)[d(x,\,z)]^{\epsilon_1}\nonumber\\
&\hspace{1.0cm} \times
 \iint_{\Gamma_x}
\lf|\overline{\mathbf{D}_t(y,\,z)}-Q_{B}(y)\r|^2\,\frac{d\mu(y)dt}
{V_t(x)t}\iint_{\Gamma_x}|g(x,\,y,\,t)|^2\,d\mu(y)dtd\mu(x)\nonumber\\
\lesssim&r_B^{-\epsilon_1}\int_{(4B)^{\complement}}V(x,\,z)[d(x,\,z)]^{\epsilon_1}
\fint_B\iint_{\Gamma_x}
\lf|\overline{\mathbf{D}_t(y,\,z)}-\overline{\mathbf{D}_t(y,\,s)}\r|^2
\,\frac{d\mu(y)dt}{V_t(x)t}d\mu(s)\nonumber\\
&\hspace{1.0cm} \times\iint_{\Gamma_x}|g(x,\,y,\,t)|^2\,d\mu(y)dtd\mu(x)\\
\lesssim&r_B^{-\epsilon_1}\|g\|_{L_{\infty}(\lm;\,L_2^c(\bx\times \mathbb{R}_+,\,d\mu(y)dt)}^2\int_{(4B)^{\complement}}V(x,\,z)[d(x,\,z)]^{\epsilon_1}\nonumber\\
&\hspace{1.0cm} \times
\fint_B\iint_{\Gamma_x}
\lf|\overline{\mathbf{D}_t(y,\,z)}-\overline{\mathbf{D}_t(y,\,s)}\r|^2
\,\frac{d\mu(y)dt}{V_t(x)t}d\mu(s)d\mu(x).\nonumber
\end{align}
To continue, it remains to show that for any $z\in B$,
\begin{align}\label{3.1a}
\int_{(4B)^{\complement}}V(x,\,z)[d(x,\,z)]^{\epsilon_1}
\fint_B\iint_{\Gamma_x}
\lf|\mathbf{D}_t(y,\,z)-\mathbf{D}_t(y,\,s)\r|^2
\,\frac{d\mu(y)dt}{V_t(x)t}d\mu(s)d\mu(x)
\lesssim r_B^{\epsilon_1}.
\end{align}
To show this, it follows from \eqref{e2.1} and the condition $(\mathbf{H}_2)$ that for any $z\in B$,
\begin{align*}
&\int_{(4B)^{\complement}}V(x,\,z)[d(x,\,z)]^{\epsilon_1}
\fint_B\iint_{\Gamma_x}
\lf|\mathbf{D}_t(y,\,z)-\mathbf{D}_t(y,\,s)\r|^2
\,\frac{d\mu(y)dt}{V_t(x)t}d\mu(s)d\mu(x)\\
\lesssim&\sum_{k=2}^{+\fz}\int_{U_k(B)}V(x,\,z)[d(x,\,z)]^{\epsilon_1}\\
&\hspace{1.0cm} \times\iint_{\Gamma_x}
\lf[\frac{r_B}{t+d(y,\,z)}\r]^{2\epsilon_1}
\frac{1}{\lf[V_t(z)+V_t(y)+V(z,\,y)\r]^2}
\lf[\frac{t}{t+d(y,\,z)}\r]^{2\epsilon_2}
\,\frac{d\mu(y)dt}{V_t(x)t}d\mu(x)\\
\lesssim&\sum_{k=2}^{+\fz}\int_{U_k(B)}V(x,\,z)[d(x,\,z)]^{\epsilon_1}
\lf(\int_0^{\frac{d(x,\,z)}{4}}+\int_{\frac{d(x,\,z)}{4}}^{\fz}\r)\\
&\hspace{1.0cm} \times\int_{B_d(x,\,t)}
\lf[\frac{r_B}{t+d(z,\,y)}\r]^{2\epsilon_1}
\frac{1}{\lf[V_t(z)+V_t(y)+V(z,\,y)\r]^2}
\lf[\frac{t}{t+d(z,\,y)}\r]^{2\epsilon_2}
\,\frac{d\mu(y)dt}{V_t(x)t}d\mu(x)\\
=&:\mathrm{II}_{11}+\mathrm{II}_{12}.
\end{align*}

 For the term $\mathrm{II}_{11}$, we first note that if $0<t<\frac{d(x,\,z)}{4}$ and $d(x,\,y)<t$, then
$$d(x,\,z)\leq d(x,\,y)+d(y,\,z)<t+d(y,\,z)\leq
\frac{d(x,\,z)}{4}+d(y,\,z).$$
Thus, $\frac{3d(x,\,z)}{4}\leq d(y,\,z)$, which implies that
$$V(y,\,z)\thicksim V(z,\,y)\gtrsim\mu(B_d(z,\,\frac{3d(x,\,z)}{4}))\gtrsim
V(z,\,x).$$
By the above facts, we obtain
\begin{align}\label{ana1}
\mathrm{II}_{11}
&\lesssim\sum_{k=2}^{+\fz}\int_{U_k(B)}V(x,\,z)[d(x,\,z)]^{\epsilon_1}
\lf[\frac{r_B}{d(x,\,z)}\r]^{2\epsilon_1}\nonumber\\
&\hspace{1.0cm} \times\int_0^{\frac{d(x,\,z)}{4}}\int_{B_d(x,\,t)}
\frac{1}{\lf[V(z,\,y)\r]^2}
\lf[\frac{t}{d(x,\,z)}\r]^{2\epsilon_2}
\,\frac{d\mu(y)dt}{V_t(x)t}d\mu(x)\nonumber\\
&\lesssim\sum_{k=2}^{+\fz}\int_{U_k(B)}V(x,\,z)[d(x,\,z)]^{\epsilon_1}
\lf[\frac{r_B}{d(x,\,z)}\r]^{2\epsilon_1}
\int_0^{\frac{d(x,\,z)}{4}}\frac{1}{\lf[V(z,\,x)\r]^2}
\lf[\frac{t}{d(x,\,z)}\r]^{2\epsilon_2}
\,\frac{dt}{t}d\mu(x)\nonumber\\
&\lesssim r_B^{\epsilon_1}\sum_{k=2}^{+\fz}2^{-k\epsilon_1}\int_{U_k(B)}
\frac{1}{V(x,\,z)}\,d\mu(x)\\
&\lesssim r_B^{\epsilon_1}.\nonumber
\end{align}

For the term $\mathrm{II}_{12}$,
we deduce that
\begin{align}\label{ana2}
\mathrm{II}_{12}
&\lesssim\sum_{k=2}^{+\fz}
\int_{U_k(B)}V(x,\,z)[d(x,\,z)]^{\epsilon_1}
\int_{\frac{d(x,\,z)}{4}}^{\fz}
\frac{1}{\lf[V_t(z)\r]^2}\lf(\frac{r_B}{t}\r)^{2\epsilon_1}
\,\frac{dt}{t}d\mu(x)\nonumber\\
&\lesssim\sum_{k=2}^{+\fz}2^{-2k\epsilon_1}(2^kr_B)
^{\epsilon_1}\int_{U_k(B)}
V(x,\,z)\lf[\frac{1}{V_{\frac{d(x,\,z)}{4}}(z)}\r]^2
\int_{\frac{d(x,\,z)}{4}}^{\fz}
\lf(\frac{2^kr_B}{t}\r)^{2\epsilon_1}
\,\frac{dt}{t}d\mu(x)\nonumber\\
&\lesssim r_B^{\epsilon_1}\sum_{k=2}^{+\fz}2^{-k\epsilon_1}\int_{U_k(B)}
\frac{1}{V(x,\,z)}\,d\mu(x)\\
&\lesssim r_B^{\epsilon_1}.\nonumber
\end{align}
This together with the estimate of $\mathrm{II}_{11}$ implies \eqref{3.1a}.
Therefore,
$$
\mathrm{II}_{1}\lesssim\|g\|
_{L_{\infty}(\lm;\,L_2^c(\bx\times \mathbb{R}_+,\,d\mu(y)dt)}^2.
$$

To deal with $\mathrm{II}_2$, by the duality theory, the Cauchy-Schwarz inequality,
Lemma \ref{l2.0}, \eqref{e2.1} and the $L_2(\lm)$-boundedness of $\Psi$, we have
\begin{align*}
\mathrm{II}_2
\lesssim&\lf\|\frac{1}{\mu(B)}\int_{\bx}\lf|\int_{\bx}\iint_{\Gamma_x}
g_1(x,\,y,\,t)
\overline{\mathbf{D}_t(y,\,z)}\,\frac{d\mu(y)dt}{\lf[V_t(x)t\r]^{1/2}}d\mu(x)\r|^2\,d\mu(z)\r\|_{\cm}\\
\lesssim&\frac{1}{\mu(B)}\sup_{\|a\|_{L_1(\cm)}\leq1}
\tau\lf(|a|\int_{\bx}\lf|\int_{\bx}\iint_{\Gamma_x}g_1(x,\,y,\,t)
\overline{\mathbf{D}_t(y,\,z)}\,\frac{d\mu(y)dt}{\lf[V_t(x)t\r]^{1/2}}d\mu(x)\r|^2\,d\mu(z)\r)\\
\thicksim&\frac{1}{\mu(B)}\sup_{\|a\|_{L_1(\cm)}\leq1}
\tau\lf(\int_{\bx}\lf|\int_{\bx}\iint_{\Gamma_x}g_1(x,\,y,\,t)
\overline{\mathbf{D}_t(y,\,z)}|a|^{{1}/{2}}\,\frac{d\mu(y)dt}{\lf[V_t(x)t\r]^{1/2}}
d\mu(x)\r|^2\,d\mu(z)\r)\\
\thicksim&\frac{1}{\mu(B)}\sup_{\|a\|_{L_1(\cm)}\leq1}
\sup_{\|h\|_{L_2(\lm)}\leq1}
\lf|\tau\int_{\bx}h(z)\r.\\
&\hspace{1.0cm} \times\lf.\int_{\bx}\iint_{\Gamma_x}g_1(x,\,y,\,t)
\overline{\mathbf{D}_t(y,\,z)}|a|^{{1}/{2}}\,\frac{d\mu(y)dt}{\lf[V_t(x)t\r]^{1/2}}d\mu(x)\,d\mu(z)\r|^2\\
\lesssim&\frac{1}{\mu(B)}\sup_{\|a\|_{L_1(\cm)}\leq1}\sup_{\|h\|_{L_2(\lm)}\leq1}
\tau\int_{\bx}\iint_{\Gamma_x}\lf|\mathbf{D}_t(h^*)(y)\r|^2
\,\frac{d\mu(y)dt}{V_t(x)t}d\mu(x)\\
&\hspace{1.0cm}\times\tau\lf(|a|\int_{\bx}\iint_{\Gamma_x}\lf|g_1(x,\,y,\,t)\r|^2\,d\mu(y)dtd\mu(x)\r)\\
\lesssim&
\lf\|\fint_{4B}\iint_{\Gamma_x}\lf|g_1(x,\,y,\,t)\r|^2\,d\mu(y)d\mu(x)\r\|_{\cm}\\
\lesssim&\|g\|_{L_{\infty}(\lm;\,L_2^c(\bx\times \mathbb{R}_+,\,d\mu(y)dt))}^2.
\end{align*}
Combining the estimates of $\mathrm{II}_1$ and $\mathrm{II}_2$, we complete the proof of Lemma \ref{l3.2}.
\end{proof}

\begin{proof}[Proof of (ii) in Theorem \ref{t3.1}] Let $\cl\in(\mathcal{H}_1^{c}(\bx,\,\cm))^{\ast}$.
It suffices to show that there exists a $g\in\mathcal{BMO}^{c}(\bx,\,\cm)$ such that
\begin{align*}
\cl(f)=\cl_g(f):=\tau\int_{\bx}f(x)g^{\ast}(x)\,d\mu(x).
\end{align*}
Recall that $\mathcal{H}_1^{c}(\bx,\,\cm)$ can be embedded into $(L_1(\lm;\,L_2^c(\bx\times \mathbb{R}_+,\,d\mu(y)dt)))$ via the embedding map $\Phi$ defined in Lemma \ref{d3.1}. This, together with Hahn-Banach theorem, indicates that $\cl$ can be extended to a continuous functional on $L_1(\lm;\,L_2^c(\bx\times \mathbb{R}_+,\,d\mu(y)dt))$ with the same norm.
Moreover, since
$$(L_1(\lm;\,L_2^c(\bx\times \mathbb{R}_+,\,d\mu(y)dt)))^{\ast}
=L_{\infty}(\lm;\,L_2^c(\bx\times \mathbb{R}_+,\,d\mu(y)dt)),$$
there exists $h\in L_{\infty}(\lm;\,L_2^c(\bx\times \mathbb{R}_+,\,d\mu(y)dt))$ such that
\begin{align*}
\|h\|_{L_{\infty}(\lm;\,L_2^c(\bx\times \mathbb{R}_+,\,d\mu(y)dt))}
=\|\cl\|_{(\mathcal{H}_1^{c}(\bx,\,\cm))^{\ast}}
\end{align*}
and that for $f\in\mathcal{H}_1^{c}(\bx,\,\cm)$,
\begin{align}\label{euqa}
\cl(f)&=\tau\int_{\bx}\iint_{\Gamma_x}h^{\ast}(x,\,y,\,t)
\frac{\mathbf{D}_t(f)(y)}{[V_t(x)t]^{1/2}}\,d\mu(y)dtd\mu(x)\nonumber\\
&=\tau\int_{\bx}(\Psi(h))^{\ast}(z)f(z)\,d\mu(z)=\tau\int_{\bx}g^{\ast}(z)f(z)\,d\mu(z),
\end{align}
where $g:=\Psi(h)$ and $\Psi$ is as in Lemma \ref{d3.1}. Then, it follows from Lemma \ref{l3.2} that
\begin{align*}
\|g\|_{\mathcal{BMO}^{c}(\bx,\,\cm)}
=\|\Psi(h)\|_{\mathcal{BMO}^{c}(\bx,\,\cm)}\lesssim
\|h\|_{L_{\infty}(\lm;\,L_2^c(\bx\times \mathbb{R}_+,\,d\mu(y)dt))}.
\end{align*}
This completes the proof of Theorem \ref{t3.1} (ii) and hence of Theorem \ref{t3.1}.
\end{proof}
At the end of this section, we provide a proof for ${\rm (ii)}\Rightarrow {\rm (i)}$ in Theorem \ref{carl}.
\begin{proposition}\label{a111}
Let $dm_g:=|\mathbf{D}_t(g)(x)|^2\,\frac{d\mu(x)dt}{t}$ be an $\cm$-valued Carleson measure on $\bx\times\rr_+$. Then $g\in \mathcal{BMO}^c(\bx,\,\cm)$. Moreover,  there exists a constant $C>0$ such that
$$\|g\|_{\mathcal{BMO}^c(\bx,\,\cm)}^2\leq C\lf\|dm_g\r\|_{\mathrm{C}}.$$
\end{proposition}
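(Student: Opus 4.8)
The plan is to reverse the Carleson-measure-to-BMO implication by comparing, for each ball $B = B_d(x_B, r_B)$, the local mean oscillation of $g$ on $B$ with a suitably localized integral of $dm_g$ over the tent $T(\lambda B)$ for a fixed dilation factor $\lambda$. As in Proposition \ref{l2.1}, the first step is the splitting $g = g_1 + g_2 + g_3$ with $g_1 = (g - g_{\lambda B})\chi_{\lambda B}$, $g_2 = (g - g_{\lambda B})\chi_{(\lambda B)^{\complement}}$, and $g_3 = g_{\lambda B}$; by condition $(\mathbf{H}_4)$ one has $\mathbf{D}_t(g_3) = 0$, so $\mathbf{D}_t(g) = \mathbf{D}_t(g_1) + \mathbf{D}_t(g_2)$. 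The key identity, coming from the Calder\'on reproducing formula \eqref{e2.50} and the self-adjointness of $\mathbf{D}_t$ together with Fubini, is
$$\fint_B |g_1(x)|^2 \, d\mu(x) = \frac{1}{\mu(B)}\int_{\bx}\int_0^{\infty} g_1^*(x)\,\mathbf{D}_t^2(g_1)(x)\,\frac{dt\,d\mu(x)}{t} = \frac{1}{\mu(B)}\int_{\bx}\int_0^{\infty} |\mathbf{D}_t(g_1)(x)|^2\,\frac{dt\,d\mu(x)}{t},$$
so that $\big\|\fint_B |g - g_{\lambda B}|^2 \chi_{\lambda B}\,d\mu\big\|_{\cm}$ is controlled by $\tfrac{1}{\mu(B)}\big\|\int_{\bx}\int_0^\infty |\mathbf{D}_t(g_1)(x)|^2\,\tfrac{dt\,d\mu(x)}{t}\big\|_{\cm}$, which in turn we bound by the Carleson norm once we localize the $t$-integral and the $x$-integral and split off the $g_2$-contribution.

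The main obstacle is precisely that this identity involves the \emph{full} integral $\int_{\bx}\int_0^\infty$, whereas the Carleson norm only controls integrals over tents $T(B')$; so one needs to trade the full region for the tent over a fixed dilate of $B$, at the cost of the tail term coming from $\mathbf{D}_t(g_1)$ being evaluated far from $\lambda B$ and from $\mathbf{D}_t(g_2)$. I would therefore refine the splitting: write $\int_{\bx}\int_0^\infty |\mathbf{D}_t(g)(x)|^2 \tfrac{dt\,d\mu(x)}{t}$ restricted to a region of the form $B \times (0, r_B)$ (or more precisely a region contained in a tent $T(CB)$ via Lemma \ref{l6.1} or Lemma \ref{l3.1} relating balls to dyadic cubes), dominate that piece directly by $\mu(B)\,\|dm_g\|_{\mathrm C}$, and then show the complementary pieces are negligible. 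For the piece $x \in B$, $t \geq r_B$ one uses the decay estimates $(\mathbf{H}_1)$ and $(\mathbf{H}_4)$ together with the cancellation $\int \mathbf{D}_t(x,y)\,d\mu(y)=0$ to gain a factor $(r_B/t)^{2\epsilon_1}$ (writing $g_1 = g_1 - (g_1)_{\lambda B}$ is moot since $g_1$ already has mean-type structure; instead one exploits that $\mathbf{D}_t$ kills constants to replace $g_1$ by $g_1$ minus a constant and invoke $(\mathbf{H}_2)$), giving a convergent integral $\int_{r_B}^\infty (r_B/t)^{2\epsilon_1}\,\tfrac{dt}{t} \sim 1$. For the $g_2$-contribution restricted to $B \times (0, r_B)$ one repeats essentially verbatim the estimate of $\|dm_{g_2}\|_{\mathrm C}$ from the proof of Proposition \ref{l2.1}: the bounds \eqref{e2.6}, \eqref{e2.6x}, \eqref{rep2}, \eqref{e2.7} there only used $(\mathbf{H}_1)$, $(\mathbf{H}_4)$, the doubling property and the BMO norm of $g$ on the dilates $2^j B$; one has to be mildly careful because here the BMO norm is what we are trying to bound, so this term must be absorbed — this forces the argument to be set up as an a priori estimate, proving the bound first for $g$ with $\|g\|_{\mathcal{BMO}^c} < \infty$ (e.g. bounded $g$, or $g$ truncated) and then removing the finiteness assumption by a standard limiting/truncation argument.

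Putting the pieces together: for a fixed ball $B$ we obtain
$$\Big\|\fint_B |g - g_{\lambda B}|^2\,d\mu\Big\|_{\cm} \lesssim \|dm_g\|_{\mathrm C} + \varepsilon\,\|g\|_{\mathcal{BMO}^c(\bx,\cm)}^2$$
with $\varepsilon$ depending only on the structural constants (and made harmless by choosing the splitting parameters, or by the truncation bootstrap), and then taking the supremum over $B$ yields $\|g\|_{\mathcal{BMO}^c(\bx,\cm)}^2 \lesssim \|dm_g\|_{\mathrm C}$, noting that $\|g - g_B\|$-type oscillation and $\|g - g_{\lambda B}\|$-type oscillation are comparable up to the doubling constant. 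The row case follows by applying the column case to $g^*$, as in Definition \ref{d2.4}. I expect the only genuinely delicate point — besides the bootstrap to remove the a priori finiteness — is making the "full integral minus tent" decomposition clean on a space of homogeneous type without reverse doubling, which is exactly where Lemma \ref{l3.1} (Christ's dyadic cubes) enters to provide the comparison between the tent over $B$ and a union of Carleson boxes over dyadic cubes comparable to $B$.
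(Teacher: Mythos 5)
Your route is the direct, Fefferman--Stein-style estimate of $\fint_B|g-g_{\lambda B}|^2$, which is genuinely different from what the paper does. The paper instead re-runs the estimates \eqref{jjj1}, \eqref{jjj2}, \eqref{rept1} from the proof of Theorem~\ref{t3.1}(i), observing that they invoke the hypothesis on $g$ only through the Carleson norm: in \eqref{rept1} the inner quantity $\int_0^{2C_4c_\delta\delta^{j-1}}\int_{B_d(z_\alpha^j,2C_4c_\delta\delta^{j-1})}|\mathbf{D}_t(g)(z)|^2\,\frac{d\mu(z)dt}{t}$ is dominated directly by $\mu(B_d(z_\alpha^j,2C_4c_\delta\delta^{j-1}))\|dm_g\|_{\mathrm C}$, yielding $|\mathcal{L}_g(f)|^2\lesssim\|dm_g\|_{\mathrm C}\,\|f\|_{\mathcal{H}_1^c(\bx,\,\cm)}^2$ for every $S_{\cm}$-valued simple $f$. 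One then applies the already-established duality $(\mathcal{H}_1^c(\bx,\,\cm))^*\backsimeq\mathcal{BMO}^c(\bx,\,\cm)$ of Theorem~\ref{t3.1}(ii) to produce $\widetilde g\in\mathcal{BMO}^c(\bx,\,\cm)$ with $\mathcal{L}_g=\mathcal{L}_{\widetilde g}$ and $\|\widetilde g\|_{\mathcal{BMO}^c}^2\lesssim\|dm_g\|_{\mathrm C}$, and concludes $g=\widetilde g$. The whole point of this route is that it never requires $\|g\|_{\mathcal{BMO}^c}<\infty$ in advance.

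Your direct route has a genuine gap at exactly the point you flag, and the fix you propose does not repair it. First, the $g_2$-term over $B\times(0,r_B)$ estimated via \eqref{e2.6}--\eqref{e2.7} produces $|\mathbf{D}_t(g_2)(x)|^2\lesssim(t/r_B)^{2\epsilon_2}\|g\|_{\mathcal{BMO}^c}^2$, and $\int_0^{r_B}(t/r_B)^{2\epsilon_2}\,\frac{dt}{t}=(2\epsilon_2)^{-1}$; the coefficient in front of $\|g\|_{\mathcal{BMO}^c}^2$ is of order $1$, not a small $\varepsilon$, so absorption is impossible as stated. Taking a larger dilation $\lambda$ gains $\lambda^{-2\epsilon_2}$ in the kernel factor $\mathrm{I}_1$, but $\mathrm{I}_2$ remains of order $\|g\|_{\mathcal{BMO}^c}^2$ (the telescoping \eqref{e2.6x} and \eqref{rep2} merely shift the base scale), so no fixed $\lambda$ makes the constant small: the $g_2$-estimate fundamentally requires knowledge of how $g$ oscillates at arbitrarily large scales, which the Carleson hypothesis on $dm_g$ does not furnish. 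Second, the proposed truncation bootstrap is not available here: replacing $g$ by a spatial cut-off $g\chi_{CB}$ alters $\mathbf{D}_t(g)$ nonlocally, so the Carleson bound on $dm_g$ does not transfer to $dm_{g\chi_{CB}}$ with a uniform constant, and there is no evident dense subclass of $L_\infty(\cm;\,L_2^{\mathrm{loc},c}(\bx))$ on which the BMO seminorm is finite a priori while remaining compatible with the hypothesis. Without adding the finiteness assumption (in which case the statement is strictly weaker), the circularity does not close; the duality argument in the paper's proof is precisely the device that sidesteps it.
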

\begin{proof}
By repeating the proof of Theorem \ref{t3.1}, we know from inequalities \eqref{jjj1}, \eqref{jjj2} and \eqref{rept1} that $g$ induces a linear functional $\mathcal{L}_g$ with the form \eqref{dua}, initially defined on the set of all $S_{\cm}$-valued simple functions, satisfying
\begin{align}\label{stidu}
\lf|\cl_g(f)\r|^2&\leq C(\mathrm{I} \times \mathrm{J}),
\end{align}
where $\mathrm{I}\lesssim \|f\|_{\mathcal{H}_1^{c}(\bx,\,\cm)}$ and
\begin{align*}
 \mathrm{ J}&\lesssim\tau\sum_{j\in\zz}\sum_{\alpha\in I_j}\mathbb{W}(z_{\alpha}^j,\,j)
\int_{0}^{2C_4c_\delta\delta^{j-1}}\int_{B_d(z_{\alpha}^j,\,2C_4c_\delta\delta^{j-1})}\lf|\mathbf{D}_t(g)(z)\r|^2
\,\frac{d\mu(z)dt}{t}\\
&\lesssim \|dm_g\|_{{\bf C}}\ \tau\sum_{j\in\zz}\sum_{\alpha\in I_j}\mathbb{W}(z_{\alpha}^j,\,j)
\mu(B_d(z_{\alpha}^j,\,2C_4c_\delta\delta^{j-1}))\,\nonumber\\
&\lesssim \|dm_g\|_{{\bf C}}\|f\|_{\mathcal{H}_1^{c}(\bx,\,\cm)}.
 \end{align*}
Substituting the estimates of $\mathrm{I}$ and $\mathrm{J}$ into inequality \eqref{stidu}, we conclude that
$$|\cl_g(f)|^2\lesssim \|dm_g\|_{{\bf C}}\|f\|_{\mathcal{H}_1^{c}(\bx,\,\cm)}^2.$$
Therefore, it follows from Theorem \ref{t3.1} that there exists $\widetilde{g}\in\mathcal{BMO}^{c}(\bx,\,\cm)$ such that
\begin{align*}
\|\widetilde{g}\|_{\mathcal{BMO}^{c}(\bx,\,\cm)}^2=\lf\|\cl_g\r\|^2\lesssim \|dm_g\|_{\rm C}
\end{align*}
and
\begin{align*}
\cl_g(f)=\tau\int_{\bx}f(x)g^{*}(x)\,d\mu(x)=\tau\int_{\bx}f(x)\widetilde{g}^{*}(x)\,d\mu(x),
\end{align*}
which implies that $g=\widetilde{g}$, $g\in\mathcal{BMO}^{c}(\bx,\,\cm)$ and
$\|g\|_{\mathcal{BMO}^{c}(\bx,\,\cm)}^2\lesssim \|dm_g\|_{\rm C}$.
\end{proof}

\bigskip
\section{Atomic characterization of $H_1$}\label{s31}
\hskip\parindent
In this section, as an application of Theorem \ref{t3.1}, we establish
the atomic characterization of operator-valued Hardy spaces $\mathcal{H}_1^{\dagger}(\bx,\,\cm)$, where $\dagger\in \{c,r\}$.
\begin{definition}
Let $\dagger\in\{c,r\}$. A  function  $a\in L_1(\cm;\,L^\dagger_2(\bx))$ is called an  {\it $\cm^\dagger$-atom} if there is a ball $B\subset \bx$ such that
\begin{enumerate}
\item[\rm{(i)}]  $\supp a:=\overline{\{x\in\bx:a(x)\neq0\}}\subset B$;
\item[\rm{(ii)}]  $\|a\|_{L_1(\cm;\,L_2^\dagger(\bx))}\leq \mu(B)^{-1/2}$;
\item[\rm{(iii)}]  $\int_\bx a(x)\, d\mu(x)=0$.
\end{enumerate}
\end{definition}
\begin{lemma}\label{l3.21}
Let $\dagger\in\{c,r\}$ and $a$ be an $\cm^\dagger$-atom. Then $a\in\mathcal{H}_{1}^{\dagger}(\bx,\,\cm)$.
\end{lemma}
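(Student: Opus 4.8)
The plan is to show that an $\cm^c$-atom $a$ supported in a ball $B=B_d(x_B,r_B)$ satisfies $\|a\|_{\mathcal{H}_1^c(\bx,\,\cm)}\lesssim 1$, by duality with $\mathcal{BMO}^c(\bx,\,\cm)$ via Theorem \ref{t3.1}. Concretely, it suffices to bound $|\tau\int_\bx a(x)g^*(x)\,d\mu(x)|$ by a constant multiple of $\|g\|_{\mathcal{BMO}^c(\bx,\,\cm)}$ uniformly over $S_\cm$-valued simple functions $g$; since $a$ has mean zero (property (iii)) we may replace $g$ by $g-g_B$ for free, so we must estimate $|\tau\int_B a(x)(g(x)-g_B)^*\,d\mu(x)|$. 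By the operator Cauchy--Schwarz inequality (Lemma \ref{e2.5}) and the trace Cauchy--Schwarz/H\"older inequality, this is at most
\begin{align*}
\lf\|\lf(\int_B |a(x)|^2\,d\mu(x)\r)^{1/2}\r\|_{L_1(\cm)}^{1/2}\cdot\lf\|\lf(\fint_B|g(x)-g_B|^2\,d\mu(x)\r)^{1/2}\r\|_{\cm}^{1/2}\cdot\mu(B)^{1/2},
\end{align*}
wait---more carefully, one splits as $\tau\int_B a\,(g-g_B)^* = \tau\int_B a\,|a|^{-?}\cdots$; the clean route is: $|\tau\int_B a(g-g_B)^*| \le \|(\int_B|a|^2)^{1/2}\|_{L_1(\cm)}\,\|(\fint_B|g-g_B|^2)^{1/2}\|_{\cm}\,\mu(B)^{1/2}$ by Lemma \ref{e2.5} applied pointwise and then the tracial H\"older inequality on $L_1(\cm)\times L_\infty(\cm)$. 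Using the normalization (ii), $\|(\int_B|a|^2)^{1/2}\|_{L_1(\cm)}=\|a\|_{L_1(\cm;L_2^c(\bx))}\le\mu(B)^{-1/2}$, so the product collapses to $\|g\|_{\mathcal{BMO}^c(\bx,\,\cm)}$.

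Then I would invoke Theorem \ref{t3.1}: since the linear functional $f\mapsto\tau\int_\bx f(x)a^*(x)\,d\mu(x)$ on $\mathcal{BMO}^c(\bx,\,\cm)$, or rather the pairing of $a$ against $\mathcal{BMO}^c(\bx,\,\cm)$ elements, is bounded by a constant, and $\mathcal{BMO}^c(\bx,\,\cm)=(\mathcal{H}_1^c(\bx,\,\cm))^*$, one concludes $a\in(\mathcal{BMO}^c(\bx,\,\cm))^*$ with norm $\lesssim 1$. The one subtlety is passing from "$a$ defines a bounded functional on $\mathcal{BMO}^c$" back to "$a\in\mathcal{H}_1^c$ with $\|a\|_{\mathcal{H}_1^c}\lesssim1$": since $\mathcal{H}_1^c(\bx,\,\cm)$ is not reflexive, one needs either that $\mathcal{H}_1^c(\bx,\,\cm)$ is (isometrically) a closed subspace of its bidual together with the fact that simple functions with mean zero, or simple functions generally, are dense, and that $a$ can be approximated in the relevant topology. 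The cleanest fix is to first establish the estimate for $a$ replaced by an $S_\cm$-valued simple function $\tilde a$ that is itself an atom up to a constant (truncating $a$), note such $\tilde a\in\mathcal{H}_1^c(\bx,\,\cm)$ by Remark \ref{r2.9}(iii) since it lies in $L_2(\lm)$, deduce $\|\tilde a\|_{\mathcal{H}_1^c}\lesssim 1$ by the duality pairing over simple $g$, and then pass to the limit using completeness of $\mathcal{H}_1^c(\bx,\,\cm)$ and an $L_1(\cm;L_2^c)$-convergence argument for the truncations.

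The main obstacle I anticipate is exactly this last limiting/density step: the atom $a$ need not be in $L_2(\lm)$ a priori (it is only in $L_1(\cm;L_2^c(\bx))$), so $\mathcal{S}^c(a)$ is not obviously finite, and one cannot directly speak of $\|a\|_{\mathcal{H}_1^c(\bx,\,\cm)}$ before knowing $a\in\mathcal{H}_1^c(\bx,\,\cm)$. I would handle this by a two-step approximation: (1) replace $\cm$ by a finite von Neumann subalgebra / use the trace approximation so that bounded atoms are dense among atoms in the $L_1(\cm;L_2^c(\bx))$ norm; (2) for bounded atoms $\tilde a$, which do lie in $L_2(\lm)$ hence in $\mathcal{H}_1^c(\bx,\,\cm)$, apply the duality estimate above to get the uniform bound $\|\tilde a\|_{\mathcal{H}_1^c}\lesssim 1$; (3) conclude $a\in\mathcal{H}_1^c(\bx,\,\cm)$ as a norm limit and hence $\|a\|_{\mathcal{H}_1^c}\lesssim 1$. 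The row case follows by taking adjoints. Throughout, the only nontrivial analytic input is Lemma \ref{e2.5} and Theorem \ref{t3.1}; the geometry of $\bx$ plays no role here since the atom's support and cancellation do all the work.
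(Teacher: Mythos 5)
Your plan---bound $\|a\|_{\mathcal{H}_1^c(\bx,\,\cm)}$ by pairing $a$ against $\mathcal{BMO}^c(\bx,\,\cm)$ and invoking Theorem \ref{t3.1}---has a genuine gap at exactly the spot you flagged. The Cauchy--Schwarz/H\"older step $|\tau\int_B a\,(g-g_B)^*|\leq\|a\|_{L_1(\cm;\,L_2^c(B))}\,\mu(B)^{1/2}\|g\|_{\mathcal{BMO}^c(\bx,\,\cm)}\leq\|g\|_{\mathcal{BMO}^c(\bx,\,\cm)}$ is fine (it is in fact what the paper uses inside the proof of Theorem \ref{t3.2} to show $\mathcal{BMO}^c\subset(\mathcal{H}_{1}^{c,\mathrm{at}})^{*}$). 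But to turn this into $\|a\|_{\mathcal{H}_1^c}\lesssim1$ via Hahn--Banach you must first know $a\in\mathcal{H}_1^c(\bx,\,\cm)$, i.e.\ that $\cs^c(a)\in L_1(\lm)$, and your proposed fix does not deliver this. You assert the truncation $\tilde a$ lies in $\mathcal{H}_1^c(\bx,\,\cm)$ ``by Remark \ref{r2.9}(iii) since it lies in $L_2(\lm)$'', but that remark gives $\mathcal{H}_2^c(\bx,\,\cm)=L_2(\lm)$, not an inclusion $L_2(\lm)\subset\mathcal{H}_1^c(\bx,\,\cm)$; no such inclusion can hold when $\mu(\bx)=\infty$. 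Verifying that $\cs^c(\tilde a)\in L_1(\lm)$ for a bounded, compactly supported, mean-zero $\tilde a$ is exactly the content of the lemma you are trying to prove, so this route is circular.

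The paper instead estimates $\cs^c(a)$ directly, and the geometry of $\bx$ is essential, contrary to your closing remark. It splits $\tau\int_{\bx}\cs^c(a)\,d\mu=\mathrm{K}_1+\mathrm{K}_2$ over $4B$ and $(4B)^{\complement}$. For $\mathrm{K}_1$, a scalar Cauchy--Schwarz in $x$ produces the factor $\mu(4B)^{1/2}$; Fubini, the Kadison--Schwarz inequality (Lemma \ref{e2.5}), and the reproducing formula then reduce it to $\mu(4B)^{1/2}\|a\|_{L_1(\cm;\,L_2^c(B))}$, controlled by the size condition. For $\mathrm{K}_2$, the duality is taken against $L_\infty(\lm;\,L_2^c(\bx\times\rr_+,\,d\mu(y)dt))$---not against $\mathcal{BMO}^c$, which avoids the circularity above---and the cancellation of $a$ together with the H\"older regularity $(\mathbf{H}_3)$ of $\mathbf{D}_t(\cdot,\cdot)$ gives the pointwise decay \eqref{e3.6x}, $\lesssim r_B^{\epsilon_1}/[V(x,\,x_B)\,d(x,\,x_B)^{\epsilon_1}]$, which makes the integral over $(4B)^{\complement}$ converge. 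That kernel decay is the one and only mechanism by which $\cs^c(a)$ becomes integrable at infinity, and there is no way to dispense with it; a duality argument against $\mathcal{BMO}^c$ cannot manufacture it.
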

\begin{proof}
Let $a$ be an $\cm^c$-atom supported in $B:={B_d(x_B,\,r_B)}$ with $x_B\in\bx$ and $r_B>0$. Then
\begin{align*}
\|a\|_{\mathcal{H}_{1}^{c}(\bx,\,\cm)}
&=\tau\int_{4B}\lf(\iint_{\Gamma_x}|\mathbf{D}_t(a)(y)|^2\,\frac{d\mu(y)dt}{V_t(x)t}\r)
^{1/2}\,d\mu(x)\\
&\hspace{1.0cm}+\tau\int_{(4B)^{\complement}}\lf(\iint_{\Gamma_x}|\mathbf{D}_t(a)(y)|^2\,\frac{d\mu(y)dt}{V_t(x)t}\r)
^{1/2}\,d\mu(x)\\
&=:\mathrm{K}_1+\mathrm{K}_2.
\end{align*}
From
the Fubini theorem, Lemma \ref{e2.5}, \eqref{e2.1} and the size condition of $a$, we conclude that
\begin{align}\label{keymodify}
\mathrm{K}_1
\leq&[\mu(4B)]^{1/2}\tau\lf(\int_{4B}\iint_{\Gamma_x}
|\mathbf{D}_t(a)(y)|^2\,\frac{d\mu(y)dt}{V_t(x)t}
\,d\mu(x)\r)^{1/2}\nonumber\\
\lesssim&[\mu(4B)]^{1/2}\tau\lf(\int_0^{\fz}\int_{\bx}|\mathbf{D}_t(a)(y)|
^2\,\frac{d\mu(y)dt}{t}\r)^{1/2}\nonumber\\
\lesssim&[\mu(4B)]^{1/2}\tau\lf(\int_{\bx}a^*(x)\int_0^{\fz}\mathbf{D}^2_t(a)(x)
\,\frac{dtd\mu(x)}{t}\r)^{1/2}\nonumber\\
\lesssim&[\mu(4B)]^{1/2}\tau\lf(\int_{B}|a(x)|^2\,d\mu(x)\r)^{1/2}\\
\lesssim&1.\nonumber
\end{align}

For the term $\mathrm{K}_2$, we apply a duality argument to deduce that
\begin{align*}
\mathrm{K}_2&=\tau\int_{(4B)^{\complement}}
\lf(\iint_{\Gamma_x}|\mathbf{D}_t(a)(y)|^2\,\frac{d\mu(y)dt}{V_t(x)t}\r)
^{1/2}\,d\mu(x)\\
&=\sup_{\|g\|_{L_{\infty}(\lm; \,L_2^c(\bx\times \mathbb{R}_+,\,d\mu(y)dt))}\leq1}
\lf|\tau\int_{(4B)^{\complement}}
\iint_{\Gamma_x}\mathbf{D}_t(a)(y)g^*(x,\,y,\,t)\,\frac{d\mu(y)dt}{[V_t(x)t]^{1/2}}d\mu(x)\r|.\nonumber
\end{align*}
For any $g\in L_{\infty}(\lm; \,L_2^c(\bx\times \mathbb{R}_+,\,d\mu(y)dt))$ with $\|g\|_{L_{\infty}(\lm; \,L_2^c(\bx\times \mathbb{R}_+,\,d\mu(y)dt))}\leq1$, it follows from the cancellation condition of $a$, Lemma \ref{e2.5} and Fubini's theorem that
\begin{align}\label{e3.6}
&\lf|\tau\int_{(4B)^{\complement}}
\iint_{\Gamma_x}\mathbf{D}_t(a)(y)g^*(x,\,y,\,t)\,\frac{d\mu(y)dt}
{[V_t(x)t]^{1/2}}d\mu(x)\r| \\ \nonumber
=&\lf|\tau\int_{(4B)^{\complement}}
\iint_{\Gamma_x}\int_{B}a(z)[\mathbf{D}_t(y,\,z)-\mathbf{D}_t(y,\,x_B)]\,
d\mu(z)g^*(x,\,y,\,t)\,\frac{d\mu(y)dt}{[V_t(x)t]^{1/2}}d\mu(x)\r|\\ \nonumber
=&\tau\int_{B}|a(z)|\int_{(4B)^{\complement}}
\lf\|\iint_{\Gamma_x}[\mathbf{D}_t(y,\,z)-\mathbf{D}_t(y,\,x_B)]
g(x,\,y,\,t)\,\frac{d\mu(y)dt}
{[V_t(x)t]^{1/2}}\r\|
_{\cm}d\mu(x)d\mu(z)\\ \nonumber
\leq&\|g\|_{L_{\infty}(\lm; \,L_2^c(\bx\times \mathbb{R}_+,\,d\mu(y)dt))}\tau\int_{B}|a(z)|\int_{(4B)^{\complement}}
\lf(\iint_{\Gamma_x}|\mathbf{D}_t(y,\,z)-\mathbf{D}_t(y,\,x_B)|^2\,\frac{d\mu(y)dt}{V_t(x)t}\r)
^{1/2}d\mu(x)d\mu(z). \nonumber
\end{align}
Now we claim that for any $z\in B$ and $x\in(4B)^{\complement}$, one has
\begin{align}\label{e3.6x}
\lf(\iint_{\Gamma_x}|\mathbf{D}_t(y,\,z)-\mathbf{D}_t(y,\,x_B)|^2\,
\frac{d\mu(y)dt}{V_t(x)t}\r)^{1/2}
\lesssim\frac{[d(z,\,x_B)]^{\epsilon_1}}{V(x,\,x_B)[d(x,\,x_B)]^{\epsilon_1}}.
\end{align}
To show this inequality, we first
 write
\begin{align*}
\lf(\iint_{\Gamma_x}|\mathbf{D}_t(y,\,z)-\mathbf{D}_t(y,\,x_B)|^2\,
\frac{d\mu(y)dt}{V_t(x)t}\r)^{1/2}
\leq& \lf(\int_0^{\frac{d(x,\,x_B)}{4}}\int_{B_d(x,\,t)}|\mathbf{D}_t(y,\,z)-\mathbf{D}_t(y,\,x_B)|^2\,
\frac{d\mu(y)dt}{V_t(x)t}\r)^{1/2}\\
&\hspace{1.0cm} +\lf(\int_{\frac{d(x,\,x_B)}{4}}^{\fz}\int_{B_d(x,\,t)}|\mathbf{D}_t(y,\,z)-\mathbf{D}_t(y,\,x_B)|^2\,
\frac{d\mu(y)dt}{V_t(x)t}\r)^{1/2}\\
=&:\mathrm{K}_{21}+\mathrm{K}_{22}.
\end{align*}
For $\mathrm{K}_{21}$, it follows from $0<t<\frac{d(x,\,x_B)}{4}$ and $d(x,\,y)<t$ that
$$d(x,\,x_B)\leq d(x,\,y)+d(y,\,x_B)<t+d(y,\,x_B)\leq
\frac{d(x,\,x_B)}{4}+d(y,\,x_B),$$
and hence, $\frac{3d(x,\,x_B)}{4}\leq d(y,\,x_B)$, which implies that
$$V(y,\,x_B)\thicksim V(x_B,\,y)\gtrsim\mu(B_d(x_B,\,\frac{3d(x,\,x_B)}{4}))\gtrsim
V(x_B,\,x).$$
This, in combination with condition $(\mathbf{H}_3)$, yields
\begin{align*}
\mathrm{K}_{21}&\lesssim\lf(\int_0^{\frac{d(x,\,x_B)}{4}}\int_{B_d(x,\,t)}
\lf[\frac{d(z,\,x_B)}{t+d(y,\,x_B)}\r]^{2\epsilon_1}
\frac{1}{\lf[V(y,\,x_B)\r]^2}
\lf[\frac{t}{t+d(y,\,x_B)}\r]^{2\epsilon_2}\,
\frac{d\mu(y)dt}{V_t(x)t}\r)^{1/2}\\
&\lesssim\lf[\frac{d(z,\,x_B)}{d(x,\,x_B)}\r]^{\epsilon_1}
\lf(\int_0^{\frac{d(x,\,x_B)}{4}}\int_{B_d(x,\,t)}
\frac{1}{\lf[V(x,\,x_B)\r]^2}
\lf[\frac{t}{d(x,\,x_B)}\r]^{2\epsilon_2}\,
\frac{d\mu(y)dt}{V_t(x)t}\r)^{1/2}\\
&\lesssim\frac{r_B^{\epsilon_1}}{V(x,\,x_B)[d(x,\,x_B)]^{\epsilon_1}}.
\end{align*}
For $\mathrm{K}_{22}$, it follows from \eqref{e2.2} that for $t\geq\frac{d(x,\,x_B)}{4}$,
\begin{align*}
\frac{1}{V_t(x_B)}\lesssim\frac{1}{V(x_B,\,x)}
\lesssim\frac{1}{V(x,\,x_B)}.
\end{align*}
This, in combination with condition $(\mathbf{H}_3)$ and the fact that
$$t+d(y,\,x_B)>d(x,\,y)+d(y,\,x_B)\geq d(x,\,x_B),$$
implies that
\begin{align*}
\mathrm{K}_{22}&\lesssim\lf(\int_{\frac{d(x,\,x_B)}{4}}^{\fz}\int_{B_d(x,\,t)}
\lf[\frac{d(z,\,x_B)}{t+d(y,\,x_B)}\r]^{2\epsilon_1}
\frac{1}{\lf[V_t(x_B)\r]^2}
\lf[\frac{t}{t+d(y,\,x_B)}\r]^{2\epsilon_2}\,
\frac{d\mu(y)dt}{V_t(x)t}\r)^{1/2}\\
&\lesssim
\lf(\int_{\frac{d(x,\,x_B)}{4}}^{\fz}\int_{B_d(x,\,t)}
\lf[\frac{d(z,\,x_B)}{t}\r]^{2\epsilon_1}
\frac{1}{\lf[V(x,\,x_B)\r]^2}\,
\frac{d\mu(y)dt}{V_t(x)t}\r)^{1/2}\\
&\lesssim\frac{r_B^{\epsilon_1}}{V(x,\,x_B)[d(x,\,x_B)]^{\epsilon_1}},
\end{align*}
which completes the proof of \eqref{e3.6x}.

Substituting \eqref{e3.6x} into \eqref{e3.6} and then applying Lemma \ref{e2.5}, we conclude that
\begin{align*}
\mathrm{K}_2
\lesssim &\tau\int_{B}|a(z)|\int_{(4B)^{\complement}}
\frac{r_B^{\epsilon_1}}{V(x,\,x_B)[d(x,\,x_B)]^{\epsilon_1}}\,d\mu(x)d\mu(z)\\
\lesssim& [\mu(B)]^{1/2}\tau\lf(\int_{B}|a(z)|^2\,d\mu(z)\r)^{1/2}\\
\lesssim&1.
\end{align*}
This completes the proof of Lemma \ref{l3.21}.
\end{proof}

\begin{definition}
The {\it $\cm^c$-atom Hardy space}
$\mathcal{H}_{1}^{c,\mathrm{at}}(\bx,\,\cm)$
is defined to be the set of all  $f$ admitting a representation of the form
\begin{align*}
f=\sum_{j\in\nn} \lz_ja_j,
\end{align*}
where the $\{a_j\}_{j\in\nn}$ is a sequence of $\cm^c$-atoms and $\{\lambda_j\}_{j\in\nn}\subset\mathbb{C}$ satisfies $\sum_{j\in\nn} |\lz_j|<\infty$.
Moreover, for $f\in \mathcal{H}_{1}^{c,\mathrm{at}}(\bx,\,\cm)$, define
$$\|f\|_{\mathcal{H}_{1}^{c,\mathrm{at}}(\bx,\,\cm)}
:=\inf\lf\{\sum_{j\in\nn} |\lambda_j|:f=\sum_{j\in\nn}\lambda_j a_j, a_j \ \mathrm{is\ an} \ \cm^c\mathrm{-atom}\ \mathrm{and}\ \lambda_j\in\mathbb{C}\r\},$$
where the infimum is taken over all the decompositions of $f$ as above.

Similarly, one can define the Hardy spaces $\mathcal{H}_{1}^{r,\mathrm{at}}(\bx,\,\cm)$ and $\mathcal{H}_{1}^{cr,\mathrm{at}}(\bx,\,\cm)$.
\end{definition}
\begin{theorem}\label{t3.2}
Let $\dagger\in\{c,r,cr\}$, then
$$\mathcal{H}_1^{\dagger}(\bx,\,\cm)=\mathcal{H}_{1}^{\dagger,\mathrm{at}}(\bx,\,\cm)$$
with equivalent norms.
\end{theorem}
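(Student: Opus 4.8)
The plan is to prove the two continuous embeddings $\mathcal{H}_1^{\dagger,\mathrm{at}}(\bx,\,\cm)\hookrightarrow\mathcal{H}_1^{\dagger}(\bx,\,\cm)$ and $\mathcal{H}_1^{\dagger}(\bx,\,\cm)\hookrightarrow\mathcal{H}_1^{\dagger,\mathrm{at}}(\bx,\,\cm)$; being mutually inverse on a common dense set, these give the claimed identification with equivalent norms. It suffices to treat $\dagger=c$: the row case follows by taking adjoints (since $a$ is an $\cm^{r}$-atom iff $a^{\ast}$ is an $\cm^{c}$-atom and $\|f\|_{\mathcal{H}_1^{r}(\bx,\,\cm)}=\|f^{\ast}\|_{\mathcal{H}_1^{c}(\bx,\,\cm)}$), and the mixture case is then immediate from the definitions of $\mathcal{H}_1^{cr}$ and $\mathcal{H}_1^{cr,\mathrm{at}}$ as sums of the column and row spaces. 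The embedding $\mathcal{H}_1^{c,\mathrm{at}}(\bx,\,\cm)\hookrightarrow\mathcal{H}_1^{c}(\bx,\,\cm)$ is the easy half: Minkowski's inequality for the column norm makes $\cs^{c}$ subadditive into the positive cone of $\cm$, so for $f=\sum_{j}\lambda_{j}a_{j}$ with $\cm^{c}$-atoms $a_{j}$ and $\sum_{j}|\lambda_{j}|<\infty$ the partial sums are Cauchy in $\mathcal{H}_1^{c}(\bx,\,\cm)$ and $\|f\|_{\mathcal{H}_1^{c}(\bx,\,\cm)}\leq\sum_{j}|\lambda_{j}|\,\|a_{j}\|_{\mathcal{H}_1^{c}(\bx,\,\cm)}\lesssim\sum_{j}|\lambda_{j}|$ by Lemma \ref{l3.21}; taking the infimum over decompositions yields $\|f\|_{\mathcal{H}_1^{c}(\bx,\,\cm)}\lesssim\|f\|_{\mathcal{H}_1^{c,\mathrm{at}}(\bx,\,\cm)}$.

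The reverse embedding will come from duality through $\mathcal{BMO}^{c}(\bx,\,\cm)$, and its core is the ``easy half'' of atomic $H_1$--BMO duality: every $\mathcal{L}\in(\mathcal{H}_1^{c,\mathrm{at}}(\bx,\,\cm))^{\ast}$ equals $\mathcal{L}_{g}$, with $\mathcal{L}_{g}(f)=\tau\int_{\bx}f(x)g^{\ast}(x)\,d\mu(x)$, for some $g\in\mathcal{BMO}^{c}(\bx,\,\cm)$ satisfying $\|g\|_{\mathcal{BMO}^{c}(\bx,\,\cm)}\lesssim\|\mathcal{L}\|$. To see this, fix a ball $B$; a mean-zero $\cm$-valued function $h$ supported in $B$ is, after division by $\mu(B)^{1/2}\|h\|_{L_1(\cm;\,L_2^{c}(B))}$, an $\cm^{c}$-atom, so $|\mathcal{L}(h)|\leq\mu(B)^{1/2}\|\mathcal{L}\|\,\|h\|_{L_1(\cm;\,L_2^{c}(B))}$ on the mean-zero subspace of $L_1(\cm;\,L_2^{c}(B))$, whose dual is $L_\infty(\cm;\,L_2^{c}(B))$ modulo constants by the duality recalled in Section \ref{nclp}. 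Hence $\mathcal{L}$ is represented on $B$ by some $G_{B}$, unique up to a constant, with $\inf_{c}\|G_{B}-c\|_{L_\infty(\cm;\,L_2^{c}(B))}\lesssim\mu(B)^{1/2}\|\mathcal{L}\|$; for nested balls the representatives agree modulo constants (a mean-zero function on a smaller ball being mean-zero on a larger one), so running over an exhaustion of $\bx$ by balls (possible as $\mu(\bx)=\infty$) and adjusting constants produces a single $g\in L_\infty(\cm;\,L_2^{\mathrm{loc},c}(\bx))$ with $g|_{B}=G_{B}$ modulo constants. Picking for each $B$ a near-optimal constant $c_{B}$ and using the Kadison--Schwarz inequality (Lemma \ref{e2.5}, with weight $1/\mu(B)$) to bound $\|c_{B}-g_{B}\|_{\cm}$ by $\|(\fint_{B}|g-c_{B}|^{2}\,d\mu)^{1/2}\|_{\cm}$, where $g_{B}=\fint_{B}g\,d\mu$, gives $\|(\fint_{B}|g-g_{B}|^{2}\,d\mu)^{1/2}\|_{\cm}\lesssim\|\mathcal{L}\|$ for all $B$, i.e. $g\in\mathcal{BMO}^{c}(\bx,\,\cm)$. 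Finally $\mathcal{L}(f)=\tau\int_{\bx}fg^{\ast}\,d\mu$ for $f$ a finite combination of $\cm^{c}$-atoms, the constants being annihilated by the cancellation and the bounded support of $f$.

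Granting this, let $f$ be a finite linear combination of $\cm^{c}$-atoms. By Hahn--Banach $\|f\|_{\mathcal{H}_1^{c,\mathrm{at}}(\bx,\,\cm)}=\sup\{|\mathcal{L}(f)|:\mathcal{L}\in(\mathcal{H}_1^{c,\mathrm{at}}(\bx,\,\cm))^{\ast},\,\|\mathcal{L}\|\leq1\}$, so by the previous paragraph and the quantitative bound in Theorem \ref{t3.1},
\[
\|f\|_{\mathcal{H}_1^{c,\mathrm{at}}(\bx,\,\cm)}\lesssim\sup_{\|g\|_{\mathcal{BMO}^{c}(\bx,\,\cm)}\lesssim1}\lf|\tau\int_{\bx}f(x)g^{\ast}(x)\,d\mu(x)\r|\lesssim\|f\|_{\mathcal{H}_1^{c}(\bx,\,\cm)}.
\]
To upgrade this to all of $\mathcal{H}_1^{c}(\bx,\,\cm)$ one needs that finite combinations of $\cm^{c}$-atoms are dense in $\mathcal{H}_1^{c}(\bx,\,\cm)$; this is the point at which the cancellation $(\mathbf{H}_4)$ and the size bound $(\mathbf{H}_1)$ of the kernel enter, guaranteeing that an $S_{\cm}$-valued simple function of finite $\mathcal{H}_1^{c}$-norm is approximated in $\mathcal{H}_1^{c}(\bx,\,\cm)$ on truncating its support to a large ball $B_{N}$ and subtracting a corrective multiple of $\mu(B_{N})^{-1}\chi_{B_{N}}$, whose $\mathcal{H}_1^{c}$-norm vanishes as $N\to\infty$. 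Given density, a Cauchy sequence of finitely-atomic functions converging to $f$ in $\mathcal{H}_1^{c}(\bx,\,\cm)$ is, by the displayed inequality, also Cauchy in $\mathcal{H}_1^{c,\mathrm{at}}(\bx,\,\cm)$; its limit there coincides with $f$ by the easy embedding, whence $f\in\mathcal{H}_1^{c,\mathrm{at}}(\bx,\,\cm)$ with $\|f\|_{\mathcal{H}_1^{c,\mathrm{at}}(\bx,\,\cm)}\lesssim\|f\|_{\mathcal{H}_1^{c}(\bx,\,\cm)}$.

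The two steps I expect to demand real care are the bookkeeping in the atomic-duality lemma of the second paragraph --- keeping the operator-valued ``constants'' consistent across balls and converting the $L_\infty(\cm;\,L_2^{c})$ estimates into the $\mathcal{BMO}^{c}$ supremum with the correct $\mu(B)$-normalisation, where Lemma \ref{e2.5} does the essential comparison --- and the density statement in the third paragraph, which is routine on $\rr^{n}$ but must here be carried out quantitatively on the kernel, since neither a dilation structure nor a reverse doubling condition is available.
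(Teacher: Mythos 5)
Your high-level strategy coincides with the paper's: Lemma~\ref{l3.21} gives the embedding $\mathcal{H}_1^{c,\mathrm{at}}(\bx,\,\cm)\hookrightarrow\mathcal{H}_1^{c}(\bx,\,\cm)$, and the reverse inclusion is obtained by identifying both duals with $\mathcal{BMO}^{c}(\bx,\,\cm)$ via Theorem~\ref{t3.1} together with the operator-valued Coifman--Weiss duality $(\mathcal{H}_1^{c,\mathrm{at}})^{*}\backsimeq\mathcal{BMO}^{c}$. Where you genuinely diverge is in the hard half of that atomic duality, namely the step producing $g\in\mathcal{BMO}^{c}$ from $\mathcal{L}\in(\mathcal{H}_1^{c,\mathrm{at}})^{*}$. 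The paper builds $g$ on an exhausting family of balls exactly as you do, but then verifies $g\in\mathcal{BMO}^{c}$ by a detour through the Hilbert space $H$ on which $\cm$ acts: it writes $\|g\|_{\mathcal{BMO}^{c}}=\sup_{\|e\|_H=1}\|ge\|_{\mathrm{BMO}(\bx,\,H)}$, invokes the classical $H$-valued $H_1$--BMO duality, and converts the resulting pairing $\lf\langle g(x)e,\,\overline{h(x)}\r\rangle_H$ into a $\tau$-pairing against $f(x)=[h(x)\otimes e]$, checking that such $f$ have unit $\mathcal{H}_1^{c}$-norm. Your route --- reading off the quotient-norm bound on the Hahn--Banach extension over the mean-zero subspace of $L_1(\cm;\,L_2^{c}(B))$ and then replacing the near-optimal constant $c_B$ by the average $g_B$ via Kadison--Schwarz (Lemma~\ref{e2.5}) and a column Minkowski inequality --- is more self-contained and stays entirely inside the operator-valued column framework, which is arguably cleaner than the paper's reduction to the scalar/Hilbert-valued theory. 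One comment on your final paragraph: the density of finitely-atomic elements in $\mathcal{H}_1^{c}(\bx,\,\cm)$ that you flag is also invoked, without proof, at the same point in the paper's argument (``together with the density of $\mathcal{H}_1^{c,\mathrm{at}}$ in $\mathcal{H}_1^{c}$''), so this is a shared assumption rather than a defect of your proposal; moreover, once the atomic duality is in hand you can dispense with it entirely by Hahn--Banach separation: if $\mathcal{H}_1^{c,\mathrm{at}}$ were not dense in $\mathcal{H}_1^{c}$, Theorem~\ref{t3.1} would yield a nonzero $g\in\mathcal{BMO}^{c}$ with $\mathcal{L}_g$ vanishing on every $\cm^{c}$-atom, contradicting the already-proved lower bound $\|g\|_{\mathcal{BMO}^{c}}\lesssim\|\mathcal{L}_g\|_{(\mathcal{H}_1^{c,\mathrm{at}})^{*}}$.
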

\begin{proof}[Proof of Theorem \ref{t3.2}]
By Theorem \ref{t3.1}, it remains to show an analogue of Coifman--Weiss theorem in the operator-valued setting, which refers to the following duality:
\begin{align}\label{CWatom0}
(\mathcal{H}_{1}^{c,\mathrm{at}}(\bx,\,\cm))^{*}\backsimeq\mathcal{BMO}^{c}(\bx,\,\cm).
\end{align}
We first show that $\mathcal{BMO}^{c}(\bx,\,\cm)\subset(\mathcal{H}_{1}^{c,\mathrm{at}}(\bx,\,\cm))^{*}.$
Let $g\in\mathcal{BMO}^{c}(\bx,\,\cm)$ and $f\in \mathcal{H}_{1}^{c,\mathrm{at}}(\bx,\,\cm)$. By the definition of $\mathcal{H}_{1}^{c,\mathrm{at}}(\bx,\,\cm)$, we know that there exist  $\{\lambda_j\}_{j\in\mathbb{N}}\subset\mathbb{C}$ and a sequence of  $\cm^c$-atoms $\{a_j\}_{j\in\mathbb{N}}$ supported in the balls $\{B_j\}_{j\in\nn}\subset\bx$ such that
$f=\sum_{j\in\nn} \lz_{j}a_j,$
where
\begin{align*}
\|f\|_{\mathcal{H}_{1}^{c,\mathrm{at}}(\bx,\,\cm)}
\thicksim\sum_{j\in\nn} |\lambda_j|<\infty.
\end{align*}
This, in combination with Lemma \ref{e2.5}, yields
\begin{align*}
\lf|\mathcal{L}_g(f)\r|&:=\lf|\tau\int_{\bx}f(x)g^*(x)\,d\mu(x)\r|\\
&\leq\sum_{j\in\nn}|\lambda_j|\lf|\tau\int_{\bx}
a_j(x)g^*(x)\,d\mu(x)\r|\\
&=\sum_{j\in\nn}|\lambda_j|\lf|\tau\int_{B_j}
a_j(x)\lf[g(x)-g_{B_j}\r]^*\,d\mu(x)\r|\\
&\leq\sum_{j\in\nn}|\lambda_j|
\|a_j\|_{L_1(\cm;\,L_2^c(B_j))}
\lf\|\lf(\int_{B_j}
\lf|g(x)-g_{B_j}\r|^2\,d\mu(x)\r)^{1/2}\r\|_{\cm}\\
&\leq\sum_{j\in\nn}|\lambda_j|
\lf\|\lf(\fint_{B_j}
\lf|g(x)-g_{B_j}\r|^2\,d\mu(x)\r)^{1/2}\r\|_{\cm}\\
&\lesssim\|f\|_{\mathcal{H}_{1}^{c,\mathrm{at}}(\bx,\,\cm)}
\|g\|_{\mathcal{BMO}^c(\bx,\,\cm)},
\end{align*}
which implies that $(\mathcal{H}_{1}^{c,\mathrm{at}}(\bx,\,\cm))^{*}
\supset\mathcal{BMO}^{c}(\bx,\,\cm)$.

Next we prove that $(\mathcal{H}_{1}^{c,\mathrm{at}}(\bx,\,\cm))^{*}
\subset\mathcal{BMO}^{c}(\bx,\,\cm)$.
For any ball $B\subset\bx$, let
$$L_0^1(\cm;\,L_2^c(B)):=\lf\{f\in L_1(\cm;\,L_2^c(B)):\int_{\bx} f(x)\,d\mu(x)=0\r\}.$$
Then we have
$$L_0^1(\cm;\,L_2^c(B))\subset\mathcal{H}_{1}^{c,\mathrm{at}}(\bx,\,\cm).$$
In fact, if $f\in L_0^1(\cm;\,L_2^c(B))$, then
$a:=\mu(B)^{-1/2}\|f\|_{L_1(\cm;\,L_2^c(B))}^{-1}f$
is an $\cm^c$-atom with $\supp a\subset B$, and
$$\|f\|_{\mathcal{H}_{1}^{c,\mathrm{at}}(\bx,\,\cm)}
\leq\mu(B)^{1/2}\|f\|_{L_1(\cm;\,L_2^c(B))}.$$
This indicates that for any $\cl\in(\mathcal{H}_{1}^{c,\mathrm{at}}(\bx,\,\cm))^*$, $\cl$ induces a continuous linear functional on $L_0^1(\cm;\,L_2^c(B))$ with norm bounded by $\mu(B)^{1/2}\|\cl\|_{(\mathcal{H}_{1}^{c,\mathrm{at}}(\bx,\,\cm))^*}$.
Fix $x_0\in\bx$, since $\bx=\bigcup_{k\in\zz_+}B_d(x_0,\,2^k)$, one may choose a sequence $\{g_k\}_{k\in\nn}\subset L_{\infty}(\cm;\,L_2^c(B_d(x_0,\,2^k)))$ such that
$$\cl(a)=\tau\int_{\bx} a(x)g_k(x)\,d\mu(x),\ \ \mathrm{for\ any}\ \cm^c\mathrm{-atom}\ a \ \mathrm{with\ \supp} a\subset B_d(x_0,\,2^k),$$
$$\|g_k\|_{L_{\infty}(\cm;\,L_2^c(B_d(x_0,\,2^k)))}\leq \lf[\mu(B_d(x_0,\,2^k))\r]^{1/2}\|\cl\|_{(\mathcal{H}_{1}^{c,\mathrm{at}}(\bx,\,\cm))^*},$$
and
$$g_k|_{B_d(x_0,\,2^l)}=g_l, \ \ \mathrm{for\ any}\ k>l\in\nn.$$
Define
$$g(x):=g_k(x), \ \ \mathrm{for\ any}\ x\in B_d(x_0,\,2^k)\backslash B_d(x_0,\,2^{k-1}),\ \ k\in\nn.$$
Therefore, for any $\cm^c$-atom $a$, one has
$$\cl(a)=\tau\int_{\bx}a(x)g^*(x)\,d\mu(x).$$

It remains to show that $g\in \mathcal{BMO}^c(\bx,\,\cm)$.
Let $H$ be the Hilbert space on which $\cm$ acts.
The standard inner product and the norm on $H$ will be denoted by $\langle\cdot,\,\cdot\rangle_H$ and $\|\cdot\|_H$, respectively.
 Recall that $\cm_{*}$ is a
quotient space of $B(H)_{*}$ by the preannihilator of $\cm$. Denote the quotient map
by $q$. For every $a,\,b\in H$, denote $$[a\otimes b]: = q(a\otimes b).$$
Notice that, for any $m\in\cm$,
$$\tau(m^*
[a\otimes b])=\tau([m^*
(a\otimes b)]) = \langle m(b),\, \overline{a}\rangle_H.$$
Let $\mathcal{BMO}(\bx,\,H)$ and $\mathcal{H}_1(\bx,\,H)$ denote the $H$-valued BMO and Hardy spaces on $\bx$, respectively.
From the classical duality between
$\mathcal{BMO}(\bx,\,H)$ and $\mathcal{H}_1(\bx,\,H)$, we deduce that
\begin{align*}
\|g\|_{\mathcal{BMO}^{c}(\bx,\,\cm)}
&=\sup_{e\in H,\,\|e\|_H=1}\|ge\|_{\mathrm{BMO}(\bx,\,H)}\\
&=\sup_{e\in H,\,\|e\|_H=1}\sup_{\|h\|_{\mathcal{H}_1(\bx,\,H)}=1}\lf|\int_{\bx}\lf\langle
g(x)e,\,\overline{h(x)}\r\rangle_H\,d\mu(x)\r|\\
&=\sup_{e\in H,\,\|e\|_H=1}\sup_{\|h\|_{\mathcal{H}_1(\bx,\,H)}=1}\lf|\tau\int_{\bx}
g^*(x)[h(x)\otimes e]\,d\mu(x)\r|.
\end{align*}
Denote by $f(x)=[h(x)\otimes e]$, $x\in \bx$. Since for any $t>0$ and $y\in\bx$,
$$|\mathbf{D}_t(f)(y)|^2=\langle \mathbf{D}_t(h)(y),\,\mathbf{D}_t(h)(y)\rangle_H[e\otimes e]=\|\mathbf{D}_t(h)(y)\|_H^2[e \otimes e],$$
we have
$$\tau(\cs^c(f))(x)=\lf(\iint_{\Gamma_x}\|\mathbf{D}_t(h)(y)\|_H^2\frac{d\mu(y)dt}{V_t(x)t}\r)^{1/2}.$$
Thus,
 $\|f\|_{\mathcal{H}_{1}^{c}(\bx,\,\cm)}= 1$ if $\|h\|_{\mathcal{H}_1(\bx,\,H)}=1$ and $\|e\|_H=1.$ This, together with the density of $\mathcal{H}_{1}^{c,\mathrm{at}}(\bx,\,\cm)$ in $\mathcal{H}_{1}^{c}(\bx,\,\cm)$, yields
$$\|g\|_{\mathcal{BMO}^{c}(\bx,\,\cm)}\lesssim\sup_{\|f\|
_{\mathcal{H}_{1}^{c,\mathrm{at}}(\bx,\,\cm)}\leq1}
\lf|\tau\int_{\bx}f(x)g^*(x)\,d\mu(x)\r|\\
\lesssim\|\cl\|
_{(\mathcal{H}_{1}^{c,\mathrm{at}}(\bx,\,\cm))^*}.$$
This completes the proof of Theorem \ref{t3.2}.
\end{proof}

\bigskip
\section{Duality between $H_p$ and $L_{p'}MO$}{\label{s4}}
\setcounter{equation}{0}

At the beginning of  this section, we will introduce $L_p$-space analogues of the BMO spaces $L_p\mathcal{MO}^{c}(\bx,\,\cm)$, $L_p\mathcal{MO}^{r}(\bx,\,\cm)$ and  $L_p\mathcal{MO}^{cr}(\bx,\,\cm)$ with $p\in(2,\,\infty)$. Then, we obtain their  predual spaces, which
are the operator-valued Hardy spaces $\mathcal{H}_{p'}^{c}(\bx,\,\cm)$, $\mathcal{H}_{p'}^{r}(\bx,\,\cm)$ and $\mathcal{H}_{p'}^{cr}(\bx,\,\cm)$ introduced in Section \ref{s3}, respectively.

Let us recall the definition of non-commutative maximal norms. Let $0<p\leq \infty$ and $x=\{x_i\}_{i}$ be a sequence of elements in $L_p(\cm)$. Define $L_p(\cm;\,\ell_{\infty})$-norm of $x$ by
$$\lf\|x\r\|_{L_p(\cm;\,\ell_{\infty})}:=\inf_{x_i=ay_ib}\lf\{\|a\|_{L_{2p}(\cm)}
\|b\|_{L_{2p}(\cm)}\sup_{i}\|y_i\|_{\cm}\r\},$$
where the infimum is taken over all $a,\,b\in L_{2p}(\cm)$ and $\{y_i\}_{i}\subset\cm$ such that
$x_i=ay_ib.$ Conventionally,
$\|x\|_{L_p(\cm;\,\ell_{\infty})}$ is usually denoted by $\lf\|\sup_{i}^{+}x_i\r\|_{L_p(\cm)}$. However, it is worthwhile to mention that $\sup_{i}^+x_i$
 is just a notation without making any specific
sense in the non-commutative setting. We just use this notation for convenience.
Moreover,
if $\Theta$ is an uncountable set, we define
$$\lf\|\{x_{\lambda}\}_{\lambda\in\Theta}\r\|_{L_p(\cm;\,\ell_{\infty})}:=\lf\|\sup_{\lambda\in\Theta}^{\ \ \ \ \  +}x_{\lambda}\r\|_{L_p(\cm)}:=\sup_{\{\lambda_i\}_{i}\subset\Theta}
\lf\|\sup_{i}^{\ \ \ \ \  +}x_{\lambda_i}\r\|_{L_p(\cm)}.$$
It was proved by Junge \cite{j02} that for any $p\in(1,\,\infty)$  and any sequence of positive operators $\{x_i\}_{i}$,
\begin{align}\label{e4.0}
\lf\|\sup_{i}^{\ \ \ \ \  +}x_i\r\|_{L_p(\cm)}=\sup\lf\{\sum_{i}\tau(x_iy_i):
y_i\in L_q(\cm),\,y_i\geq0,\,\lf\|\sum_{i}y_i\r\|_{L_q(\cm)}\leq1\r\}.
\end{align}

We also need the column and row non-commutative maximal norms, which was introduced by \cite{dj04}. Let $p\in [2,\infty)$. Then the space $L_p(\cm;\,\ell^c_{\infty})$ is defined to be the set of all sequences $\{x_k\}_{k\in\mathbb{N}}\subset L_p(\mathcal{M})$ which can be decomposed as $$x_k=y_ka\ {\rm for}\ {\rm some}\ \ a\in L_p(\mathcal{M})\ {\rm and}\ \{y_k\}_k\subset L_\infty(\mathcal{M})$$ satisfying $\sup\limits_{k\in\mathbb{N}}\|y_k\|_{\mathcal{M}}<\infty$. Furthermore, the $L_p(\cm;\,\ell^c_{\infty})$-norm of $\{x_k\}_k$ is defined to be
 $$\|\{x_k\}_k\|_{L_p(\cm;\,\ell_{\fz}^c)}:
=\inf_{x_k=y_ka}\sup\limits_{k\in\mathbb{N}}\|y_k\|_{\mathcal{M}}\|a\|_{L_p(\cm)},$$
where the infimum is taken over all factorization of $\{x_k\}_k$ as above. This norm is related closely to the $L_p(\cm;\,\ell_{\infty})$ space via the following equality:
$$\|\{x_k\}_k\|_{L_p(\cm;\,\ell_{\fz}^c)}=\|\{x_k^*x_k\}_k\|_{L_{\frac p2}(\cm;\,\ell_{\fz})}^{1/2}.$$
Similarly, one can define $L_p(\cm;\,\ell^r_{\infty})$ and connects its norm with $L_p(\cm;\,\ell_{\infty})$.

In what follows, for $p\in[1,\,\fz]$, let $L_p(\mathcal{M};\,L_2^{{\rm loc},c}(\bx))$ denote the set of all locally $L_2$-integrable functions on $\bx$ with values in $L_p(\cm)$. For any $g\in L_p(\mathcal{M};\,L_2^{{\rm loc},c}(\bx))$ and ball $B\subset\bx$, set
\begin{align*}
g_B^{\sharp}(x):=\fint_{B}\lf|g(y)
-g_{B}\r|^2\,d\mu(y),\ \  x\in B.
\end{align*}

\begin{definition}\label{d4.1}
Let $p\in(2,\,\infty)$. We define column $L_p\mathcal{MO}$ space
\begin{align*}
L_p\mathcal{MO}^{c}(\bx,\,\cm):=\lf\{g\in L_p(\mathcal{M};\,L_2^{{\rm loc},c}(\bx)):\|g\|_{L_q\mathcal{MO}^{c}(\bx,\,\cm)}<\infty\r\},
\end{align*}
where
\begin{align*}
\|g\|_{L_p\mathcal{MO}^{c}(\bx,\,\cm)}:=\lf\|\sup_{x\in B\subset\bx}^{\ \ \ \ \  +}g_B^{\sharp}\r\|
_{L_{\frac{p}{2}}(\lm)}^{1/2}.
\end{align*}
In a similar way as extending the definition of column BMO space to the row and mixture ones, one can define
the row space $L_p\mathcal{MO}^{r}(\lm)$ and mixture space $L_p\mathcal{MO}^{cr}(\lm)$, respectively.
\end{definition}
Now we state the main result of this section as follows.
\begin{theorem}\label{t4.1}
Let $p\in(1,\,2)$ and $\dagger\in\{c,r,cr\}$. Then
$$(\mathcal{H}_{p}^\dagger(\bx,\,\cm))^{\ast}\backsimeq L_{p'}\mathcal{MO}^{\dagger}(\bx,\,\cm)$$
in the following sense:
\begin{enumerate}
\item[\rm{(i)}]
 Each $g\in L_{p'}\mathcal{MO}^{\dagger}(\bx,\,\cm)$ defines a continuous linear
functional $\mathcal{L}_g$ on $\mathcal{H}_{p}^\dagger(\bx,\,\cm)$ by
\begin{align}\label{formmm}
\mathcal{L}_g(f):=\tau\int_{\bx}f(x)g^{*}(x)\,d\mu(x),\ \ \ \mathrm{for\ any}\ S_{\cm}\mathrm{-valued\ simple\ function}\ f.
\end{align}
\item[\rm{(ii)}] For any $\mathcal{L}\in(\mathcal{H}_{p}^\dagger(\bx,\,\cm))^{*}$, there exists some
$g\in L_{p'}\mathcal{MO}^{\dagger}(\bx,\,\cm)$ such that $\mathcal{L}=\mathcal{L}_g$.
\end{enumerate}
Moreover, there exists an universal positive constant $C$ such that
$$C^{-1}\|g\|_{L_{p'}\mathcal{MO}^{\dagger}(\bx,\,\cm)}
\leq\|\mathcal{L}_g\|_{(\mathcal{H}_{p}^\dagger(\bx,\,\cm))^{*}}\leq C\|g\|_{L_{p'}\mathcal{MO}^{\dagger}(\bx,\,\cm)}.$$
\end{theorem}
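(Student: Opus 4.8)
The plan is to mimic the proof of Theorem~\ref{t3.1}, replacing the $L_1$--$L_\infty$ duality for column/row square-function spaces by its $L_{p'}$--$L_p$ analogue. Since $\mathcal{H}_p^c(\bx,\,\cm)$ embeds isometrically (via $\Phi$ of Lemma~\ref{d3.1}) into $L_p(\lm;\,L_2^c(\bx\times\rr_+,\,d\mu(y)dt))$, which for $p\in(1,\,2)$ is a reflexive space with dual $L_{p'}(\lm;\,L_2^c(\bx\times\rr_+,\,d\mu(y)dt))$, the Hahn--Banach argument of part~(ii) carries over almost verbatim once we know that the adjoint map $\Psi$ is bounded from $L_{p'}(\lm;\,L_2^c(\bx\times\rr_+,\,d\mu(y)dt))$ into $L_{p'}\mathcal{MO}^c(\bx,\,\cm)$; conversely, part~(i) requires the pairing estimate $|\mathcal{L}_g(f)|\lesssim\|f\|_{\mathcal{H}_p^c}\|g\|_{L_{p'}\mathcal{MO}^c}$. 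Thus the theorem reduces to two lemmas: a $\Psi$-boundedness lemma on $L_{p'}\mathcal{MO}^c$ analogous to Lemma~\ref{l3.2}, and a pairing inequality analogous to \eqref{duugoa}. We treat the column case; row follows by adjoints and mixture by the $\ell_1$/$\ell_\infty$ (i.e. sum/intersection) duality.

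\textbf{The pairing inequality (part (i)).} I would run the same decomposition as in \eqref{e3.1}--\eqref{jjj1}: write $\mathcal{L}_g(f)=\tau\int_\bx\int_0^\infty\mathbf{D}_t(f)\,[\mathbf{D}_t(g)]^*\,\tfrac{dt\,d\mu}{t}$, average over the ball $B_d(x,t/2)$, and split by Cauchy--Schwarz into a term $\mathrm{I}$ controlled by $\|f\|_{\mathcal{H}_p^c}$ and a term $\mathrm{J}$ to be bounded by $\|f\|_{\mathcal{H}_p^c}\|g\|_{L_{p'}\mathcal{MO}^c}^2$. The term $\mathrm{I}$ is handled exactly as in \eqref{jjj2} (the monotonicity trick with $\widetilde S_{\mathbf D}^c$ is insensitive to $p$, giving $\mathrm{I}\lesssim\|f\|_{\mathcal{H}_p^c}\cdot\|\,\cdot\,\|$ after a H\"older pairing). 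For $\mathrm{J}$, after the dyadic-cube telescoping of \eqref{e3.5}--\eqref{rept1} one arrives at
\[
\mathrm{J}\lesssim\tau\sum_{j\in\zz}\sum_{\alpha\in I_j}\mathbb{W}(z_\alpha^j,\,j)\,\fint_{Q_\alpha^j}\Big(\int_0^{c\delta^{j-1}}\!\!\int_{B_d(x,t/2)}|\mathbf{D}_t(g)(z)|^2\tfrac{d\mu(z)dt}{V_t(x)t}\Big)\,d\mu(x)\,\mu(Q_\alpha^j).
\]
The inner average is dominated by $\bigl(g^\sharp_{B'}\bigr)$-type quantities on a controlled dilate $B'$ of $Q_\alpha^j$, hence pointwise by $\sup_{x\in B\subset\bx}{}^{+}g_B^\sharp$. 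Here the key new input over Theorem~\ref{t3.1} is Junge's dual formulation \eqref{e4.0} of the non-commutative maximal norm: one pairs $\sum_j\mathbb{W}(\cdot,j)$ (a positive "increment" sequence summing in the appropriate sense to $\cs^c(f)\in L_p$) against the positive operators $\{g_B^\sharp\}$, and \eqref{e4.0} with $q=p/2$ (so $q'=p'/2$) gives exactly $\mathrm{J}\lesssim\|\cs^c(f)\|_{L_p(\lm)}\cdot\|g\|_{L_{p'}\mathcal{MO}^c}^2 = \|f\|_{\mathcal{H}_p^c}\|g\|_{L_{p'}\mathcal{MO}^c}^2$. Combining with $\mathrm{I}$ and taking square roots yields \eqref{formmm}-boundedness.

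\textbf{Boundedness of $\Psi$ into $L_{p'}\mathcal{MO}^c$ (part (ii)).} This is the analogue of Lemma~\ref{l3.2} and is where I expect the main work. Fix a ball $B$ and split $g=g_1+g_2$ as in Lemma~\ref{l3.2}, with $g_1$ supported near $4B$ and $g_2$ away from it, and subtract the constant $C_B$. The far piece $\mathrm{II}_1$ uses only the smoothness estimates $(\mathbf{H}_2)$--$(\mathbf{H}_3)$ and the kernel decay computations \eqref{ana1}--\eqref{ana2}, which are pointwise in the operator variable and therefore insensitive to the exponent; they reduce $\mathrm{II}_1$ to $\|g\|_{L_{\infty}(\lm;L_2^c)}^2$ times a constant --- but now I need the $L_{p'/2}(\lm;\ell_\infty)$-norm of the family $\{(\Psi g)^\sharp_B\}_{B}$, so I must keep the operator $g(x,y,t)$ inside and estimate the resulting positive operators in $L_{p'/2}(\lm)$ using the maximal description rather than the $\cm$-norm; the same Cauchy--Schwarz-in-measure (Lemma~\ref{e2.5}) plus Fubini as in \eqref{eeeee000} applies. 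The near piece $\mathrm{II}_2$ is handled by the $L_{p'}(\lm)$-boundedness of $\Psi$ (the $p=2$ case upgrades to general $p'\in(2,\infty)$ because $\Psi$ is a genuine square-function-type operator, or else by interpolation between the $p'=2$ case and the $\mathrm{BMO}$ endpoint of Lemma~\ref{l3.2}) together with the duality argument already used there. The delicate point is bookkeeping the non-commutative maximal norm across the ball average: one must verify that the supremum over $B$ of the positive operators $(\Psi g)^\sharp_B$ factors as $a y_B b$ uniformly, which is precisely where the column maximal norm $L_{p'/2}(\cm;\ell_\infty^c)$ and its factorization property (the identity $\|\{x_k\}\|_{L_p(\cm;\ell_\infty^c)}=\|\{x_k^*x_k\}\|_{L_{p/2}(\cm;\ell_\infty)}^{1/2}$ recalled in Section~\ref{s4}) enter. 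Once $\|\Psi(h)\|_{L_{p'}\mathcal{MO}^c}\lesssim\|h\|_{L_{p'}(\lm;L_2^c)}$ is established, the Hahn--Banach/duality argument of the proof of Theorem~\ref{t3.1}(ii) produces, for each $\mathcal{L}\in(\mathcal{H}_p^c)^*$, an $h\in L_{p'}(\lm;L_2^c(\bx\times\rr_+))$ with $\mathcal{L}=\mathcal{L}_{\Psi(h)}$ and the required norm bound, completing the proof; the row and mixture cases follow by passing to adjoints and by the standard duality $(\mathcal X\cap\mathcal Y)^*=\mathcal X^*+\mathcal Y^*$ for a compatible couple.
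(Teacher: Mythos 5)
The exponent bookkeeping in your part~(i) does not close. You propose to ``run the same decomposition as in \eqref{e3.1}--\eqref{jjj1}'', i.e.\ split by Cauchy--Schwarz with the weight factors $[S_{\mathbf D}^c(f)]^{-1}$ and $S_{\mathbf D}^c(f)$. Carried out as in the $p=1$ case, that yields $\mathrm{I}\lesssim\tau\int_\bx \widetilde S_{\mathbf D}^c(f)(x,0)\,d\mu(x)\thicksim\|\cs^c(f)\|_{L_1(\lm)}$, which is not controlled by $\|f\|_{\mathcal H_p^c}$ for $p\in(1,2)$. Moreover, in your bound on $\mathrm{J}$ you pair $\sum_j\mathbb W(\cdot,j)\thicksim\cs^c(f)$ against $\sup^+_{B} g_B^\sharp$ and claim to Hölder with exponents ``$q=p/2$, $q'=p'/2$.'' But $p/2$ and $p'/2$ are not conjugate ($\tfrac{2}{p}+\tfrac{2}{p'}=2\neq 1$); the exponent actually conjugate to $p'/2$ is $(p'/2)'=\tfrac{p}{2-p}$, so this pairing gives $\mathrm{J}\lesssim\|\cs^c(f)\|_{L_{p/(2-p)}}\cdot\|g\|_{L_{p'}\mathcal{MO}^c}^2$, not $\|\cs^c(f)\|_{L_p}\|g\|^2$. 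Combining this with the wrong $\mathrm{I}$ gives a product that does not reduce to $\|f\|_{\mathcal H_p^c}^2\|g\|^2_{L_{p'}\mathcal{MO}^c}$. The fix, which is the actual new idea here, is to modify the Cauchy--Schwarz split: weight by $[S_{\mathbf D}^c(f)]^{(p-2)/2}$ and $[S_{\mathbf D}^c(f)]^{(2-p)/2}$ instead of $[S]^{-1/2}$ and $[S]^{1/2}$. Then one obtains $\mathrm{L}_1\lesssim\|\cs^c(f)\|_{L_p}^p$ by the monotonicity/integration-by-parts argument, and in $\mathrm{L}_2$ one telescopes the powers $[\mathbb S_{\mathbf D}^c(\cdot,j)]^{2-p}$, whose increments $\mathbb V(\cdot,j)$ sum to $[\cs^c(f)]^{2-p}$. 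Since $(2-p)\cdot(p'/2)'=(2-p)\cdot\tfrac{p}{2-p}=p$, the Hölder pairing with $\sup^+ g_B^\sharp\in L_{p'/2}$ gives $\mathrm{L}_2\lesssim\|\cs^c(f)\|_{L_p}^{2-p}\|g\|^2_{L_{p'}\mathcal{MO}^c}$, and the two factors multiply out to $\|f\|_{\mathcal H_p^c}^2\|g\|^2_{L_{p'}\mathcal{MO}^c}$. Without this $p$-dependent rebalancing the argument does not go through.

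In part~(ii) your outline is pointing the right way (far/near splitting, Cauchy--Schwarz, kernel decay, duality), but you omit the tool that actually makes the $L_{p'/2}(\lm;\ell_\infty)$ estimate work: the non-commutative Hardy--Littlewood-type maximal inequality derived from Doob's inequality via the collection of adjacent dyadic cube systems (Lemma~\ref{l6.1} feeding into Lemma~\ref{hlmi}). In the duality step one pairs the family of ball averages against a positive sequence $\{a_k\}$ or $\{b_k\}$ with $\|\sum_k a_k\|_{L_{(p'/2)'}(\lm)}\le 1$; after Fubini the resulting weighted averages over expanding balls must be controlled in $L_{(p'/2)'}(\lm)$, and it is precisely Lemma~\ref{hlmi} that does this. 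Your alternative suggestion to interpolate $\Psi$ between its $L_2\to L_2$ and $L_\infty\to\mathcal{BMO}^c$ endpoints risks circularity: identifying $[\mathcal{BMO}^c,\mathcal H_2^c]_\theta$ with $L_{p'}\mathcal{MO}^c$ is established only in Section~\ref{s7}, and the argument there depends on this very Theorem~\ref{t4.1}.
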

To prove Theorem \ref{t4.1}, we shall combine the argument of showing Theorem \ref{t3.1} with non-commutative maximal inequality on space of homogeneous type. Firstly, for any $x\in B$, let
$$\mu_{g,\,B}^{\sharp}(x):=\frac{1}{\mu(B)}\int_{ T(B)}\lf|\mathbf{D}_t(g)(y)\r|^2\,d\mu(y)\frac{dt}{t}.$$

\begin{proposition}\label{p4.00}
For any $p\in(2,\,\infty)$, there exists a constant $C>0$ such that
$$\lf\|\sup_{x\in B\subset\bx}^{\ \ \ \ \  +}\mu_{g,\,B}^{\sharp}\r\|_{L_{\frac{p}{2}}(\lm)}
^{1/2}\leq C\|g\|_{L_p\mathcal{MO}^{c}(\bx,\,\cm)}.$$
\end{proposition}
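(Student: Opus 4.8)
The statement to be proved, Proposition \ref{p4.00}, asserts a Carleson-type maximal bound: the $L_{p/2}(\lm;\,\ell_\infty)$-norm of the family $\{\mu_{g,\,B}^{\sharp}\}_{B}$ is controlled by $\|g\|_{L_p\mathcal{MO}^c(\bx,\,\cm)}^2$. The natural strategy is to mimic the proof of the inclusion ${\rm (i)}\Rightarrow{\rm (ii)}$ in Theorem \ref{carl} (that is, Proposition \ref{l2.1}), but now carrying the $\sup^+$ through each estimate instead of a scalar supremum. The plan is: fix a ball $B:=B_d(x_B,\,r_B)$, and decompose $g=(g-g_{4B})\chi_{4B}+(g-g_{4B})\chi_{(4B)^\complement}+g_{4B}=:g_1+g_2+g_3$ exactly as before. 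By condition $(\mathbf{H}_4)$, $\mathbf{D}_t(g_3)=0$, so $\mathbf{D}_t(g)=\mathbf{D}_t(g_1)+\mathbf{D}_t(g_2)$ on $T(B)$, and $\mu_{g,\,B}^{\sharp}\lesssim\mu_{g_1,\,B}^{\sharp}+\mu_{g_2,\,B}^{\sharp}$ as positive operators, so it suffices to bound each of the two pieces.

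For the local piece $g_1$: using the Fubini theorem and the reproducing formula \eqref{e2.50} as in \eqref{rep1}, one gets the pointwise (in $x\in B$) operator bound
$\mu_{g_1,\,B}^{\sharp}(x)\leq\frac{1}{\mu(B)}\int_{4B}|g(y)-g_{4B}|^2\,d\mu(y)
\lesssim (g_{4B}^{\sharp})(x')$
for $x'\in 4B$ — more precisely $\mu_{g_1,\,B}^{\sharp}$ is a constant operator equal to $\fint_{4B}|g-g_{4B}|^2\,d\mu$, which is dominated by $g_{4B}^{\sharp}(x)$ for $x\in 4B$. Hence $\sup_{x\in B\subset\bx}^{+}\mu_{g_1,\,B}^{\sharp}\leq C\sup_{x\in B'\subset\bx}^{+}g_{B'}^{\sharp}$ as a consequence of the order properties of the $\ell_\infty$-valued norm (one should cite the relevant monotonicity of $\|\cdot\|_{L_{p/2}(\lm;\,\ell_\infty)}$, e.g. from \cite{j02} or \cite{dj04}), and taking $L_{p/2}(\lm)$-norms gives the bound by $\|g\|_{L_p\mathcal{MO}^c(\bx,\,\cm)}^2$.

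For the far piece $g_2$: here I would reuse the pointwise estimate \eqref{e2.6}, namely the Kadison--Schwarz splitting from Lemma \ref{e2.5}, $|\mathbf{D}_t(g_2)(x)|^2\leq\mathrm{I}_1\cdot\mathrm{I}_2$, where $\mathrm{I}_1$ is a scalar integral of the kernel bounded by $r_B^{\epsilon_2}(t/r_B)^{2\epsilon_2}$ via $(\mathbf{H}_1)$ (computed exactly as in \eqref{e2.7}), and $\mathrm{I}_2=\int_{(4B)^\complement}|g(y)-g_{4B}|^2\,\frac{d\mu(y)}{\mu(B_d(x_B,d(y,x_B)))d(x,y)^{\epsilon_2}}$ is now an $\cm$-valued operator. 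Summing over the annuli $U_j(B)$ and controlling $|g_{2^{j+1}B}-g_{4B}|^2$ as in \eqref{e2.6x}, one obtains the operator bound $\mathrm{I}_2\lesssim r_B^{-\epsilon_2}\sum_{j\geq2}\frac{1+j^2}{2^{j\epsilon_2}}\,g_{2^{j+1}B}^{\sharp}$ for a suitable point, so that $\mu_{g_2,\,B}^{\sharp}(x)\lesssim\int_0^{r_B}(t/r_B)^{2\epsilon_2}\frac{dt}{t}\cdot r_B^{-\epsilon_2}\cdot r_B^{\epsilon_2}\sum_{j}\frac{1+j^2}{2^{j\epsilon_2}}\,g_{2^{j+1}B}^{\sharp}\lesssim\sum_{j\geq2}\frac{1+j^2}{2^{j\epsilon_2}}\,g_{2^{j+1}B}^{\sharp}$. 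Then $\sup_{x\in B\subset\bx}^{+}\mu_{g_2,\,B}^{\sharp}\lesssim\sum_{j\geq2}\frac{1+j^2}{2^{j\epsilon_2}}\,\sup_{x\in B'\subset\bx}^{+}g_{B'}^{\sharp}$, whose $L_{p/2}(\lm)$-norm, since the geometric series converges, is $\lesssim\|g\|_{L_p\mathcal{MO}^c(\bx,\,\cm)}^2$. Combining the two pieces finishes the proof; the row case is identical after passing to adjoints.

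\textbf{Main obstacle.} The genuinely non-commutative subtlety — the step I expect to require the most care — is that the $\sup^+$ (i.e. the $\ell_\infty$-valued norm) is \emph{not} given by an honest pointwise supremum of operators, so one cannot naively write ``$\sup_B\mu_{g_2,\,B}^{\sharp}\leq\sum_j c_j\sup_B g_B^{\sharp}$'' as an operator inequality and then take norms. What is actually needed is that $\|\cdot\|_{L_{p/2}(\lm;\,\ell_\infty)}$ is monotone with respect to the partial order of positive operators on each index and satisfies a quasi-triangle inequality for (countable) sums $\bigl\|\{\sum_j c_j x_j^{(\lambda)}\}_\lambda\bigr\|\leq\sum_j c_j\bigl\|\{x_j^{(\lambda)}\}_\lambda\bigr\|$ with the \emph{same} $\lambda$-family — both of which do hold (the first by \cite{j02}, the second because the defining infimum over factorizations $x=ayb$ behaves subadditively). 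The argument has to be phrased so that for the far piece one first dominates $\mu_{g_2,\,B}^{\sharp}$ by a \emph{fixed countable sum} $\sum_{j\geq2}c_j\,\mathcal{A}_j(g)_B$, where $\mathcal{A}_j(g)_B$ is itself an averaged oscillation over a dilate of $B$, uniformly over $B$, and only then invoke these two properties of the maximal norm; the weights $c_j=(1+j^2)2^{-j\epsilon_2}$ being summable is exactly what makes this work. Everything else (the kernel integrals $\mathrm{I}_1$, the telescoping of $g_{2^{j+1}B}-g_{4B}$, the volume comparisons) is a routine transcription of the computations already carried out in Proposition \ref{l2.1}.
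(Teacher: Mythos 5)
Your proposal is correct and follows essentially the same route as the paper's own proof of Proposition~\ref{p4.00}: the identical three-piece decomposition $g=g_1+g_2+g_3$ from Proposition~\ref{l2.1}, the vanishing of $\mathbf{D}_t(g_3)$ via $(\mathbf{H}_4)$, the Fubini/reproducing-formula argument \eqref{rep1} for the local piece, and the reuse of \eqref{e2.6}, \eqref{e2.7}, \eqref{rep2}, \eqref{e2.6x} to reduce the tail piece to a summable-weighted sum of dyadic oscillations. The paper compresses the final step — passing from $\sup^+_B\sum_j c_j\,(\text{oscillation over }2^{j+1}B)$ to $\|g\|^2_{L_p\mathcal{MO}^c}$ — into a single display without comment, whereas you correctly single out that this step relies on monotonicity of $\|\cdot\|_{L_{p/2}(\lm;\ell_\infty)}$ in the positive cone and its countable subadditivity over a fixed index family (both readily verified from the duality formula \eqref{e4.0}); flagging this is a useful clarification, not a deviation.
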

\begin{proof}
Let $g\in L_p\mathcal{MO}^{c}(\bx,\,\cm)$ and the ball $B:=B(x_B,\,r_B)$ with $x_B\in\bx,\,r_B>0$. Then we write
\begin{align*}
g&=(g-g_{4B})\chi_{4B}
+(g-g_{4B})\chi_{(4B)^{\complement}}
+g_{4B}\\
&=:g_1+g_2+g_3.
\end{align*}
Thus,
\begin{align*}
\lf\|\sup_{x\in B\subset\bx}^{\ \ \ \ \  +}\mu_{g,\,B}^{\sharp}\r\|_{L_{\frac{p}{2}}(\lm)}
\lesssim&\lf\|\sup_{x\in B\subset\bx}^{\ \ \ \ \  +}\frac{1}{\mu(B)}\int_{T(B)}\lf|\mathbf{D}_t(g_1)(y)\r|^2\,d\mu(y)\frac{dt}{t}\r\|
_{L_{\frac{p}{2}}(\lm)}\\
&\ \  +\lf\|\sup_{x\in B\subset\bx}^{\ \ \ \ \  +}\frac{1}{\mu(B)}\int_{T(B)}\lf|\mathbf{D}_t(g_2)(y)\r|^2\,d\mu(y)\frac{dt}{t}\r\|
_{L_{\frac{p}{2}}(\lm)}\\
&\ \  +\lf\|\sup_{x\in B\subset\bx}^{\ \ \ \ \  +}
\frac{1}{\mu(B)}\int_{T(B)}\lf|\mathbf{D}_t(g_3)(y)\r|^2\,\frac{d\mu(y)dt}{t}\r\|
_{L_{\frac{p}{2}}(\lm)}\\
=&:\mathrm{K}_1+\mathrm{K}_2+\mathrm{K}_3.
\end{align*}

For the term $\mathrm{K}_1$, similar to the proof of \eqref{rep1}, we deduce that
\begin{align*}
\mathrm{K}_1&\leq
\lf\|\sup_{x\in B\subset\bx}^{\ \ \ \ \  +}\frac{1}{\mu(B)}\int_{4B}\lf|g(z)-g_{4B}\r|^2
\,d\mu(z)\r\|_{L_{\frac{p}{2}}(\lm)}\lesssim\|g\|_{L_p\mathcal{MO}^{c}(\bx,\,\cm)}^2.
\end{align*}

For the term $\mathrm{K}_2$, substituting inequalities \eqref{rep2} and \eqref{e2.7}  into \eqref{e2.6}, we conclude that, for any $y\in B_d(x_B,\,r_B)$ and $t>0$,
\begin{align*}
\lf|\mathbf{D}_t(g_2)(y)\r|^2&\lesssim \lf(\frac{t}{r_B}\r)^{2\epsilon_2} \sum_{j=2}^{\infty}\frac{1}{2^{j}} \fint_{2^{j+1}B}
\lf|g(z)-g_{2^{j+1}B}\r|^2
+\lf|g_{2^{j+2}B}-g_{4B}\r|^2\,d\mu(z).
\end{align*}
This, in combination with \eqref{e2.6x}, yields
\begin{align*}
\mathrm{K}_2
&=\lf\|\sup_{x\in B\subset\bx}^{\ \ \ \ \  +}\frac{1}{\mu(B)}\int_0^{r_B}\int_{B}
\lf|\mathbf{D}_t(g_2)(y)\r|^2\,d\mu(y)\frac{dt}{t}\r\|_{L_{\frac{p}{2}}
(\lm)}\\
&\lesssim\lf\|\sup_{x\in B\subset\bx}^{\ \ \ \ \  +}\sum_{j=2}^{\infty}\frac{1}{2^j}
\fint_{2^{j+1}B}\lf|g(z)-g_{2^{j+1}B}\r|^2\,d\mu(z)\r\|
_{L_{\frac{p}{2}}(\lm)}\\
&\hspace{1.0cm}+\lf\|\sup_{x\in B\subset\bx}^{\ \ \ \ \  +}\sum_{j=2}^{\infty}\frac{1}{2^{j}}
\lf|g_{2^{j+1}B}-g_{4B}\r|^2\r\|_{L_{\frac{p}{2}}
(\lm)}\\
&\lesssim\|g\|_{L_p\mathcal{MO}^{c}(\bx,\,\cm)}^2.
\end{align*}

For the term $\mathrm{K}_3$, we apply the condition $(\mathbf{H}_4)$ to obtain
that  $\mathbf{D}_t(g_3)(x)\equiv0$ for any $x\in\bx$ and $t>0$. Therefore, $\mathrm{K}_3=0$.

Combining the estimates of $\mathrm{K}_1$, $\mathrm{K}_2$ and $\mathrm{K}_3$, we end the proof of Proposition \ref{p4.00}.
\end{proof}

\begin{lemma}\label{hlmi}
Let $p\in(1,\,\fz)$. Then there exists a positive constant $c_p$ such that
for any positive sequence $\{a_k\}_{k\in\zz}\subseteq L_p(\lm)$,
$$\lf\|\sum_{k\in\zz}\fint_{B_d(x,\,\delta^k)} a_k(y)\,d\mu(y)\r\|
_{L_p(\lm)}
\leq c_p
\lf\|\sum_{k\in\zz}a_k\r\|
_{L_p(\lm)}
.$$
\end{lemma}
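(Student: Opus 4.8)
The plan is to derive the inequality, by duality, from the non-commutative Hardy--Littlewood maximal inequality on $\bx$ established in \cite{hlw21}, used together with Junge's dual description \eqref{e4.0} of the non-commutative maximal norm. Throughout, write $M_r(a)(x):=\fint_{B_d(x,\,r)}a(y)\,d\mu(y)$ for the averaging operator and put $q:=p'$. A routine truncation (replace $\sum_{k\in\zz}$ by $\sum_{|k|\le N}$ and let $N\to\fz$, using the Fatou property of $L_p(\lm)$ and the positivity of all terms) reduces matters to a finite sum, so that $\sum_k M_{\delta^k}(a_k)$ is a well-defined positive element of $L_p(\lm)$. Then the standard duality for positive elements gives
\begin{align*}
\lf\|\sum_k M_{\delta^k}(a_k)\r\|_{L_p(\lm)}
=\sup\lf\{\tau\int_{\bx}\sum_k M_{\delta^k}(a_k)(z)\,b(z)\,d\mu(z):\ b\in L_q(\lm),\ b\ge0,\ \|b\|_{L_q(\lm)}\le1\r\}.
\end{align*}

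First I would transfer each averaging operator onto the test function. A Fubini computation (the trace being tracial) shows
\begin{align*}
\tau\int_{\bx}M_{\delta^k}(a_k)(z)\,b(z)\,d\mu(z)=\tau\int_{\bx}a_k(z)\,M_{\delta^k}^{*}(b)(z)\,d\mu(z),\qquad
M_{\delta^k}^{*}(b)(z):=\int_{\bx}\frac{\chi_{\{d(x,\,z)<\delta^k\}}}{V_{\delta^k}(x)}\,b(x)\,d\mu(x).
\end{align*}
The geometric input is that $d(x,\,z)<\delta^k$ forces $B_d(z,\,\delta^k)\subseteq B_d(x,\,2\delta^k)$, so the strong homogeneity property \eqref{e2.2} yields $V_{\delta^k}(z)\le V_{2\delta^k}(x)\ls V_{\delta^k}(x)$; consequently $M_{\delta^k}^{*}(b)(z)\ls M_{\delta^k}(b)(z)$ as positive operators, and since $a_k\ge0$ this gives $\tau\int M_{\delta^k}(a_k)\,b\ \ls\ \tau\int a_k\,M_{\delta^k}(b)$ for each $k$.

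Next I would invoke \eqref{e4.0}, applied at exponent $q$ (whose conjugate is $p$) to the positive sequence $\{M_{\delta^k}(b)\}_{k\in\zz}\subset L_q(\lm)$ and to the positive family $\{a_k/\|\sum_j a_j\|_{L_p(\lm)}\}_{k\in\zz}$, whose terms sum to an element of $L_p(\lm)$ of norm $1$; this yields
\begin{align*}
\sum_k\tau\int_{\bx}a_k(z)\,M_{\delta^k}(b)(z)\,d\mu(z)\le\lf\|\sum_k a_k\r\|_{L_p(\lm)}\lf\|\{M_{\delta^k}(b)\}_{k\in\zz}\r\|_{L_q(\lm;\,\ell_\infty)}.
\end{align*}
Finally, $\|\{M_{\delta^k}(b)\}_{k\in\zz}\|_{L_q(\lm;\,\ell_\infty)}\le\|\{M_r(b)\}_{r>0}\|_{L_q(\lm;\,\ell_\infty)}\ls\|b\|_{L_q(\lm)}\le1$ by the non-commutative maximal inequality of \cite{hlw21}, valid for every $q\in(1,\,\fz)$. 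Combining the three displays and taking the supremum over $b$ proves the claim, with $c_p$ depending only on $p$, the doubling constant of $\mu$, and the constant in \cite{hlw21}.

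Every individual step is short; the only points requiring attention are the justification of the positive-element duality and of the tracial Fubini step (both standard in semifinite non-commutative $L_p$-theory), and keeping the exponents correctly matched in the single use of Junge's formula \eqref{e4.0} --- it is applied at exponent $q$, so $\{a_k\}$ plays the role of the $L_p$-normalised test sequence while $\{M_{\delta^k}(b)\}$ is the sequence whose maximal norm appears. The substantive ingredient is the maximal inequality from \cite{hlw21}; in effect, the present lemma is its predual reformulation specialised to the scales $\{\delta^k\}_{k\in\zz}$.
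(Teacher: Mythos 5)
Your argument is correct, and it takes a genuinely different route from the paper's.

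The paper's proof of this lemma goes through the Hyt\"onen--Kairema adjacent systems of dyadic cubes (Lemma~\ref{l6.1}): for each $x$ and $k$ one finds $Q\in\mathcal{Q}^{(i)}_{k-2}$ with $B_d(x,\delta^k)\subset Q$ and $\mu(Q)\lesssim\mu(B_d(x,\delta^k))$, so that $\fint_{B_d(x,\delta^k)}a_k\lesssim\sum_{i=1}^{I_0}\mathbb{E}^{(i)}_{k-2}(a_k)(x)$, and then invokes the (dual) Doob inequality from \cite[Theorem~0.1]{j02} for each of the finitely many filtrations. Your proof instead dualises the statement directly against a positive $b\in L_{p'}(\lm)$ of norm at most one, transposes the averaging operator (where the doubling estimate $V_{\delta^k}(z)\lesssim V_{\delta^k}(x)$ for $d(x,z)<\delta^k$ takes care of the normalisation mismatch between $M_{\delta^k}$ and its formal adjoint), applies Junge's dual description \eqref{e4.0} at exponent $p'$ with $\{M_{\delta^k}(b)\}_k$ as the maximal sequence and $\{a_k/\|\sum_j a_j\|_p\}_k$ as the normalised test sequence, and closes with the non-commutative Hardy--Littlewood maximal inequality of \cite{hlw21} (the paper's Lemma~\ref{l5.x}). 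Each step is legitimate: the restriction from the full family of radii to $\{\delta^k\}_{k\in\zz}$ decreases the $L_{p'}(\lm;\ell_\infty)$ norm by the definition of the norm over uncountable index sets, and the truncation/Fatou reduction to finite sums is standard. What the paper's route buys is independence from the continuous-radius maximal theorem --- it only needs martingale Doob on finitely many dyadic filtrations. What yours buys is avoiding the dyadic machinery and the decomposition into $I_0$ systems altogether, and it makes explicit that the lemma is precisely the predual form of the maximal inequality of \cite{hlw21} specialised to the scales $\{\delta^k\}$; both ingredients (the dyadic systems and the maximal inequality) already appear elsewhere in the paper, so neither route is out of place. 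Your proof is, if anything, slightly more direct.
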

\begin{proof}
From Lemma \ref{l6.1},
for any $x\in\bx$ and $k\in\zz$, there exists $Q\in\mathcal{Q}^{(i)}_{k-2}$ for some $1 \leq i \leq I_0$ such that
$B_d(x,\,\delta^{k})\subset Q$ and $\mu(Q)\lesssim\mu(B_d(x,\,\delta^k))$.
Therefore,
$$
\fint_{B_d(x,\,\delta^k)} a_k(y)\,d\mu(y)
\lesssim
\fint_{Q} a_k(y)\,d\mu(y)=\mathbb{E}_{k-2}^{(i)}(a_k)(x).
$$
From this, we conclude that
\begin{align*}
\lf\|\sum_{k\in\zz}\fint_{B_d(x,\,\delta^k)} a_k(y)\,d\mu(y)\r\|
_{L_p(\lm)}
\lesssim\lf\|\sum_{k\in\zz}\lf(\sum_{i=0}^{I_0}\mathbb{E}_{k-2}^{(i)}(a_k)\r)\r\|
_{L_p(\lm)}
\lesssim\sum_{i=0}^{I_0}\lf\|\sum_{k\in\zz}\mathbb{E}_{k-2}^{(i)}(a_k)\r\|
_{L_p(\lm)}.
\end{align*}
where $\mathbb{E}_k^{(i)}$($0\leq i\leq I_0$) denotes the conditional expectation with respect to $\mathcal{Q}^{(i)}_{k}$. This, in combination with Doob's maximal inequality given in \cite[Theorem 0.1]{j02}, ends the proof of Lemma \ref{hlmi}.
\end{proof}

The following lemma
 plays an important role in the proof of Theorem \ref{t4.1}, which can be regarded as an extension of Lemma \ref{l3.2} to the case of $p\in (2,\,\infty)$, but it requires some extra techniques from non-commutative maximal inequality and non-commutative martingale.

\begin{lemma}\label{l4.1}
Let  $\Psi$ be the map given in Lemma \ref{d3.1}. Then for any $p\in(2,\,\infty)$,
 $\Psi$ is bounded from $L_{p}(\lm;\,L_2^c(\bx\times \mathbb{R}_+,\,d\mu(y)dt))$ to $L_p\mathcal{MO}^{c}(\bx,\,\cm)$. Similar result also holds for the row space.
\end{lemma}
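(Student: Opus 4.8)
The plan is to mimic the structure of the proof of Lemma \ref{l3.2}, but replace every appearance of the $L_\infty$-norm and the pointwise estimate by the corresponding non-commutative maximal ($L_p(\lm;\ell_\infty^c)$-type) machinery. By a density argument it suffices to prove, for any $S_{\lm}$-valued simple function $g$,
\[
\|\Psi(g)\|_{L_p\mathcal{MO}^c(\bx,\,\cm)}\lesssim \|g\|_{L_p(\lm;\,L_2^c(\bx\times\mathbb{R}_+,\,d\mu(y)dt))}.
\]
Fix a ball $B=B_d(x_B,\,r_B)$. Exactly as in Lemma \ref{l3.2} I would split $g=g_1+g_2$ with $g_1=g\chi_{4B}$ and $g_2=g\chi_{(4B)^\complement}$, subtract the constant $C_B$, and reduce the estimate of $(\Psi(g))_B^\sharp$ to the two pieces $\mathrm{II}_1$ (the far part, involving the H\"older-type regularity estimate $(\mathbf{H}_2)$) and $\mathrm{II}_2$ (the local part). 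The key point is that the geometric kernel estimate \eqref{3.1a} proved inside Lemma \ref{l3.2} is purely scalar and carries over verbatim; only the way it is combined with $g$ changes.

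For the far term $\mathrm{II}_1$: after inserting \eqref{3.1a}, the operator-valued quantity to control is
$\dfrac{1}{\mu(B)}\int_B\Big(\int_{(4B)^\complement}\fint_{4\tilde B(x)}|g(x,y,t)|^2\,\dots\Big)$,
i.e. a sum over the annuli $U_k(B)$ of averages of $\iint_{\Gamma_x}|g(x,y,t)|^2\,d\mu(y)dt$ against the summable weights $2^{-k\epsilon_1}$. Taking the supremum over all balls $B\ni x$ and then the $L_{p/2}(\lm)$-norm, I would bound this by $\sum_k 2^{-k\epsilon_1}$ times the $L_{p/2}(\lm;\ell_\infty)$-norm of the family of averages $\big\{\fint_{2^{k+1}B}\iint_{\Gamma_x}|g|^2\big\}_{B}$, and then invoke the non-commutative Hardy–Littlewood maximal inequality — here precisely in the form of Lemma \ref{hlmi} (or the Doob inequality \cite{j02} behind it, via the adjacent dyadic systems of Lemma \ref{l6.1}) — to dominate it by $\|\,\iint_{\Gamma_x}|g(x,y,t)|^2\,d\mu(y)dt\,\|_{L_{p/2}(\lm;\ell_\infty)}\lesssim\|g\|_{L_p(\lm;L_2^c)}^2$. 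The geometric decay $2^{-k\epsilon_1}$ makes the $k$-sum converge uniformly, as in \eqref{ana1}--\eqref{ana2}.

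For the local term $\mathrm{II}_2$, the $L_2(\lm)$-boundedness of $\Psi$ from Lemma \ref{d3.1}(2) gives, after the usual duality manipulation,
$\mathrm{II}_2\lesssim\big\|\fint_{4B}\iint_{\Gamma_x}|g_1(x,y,t)|^2\,d\mu(y)d\mu(x)\big\|_{\cm}$ in the $L_\infty$ case; in the $L_p$ case I would instead keep the ball-average inside and take $\sup_{x\in B\subset\bx}^{+}$ followed by $\|\cdot\|_{L_{p/2}(\lm)}$, again landing on the non-commutative maximal function of $x\mapsto\iint_{\Gamma_x}|g(x,y,t)|^2\,d\mu(y)dt$ and closing with Lemma \ref{hlmi}. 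Combining $\mathrm{II}_1$ and $\mathrm{II}_2$, and recalling $\|\Psi(g)\|_{L_p\mathcal{MO}^c}^2\sim\|\sup_{x\in B\subset\bx}^{+}(\Psi(g))_B^\sharp\|_{L_{p/2}(\lm)}$, finishes the column case; the row case follows by taking adjoints. The main obstacle is purely non-commutative bookkeeping: one cannot take a pointwise supremum over balls, so each step that in Lemma \ref{l3.2} was "estimate pointwise, then take $\sup_B$" must be rephrased as an honest $L_{p/2}(\lm;\ell_\infty)$-estimate, and the passage from the continuum of balls $\{B_d(x,r)\}$ to the countable adjacent dyadic systems of Lemma \ref{l6.1} (so that Doob's inequality applies) has to be done carefully, exactly as in the proof of Lemma \ref{hlmi}.
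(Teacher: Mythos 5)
Your proposal is correct in substance and reuses exactly the same ingredients as the paper's proof: the splitting $g=g_1+g_2$, the Kadison--Schwarz/kernel estimates from the proof of Lemma \ref{l3.2} (which, as you observe, are scalar and carry over unchanged), and a non-commutative Hardy--Littlewood maximal inequality via the adjacent dyadic systems of Lemma \ref{l6.1}. Where you diverge from the paper is in how the maximal-norm step is executed. You propose to work on the \emph{primal} side: establish a pointwise operator inequality
$\big(\Psi(g)\big)_B^{\sharp}\lesssim\sum_{m\ge 0}\delta^{m\epsilon_1}\,\mathfrak M_{\delta^{-m}}\!\big(\iint_{\Gamma_\cdot}|g|^2\big)$,
then pass directly to $\|\cdot\|_{L_{p/2}(\mathcal N;\ell_\infty)}$ via the triangle inequality in $m$ and close with the $\ell_\infty$-form of the maximal inequality (i.e.\ Lemma \ref{l5.x}, not Lemma \ref{hlmi}, which is the $\ell_1$-form; your citation is slightly off, and the quantity $\big\|\iint_{\Gamma_x}|g|^2\big\|_{L_{p/2}(\mathcal N;\ell_\infty)}$ should simply read $L_{p/2}(\mathcal N)$). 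The paper instead works entirely on the \emph{dual} side: it pairs with a positive sequence $\{a_k\}$ (resp.\ $\{b_k\}$) with $\|\sum_k a_k\|_{(p/2)'}\le 1$, applies Fubini and H\"older, and then invokes Lemma \ref{hlmi} on the dual exponent $(p/2)'$. Both routes are legitimate; the trade-off is that your primal argument for the local term tacitly relies on the fact that $\Psi$ is a right $\mathcal M$-module map, so that the scalar $L_2$-boundedness of $\Psi$ upgrades to the operator inequality $\int_{\mathbb X}|\Psi(g_1)|^2\le C\int_{\mathbb X}\iint_{\Gamma_x}|g_1|^2$ (this is the content hidden behind your phrase ``the usual duality manipulation''). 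The paper's duality-based route bypasses this module observation by running a Cauchy--Schwarz argument entirely under the trace (cf.\ \eqref{wer13}--\eqref{sde1}), at the cost of being a bit more bookkeeping-heavy. You should spell out the module property explicitly if you want the primal argument to be airtight, but there is no genuine gap in your plan.
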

\begin{proof}
It can be seen from Lemma \ref{l2.0} that for any $f\in L_p\mathcal{MO}^{c}(\bx,\,\cm)$,
\begin{align*}
\|f\|_{L_p\mathcal{MO}^{c}(\bx,\,\cm)}\sim\left\|
\sup_{k\in\mathbb{Z}}\fint_{B_d(\omega,\,\delta^k)}\lf|f(y)-f_{B_d(\omega,\,\delta^k)}\r|
^2\,d\mu(y)\right\|_{L_{\frac{p}{2}}(\mathcal{N})}^{1/2}.
\end{align*}
Then, it suffices to show that for any $g\in L_{p}(\lm;\,L_2^c(\bx\times \mathbb{R}_+,\,d\mu(y)dt))$,
\begin{align*}
\left\|\sup_{k\in\mathbb{Z}}\fint_{B_d(\omega,\,\delta^k)}
\lf|\Psi(g)(z)-\Psi(g)_{B_d(\omega,\,\delta^k)}\r|^2\,d\mu(z)\right\|_{L_{\frac{p}{2}}(\mathcal{N})}^{1/2}\lesssim \|g\|_{L_{p}(\lm;\,L_2^c(\bx\times \mathbb{R}_+,\,d\mu(y)dt))}.
\end{align*}
To show this inequality, we write
\begin{align*}
g(x,\,y,\,t)&=g(x,\,y,\,t)\chi_{B(\omega,\,\delta^{k-1})}(x)+g(x,\,y,\,t)
\chi_{(B(\omega,\,\delta^{k-1}))^{\complement}}(x)\\
&=:g_1(x,\,y,\,t)+g_2(x,\,y,\,t).
\end{align*}
By \eqref{iopiopiop}, we only need to show the $L_{\frac{p}{2}}(\mathcal{N};\,\ell_\infty)$-norm of expressions
\begin{align}\label{Agoal1}
\fint_{B(\omega,\,\delta^k)}\lf|
\int_{(B(\omega,\,\delta^{k-1}))
^{\complement}}
\iint_{\Gamma_x}g_2(x,\,y,\,t)
\lf[\mathbf{D}_t(z,\,y)-Q_{B(\omega,\,\delta^k)}(y)\r]\,\frac{d\mu(y)dt}{\lf[V_t(x)t\r]^{1/2}}d\mu(x)\r|^2\,d\mu(z)
\end{align}
and
\begin{align}\label{Agoal2}
\fint_{B(\omega,\,\delta^k)}\lf|\int_{\bx}\iint_{\Gamma_x}g_1(x,\,y,\,t)
\overline{\mathbf{D}_t(y,\,z)}\,\frac{d\mu(y)dt}{\lf[V_t(x)t\r]^{1/2}}d\mu(x)\r|^2\,d\mu(z)
\end{align}
are dominated by $\|g\|_{L_{p}(\lm;\,L_2^c(\bx\times \mathbb{R}_+,\,d\mu(y)dt))}^2$, where we recall that $Q_{B}(y):=\fint_{B}\mathbf{D}_t(z,\,y)\,d\mu(z)$ for $y\in\bx$.

To estimate the $L_{\frac{p}{2}}(\mathcal{N};\,\ell_\infty)$-norm of \eqref{Agoal1}, we first follow the proof of \eqref{eeeee000} and then \eqref{3.1a} to deduce that if $z\in B(\omega,\,\delta^{k})$, then
\begin{align*}
&\lf|
\int_{(B(\omega,\,\delta^{k-1}))
^{\complement}}
\iint_{\Gamma_x}g_2(x,\,y,\,t)
\lf[\mathbf{D}_t(z,\,y)-Q_{B(\omega,\,\delta^k)}(y)\r]\,\frac{d\mu(y)dt}{\lf[V_t(x)t\r]^{1/2}}d\mu(x)\r|^2\\
\lesssim&\delta^{k\epsilon_1}\int_{(B(\omega,\,\delta^{k-1}))^{\complement}}V(x,\,z)[d(x,\,z)]^{\epsilon_1}
\fint_{B(\omega,\,\delta^k)}\iint_{\Gamma_x}
\lf|\overline{\mathbf{D}_t(y,\,z)}-\overline{\mathbf{D}_t(y,\,s)}\r|^2
\,\frac{d\mu(y)dt}{V_t(x)t}d\mu(s)\\
&\hspace{1.0cm} \times\iint_{\Gamma_x}|g(x,\,y,\,t)|^2\,d\mu(y)dtd\mu(x)\\
\lesssim& \sum_{j=-\infty}^{k}\delta^{(k-j)\epsilon_1}\int_{B(\omega,\,\delta^{j-1})\backslash B(\omega,\,\delta^{j})}\frac{1}{V(x,\,z)}
\iint_{\Gamma_x}|g(x,\,y,\,t)|^2\,d\mu(y)dtd\mu(x),
\end{align*}
where the last inequality is a variant of the combination of inequalities \eqref{ana1} and \eqref{ana2} with a slight modification that the $L_{\infty}(\lm;\,L_2^c(\bx\times \mathbb{R}_+,\,d\mu(y)dt))$-norm of $g$ is not taken outside. Next, we shall use a duality argument to estimate the $L_{\frac{p}{2}}(\mathcal{N};\,\ell_\infty)$-norm of the right-hand side. To this end, we take a sequence of positive elements $\{a_k\}_{k\in\mathbb{Z}}\in L_{(\frac{p}{2})'}(\mathcal{N})$ such that $\|\sum_k a_k\|_{L_{(\frac{p}{2})'}(\mathcal{N})}=1$. Then, we use H\"{o}lder's inequality and Lemma \ref{hlmi} to see that
\begin{align*}
&\tau\int_\bx\sum_{k\in\mathbb{Z}} \sum_{j=-\infty}^{k}\delta^{(k-j)\epsilon_1}\fint_{B(\omega,\,\delta^{j-1})}
\iint_{\Gamma_x}|g(x,\,y,\,t)|^2\,d\mu(y)dtd\mu(x)a_k(\omega)d\mu(\omega)\\
&=\tau \int_\bx\iint_{\Gamma_x}|g(x,\,y,\,t)|^2\,d\mu(y)dt\sum_{j\in\mathbb{Z}} \fint_{B(x,\,\delta^{j-1})}
\sum_{k=j}^{\infty}\delta^{(k-j)\epsilon_1}a_k(\omega)d\mu(\omega)d\mu(x)\\
&\leq \|g\|_{L_{p}(\lm;\,L_2^c(\bx\times \mathbb{R}_+,\,d\mu(y)dt))}^2
\left\|\sum_{j\in\mathbb{Z}}\fint_{B(x,\,\delta^{j-1})}
\sum_{k=j}^{\infty}\delta^{(k-j)\epsilon_1}a_k(\omega)d\mu(\omega)
\right\|_{L_{(\frac{p}{2})'}(\mathcal{N})}\\
& \lesssim \|g\|_{L_{p}(\lm;\,L_2^c(\bx\times \mathbb{R}_+,\,d\mu(y)dt))}^2
\left\|\sum_{k\in\zz}\sum_{j=-\fz}^k\delta^{(k-j)
\epsilon_1}a_k\right\|_{L_{(\frac{p}{2})'}(\mathcal{N})} \\
&\lesssim \|g\|_{L_{p}(\lm;\,L_2^c(\bx\times \mathbb{R}_+,\,d\mu(y)dt))}^2.
\end{align*}
By taking the supremum in the above inequality over all $\{a_k\}_{k\in\mathbb{Z}}$ satisfying the stating properties, we conclude that the $L_{\frac{p}{2}}(\mathcal{N};\,\ell_\infty)$-norm of \eqref{Agoal1} is dominated by $\|g\|_{L_{p}(\lm;\,L_2^c(\bx\times \mathbb{R}_+,\,d\mu(y)dt))}^2$.

To estimate the $L_{\frac{p}{2}}(\mathcal{N};\,\ell_\infty)$-norm of \eqref{Agoal2}, we shall use the duality argument again. To this end, we take a sequence of positive elements $\{b_k\}_{k\in\mathbb{Z}}\in L_{(\frac{p}{2})'}(\mathcal{N})$ such that $\|\sum_k b_k\|_{L_{(\frac{p}{2})'}(\mathcal{N})}=1$. Note that
\begin{align}\label{wer13}
&\tau\int_{\bx}\sum_{k\in\mathbb{Z}}\fint_{B(\omega,\,\delta^k)}\lf|\int_{\bx}\iint_{\Gamma_x}g_1(x,\,y,\,t)
\overline{\mathbf{D}_t(y,\,z)}\,\frac{d\mu(y)dt}{\lf[V_t(x)t\r]^{1/2}}d\mu(x)\r|^2d\mu(z)\,b_k(\omega)d\mu(\omega)\nonumber\\
&\leq\sum_{k\in\mathbb{Z}}
\int_{\bx}\frac{1}{\mu(B(\omega,\,\delta^{k}))}\nonumber\\
&\hspace{1.0cm}\times\left\|\int_{B(\omega,\,\delta^{k-1})}\iint_{\Gamma_x}g(x,\,y,\,t)b_k(\omega)^{1/2}
\overline{\mathbf{D}_t(y,\,\cdot)}\,\frac{d\mu(y)dt}{\lf[V_t(x)t\r]^{1/2}}d\mu(x)\right\|_{L_2(\mathcal{N})}^2\,d\mu(\omega).
\end{align}
To estimate the $L_2(\mathcal{N})$ in the above expression, we use the duality again, Cauchy-Schwartz inequality and Remark \ref{r2.9} (iii) to deduce that
\begin{align}\label{sde1}
&\left\|\int_{B(\omega,\,\delta^{k-1})}\iint_{\Gamma_x}g(x,\,y,\,t)b_k(\omega)^{1/2}
\overline{\mathbf{D}_t(y,\,z)}\,\frac{d\mu(y)dt}{\lf[V_t(x)t\r]^{1/2}}d\mu(x)\right\|_{L_2(\mathcal{N})}^2\nonumber\\
&=\sup\limits_{\|h\|_{L_2(\mathcal{N})=1}}\left|\tau\int_\bx\int_{B(\omega,\,\delta^{k-1})}\iint_{\Gamma_x}g(x,\,y,\,t)b_k(\omega)^{1/2}
\overline{\mathbf{D}_t(y,\,z)}\,\frac{d\mu(y)dt}{\lf[V_t(x)t\r]
^{1/2}}d\mu(x)h(z)d\mu(z)\right|^2                                       \nonumber\\
&=\sup\limits_{\|h\|_{L_2(\mathcal{N})=1}}\left|\tau\int_{B(\omega,\,\delta^{k-1})}\iint_{\Gamma_x}g(x,\,y,\,t)b_k(\omega)^{1/2}
(\mathbf{D}_t(h^*)(y))^*\,\frac{d\mu(y)dt}{\lf[V_t(x)t\r]
^{1/2}}d\mu(x)\right|^2\nonumber\\
&\lesssim \tau\int_{B(\omega,\,\delta^{k-1})}\iint_{\Gamma_x}|g(x,\,y,\,t)|^2b_k(\omega)
d\mu(y)dtd\mu(x).
\end{align}
Substituting the inequality \eqref{sde1} into \eqref{wer13}, we conclude that
\begin{align*}
&\tau\int_{\bx}\sum_{k\in\mathbb{Z}}\fint_{B(\omega,\,\delta^k)}\lf|\int_{\bx}\iint_{\Gamma_x}g_1(x,\,y,\,t)
\overline{\mathbf{D}_t(y,\,z)}\,\frac{d\mu(y)dt}{\lf[V_t(x)t\r]^{1/2}}d\mu(x)\r|^2d\mu(z)\,b_k(\omega)d\mu(\omega)\nonumber\\
&\lesssim \tau
\int_{\bx}\iint_{\Gamma_x}|g(x,\,y,\,t)|^2d\mu(y)dt
\sum_{k\in\mathbb{Z}}\fint_{B(x,\,\delta^{k-1})}b_k(\omega)d\mu(\omega)
d\mu(x)\\
&\lesssim \|g\|_{L_{p}(\lm;\,L_2^c(\bx\times \mathbb{R}_+,\,d\mu(y)dt))}^2 \left\|\sum_{k\in\mathbb{Z}}\fint_{B(x,\,\delta^{k-1})}
b_k(\omega)d\mu(\omega)\right\|_{L_{(\frac{p}{2})'}(\mathcal{N})}\\
&\lesssim  \|g\|_{L_{p}(\lm;\,L_2^c(\bx\times \mathbb{R}_+,\,d\mu(y)dt))}^2\left\|\sum_{k\in\mathbb{Z}}b_k\right\|_{L_{(\frac{p}{2})'}(\mathcal{N})}\\
&\lesssim \|g\|_{L_{p}(\lm;\,L_2^c(\bx\times \mathbb{R}_+,\,d\mu(y)dt))}^2.
\end{align*}
By taking the supremum in the above inequality over all $\{b_k\}_{k\in\mathbb{Z}}$ satisfying the stating properties, we conclude that the $L_{\frac{p}{2}}(\mathcal{N};\,\ell_\infty)$-norm of \eqref{Agoal2} is dominated by $\|g\|_{L_{p}(\lm;\,L_2^c(\bx\times \mathbb{R}_+,\,d\mu(y)dt))}^2$. This finishes the proof of Lemma \ref{l4.1}.
\end{proof}


\begin{proof}[Proof of Theorem \ref{t4.1}]We first prove Theorem \ref{t4.1} (i).  We shall show that for any $S_{\cm}$-valued simple function $f$  and $g\in L_{p'}\mathcal{MO}^{c}(\bx,\,\cm)$,
\begin{align}\label{duagoal}
\lf|\cl_g(f)\r|\lesssim\|f\|_{\mathcal{H}_{p}^c(\bx,\,\cm)}\|g\|_
{L_{p'}\mathcal{MO}^c(\bx, \,\cm)}.
\end{align}
By equality \eqref{e3.1}, we have
\begin{align}\label{jkl1}
\lf|\cl_g(f)\r|^2
&=C\lf|\tau\int_{\bx}\int_0^{\fz}\int_{B_d(x,\,\frac{t}{2})}\mathbf{D}_t(f)(z)\lf[\mathbf{D}_t(g)(z)\r]^{*}
\,\frac{dtd\mu(z)}{V_{\frac{t}{2}}(x)t}d\mu(x)\r|^2 \nonumber\\
&\leq C\tau\int_{\bx}\int_0^{\fz}\lf[S_{\mathbf{D}}^c(f)(x,\,t)\r]^{p-2}
\int_{B_d(x,\,\frac{t}{2})}\lf|\mathbf{D}_t(f)(z)\r|^2
\,\frac{dtd\mu(z)}{V_{\frac{t}{2}}(x)t}d\mu(x)\nonumber\\
&\hspace{1.0cm} \times \tau\int_{\bx}\int_0^{\fz}\lf[S_{\mathbf{D}}^c(f)(x,\,t)\r]^{2-p}\int_{B_d(x,\,\frac{t}{2})}
\lf|\mathbf{D}_t(g)(z)\r|^2
\,\frac{dtd\mu(z)}{V_{\frac{t}{2}}(x)t}d\mu(x)\nonumber\\
&=:\mathrm{L}_1 \times \mathrm{L}_2,
\end{align}
where $S_{\mathbf{D}}^c(f)(x,\,t)$ and $\widetilde{S}_{\mathbf{D}}^c(f)(x,\,t)$ are given in \eqref{ggg1} and \eqref{ggg2}, respectively. These functions satisfy $[S_{\mathbf{D}}^c(f)(x,\,t)]^{p-2}\leq[\widetilde{S}_{\mathbf{D}}^c(f)(x,\,t)]^{p-2}$ for any $(x,\,t)\in\bx \times\rr_+$ and $p\in(1,\,2)$. Therefore,
\begin{align}\label{jkl2}
\mathrm{L}_1&\leq\tau\int_{\bx}\int_0^{\fz}\lf[\widetilde{S}_{\mathbf{D}}^{c}
(f)(x,\,t)\r]^{p-2}\int_{B_{d}(x,\,\frac{t}{2})}|\mathbf{D}_t(f)(z)|^2\,
\frac{d\mu(z)dt}{V_{\frac{t}{2}}(x)t}d\mu(x)\nonumber\\
&=-\tau\int_{\bx}\int_0^{\fz}\lf[\widetilde{S}_{\mathbf{D}}^{c}
(f)(x,\,t)\r]^{p-2}\frac{\partial}{\partial t}\lf[\widetilde{S}_{\mathbf{D}}^{c}
(f)(x,\,t)\r]^2\,dtd\mu(x)\nonumber\\
&\thicksim-\tau\int_{\bx}\int_0^{\fz}\lf[\widetilde{S}_{\mathbf{D}}^{c}
(f)(x,\,t)\r]^{p-1}\frac{\partial}{\partial t}\widetilde{S}_{\mathbf{D}}^{c}
(f)(x,\,t)\,dtd\mu(x)\nonumber\\
&\lesssim\tau\int_{\bx}\lf[\widetilde{S}_{\mathbf{D}}^{c}
(f)(x,\,0)\r]^{p}\,d\mu(x)\nonumber\\
&\lesssim \|f\|^p_{\mathcal{H}_p^{c}(\bx,\,\cm)}.
\end{align}

Now we estimate the term $\mathrm{L}_{2}$. Let $c_\delta:=1+(1-\delta)^{-1}$,
then
\begin{align}\label{e4.4}
\mathrm{L_2}
&=\tau\sum_{k\in\zz}\sum_{\alpha\in I_k}\int_{Q_{\alpha}^k}\int_{2C_4c_\delta\delta^k}
^{2C_4c_\delta\delta^{k-1}}\lf[S_{\mathbf{D}}^c(f)(x,\,t)\r]^{2-p}\int_{B_d(x,\,\frac{t}{2})}
\lf|\mathbf{D}_t(g)(z)\r|^2
\,\frac{d\mu(z)}{V_{\frac{t}{2}}(x)t}dtd\mu(x). \nonumber\\
&\leq\tau\sum_{k\in\zz}\sum_{\alpha\in I_k}\int_{Q_{\alpha}^k}\int_{2C_4c_\delta\delta^k}
^{2C_4c_\delta\delta^{k-1}}\lf[\mathbb{S}_{\mathbf{D}}^c(f)(x,\,k)\r]^{2-p}\int_{B_d(x,\,\frac{t}{2})}
\lf|\mathbf{D}_t(g)(z)\r|^2
\,\frac{d\mu(z)}{V_{\frac{t}{2}}(x)t}dtd\mu(x)\nonumber\\
   &=\tau\int_{\bx}\sum_{k\in\zz}\lf[\mathbb{S}_{\mathbf{D}}^c(f)(x,\,k)\r]^{2-p}\int_{2C_4c_\delta\delta^k}
^{2C_4c_\delta\delta^{k-1}}\int_{B_d(x,\,\frac{t}{2})}\lf|\mathbf{D}_t(g)(z)\r|^2
\,\frac{d\mu(z)}{V_{\frac{t}{2}}(x)t}dtd\mu(x),
\end{align}
where $Q_{\alpha}^k\in\mathcal{Q}$ and $I_k$, $\mathcal{Q}$, $C_4$, $\delta$ are given in Lemma \ref{l3.1} and where $\mathbb{S}_{\mathbf{D}}^c(f)$ is given in \eqref{mathS}.
%
Let  $\mathbb{V}(x,\,j):=[\mathbb{S}_{\mathbf{D}}^c(f)(x,\,j)]^{2-p}-[\mathbb{S}_{\mathbf{D}}^c(f)(x,\,j-1)]^{2-p}$ for any $x\in\bx$ and $j\in\zz$. By \eqref{e4.4} (which is a slight modification of \eqref{e3.5}), we can repeat the argument in showing \eqref{rept1} to deduce that
\begin{align*}
\mathrm{L}_{2}
&\lesssim\tau\sum_{j\in\zz}\sum_{\alpha\in I_j}\mathbb{V}(z_{\alpha}^j,\,j)
\int_{0}^{2C_4c_\delta\delta^{j-1}}\int_{B_d(z_{\alpha}^j,\,2C_4c_\delta\delta^{j-1})}\lf|\mathbf{D}_t(g)(z)\r|^2
\,\frac{d\mu(z)dt}{t}.
\end{align*}
Next, we apply H\"{o}lder's inequality and Lemma \ref{l3.1} to conclude that
\begin{align}\label{jkl3}
\mathrm{L}_{2}&\lesssim\tau\sum_{j\in\zz}\sum_{\alpha\in I_j}\int_{Q_{\alpha}^j}\mathbb{V}(x,\,j)
\int_{0}^{2C_4c_\delta\delta^{j-1}}\fint_{B_d(z_{\alpha}^j,\,2C_4c_\delta\delta^{j-1})}\lf|\mathbf{D}_t(g)(z)\r|^2
\,\frac{d\mu(z)dt}{t}d\mu(x)\nonumber\\
&\lesssim\tau\sum_{j\in\zz}\int_{\bx}\mathbb{V}(x,\,j)
\int_{0}^{2C_4c_\delta\delta^{j-1}}\fint_{B_d(x,\,4C_4c_\delta\delta^{j-1})}\lf|\mathbf{D}_t(g)(z)\r|^2
\,\frac{d\mu(z)dt}{t}d\mu(x)\nonumber\\
&\lesssim\lf\|\sum_{j\in\zz}\mathbb{V}(\cdot,\,j)\r\|_{L_{(\frac{p'}{2})'}(\lm)}
\lf\|\sup_{j\in\zz}^{\ \ \ \ \  +}
\int_{0}^{2C_4c_\delta\delta^{j-1}}\fint_{B_d(x,\,4C_4c_\delta\delta^{j-1})}\lf|\mathbf{D}_t(g)(z)\r|^2
\,\frac{d\mu(z)dt}{t}\r\|_{L_{\frac{p'}{2}}(\lm)}\\
&\lesssim\lf\|\lf[\mathbb{S}_{\mathbf{D}}^c(\cdot,\,+\infty)\r]^{2-p}\r\|_{L_{(\frac{p'}{2})'}(\lm)}
\|g\|_{L_{p'}\mathcal{MO}^c(\bx,\,\cm)}^2\nonumber\\
&\lesssim\|f\|_{\mathcal{H}_p^{c}(\bx,\,\cm)}
^{2-p}\|g\|_{L_{p'}\mathcal{MO}^c(\bx,\,\cm)}^2.\nonumber
\end{align}
Combining the estimates of $\mathrm{L}_1$ and $\mathrm{L}_2$ finishes the proof of \eqref{duagoal} and then Theorem \ref{t4.1} (i).

Next we prove Theorem \ref{t4.1} (ii). Let $\cl\in(\mathcal{H}_p^{c}(\bx,\,\cm))^{\ast}$.
By the definition of $\Phi$ given in Lemma \ref{d3.1} together with the Hahn-Banach theorem, we know that
$\cl$ can be extended to a continuous functional on $L_{p}(\lm;\,L_2^c(\bx\times \mathbb{R}_+,\,d\mu(y)dt))$ with the same norm.
Moreover, since
$$(L_{p}(\lm;\,L_2^c(\bx\times \mathbb{R}_+,\,d\mu(y)dt)))^{\ast}
=L_{p'}(\lm;\,L_2^c(\bx\times \mathbb{R}_+,\,d\mu(y)dt)),$$
there exists $h\in L_{p'}(\lm;\,L_2^c(\bx\times \mathbb{R}_+,\,d\mu(y)dt))$ such that
$
\|h\|_{L_{p'}(\lm;\,L_2^c(\bx\times \mathbb{R}_+,\,d\mu(y)dt))}
=\|\cl\|_{(\mathcal{H}_p^{c}(\bx,\,\cm))^{\ast}}.
$
This, together with Lemma \ref{l4.1}, implies that $g:=\Psi(h)\in L_{p'}\mathcal{MO}^c(\bx,\,\cm)$, satisfies equality \eqref{euqa} and
\begin{align*}
\|g\|_{L_{p'}\mathcal{MO}^c(\bx,\,\cm)}
=\|\Psi(h)\|_{L_{p'}\mathcal{MO}^c(\bx,\,\cm)}\lesssim
\|h\|_{L_{p'}(\lm;\,L_2^c(\bx\times \mathbb{R}_+,\,d\mu(y)dt))}.
\end{align*}
This completes the proof of Theorem \ref{t4.1} (ii) and hence of Theorem \ref{t4.1}.
\end{proof}
\begin{definition}
Let $p\in(1,\,\fz)$. We say that a $L_p(\cm)$-valued measure $dm$ on $\bx\times\rr_+$ is a {\it{$p$-Carleson measure}}, if
\begin{eqnarray*}
\lf\|\sup_{x\in B\subset\bx}^{\ \ \ \ \  +}\frac{1}{\mu(B)}\int_{ T(B)}dm(x,\,t)\r\|_{L_{p}(\lm)}<\infty,
\end{eqnarray*}
where the supremum is taken over all balls $B\subset\bx$ such that $B\ni x$.
\end{definition}
We have the following $p$-Carleson measure characterization theorem.
\begin{theorem}\label{c4.1}
Let $p\in(2,\,\infty)$. Then the following conditions are equivalent:
\begin{enumerate}
\item[\rm{(i)}] $g\in L_p\mathcal{MO}^c(\bx,\,\cm)$;
\item[\rm{(ii)}]
$dm_g=|\mathbf{D}_t(g)(x)|^2\,d\mu(x)\frac{dt}{t}$ is a $p$-Carleson measure on $\bx\times\rr_+$.
\end{enumerate}
Moreover, there exists a positive constant $C$ such that
$$\frac{1}{C}\lf\|\sup_{x\in B\subset\bx}^{\ \ \ \ \  +}\mu_{g,\,B}^{\sharp}\r\|_{L_{\frac{p}{2}}(\lm)}
\leq\|g\|_{L_p\mathcal{MO}^c(\bx,\,\cm)}^2
\leq{C}\lf\|\sup_{x\in B\subset\bx}^{\ \ \ \ \  +}\mu_{g,\,B}^{\sharp}\r\|_{L_{\frac{p}{2}}(\lm)}.$$
Similar result also holds for the row space.
\end{theorem}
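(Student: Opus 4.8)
The equivalence in Theorem~\ref{c4.1} splits into two inequalities. The bound $\|\sup_{x\in B\subset\bx}^{+}\mu_{g,\,B}^{\sharp}\|_{L_{\frac{p}{2}}(\lm)}\lesssim\|g\|_{L_p\mathcal{MO}^c(\bx,\,\cm)}^2$ --- that is, $\mathrm{(i)}\Rightarrow\mathrm{(ii)}$ together with its quantitative form --- is precisely Proposition~\ref{p4.00}, which has already been established, so nothing new is needed there. The remaining task is the converse: assuming $\|\sup_{x\in B\subset\bx}^{+}\mu_{g,\,B}^{\sharp}\|_{L_{\frac{p}{2}}(\lm)}<\infty$, to show $g\in L_p\mathcal{MO}^c(\bx,\,\cm)$ with $\|g\|_{L_p\mathcal{MO}^c(\bx,\,\cm)}^2\lesssim\|\sup_{x\in B\subset\bx}^{+}\mu_{g,\,B}^{\sharp}\|_{L_{\frac{p}{2}}(\lm)}$. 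This is the $L_p$-analogue of Proposition~\ref{a111}, and the plan is to mimic that proof, using the $\mathcal{H}_{p'}$--$L_p\mathcal{MO}$ duality of Theorem~\ref{t4.1} in place of the $H_1$--$\mathrm{BMO}$ duality (note $p'\in(1,2)$ and $(\mathcal{H}_{p'}^c(\bx,\,\cm))^{\ast}\backsimeq L_p\mathcal{MO}^c(\bx,\,\cm)$).

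Concretely, I would revisit the proof of Theorem~\ref{t4.1}$\,\mathrm{(i)}$ and stop one step before its last estimate. Running the chain \eqref{jkl1}--\eqref{jkl3}, the functional $\mathcal{L}_g$ of the form \eqref{formmm}, initially defined on $S_\cm$-valued simple functions, satisfies $|\mathcal{L}_g(f)|^2\lesssim\mathrm{L}_1\cdot\mathrm{L}_2$ with $\mathrm{L}_1\lesssim\|f\|_{\mathcal{H}_{p'}^c(\bx,\,\cm)}^{p'}$ by \eqref{jkl2}, while the argument producing \eqref{jkl3} gives, before the $L_p\mathcal{MO}$-norm is inserted,
\[
\mathrm{L}_2\lesssim\left\|\sum_{j\in\zz}\mathbb{V}(\cdot,\,j)\right\|_{L_{(\frac{p}{2})'}(\lm)}\left\|\sup_{j\in\zz}^{\ \ \ \ \ +}\int_{0}^{2C_4c_\delta\delta^{j-1}}\fint_{B_d(\cdot,\,4C_4c_\delta\delta^{j-1})}\lf|\mathbf{D}_t(g)(z)\r|^2\,\frac{d\mu(z)dt}{t}\right\|_{L_{\frac{p}{2}}(\lm)},
\]
and the first factor is $\lesssim\|f\|_{\mathcal{H}_{p'}^c(\bx,\,\cm)}^{2-p'}$ exactly as in \eqref{jkl3}. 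For the second factor I would \emph{not} invoke $g\in L_p\mathcal{MO}^c$; instead, for fixed $x$ and $j$, setting $R_j:=2C_4c_\delta\delta^{j-1}$ and using the inclusion $\{(z,t):d(x,z)<4C_4c_\delta R_j,\ 0<t\le R_j\}\subset T(B_d(x,(4C_4c_\delta+1)R_j))$ (here $A_d=1$) together with the doubling property \eqref{e2.1}, the truncated cone-average is bounded by a fixed multiple of $\mu_{g,\,B_d(x,\,cR_j)}^{\sharp}(x)$ for a suitable $c$; taking the supremum over $j$ and recalling $x\in B_d(x,cR_j)$, this dominates the inner maximal function by $\sup_{x\in B\subset\bx}^{+}\mu_{g,\,B}^{\sharp}$ at the level of $L_{\frac{p}{2}}(\lm;\,\ell_\infty)$ --- which is exactly the step already used implicitly in the last line of \eqref{jkl3}. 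Hence $|\mathcal{L}_g(f)|^2\lesssim\|f\|_{\mathcal{H}_{p'}^c(\bx,\,\cm)}^{2}\,\|\sup_{x\in B\subset\bx}^{+}\mu_{g,\,B}^{\sharp}\|_{L_{\frac{p}{2}}(\lm)}$.

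Thus $\mathcal{L}_g$ extends to a bounded functional on $\mathcal{H}_{p'}^c(\bx,\,\cm)$ with $\|\mathcal{L}_g\|^2\lesssim\|\sup_{x\in B\subset\bx}^{+}\mu_{g,\,B}^{\sharp}\|_{L_{\frac{p}{2}}(\lm)}$, and Theorem~\ref{t4.1} furnishes $\widetilde g\in L_p\mathcal{MO}^c(\bx,\,\cm)$ with $\mathcal{L}_g=\mathcal{L}_{\widetilde g}$ and $\|\widetilde g\|_{L_p\mathcal{MO}^c(\bx,\,\cm)}^2=\|\mathcal{L}_g\|^2$. Testing \eqref{formmm} against all $S_\cm$-valued simple functions forces $g=\widetilde g$ $\mu$-a.e., so $g\in L_p\mathcal{MO}^c(\bx,\,\cm)$ with the claimed bound; the row statement follows by passing to adjoints. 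I expect the main delicate point to be the careful justification --- at the level of the non-commutative maximal norm, which is handled through factorizations and the order duality \eqref{e4.0} rather than any literal pointwise supremum --- of the passage from the pointwise box-domination to the $L_{\frac{p}{2}}(\lm;\,\ell_\infty)$ inequality comparing the $\delta$-adic cone maximal function with $\sup_{x\in B\subset\bx}^{+}\mu_{g,\,B}^{\sharp}$, and in checking that the ball enlargement there is compatible with the scale-$\delta^{k-1}$ truncations built into the proof of Theorem~\ref{t4.1}.
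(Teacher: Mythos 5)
Your proposal is correct and coincides with the paper's own argument: the paper likewise cites Proposition \ref{p4.00} for one direction, and for the converse reruns the chain \eqref{jkl1}--\eqref{jkl3} from the proof of Theorem \ref{t4.1}, replacing the final invocation of $\|g\|_{L_{p'}\mathcal{MO}^c}$ by a direct domination of the $\delta$-adic truncated Carleson boxes by $\sup^{+}_{x\in B}\mu_{g,\,B}^{\sharp}$, then feeds the resulting functional bound back through the duality of Theorem \ref{t4.1}. The only discrepancy is a cosmetic one in your tent-containment constants (the radius should be a fixed multiple of $\delta^{j-1}$, on the order of $3\cdot 2C_4c_\delta\delta^{j-1}$ rather than $(4C_4c_\delta+1)\cdot 2C_4c_\delta\delta^{j-1}$), which is harmless for the estimate.
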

\begin{proof}
By repeating the proof of Theorem \ref{t4.1}, we know from inequalities \eqref{jkl1}, \eqref{jkl2} and \eqref{jkl3} that $g$ induces a linear functional $\mathcal{L}_g$ with the form \eqref{formmm}, initially defined on the set of all $S_{\cm}$-valued simple functions, satisfying
\begin{align}\label{iop00}
\lf|\cl_g(f)\r|^2&\leq C(\mathrm{L}_1 \times \mathrm{L}_2),
\end{align}
where $\mathrm{L}_1\lesssim \|f\|_{\mathcal{H}_p^{c}(\bx,\,\cm)}^p$ and
\begin{align*}
\mathrm{L}_{2}&\lesssim\lf\|\sum_{j\in\zz}\mathbb{V}(\cdot,\,j)\r\|
_{L_{(\frac{p'}{2})'}(\lm)}\lf\|\sup_{j\in\zz}^{\ \ \ \ \  +}
\int_{0}^{2C_4c_\delta\delta^{j-1}}\fint_{B_d(x,\,4C_4c_\delta
\delta^{j-1})}\lf|\mathbf{D}_t(g)(z)\r|^2
\,\frac{d\mu(z)dt}{t}\r\|_{L_{\frac{p'}{2}}(\lm)}\\
&\lesssim\lf\|\lf[\mathbb{S}_{\mathbf{D}}^c(\cdot,\,+\infty)\r]^{2-p}\r
\|_{L_{(\frac{p'}{2})'}(\lm)}
\lf\|\sup_{x\in B\subset\bx}^{\ \ \ \ \  +}\mu_{g,\,B}^{\sharp}\r\|_{L_{\frac{p'}{2}}(\lm)}^2\nonumber\\
&\lesssim\|f\|_{\mathcal{H}_p^{c}(\bx,\,\cm)}
^{2-p}\lf\|\sup_{x\in B\subset\bx}^{\ \ \ \ \  +}\mu_{g,\,B}^{\sharp}\r\|_{L_{\frac{p'}{2}}(\lm)}^2.\nonumber
\end{align*}

Substituting the estimates of $\mathrm{L}_1$ and $\mathrm{L}_2$ into inequality \eqref{iop00}, we conclude that
$$|\cl_g(f)|\lesssim \lf\|\sup_{x\in B\subset\bx}^{\ \ \ \ \  +}\mu_{g,\,B}^{\sharp}\r\|_{L_{\frac{p'}{2}}(\lm)}\|f\|_{\mathcal{H}_p^{c}(\bx,\,\cm)}.$$
This, in combination with Theorem \ref{t4.1}, completes the proof of Theorem \ref{c4.1}.
\end{proof}

\bigskip

\section{Relation between $H_p$ and $L_pMO$}\label{s5}
\setcounter{equation}{0}

In this section, we establish the equivalence between $L_p\mathcal{MO}^{\dagger}(\bx,\,\cm)$
 and $\mathcal{H}_p^{\dagger}(\bx,\,\cm)$ for $p\in(2,\,\infty)$ and $\dagger\in \{c,r,cr\}$.
\begin{theorem}\label{t5.1}
For any $p\in(2,\,\infty)$ and $\dagger\in\{c,r,cr\}$, one has
$$\mathcal{H}_p^{\dagger}(\bx,\,\cm)=L_p\mathcal{MO}^\dagger(\bx,\,\cm)$$
with equivalent norms.
\end{theorem}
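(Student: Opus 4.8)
The plan is to prove the column identity $\mathcal{H}_p^c(\bx,\,\cm)=L_p\mathcal{MO}^c(\bx,\,\cm)$ with equivalent norms; the row case is obtained by the same argument, with the auxiliary results of Sections~\ref{s3} and~\ref{s4} replaced by their row counterparts (all of which are noted to hold), and the mixture case for $p\in(2,\,\infty)$ then follows immediately because $\mathcal{H}_p^{cr}$ and $L_p\mathcal{MO}^{cr}$ are in this range the corresponding intersections equipped with the maximum norm. By density it suffices to show, for every $S_{\cm}$-valued simple function $f$ and every $p\in(2,\,\infty)$,
\begin{align*}
\|f\|_{\mathcal{H}_p^c(\bx,\,\cm)}\thicksim\|f\|_{L_p\mathcal{MO}^c(\bx,\,\cm)};
\end{align*}
that the completions then coincide follows from this two-sided estimate together with the duality $(\mathcal{H}_{p'}^c(\bx,\,\cm))^{\ast}\backsimeq L_p\mathcal{MO}^c(\bx,\,\cm)$ of Theorem~\ref{t4.1}, the reflexivity of the spaces involved, and a Hahn--Banach argument (any functional in $(L_p\mathcal{MO}^c)^{\ast}\backsimeq\mathcal{H}_{p'}^c$ annihilating all simple functions is zero).

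The inequality $\|f\|_{L_p\mathcal{MO}^c}\lesssim\|f\|_{\mathcal{H}_p^c}$ is the easy half. A simple function lies in $\mathcal{H}_2^c(\bx,\,\cm)=L_2(\lm)$ by Remark~\ref{r2.9}(iii), so Lemma~\ref{d3.1}(3) gives $\Psi\Phi(f)=f$. Combining this with the fact that $\Phi$ is an isometry from $\mathcal{H}_p^c(\bx,\,\cm)$ into $L_p(\lm;\,L_2^c(\bx\times\rr_+,\,d\mu(y)dt))$ (Lemma~\ref{d3.1}(1)) and that $\Psi$ maps this space boundedly into $L_p\mathcal{MO}^c(\bx,\,\cm)$ (Lemma~\ref{l4.1}) yields
\begin{align*}
\|f\|_{L_p\mathcal{MO}^c(\bx,\,\cm)}=\|\Psi\Phi(f)\|_{L_p\mathcal{MO}^c(\bx,\,\cm)}\lesssim\|\Phi(f)\|_{L_p(\lm;\,L_2^c(\bx\times\rr_+,\,d\mu(y)dt))}=\|f\|_{\mathcal{H}_p^c(\bx,\,\cm)}.
\end{align*}

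For the reverse bound $\|f\|_{\mathcal{H}_p^c}\lesssim\|f\|_{L_p\mathcal{MO}^c}$, which is the substantive half, I would dualize against $\mathcal{H}_{p'}^c(\bx,\,\cm)$ with $p'=p/(p-1)\in(1,\,2)$. Using that $\Phi$ is an isometry and that $(L_{p'}(\lm;\,L_2^c))^{\ast}=L_p(\lm;\,L_2^c)$, and that (by self-adjointness of $\mathbf{D}_t$ and Fubini) $\langle\Phi(f),\,h\rangle=\overline{\tau\int_{\bx}\Psi(h)(z)f^{\ast}(z)\,d\mu(z)}$ for $h\in L_{p'}(\lm;\,L_2^c(\bx\times\rr_+,\,d\mu(y)dt))$, one gets $\|f\|_{\mathcal{H}_p^c}=\sup_{\|h\|\leq1}|\mathcal{L}_f(\Psi(h))|$ with $\mathcal{L}_f$ the functional in \eqref{formmm}. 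Since $\|\mathcal{L}_f\|_{(\mathcal{H}_{p'}^c(\bx,\,\cm))^{\ast}}\thicksim\|f\|_{L_p\mathcal{MO}^c(\bx,\,\cm)}$ by Theorem~\ref{t4.1}, the whole reverse bound reduces to the single new ingredient: \emph{$\Psi$ maps $L_{p'}(\lm;\,L_2^c(\bx\times\rr_+,\,d\mu(y)dt))$ boundedly into $\mathcal{H}_{p'}^c(\bx,\,\cm)$ for every $1<p'<2$}. This is the dual counterpart of Lemma~\ref{l4.1}, and its proof should follow the same pattern: the composed kernel $K_{s,t}(w,\,y):=\int_{\bx}\mathbf{D}_s(w,\,z)\overline{\mathbf{D}_t(y,\,z)}\,d\mu(z)$ inherits the standard almost-orthogonality estimates from $(\mathbf{H}_1)$--$(\mathbf{H}_4)$, and, after discretizing the scales $(s,t)$ through the adjacent dyadic systems of Lemmas~\ref{l3.1} and~\ref{l6.1}, the estimate of $\|\cs^c(\Psi(h))\|_{L_{p'}(\lm)}$ is reduced to Doob's maximal inequality, i.e. to the non-commutative Hardy--Littlewood maximal inequality of \cite{hlw21} as packaged in Lemma~\ref{hlmi}. (Alternatively, one can prove $\|f\|_{\mathcal{H}_p^c}\lesssim\|f\|_{L_p\mathcal{MO}^c}$ by a direct argument along the lines of the proof of Theorem~\ref{t4.1}(i): write $\|f\|_{\mathcal{H}_p^c}^2=\|\cs^c(f)^2\|_{L_{p/2}(\lm)}$, pair $\cs^c(f)^2$ with a positive $b\in L_{(p/2)'}(\lm)$, use Fubini to rewrite the conical integral as $\iint_{\bx\times\rr_+}\tau(|\mathbf{D}_t(f)(y)|^2\beta(y,t))\frac{d\mu(y)dt}{t}$ with $\beta(y,t)$ comparable to the ball average of $b$, dominate $\beta(y,t)$ by a dyadic conditional expectation, and conclude with the $p$-Carleson characterization of Theorem~\ref{c4.1}, Junge's formula \eqref{e4.0} and Lemma~\ref{hlmi}.)

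The main obstacle is exactly this passage to a maximal inequality. In the commutative setting the inclusion $L_p\mathcal{MO}^c\subseteq\mathcal{H}_p^c$ rests on a good-$\lambda$/Carleson-embedding argument comparing the area integral of $dm_f$ with its Carleson maximal function, and that argument manipulates distribution functions of the quantities in play, which is meaningless for operator-valued objects. The non-commutative substitute must reorganize everything through the dyadic filtration $\{\mathcal{Q}^{(i)}_k\}$, replace the pointwise domination ``$\fint_B b\leq Mb$'' by the variational description of the maximal norm in \eqref{e4.0}, and invoke Doob's inequality in the form of Lemma~\ref{hlmi}; throughout, products under the trace must be kept in an order that preserves positivity (e.g. $\tau(|\mathbf{D}_t(f)(y)|^2\,b(x))$ read as $\tau(|\mathbf{D}_t(f)(y)b(x)^{1/2}|^2)$), and the geometric comparison of the balls $B(y,t)$ with the cubes of the finitely many systems $\mathcal{Q}^{(1)},\ldots,\mathcal{Q}^{(I_0)}$ must be done with constants uniform in the system.
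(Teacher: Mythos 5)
Your ``easy half'' ($\|f\|_{L_p\mathcal{MO}^c}\lesssim\|f\|_{\mathcal{H}_p^c}$) is correct, and in fact cleaner than the paper's: you factor $f=\Psi\Phi(f)$, use the isometry of $\Phi$ (Lemma~\ref{d3.1}) and the boundedness $\Psi: L_p(\lm;L_2^c)\to L_p\mathcal{MO}^c$ (Lemma~\ref{l4.1}). The paper instead expands the duality pairing with an $\mathcal{H}_{p'}^c$ test function via the Calder\'on reproducing formula and applies Cauchy--Schwarz; both are valid and both ultimately rest on Lemma~\ref{l4.1}.

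The ``hard half,'' however, has a genuine gap. You correctly reduce $\|f\|_{\mathcal{H}_p^c}\lesssim\|f\|_{L_p\mathcal{MO}^c}$ to the claim that $\Psi$ maps $L_{p'}(\lm;L_2^c(\bx\times\rr_+,d\mu(y)dt))$ boundedly into $\mathcal{H}_{p'}^c(\bx,\,\cm)$ for $1<p'<2$. But this is exactly the content of Corollary~\ref{io44} for exponents in $(1,2)$, and in the paper that corollary is \emph{derived from} Theorem~\ref{t5.1} (its proof for $p\in(1,2)$ explicitly invokes Theorems~\ref{t4.1} and~\ref{t5.1}). So you are reducing the theorem to a statement that the paper treats as a downstream consequence, and you would need an independent direct proof. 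Your sketch --- almost-orthogonality of the composed kernels $K_{s,t}$, discretization through the adjacent dyadic systems, and Doob's inequality via Lemma~\ref{hlmi} --- is not a proof of this: almost-orthogonality together with Cotlar--Stein gives the $L_2$ case, and the pairing/Doob strategy you invoke (as in the proof of Lemma~\ref{l4.1}) works precisely because the target there is $L_p\mathcal{MO}^c$ with $p>2$, where one can dualize the maximal norm against $L_{(p/2)'}$ densities via~\eqref{e4.0}. For a target $\mathcal{H}_{p'}^c$ with $p'<2$, the square function norm does not admit that duality, and the analogue of a tent-space singular-integral bound below $L_2$ typically requires atomic decomposition or a weak-type endpoint, neither of which you supply. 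Your parenthetical alternative (pair $\cs^c(f)^2$ against $b\in L_{(p/2)'}$, dominate the ball average by a conditional expectation, invoke~\eqref{e4.0} and Lemma~\ref{hlmi}) is the non-commutative Carleson embedding in disguise; you correctly flag it as ``the main obstacle'' but do not actually close it.

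The paper sidesteps this obstacle with a genuinely different device. It introduces the tent space $T_{p'}^c(\bx,\cm)$, shows (by the same stopping-time argument as in Theorem~\ref{t4.1}) that $g\in L_p\mathcal{MO}^c$ defines a bounded functional $\cl_g$ on $T_{p'}^c$ with $\|\cl_g\|\lesssim\|g\|_{L_p\mathcal{MO}^c}$, and then --- this is the key step you are missing --- extends $\cl_g$ by Hahn--Banach to all of $L_{p'}(\lm;L_2^c(\bx\times\rr_+,\frac{d\mu(y)dt}{V_t(y)t}))$, reads off the explicit representing function $h$ so that $\mathbf{D}_t(g)(y)=\fint_{B_d(y,t)}h(x,y,t)\,d\mu(x)$, and closes via Kadison--Schwarz and the non-commutative Hardy--Littlewood maximal inequality (Lemma~\ref{l5.x}) applied to a single ball average at each scale. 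This avoids ever having to prove that $\Psi$ (or $\Phi\Psi$) is bounded on tent spaces for exponents below $2$. You have identified the right toolbox (dyadic filtration, Junge's formula~\eqref{e4.0}, Doob, \cite{hlw21}), but not the Hahn--Banach extension trick that makes the hard direction close.
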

To prove Theorem \ref{t5.1}, we need the following lemma, which can be regarded as an operator-valued analogue of the {\it Hardy-Littlewood maximal inequality}.
\begin{lemma}\rm{\cite[Theorem 4.1]{hlw21}}\label{l5.x}
Assume that $(\bx,\,d,\,\mu)$ be a space of homogeneous type.
Then for any $p\in(1,\,\fz)$, there exists a $C>0$ such that for any $f\in L_p(\lm)$,
$$\lf\|\{\mathfrak{M}_r(f)\}_{r>0}\r\|_{L_p(\lm,\,\ell_{\fz})} \leq C\|f\|_{L_p(\lm)},$$
where
$$\mathfrak{M}_r(f)(x):=\fint_{B(x,\,r)}f(y)\,d\mu(y),\ \ x\in\bx\ \ {\rm and}\ \ r>0.$$
\end{lemma}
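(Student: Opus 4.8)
The statement is \cite[Theorem 4.1]{hlw21}; the plan is to dominate the continuum of averaging operators $\{\mathfrak{M}_r\}_{r>0}$ by finitely many dyadic‑martingale conditional‑expectation families and then invoke the non‑commutative Doob maximal inequality of Junge \cite{j02}. Recall from Lemma \ref{l6.1} the Hyt\"onen--Kairema adjacent systems $\mathcal{Q}^{(1)},\dots,\mathcal{Q}^{(I_0)}$, with $\mathcal{Q}^{(i)}=\bigcup_{k\in\zz}\mathcal{Q}^{(i)}_k$ and each $\{\mathcal{Q}^{(i)}_k\}_k$ an increasing sequence of partitions of $\bx$: for every $r>0$, writing $k=k(r)$ for the unique integer with $\delta^{k+3}<r\le\delta^{k+2}$, and for every $x\in\bx$, there exist $i\in\{1,\dots,I_0\}$ and $Q\in\mathcal{Q}^{(i)}_{k}$ with $B_d(x,r)\subset Q\subset B_d(x,Cr)$. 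Denote by $\mathbb{E}^{(i)}_k$ the conditional expectation of $\lm$ onto $L_\infty(\bx,\sigma^{(i)}_k,d\mu)\overline{\otimes}\cm$; for each $i$ the family $\{\mathbb{E}^{(i)}_k\}_{k\in\zz}$ is an increasing filtration of conditional expectations.

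By the usual reduction (decompose a general $f\in L_p(\lm)$ into its four positive parts, each of $L_p$‑norm $\lesssim\|f\|_{L_p(\lm)}$, and use that $L_p(\lm;\ell_\infty)$ is a norm for $p>1$) it suffices to bound $\|\{\mathfrak{M}_r(f)\}_{r>0}\|_{L_p(\lm;\ell_\infty)}$ for $0\le f\in L_p(\lm)$. For such $f$, any $r>0$ and a.e.\ $x\in\bx$, choosing $i=i(x,r)$ and $Q=Q(x,r)$ as above and using doubling \eqref{e2.1}--\eqref{e2.2},
\begin{align*}
0\le\mathfrak{M}_r(f)(x)=\frac{1}{\mu(B_d(x,r))}\int_{B_d(x,r)}f\,d\mu
\le\frac{\mu(Q)}{\mu(B_d(x,r))}\fint_Q f\,d\mu
\lesssim\fint_Q f\,d\mu=\mathbb{E}^{(i)}_{k(r)}(f)(x)\le\sum_{i=1}^{I_0}\mathbb{E}^{(i)}_{k(r)}(f)(x),
\end{align*}
all inequalities holding in the positive cone of $\cm$ for a.e.\ $x$, with constant independent of $x$ and $r$. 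Thus $\{\mathfrak{M}_r(f)\}_{r>0}$ is dominated pointwise in $x$, in the order of $\cm$, by $\{C\sum_{i=1}^{I_0}\mathbb{E}^{(i)}_{k(r)}(f)\}_{r>0}$.

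Now fix any countable $\{r_j\}_j\subset(0,\infty)$. By monotonicity and subadditivity of the Junge functional on positive families,
\begin{align*}
\big\|\{\mathfrak{M}_{r_j}(f)\}_j\big\|_{L_p(\lm;\ell_\infty)}
\le C\sum_{i=1}^{I_0}\big\|\{\mathbb{E}^{(i)}_{k(r_j)}(f)\}_j\big\|_{L_p(\lm;\ell_\infty)}
\le C\sum_{i=1}^{I_0}\big\|\{\mathbb{E}^{(i)}_k(f)\}_{k\in\zz}\big\|_{L_p(\lm;\ell_\infty)},
\end{align*}
the last step because $\{\mathbb{E}^{(i)}_{k(r_j)}(f)\}_j$ uses only values already present in the $\zz$‑indexed family (the map $r\mapsto k(r)$ being onto $\zz$). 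For each $i$, the non‑commutative Doob maximal inequality (Junge \cite{j02}, in the bilateral $\zz$‑indexed form) gives $\|\{\mathbb{E}^{(i)}_k(f)\}_{k\in\zz}\|_{L_p(\lm;\ell_\infty)}\le c_p\|f\|_{L_p(\lm)}$ for $1<p<\infty$. Taking the supremum over all countable $\{r_j\}_j$ — which by definition equals $\|\{\mathfrak{M}_r(f)\}_{r>0}\|_{L_p(\lm;\ell_\infty)}$ — yields the claim with $C'=CI_0c_p$.

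The step I expect to require the most care is the order‑domination in the operator‑valued, continuum‑indexed setting: one must check that the $x$‑dependence of the chosen adjacent dyadic cube is harmless (which is exactly why one sums over all $I_0$ systems), that the resulting inequality genuinely holds in the positive cone of $\cm$ for a.e.\ $x$ with an $(x,r)$‑uniform constant, and that this passes correctly to the $L_p(\lm;\ell_\infty)$ ``norm'' — for which one needs the standard but somewhat delicate facts that this functional is monotone and subadditive on positive families and that its value over an uncountable index set is computed through countable subfamilies. Granting these, the entire analytic weight of the theorem is carried by Junge's non‑commutative Doob inequality.
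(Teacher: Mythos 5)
The paper does not actually prove this lemma: it is imported verbatim as \cite[Theorem 4.1]{hlw21}, so there is no internal proof to compare against. Your argument, however, is correct as a self-contained proof for the case at hand ($\mathcal{N}=L_\infty(\bx)\overline{\otimes}\mathcal{M}$, $p>1$): the pointwise domination $\mathfrak{M}_r(f)(x)\lesssim\sum_{i=1}^{I_0}\mathbb{E}^{(i)}_{k(r)}(f)(x)$ for $f\ge0$ follows exactly from Lemma \ref{l6.1}(ii) and doubling, and the passage to the maximal norm uses only the monotonicity and subadditivity of the positive-cone $L_p(\lm;\ell_\infty)$ functional, which for $1<p<\infty$ are immediate from the duality \eqref{e4.0}; the bilateral ($\zz$-indexed) Doob inequality is obtained from Junge's theorem by exhausting with finite subfamilies, since the constant is independent of the number of conditional expectations. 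This is precisely the technique the paper itself employs for the related Lemma \ref{hlmi} (dyadic domination by the $I_0$ adjacent Hyt\"onen--Kairema systems plus Doob), and it is the space-of-homogeneous-type analogue of Mei's trick of dominating the Hardy--Littlewood maximal function by finitely many shifted dyadic martingales on $\rr^d$. What you gain over the citation is a short, transparent proof in the concrete setting used here; what \cite{hlw21} provides beyond this is a much more general framework (maximal ergodic inequalities for group actions with doubling conditions, including weak type $(1,1)$ endpoints) that cannot be reduced to a single martingale domination. The points you flag as delicate are indeed the right ones, and each is covered by the standard facts cited above, so I see no genuine gap.
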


\begin{proof}[Proof of Theorem \ref{t5.1}] We divide the proof into two steps.

\textbf{Step 1:}
In this step, we show that $\mathcal{H}_p^{c}(\bx,\,\cm)\subset L_p\mathcal{MO}^c(\bx,\,\cm)$ for $p\in(2,\,\infty)$. By Theorem \ref{t4.1}, for any
$f\in\mathcal{H}_p^{c}(\bx,\,\cm)$,
\begin{align*}
\|f\|_{L_p\mathcal{MO}^c(\bx,\,\cm)}&=\sup_{\|g\|_{\mathcal{H}_{p'}^{c}(\bx,\,\cm)}\leq1}
\lf|\tau\int_{\bx}g(x)f^{\ast}(x)\,d\mu(x)\r|\\
&=\sup_{\|g\|_{\mathcal{H}_{p'}^{c}(\bx,\,\cm)}\leq1}
\lf|\tau\int_{\bx}\int_0^{\fz}\int_{B_d(x,\,t)}\mathbf{D}_t(g)(y)
\lf[\mathbf{D}_t(f)(y)\r]^{*}\,\frac{d\mu(y)dt}{V_t(x)t}d\mu(x)\r|\\
&\leq\sup_{\|g\|_{\mathcal{H}_{p'}^{c}(\bx,\,\cm)}\leq1}
\lf[\tau\int_{\bx}\lf(\int_0^{\fz}\int_{B_d(x,\,t)}
\lf|\mathbf{D}_t(f)(y))\r|^2\,\frac{d\mu(y)dt}{V_t(x)t}\r)^{p/2}d\mu(x)\r]^{1/p}\\
&\ \ \ \times\lf[\tau\int_{\bx}\lf(\int_0^{\fz}\int_{B_d(x,\,t)}
\lf|\mathbf{D}_t(g)(y))\r|^2\,\frac{d\mu(y)dt}{V_t(x)t}\r)^{p'/2}d\mu(x)\r]^{1/p'}\\
&\leq\|f\|_{\mathcal{H}_p^{c}(\bx,\,\cm)}.
\end{align*}
This finishes the proof of \textbf{Step 1}.

\textbf{Step 2:} In this step, we prove that
$L_p\mathcal{MO}^c(\bx,\,\cm)\subset\mathcal{H}_p^{c}(\bx,\,\cm)$. To prove this, we need to introduce the {\it tent space}. For any $f\in L_p(\lm;\,L_2^c(\bx\times\rr_+,\,\frac{d\mu(y)dt}{V_t(y)t}))$ with $p\in(1,\,\infty)$, we define
$$\mathcal{A}^c(f)(x):=\lf(\iint_{\Gamma_x}|f(y,\,t)|^2\,\frac{d\mu(y)dt}{V_t(x)t}\r)^{1/2}.$$
Then the tent space $T_p^c(\bx,\,\cm)$ is defined by setting
$$T_p^c(\bx,\,\cm):=\lf\{f\in L_p(\mathcal{N};\,L_2^c(\bx\times\rr_+,\,\frac{d\mu(y)dt}{V_t(y)t}))
:\|f\|_{T_p^c(\bx,\,\cm)}<\infty\r\},
$$
equipped with the norm
$\|f\|_{T_p^c(\bx,\,\cm)}:=\lf\|\mathcal{A}^c(f)\r\|_{L_p(\lm)}.$

Now we claim that for any $g\in L_p\mathcal{MO}^c(\bx,\,\cm)$ with $p\in(2,\,\infty)$, $g$ induces a continuous linear functional $\cl_g$ on $T^c_{p'}(\bx,\, \cm)$ by
\begin{align}\label{e5.1}
\cl_g(f):=\tau\int_0^{\infty}\int_{\bx}f(y,\,t)(\mathbf{D}_t(g)(y))^*\,d\mu(y)\frac{dt}{t}
\end{align}
and
\begin{align}\label{e5.2}
\|g\|_{\mathcal{H}_p^{c}(\bx,\, \cm)}\lesssim\|\cl_g\|
\lesssim\|g\|_{L_p\mathcal{MO}^c(\bx,\,\cm)}.
\end{align}
To prove this claim, we define
$$\mathcal{A}^c(f)(x,\,t):=\lf(\int_t^{\infty}
\int_{B_d(x,\,s-\frac{t}{2})}|f(y,\,s)|^2\,
\frac{d\mu(y)ds}{V_{\frac{s}{2}}(x)s}\r)^{1/2}.$$
By approximation, we assume that $\mathcal{A}^c(f)(x,\,t)$ is invertible for any $(x,\,t)\in\bx\times\rr_+$. Then, by the Cauchy-Schwarz inequality, we have
\begin{align*}
\lf|\cl_g(f)\r|^2&=\lf|\tau\int_0^{\fz}\int_{\bx}f(y,\,t)
(\mathbf{D}_t(g)(y))^*\,d\mu(y)\frac{dt}{t}\r|^2\\
&=C\lf|\tau\int_{\bx}\int_0^{\fz}\int_{B_d(x,\,\frac{t}{2})}
f(y,\,t)(\mathbf{D}_t(g)(y))^*\,\frac{d\mu(y)dt}{V_{\frac{t}{2}}(x)t}d\mu(x)\r|^2\\
&\leq C\tau\int_{\bx}\int_0^{\fz}\lf[\mathcal{A}^c(f)(x,\,t)\r]^{p'-2}
\int_{B_d(x,\,\frac{t}{2})}
|f(y,\,t)|^2\,\frac{d\mu(y)dt}{V_{\frac{t}{2}}(x)t}d\mu(x)\\
&\ \ \ \ \times\tau\int_{\bx}\int_0^{\fz}\lf[\mathcal{A}^c(f)(x,\,t)\r]^{2-p'}
\int_0^{\fz}\int_{B_d(x,\,\frac{t}{2})}
|\mathbf{D}_t(g)(y)|^2\,\frac{d\mu(y)dt}{V_{\frac{t}{2}}(x)t}d\mu(x)\\
&=:C(\mathrm{P}_1\times\mathrm{P}_2).
\end{align*}

For the terms $\mathrm{P}_1$ and $\mathrm{P}_2$, similar to the estimates of
$\mathrm{L}_1$ and $\mathrm{L}_2$ in the proof of Theorem \ref{t4.1}, but with $S_{\mathbf{D}}^c(f)(x,\,t)$ being replaced by $\mathcal{A}^c(f)(x,\,t)$, we conclude that
\begin{align*}
\mathrm{P}_1\lesssim\|f\|_{T_{p'}^c(\bx, \,\cm)}^{p'} \ \ \mathrm{and} \ \
\mathrm{P}_2\lesssim\|f\|_{T_{p'}^c(\bx, \,\cm)}^{2-p'}\|g\|
_{L_p\mathcal{MO}^c(\bx,\,\cm)}^2,
\end{align*}
which implies that
\begin{align}\label{e5.3}
\|\cl_g\|\lesssim\|g\|_{L_p\mathcal{MO}^c(\bx,\,\cm)}.
\end{align}

Next we show that $\|g\|_{\mathcal{H}_p^{c}(\bx,\,\cm)}\lesssim\|\cl_g\|$. By the definition of
 $T_{p'}^c(\bx,\, \cm)$, we can identify $T_{p'}^c(\bx,\, \cm)$
 as a subspace of $L_{p'}(\lm;\,L_2^c(\bx\times\rr_+,\,\frac{d\mu(y)dt}{V_t(y)t}))$ via the following map:
 \begin{align*}
\overline{\Phi}:f(y,\,t)\mapsto f(y,\,t)\chi_{\Gamma_x}(y,\,t).
\end{align*}
Therefore, by the Hahn-Banach theorem, $\cl_g$  can be extended to a continuous linear functional on
$L_{p'}(\lm;\,L_2^c(\bx\times\rr_+,\,\frac{d\mu(y)dt}{V_t(y)t}))$ with the same norm. Then, by the fact that
$$(L_{p'}(\lm;\,L_2^c(\bx\times\rr_+,\,\frac{d\mu(y)dt}{V_t(y)t})))^{*}
=L_{p'}(\lm;\,L_2^c(\bx\times\rr_+,\,\frac{d\mu(y)dt}{V_t(y)t})),$$
there exists $h\in L_p(\lm;\,L_2^c(\bx\times\rr_+,\,\frac{d\mu(y)dt}{V_t(y)t}))$ such that
$$\|h\|_{L_p(\lm;\,L_2^c(\bx\times\rr_+,\,\frac{d\mu(y)dt}{V_t(y)t}))}=\|\cl_g\|$$
and that for any $f\in T^c_{p'}(\bx,\, \cm)$,
\begin{align*}
\cl_g(f)&=\tau\int_{\bx}\iint_{\Gamma_x}f(y,\,t)h^{*}(x,\,y,\,t)\,
\frac{d\mu(y)dt}{V_t(y)t}d\mu(x)\\
&=\tau\int_{\bx}\int_0^{\fz}f(y,\,t)\int_{B_d(y,\,t)}h^{*}(x,\,y,\,t)\,
\frac{d\mu(x)dt}{V_t(y)t}d\mu(y).
\end{align*}
This, combined with \eqref{e5.1}, indicates that
\begin{align*}
\mathbf{D}_t(g)(y)=\int_{B_d(y,\,t)}h(x,\,y,\,t)\,\frac{d\mu(x)}{V_t(y)}.
\end{align*}
By Lemma \ref{e2.5}, we have
\begin{align}\label{e5.4}
\|g\|_{\mathcal{H}_p^{c}(\bx,\,\cm)}
&=\lf[\tau\int_{\bx}\lf(\iint_{\Gamma_z}
\lf|\mathbf{D}_t(g)(y)\r|^2\,\frac{d\mu(y)dt}{V_t(z)t}\r)^{p/2}\,d\mu(z)\r]^{1/p}\\\nonumber
&=\lf[\tau\int_{\bx}\lf(\iint_{\Gamma_z}
\lf|\int_{B_d(y,\,t)}h(x,\,y,\,t)\,\frac{d\mu(x)}{V_t(y)}\r|^2\,
\frac{d\mu(y)dt}{V_t(z)t}\r)^{p/2}\,d\mu(z)\r]^{1/p}\\\nonumber
&\lesssim\lf[\tau\int_{\bx}\lf(\int_0^{\fz}\int_{B_d(z,\,t)}
\int_{B_d(y,\,t)}\lf|h(x,\,y,\,t)\r|^2\,\,\frac{d\mu(x)}{V_t(y)}\,
\frac{d\mu(y)dt}{V_t(z)t}\r)^{p/2}\,d\mu(z)\r]^{1/p}\\\nonumber
&\lesssim\lf[\tau\int_{\bx}\lf(\int_0^{\fz}\int_{\bx}
\int_{B_d(z,\,2t)}\lf|h(x,\,y,\,t)\r|^2\,
\frac{d\mu(x)}{V_t(y)}\,\frac{d\mu(y)dt}{V_t(z)t}\r)^{p/2}\,d\mu(z)\r]^{1/p}\\\nonumber
&\thicksim\lf\|\int_0^{\fz}\int_{\bx}
\int_{B_d(z,\,2t)}\lf|h(x,\,y,\,t)\r|^2\,\frac{d\mu(x)}{V_t(y)}\,\frac{d\mu(y)dt}{V_t(z)t}\r\|
_{L_{\frac{p}{2}}(\lm)}^{1/2}.\nonumber
\end{align}
It follows from \eqref{e2.1}, \eqref{e4.0}, Fubini's theorem and Lemma \ref{l5.x} that for any positive $a\in L_{(\frac{p}{2})'}(\lm)$ with
$\|a\|_{L_{(\frac{p}{2})'}(\lm)}\leq1$,
\begin{align*}
&\tau\int_{\bx}\int_0^{\fz}\int_{\bx}
\int_{B_d(z,\,2t)}\lf|h(x,\,y,\,t)\r|^2\,
\frac{d\mu(x)}{V_t(y)}\,\frac{d\mu(y)dt}{V_t(z)t}\,a(z)\,d\mu(z)\\
\lesssim&\tau\int_{\bx}\int_0^{\fz}\int_{\bx}
\lf|h(x,\,y,\,t)\r|^2\,\frac{d\mu(y)dt}{V_t(y)t}\,\frac{1}{V_{2t}(x)}\int_{B_d(x,\,2t)}a(z)\,d\mu(z)d\mu(x)\\
\lesssim&\lf\|\int_0^{\fz}\int_{\bx}
\lf|h(x,\,y,\,t)\r|^2\,\frac{d\mu(y)dt}{V_t(y)t}\r\|_{L_{\frac{p}{2}}(\lm)}
\lf\|\sup_{x\in B\subset\bx}^{\ \ \ \ \ +}\frac{1}{\mu(B)}\int_{B}a(z)\,d\mu(z)\r\|
_{L_{(\frac{p}{2})'}(\lm)}\\
\lesssim&\lf\|h\r\|_{L_{p}(\lm;\,L_2^c(\bx\times\rr_+,\,\frac{d\mu(y)dt}{V_t(y)t}))}^2\\
\thicksim&\lf\|\cl_g\r\|^2.
\end{align*}
From this, \eqref{e5.4} and taking the supremum for all $a$ as above, we conclude that
$$\|g\|_{\mathcal{H}_p^{c}(\bx,\,\cm)}\lesssim\lf\|\cl_g\r\|,$$
which, combined with \eqref{e5.3}, implies that claim \eqref{e5.2} holds true.

Finally, to complete the proof of $\mathcal{H}_p^{c}(\bx,\,\cm)=L_p\mathcal{MO}^c(\bx,\,\cm)$, we only need to show that the set of
$S_{\cm}$-valued simple functions is dense in $L_p\mathcal{MO}^c(\bx,\,\cm)$. Indeed, from the proof of Theorem \ref{t4.1}, we see that for any $g\in L_p\mathcal{MO}^c(\bx,\,\cm)$, there
exists $h\in L_p(\lm;\,L_2^c(\bx\times\rr_+,\,d\mu(y)dt))$ such that
$$g=\Psi(h)\ \ \ \ \mathrm{and} \ \ \ \ \|\Psi(h)\|_{L_p\mathcal{MO}^c(\bx,\,\cm)}
\lesssim\|h\|_{L_p(\lm;\,L_2^c(\bx\times\rr_+,\,d\mu(y)dt))}.$$
By a density argument, we can choose a sequence of $S_{\lm}$-valued simple functions $\{h_i\}_{i\in\nn}$ such that
$h_i\rightarrow h$ in $L_p(\lm;\,L_2^c(\bx\times\rr_+,\,d\mu(y)dt))$, as $i\rightarrow\infty$. Let $g_i=\Psi(h_i)$. Then we deduce that, for each $i$, $g_i$ is an  $S_{\cm}$-valued simple function
and that
$$g_i\rightarrow g\ \ \ \mathrm{in} \ \ L_p\mathcal{MO}^c(\bx,\,\cm),\ \ \mathrm{as} \ \ i\rightarrow\infty.$$
This shows that the set of
$S_{\cm}$-valued simple functions is dense in $L_p\mathcal{MO}^c(\bx,\,\cm)$ and
therefore, concludes the proof of Theorem \ref{t5.1}.
\end{proof}
\begin{corollary}\label{io44}
Let $\Psi$ be the map given in Lemma \ref{d3.1}. Then for any $p\in(1,\,\infty)$,
 $\Psi$ is bounded from $L_{p}(\lm;\,L_2^c(\bx\times \mathbb{R}_+,\,d\mu(y)dt))$ to $\mathcal{H}_{p}^c(\bx,\,\cm)$.
\end{corollary}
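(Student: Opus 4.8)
The plan is to prove Corollary~\ref{io44} by interpolation, using the two endpoint boundedness results for $\Psi$ that were already established: the $L_2$-estimate $\Psi:L_2(\lm;\,L_2^c(\bx\times\rr_+,\,d\mu(y)dt))\to\mathcal{H}_2^c(\bx,\,\cm)$ from Lemma~\ref{d3.1}(2) (recall $\mathcal{H}_2^c(\bx,\,\cm)=L_2(\lm)$ by Remark~\ref{r2.9}(iii)), and the $L_p$-estimate $\Psi:L_p(\lm;\,L_2^c(\bx\times\rr_+,\,d\mu(y)dt))\to L_p\mathcal{MO}^c(\bx,\,\cm)$ for $p\in(2,\,\infty)$ from Lemma~\ref{l4.1}, which by Theorem~\ref{t5.1} equals $\Psi:L_p(\lm;\,L_2^c(\bx\times\rr_+,\,d\mu(y)dt))\to\mathcal{H}_p^c(\bx,\,\cm)$. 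Thus for $p\in[2,\,\infty)$ the statement is already known, and the new content is the range $p\in(1,\,2)$, which should follow by complex interpolation between the $L_2$ endpoint and an $L_{p_0}$ endpoint with $2<p_0<\infty$.

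First I would fix $p\in(1,\,2)$, pick $p_0\in(2,\,\infty)$, and choose $\theta\in(0,\,1)$ so that $\frac1p=\frac{1-\theta}{2}+\frac{\theta}{p_0}$. The domain spaces interpolate correctly: by the standard complex interpolation identity for vector-valued non-commutative $L_p$-spaces (see \cite{bl76} and the references on non-commutative $L_p$-interpolation used elsewhere in the paper), one has
\[
\lf[L_2(\lm;\,L_2^c(\bx\times\rr_+,\,d\mu(y)dt)),\,L_{p_0}(\lm;\,L_2^c(\bx\times\rr_+,\,d\mu(y)dt))\r]_{\theta}
=L_{p}(\lm;\,L_2^c(\bx\times\rr_+,\,d\mu(y)dt)).
\]
For the target spaces I would invoke the interpolation scale for operator-valued Hardy spaces: combining the $H_1$--$BMO$ duality (Theorem~\ref{t3.1}), the $H_p$--$H_{p'}$ duality (part (3) of Theorem~\ref{duua}), the identification $\mathcal{H}_p^c=L_p\mathcal{MO}^c$ for $p>2$ (Theorem~\ref{t5.1}), and the interpolation theorem for these Hardy spaces stated as the second displayed theorem in the introduction, one gets
\[
\lf[\mathcal{H}_2^c(\bx,\,\cm),\,\mathcal{H}_{p_0}^c(\bx,\,\cm)\r]_{\theta}
=\mathcal{H}_{p}^c(\bx,\,\cm),\qquad p\in(1,\,2).
\]
Then the abstract complex interpolation functor applied to the single operator $\Psi$, which is bounded on both endpoints, yields boundedness of $\Psi:L_{p}(\lm;\,L_2^c(\bx\times\rr_+,\,d\mu(y)dt))\to\mathcal{H}_{p}^c(\bx,\,\cm)$ for all $p\in(1,\,2)$, and together with the already-known range $p\in[2,\,\infty)$ this completes the proof for the column case; the row case is identical by taking adjoints.

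The main obstacle is a bookkeeping one: one must make sure that the interpolation identity for the Hardy-space scale is genuinely available at the exponents we need \emph{before} Corollary~\ref{io44} is invoked, so there is no circularity. In the paper's logical order, Theorem~\ref{t5.1} and Theorem~\ref{t4.1} are proved in Sections~\ref{s4} and~\ref{s5} without reference to Corollary~\ref{io44}, and the Hardy-space interpolation theorem is proved later in Section~\ref{s7}; if Corollary~\ref{io44} is used in that interpolation proof, the cleaner route is to argue directly by complex interpolation of the operator $\Psi$ between the concrete spaces $L_2(\lm)$ and $L_p\mathcal{MO}^c(\bx,\,\cm)$ (identifying the latter with $\mathcal{H}_p^c$ via Theorem~\ref{t5.1}), relying only on Lemma~\ref{d3.1}(2) and Lemma~\ref{l4.1}, which are established strictly earlier. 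So I would present the proof as: fix $p_0\in(2,\,\infty)$, recall $\Psi$ is bounded $L_2\to L_2(\lm)$ and $L_{p_0}(\lm;\,L_2^c)\to\mathcal{H}_{p_0}^c(\bx,\,\cm)=L_{p_0}\mathcal{MO}^c(\bx,\,\cm)$, apply the complex interpolation method, and use the known interpolation identities for the domain and range scales to land exactly in $\mathcal{H}_p^c(\bx,\,\cm)$ for every $p\in(1,\,2)$; glue with the trivial range $p\in[2,\,\infty)$ to finish.
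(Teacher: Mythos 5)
Your interpolation set-up cannot produce the missing range $p\in(1,2)$. If you choose $p_0\in(2,\infty)$ and $\theta\in(0,1)$ with $\tfrac1p=\tfrac{1-\theta}{2}+\tfrac{\theta}{p_0}$, then because $\tfrac{1}{p_0}<\tfrac12$ you always get $\tfrac1p<\tfrac12$, i.e.\ $p>2$: complex interpolation between the $L_2$ endpoint and any $L_{p_0}$ endpoint with $p_0>2$ only reaches intermediate exponents in $(2,p_0)$, never any $p<2$. The only available estimates for $\Psi$ (Lemma~\ref{d3.1}(2), Lemma~\ref{l3.2}, Lemma~\ref{l4.1}) all sit at exponents $\geq 2$, so no pair of them brackets the interval $(1,2)$ and no amount of interpolation between them reaches $p\in(1,2)$.

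The paper gets the range $p\in(1,2)$ by duality rather than interpolation. By Theorems~\ref{t4.1} and~\ref{t5.1} one has $(\mathcal{H}_{p}^c(\bx,\cm))^{*}\backsimeq L_{p'}\mathcal{MO}^{c}(\bx,\cm)=\mathcal{H}_{p'}^{c}(\bx,\cm)$ for $p\in(1,2)$, so
$\|\Psi(h)\|_{\mathcal{H}_{p}^c(\bx,\cm)}=\sup_{\|g\|_{\mathcal{H}_{p'}^c(\bx,\cm)}\leq1}\bigl|\tau\int_{\bx}\Psi(h)(z)g^{*}(z)\,d\mu(z)\bigr|$;
unwinding the definition of $\Psi$ and applying the pointwise Cauchy--Schwarz inequality in $\mathcal M$ together with H\"older's inequality with exponents $p$ and $p'$ bounds this by $\|h\|_{L_p(\lm;L_2^c(\bx\times\rr_+,d\mu(y)dt))}$. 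Note also that both Theorem~\ref{t4.1} and Theorem~\ref{t5.1} are established before Corollary~\ref{io44} and independently of it, so there is no circularity in that route. If you want a proof that avoids explicitly invoking the dual identification, the duality computation can be run directly with $\mathcal{A}^c$-type square functions, but the interpolation-only approach as written is not repairable for $p<2$.
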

\begin{proof}
By Lemma \ref{l4.1} and Theorem \ref{t5.1}, it remains to prove  the conclusion holds for $p\in(  1,\,2)$. To show this, we apply Theorems \ref{t4.1} and \ref{t5.1} to deduce that
\begin{align*}
\lf\|\Psi(h)\r\|_{\mathcal{H}_{p}^c(\bx,\,\cm)}
=&\sup_{\|g\|_{\mathcal{H}_{p'}^c(\bx,\,\cm)}\leq1}\lf|\tau\int_{\bx}
\Psi(h)(z)g^*(z)\,d\mu(z)\r|\\
=&\sup_{\|g\|_{\mathcal{H}_{p'}^c(\bx,\,\cm)}\leq1}\lf|\tau\int_{\bx}
\int_{\bx}\iint_{\Gamma_x}h(x,\,y,\,t)
\mathbf{D}_t(z,\,y)\,\frac{d\mu(y)dt}{[V_t(x)t]^{1/2}}d\mu(x)g^*(z)\,d\mu(z)\r|\\
=&\sup_{\|g\|_{\mathcal{H}_{p'}^c(\bx,\,\cm)}\leq1}\lf|\tau
\int_{\bx}\int_0^{\fz}\int_{B_d(x,\,t)}h(x,\,y,\,t)
\lf[\mathbf{D}_t(g)(y)\r]^*\,\frac{d\mu(y)dt}{[V_t(x)t]^{1/2}}d\mu(x)\r|\\
\leq&\sup_{\|g\|_{\mathcal{H}_{p'}^c(\bx,\,\cm)}\leq1}\lf\{\tau
\int_{\bx}\lf[\iint_{\Gamma_x}|h(x,\,y,\,t)|^2
\,d\mu(y)dt\r]^{p/2}d\mu(x)\r\}^{1/p}\\
&\ \ \times \lf\{\tau
\int_{\bx}\lf[\iint_{\Gamma_x}
\lf|\mathbf{D}_t(g)(y)\r|^2\,\frac{d\mu(y)dt}{V_t(x)t}\r]^{p'/2}d\mu(x)\r\}^{1/p'}\\
\leq&\|h\|_{L_{p}(\lm;\,L_2^c(\bx\times \mathbb{R}_+,\,d\mu(y)dt))}.
\end{align*}
This finishes the proof of Corollary \ref{io44}.
\end{proof}
\begin{corollary}\label{corrrrr}
Let $p\in(1,\,\infty)$ and $\dagger\in\{c,r,cr\}$. Then
$$(\mathcal{H}_{p}^\dagger(\bx,\,\cm))^*\backsimeq\mathcal{H}_{p'}^\dagger(\bx,\,\cm)$$
with equivalent norms.
\end{corollary}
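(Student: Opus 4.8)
The plan is to prove Corollary~\ref{corrrrr} by splitting into the three ranges $p=2$, $p\in(2,\infty)$, and $p\in(1,2)$, and handling the column, row and mixture cases in turn. For $p=2$, Remark~\ref{r2.9}~(iii) already gives $\mathcal{H}_2^{\dagger}(\bx,\,\cm)=L_2(\lm)$ isometrically (up to equivalence of norms), so the self-duality $(\mathcal{H}_2^{\dagger}(\bx,\,\cm))^*\backsimeq\mathcal{H}_2^{\dagger}(\bx,\,\cm)$ is just the self-duality of $L_2(\lm)$, with the duality bracket $\langle f,g\rangle=\tau\int_{\bx}f(x)g^*(x)\,d\mu(x)$. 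For $p\in(2,\infty)$ and $\dagger\in\{c,r\}$, we combine Theorem~\ref{t4.1} (which gives $(\mathcal{H}_{p'}^{\dagger}(\bx,\,\cm))^*\backsimeq L_p\mathcal{MO}^{\dagger}(\bx,\,\cm)$, since here $p'\in(1,2)$) with Theorem~\ref{t5.1} (which gives $L_p\mathcal{MO}^{\dagger}(\bx,\,\cm)=\mathcal{H}_p^{\dagger}(\bx,\,\cm)$ with equivalent norms). Concatenating these two isomorphisms yields $(\mathcal{H}_{p'}^{\dagger}(\bx,\,\cm))^*\backsimeq\mathcal{H}_p^{\dagger}(\bx,\,\cm)$; relabelling $p'\leftrightarrow p$ gives the statement for $p\in(1,2)$, and the case $p\in(2,\infty)$ follows by the same identity read in the other direction together with the reflexivity that these identifications encode (one checks the canonical embedding $\mathcal{H}_p^{\dagger}(\bx,\,\cm)\hookrightarrow(\mathcal{H}_{p'}^{\dagger}(\bx,\,\cm))^*$ is onto by applying the $p'$-case to the predual).

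More carefully, for $p\in(1,2)$ and $\dagger\in\{c,r\}$ I would argue as follows. Each $f\in\mathcal{H}_{p'}^{\dagger}(\bx,\,\cm)$ defines a bounded functional on $\mathcal{H}_{p}^{\dagger}(\bx,\,\cm)$ via $f\mapsto\big(g\mapsto\tau\int_{\bx}g(x)f^*(x)\,d\mu(x)\big)$; boundedness is exactly the pairing estimate already carried out inside the proof of Theorem~\ref{t5.1}, Step~1 (the Cauchy--Schwarz splitting of $\tau\int\int\int\mathbf{D}_t(g)(y)[\mathbf{D}_t(f)(y)]^*$ followed by the two square-function bounds), or alternatively via Corollary~\ref{io44} plus the maps $\Phi,\Psi$ of Lemma~\ref{d3.1}. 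Conversely, given $\mathcal{L}\in(\mathcal{H}_p^{\dagger}(\bx,\,\cm))^*$ with $p\in(1,2)$, we use $\Phi$ to embed $\mathcal{H}_p^{\dagger}(\bx,\,\cm)$ isometrically into $L_p(\lm;\,L_2^{\dagger}(\bx\times\rr_+,\,d\mu(y)dt))$, extend $\mathcal{L}$ by Hahn--Banach, represent it by some $h\in L_{p'}(\lm;\,L_2^{\dagger}(\bx\times\rr_+,\,d\mu(y)dt))$ using the known duality of these column/row $L_p$-spaces, and then set $g:=\Psi(h)$. By Corollary~\ref{io44}, $g\in\mathcal{H}_{p'}^{\dagger}(\bx,\,\cm)$ with $\|g\|_{\mathcal{H}_{p'}^{\dagger}(\bx,\,\cm)}\lesssim\|h\|\lesssim\|\mathcal{L}\|$, and the identity $\Psi\Phi=\mathrm{id}$ on the dense subspace of simple functions (Lemma~\ref{d3.1}~(3)) shows $\mathcal{L}=\mathcal{L}_g$. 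This establishes $(\mathcal{H}_p^{\dagger}(\bx,\,\cm))^*\backsimeq\mathcal{H}_{p'}^{\dagger}(\bx,\,\cm)$ for all $p\in(1,\infty)$ and $\dagger\in\{c,r\}$.

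For the mixture case $\dagger=cr$ I would use the standard duality for sums and intersections of Banach spaces: when $p\in[1,2)$, $\mathcal{H}_p^{cr}=\mathcal{H}_p^c+\mathcal{H}_p^r$ and $p'\in(2,\infty]$ with $\mathcal{H}_{p'}^{cr}=\mathcal{H}_{p'}^c\cap\mathcal{H}_{p'}^r$, and the abstract fact $(\mathcal{X}_0+\mathcal{X}_1)^*=\mathcal{X}_0^*\cap\mathcal{X}_1^*$ (for a compatible couple, with matching norms up to constants) combined with the already-proven column and row dualities gives $(\mathcal{H}_p^{cr}(\bx,\,\cm))^*\backsimeq\mathcal{H}_{p'}^c(\bx,\,\cm)\cap\mathcal{H}_{p'}^r(\bx,\,\cm)=\mathcal{H}_{p'}^{cr}(\bx,\,\cm)$; the case $p\in(2,\infty)$ is symmetric, using $(\mathcal{X}_0\cap\mathcal{X}_1)^*=\mathcal{X}_0^*+\mathcal{X}_1^*$ together with the (nontrivial but routine) verification that the relevant intersection and sum are taken with respect to compatible embeddings into a common Hausdorff space such as $L_1(\lm)+L_2(\lm)$. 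The main obstacle I anticipate is bookkeeping rather than a genuinely new difficulty: one must make sure the duality brackets match up across the three identifications (Theorem~\ref{t4.1}, Theorem~\ref{t5.1}, and the sum/intersection duality) so that the composite isomorphism is implemented by the single bracket $\langle f,g\rangle=\tau\int_{\bx}f(x)g^*(x)\,d\mu(x)$, and one must check reflexivity-type surjectivity in the range $p\in(2,\infty)$ for $\dagger\in\{c,r\}$ — which, as noted above, is obtained by feeding the $p'\in(1,2)$ result back through $\Phi$, $\Psi$ and the identity $\Psi\Phi=\mathrm{id}$.
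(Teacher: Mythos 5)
Your proposal is correct, and the direct argument you spell out in the second paragraph — isometric embedding via $\Phi$, Hahn--Banach extension inside $L_p(\lm;L_2^\dagger(\bx\times\rr_+,d\mu(y)dt))$, representation by $h\in L_{p'}(\lm;L_2^\dagger)$, then $g:=\Psi(h)$ together with Corollary~\ref{io44} and $\Psi\Phi=\mathrm{id}$ — is exactly the paper's converse-inclusion argument, while your pairing estimate (Step~1 of Theorem~\ref{t5.1}, i.e.\ H\"older for the non-commutative Lusin area integrals) is the one the paper recalls from \eqref{e3.1}. The organization is mirrored, however: the paper dispatches $p\in(1,2]$ by citing Theorems~\ref{t4.1} and \ref{t5.1} and runs the direct Hahn--Banach argument for $p\in(2,\infty)$, whereas you run it for $p\in(1,2)$ and propose to pass to $p\in(2,\infty)$ via reflexivity. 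That route is sound in principle, since $\Phi$ realizes $\mathcal{H}_p^\dagger$ as a closed subspace of the reflexive space $L_p(\lm;L_2^\dagger)$; but you do not actually say this, and the parenthetical \emph{``one checks the canonical embedding $\mathcal{H}_p^\dagger\hookrightarrow(\mathcal{H}_{p'}^\dagger)^*$ is onto by applying the $p'$-case to the predual''} names the wrong map — that surjectivity \emph{is} the $p'\in(1,2)$ statement you already have; what must be shown onto is $\mathcal{H}_{p'}^\dagger\hookrightarrow(\mathcal{H}_p^\dagger)^*$, and this needs reflexivity of $\mathcal{H}_{p'}^\dagger$ (or, simpler, just re-run your direct argument for $p\in(2,\infty)$, which is what the paper does and which you have all the ingredients for). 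On the plus side, you are more explicit than the paper about the endpoint $p=2$ (via Remark~\ref{r2.9}~(iii)) and about $\dagger=cr$ via the sum/intersection duality $(\mathcal{X}_0+\mathcal{X}_1)^*=\mathcal{X}_0^*\cap\mathcal{X}_1^*$ and its dual, both of which the paper leaves implicit.
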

\begin{proof}[Proof of Corollary \ref{corrrrr}]
From Theorems \ref{t4.1} and \ref{t5.1}, it remains to consider the case $p\in(2,\,\fz)$. To this end, we recall from \eqref{e3.1} that for any $S_{\cm}$-valued simple function $f$ and $g\in\mathcal{H}_{p'}^{c}(\bx,\,\cm)$,
\begin{align*}
\lf|\cl_g(f)\r|^2
&=\lf|\tau\int_{\bx}\int_0^{\fz}\int_{B_d(x,\,t)}\mathbf{D}_t(f)(z)\lf[\mathbf{D}_t(g)(z)\r]^{*}
\,\frac{dtd\mu(z)}{V_{t}(z)t}d\mu(x)\r|^2 \lesssim\|f\|_{\mathcal{H}_{p}^c(\bx,\,\cm)}^2\|g\|_{\mathcal{H}_{p'}^c(\bx,\,\cm)}^2,
\end{align*}
which implies that $\mathcal{H}_{p'}^c(\bx,\,\cm)\subset(\mathcal{H}_{p}^c(\bx,\,\cm))^*$ for $p\in(2,\,\fz)$. To show the converse direction, we repeat the steps in the proof of Theorem \ref{t4.1} (ii), but we use Corollary \ref{io44} instead of Lemma \ref{l4.1} to see that $g:=\Psi(h)$ satisfies
$
\|g\|_{\mathcal{H}_{p'}^c(\bx,\,\cm)}
\lesssim\|\cl\|_{(\mathcal{H}_p^{c}(\bx,\,\cm))^{\ast}}.
$
This ends the proof of Corollary \ref{corrrrr}.
\end{proof}
\begin{remark}{\rm
It follows from Theorems \ref{t3.2}, \ref{t4.1} and \ref{t5.1} that for any $p\in[1,\,\infty)$ and $\dagger\in\{c,r,cr\}$, the definition of $\mathcal{H}_{p}^\dagger(\bx,\,\cm)$
is independent of the choice of $\{\mathbf{D}_t\}_{t>0}$, as long as $\{\mathbf{D}_t\}_{t>0}$ satisfies the assumptions $(\mathbf{H}_1)$--$(\mathbf{H}_4)$.}
\end{remark}

\bigskip

\section{Relation between $H_p$ and $L_p$}\label{s6}
\setcounter{equation}{0}

This section is devoted to establishing the equivalence between the mixture space $\mathcal{H}_p^{cr}(\bx,\,\cm)$ and $L_p(\mathcal{N}) $, where $p\in(1,\,\infty)$. To begin with, we let $\{\mathcal{Q}^{(i)}_k\}_{k\in\zz}$ be the sequence of partitions of $\bx$ given in Lemma \ref{l6.1} and then define the dyadic mean oscillation of $f$ with respect to $\mathcal{Q}^{(i)}_{k}$ by
$$f_{\mathcal{Q}^{(i)}_{k}}^{\sharp}(x)
:=\frac{1}{\mu({Q}^{(i)}_{k,x})}\int_{{Q}^{(i)}_{k,x}}\lf|f(y)-f_{{Q}^{(i)}_{k,x}}\r|^2\,d\mu(y),\ {\rm for}\ {\rm any}\ x\in\bx,$$
where ${Q}^{(i)}_{k,x}\in \mathcal{Q}^{(i)}_{k}$ denotes the unique cube containing $x$.
\begin{definition}
Let $p\in(2,\,\infty]$. We define the dyadic column $L_p\mathcal{MO}$ space
$$L_p\mathcal{MO}^{c,\mathcal{Q}^{(i)}}(\lm)
:=\lf\{f\in  L_p(\cm;\,L_2^{\mathrm{loc},c}(\bx)
:\|f\|_{L_p\mathcal{MO}^{c,\mathcal{Q}^{(i)}}(\lm)}<\infty\r\},$$
where
$$\|f\|_{L_p\mathcal{MO}^{c,\mathcal{Q}^{(i)}}(\lm)}:=
\lf\|\sup_k^{\ \ \ \ \ +}f_{\mathcal{Q}^{(i)}_{k}}^{\sharp}\r\|_{L_{\frac{p}{2}}
(\lm)}^{1/2}.$$
In a similar way as extending the definition of column BMO space to the row and mixture ones, one can define
the row space $L_p\mathcal{MO}^{r,\mathcal{Q}^{(i)}}(\lm)$ and mixture space $L_p\mathcal{MO}^{cr,\mathcal{Q}^{(i)}}(\lm)$, respectively.

\end{definition}

\begin{lemma}\label{l6.2}
Let $\dagger\in\{c,r,cr\}$ and $p\in(2,\,\infty]$. Then we have
$$L_p\mathcal{MO}^\dagger(\bx,\,\cm)=\bigcap_{i=1}^{I_0} L_p\mathcal{MO}^{\dagger,\mathcal{Q}^{(i)}}(\lm)$$
with equivalent norms.
\end{lemma}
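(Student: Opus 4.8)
The plan is to prove the equality $L_p\mathcal{MO}^{\dagger}(\bx,\,\cm)=\bigcap_{i=1}^{I_0}L_p\mathcal{MO}^{\dagger,\mathcal{Q}^{(i)}}(\lm)$ by establishing both inclusions with norm control, treating only the column case $\dagger=c$ (the row case follows by taking adjoints, and the mixture case by intersecting). The two ingredients we have at our disposal are Lemma \ref{l6.1} (each ball sits between a dyadic cube $Q\in\mathcal{Q}^{(i)}_k$ and a controlled dilate, and conversely each cube sits inside a ball of comparable measure) and the non-commutative maximal inequality together with Doob's maximal inequality as used in Lemma \ref{hlmi}. The key point is that the $L_p\mathcal{MO}^c$-norm involves $\|\sup^+\|$, so pointwise domination of one ``$\sharp$''-type square function by another (plus a maximal-function tail) translates into a norm inequality precisely because the non-commutative $\ell_\infty$-valued $L_p$-norm is monotone and the Hardy--Littlewood maximal operator is $L_p(\lm;\ell_\infty)$-bounded by Lemma \ref{l5.x}.

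First I would prove the inclusion $\bigcap_i L_p\mathcal{MO}^{c,\mathcal{Q}^{(i)}}(\lm)\subset L_p\mathcal{MO}^c(\bx,\,\cm)$. Fix a ball $B=B_d(x,r)$ with, say, $\delta^{k+3}<r\le\delta^{k+2}$; by Lemma \ref{l6.1}(ii) there are $i$ and $Q\in\mathcal{Q}^{(i)}_{k}$ with $B\subset Q\subset B_d(x,Cr)$ and hence $\mu(Q)\thicksim\mu(B)$. Writing $g_B-g_Q$ as a fixed element and using the standard two-term split $|g(y)-g_B|^2\lesssim|g(y)-g_Q|^2+|g_Q-g_B|^2$ together with $|g_Q-g_B|^2\lesssim\fint_Q|g(y)-g_Q|^2\,d\mu(y)$ (this last by Lemma \ref{e2.5} and $\mu(Q)\thicksim\mu(B)$), one gets the pointwise (in the operator sense) bound
\begin{align*}
g_B^{\sharp}(x)\lesssim g_{\mathcal{Q}^{(i)}_{k},x}^{\sharp}(x)\le\sum_{i=1}^{I_0}\sup_k g_{\mathcal{Q}^{(i)}_{k}}^{\sharp}(x).
\end{align*}
Taking $\sup^+$ over all balls $B\ni x$ and then the $L_{p/2}(\lm)$-norm, and using that $\|\sum_{i=1}^{I_0}x_i\|\lesssim\sum_i\|x_i\|$ in $L_{p/2}(\lm;\ell_\infty)$, yields $\|g\|_{L_p\mathcal{MO}^c(\bx,\cm)}\lesssim\sum_i\|g\|_{L_p\mathcal{MO}^{c,\mathcal{Q}^{(i)}}(\lm)}$, which is the desired inclusion with norm control.

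For the reverse inclusion $L_p\mathcal{MO}^c(\bx,\,\cm)\subset\bigcap_i L_p\mathcal{MO}^{c,\mathcal{Q}^{(i)}}(\lm)$, fix $i$ and a cube $Q\in\mathcal{Q}^{(i)}_{k}$. By Lemma \ref{l6.1}(ii) there is a ball $B$ with $Q\subset B$ and $\mu(B)\le C\mu(Q)$, so for each $x\in Q$ we may replace $Q$-averages by $B$-averages at the cost of absolute constants and of the extra term $|g_Q-g_B|^2\lesssim\fint_B|g(y)-g_B|^2\,d\mu(y)=g_B^{\sharp}(x')$ for any $x'\in B$. The resulting estimate is $g_{\mathcal{Q}^{(i)}_{k},x}^{\sharp}(x)\lesssim\fint_{B}g_{\widetilde B}^{\sharp}\,d\mu$-type average of the continuous $\sharp$-functional, which is dominated by $\mathfrak{M}_{r_B}\big(\sup_{z\in\,\cdot\,}g^{\sharp}_{(\cdot)}\big)(x)$; taking $\sup_k^+$ and then the $L_{p/2}(\lm)$-norm and applying the Hardy--Littlewood maximal inequality of Lemma \ref{l5.x} (valid for the index $p/2>1$) gives $\|g\|_{L_p\mathcal{MO}^{c,\mathcal{Q}^{(i)}}(\lm)}\lesssim\|g\|_{L_p\mathcal{MO}^c(\bx,\cm)}$, uniformly in $i\in\{1,\dots,I_0\}$. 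Finally, the $\dagger=r$ case follows by applying the column case to $g^*$, and $\dagger=cr$ by taking the max of the column and row statements; this completes the proof.

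The main obstacle I expect is bookkeeping the ``maximal function'' step rigorously in the non-commutative setting: one must be careful that the passage from a cube-indexed supremum to a supremum of averages of the continuous BMO functional is actually controlled by $\mathfrak M_{r}$ applied to a single positive operator-valued function, so that Lemma \ref{l5.x} applies to the index $p/2$. The cube-to-ball comparison introduces both a change of center and a change of scale, and one has to absorb the extra averaged term $g_Q-g_B$ using only operator-convexity/Kadison--Schwarz (Lemma \ref{e2.5}) rather than scalar manipulations; this is routine but requires attention to the fact that these are inequalities in the positive cone of $\cm$ before taking $\|\cdot\|_{L_{p/2}(\lm;\ell_\infty)}$. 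Once that is set up carefully, the rest is standard, and the constants depend only on the structural constants of $\bx$, on $I_0$, and on $p$.
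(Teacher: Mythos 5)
Your forward inclusion (dyadic controls continuous) follows the paper's argument exactly: Lemma~\ref{l6.1} embeds each ball in a comparable dyadic cube, and the pointwise comparison of mean oscillations, via the two-term split and Kadison--Schwarz, gives $g_B^\sharp \lesssim f^\sharp_{\mathcal Q^{(i)}_k}$ for the matched $(i,k)$; summing over the finitely many $i$ and taking $\sup^+_k$ then yields the norm bound.

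For the reverse inclusion, however, you take an unnecessary and, as written, not quite rigorous detour. The paper's proof of this direction is completely symmetric to the first: for every cube $Q\in\mathcal Q^{(i)}_k$, Lemma~\ref{l6.1} supplies a ball $B\supset Q$ with $\mu(B)\lesssim\mu(Q)$, and since $x\in Q\subset B$, one has directly
$f^\sharp_{\mathcal Q^{(i)}_k}(x)\lesssim g^\sharp_B(x)$ (the same two-term split). Because $B$ is a ball containing $x$, this already exhibits $\{f^\sharp_{\mathcal Q^{(i)}_k}\}_{k}$ as pointwise dominated by a subfamily of $\{g^\sharp_B\}_{x\in B}$, so $\|\sup^+_k f^\sharp_{\mathcal Q^{(i)}_k}\|_{L_{p/2}(\lm)} \lesssim \|\sup^+_{x\in B} g^\sharp_B\|_{L_{p/2}(\lm)}$ by monotonicity of the $\ell_\infty$-valued norm; no averaging step and no maximal inequality appear. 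Your formulation introduces a spurious extra average (``$\fint_B g^\sharp_{\widetilde B}\,d\mu$-type'') and then tries to control it by $\mathfrak M_{r_B}\big(\sup^+_{\cdot} g^\sharp_{(\cdot)}\big)$. This is problematic: $\sup^+$ is not a genuine pointwise supremum in the noncommutative setting, so it is not a positive measurable function to which $\mathfrak M_{r}$ can literally be applied; and $r_B$ varies with $k$, so this is not a single maximal operator. The issue can presumably be patched (e.g.\ by working through the factorization in $L_{p/2}(\lm;\ell_\infty)$), but there is no reason to, since the direct pointwise estimate already closes the argument. Note this also means Lemma~\ref{l5.x} is not needed here---the paper's proof of Lemma~\ref{l6.2} uses only Lemma~\ref{l6.1}.
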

\begin{proof}
By Lemma \ref{l6.1}, for any ball $B\subset\bx$, there exist $i\in[1,\,I_0]\cap\nn$, $k\in\zz$ and $Q\in \mathcal{Q}_{k}^{(i)}$ such that
$$B\subset Q \ \ \mathrm{and} \ \ \mu(Q)\lesssim\mu(B).$$
Then
\begin{align*}
\frac{1}{\mu(B)}\int_{B}\lf|f(y)-f_{B}\r|^2\,d\mu(y)
&\lesssim\frac{1}{\mu(B)}\int_{B}\lf|f(y)-f_{Q}\r|^2\,d\mu(y)+
\lf|f_{B}-f_{Q}\r|^2\\
&\lesssim\frac{1}{\mu(B)}\int_{B}\lf|f(y)-f_{Q}\r|^2\,d\mu(y)\\
&\lesssim\frac{1}{\mu(Q)}\int_{Q}\lf|f(y)-f_{Q}\r|^2\,d\mu(y).
\end{align*}
Therefore,
\begin{align*}
\|f\|_{L_p\mathcal{MO}^c(\bx,\,\cm)}
\lesssim\lf\|\sup_k^{\ \ \ \ \ +}\sum_{i=1}^{I_0} f_{\mathcal{Q}^{(i)}_k}^{\sharp}\r\|_
{L_{\frac{p}{2}}(\lm)}^{1/2}\lesssim\max_{1\leq i\leq I_0}\lf\{\|f\|_{L_p\mathcal{MO}^{c,\mathcal{Q}^{(i)}}(\lm)}\r\},
\end{align*}
which implies that $$\bigcap_{i=1}^{I_0} L_p\mathcal{MO}^{c,\mathcal{Q}^{(i)}}(\lm)\subset L_p\mathcal{MO}^c(\bx,\,\cm).$$

Next we show that $$L_p\mathcal{MO}^c(\bx,\,\cm)\subset \bigcap_{i=1}^{I_0} L_p\mathcal{MO}^{c,\mathcal{Q}^{(i)}}(\lm).$$
For any $1\leq i\leq I_0$, if $Q\in \mathcal{Q}_k^{(i)}$, then we apply Lemma \ref{l6.1} to choose a ball $B\subset\bx$ such that $Q\subset B$ and $\mu(B)\lesssim\mu(Q)$. Therefore,
\begin{align*}
\frac{1}{\mu(Q)}\int_{Q}\lf|f(y)-f_{Q}\r|^2\,d\mu(y)
\lesssim&\frac{1}{\mu(Q)}\int_{Q}\lf|f(y)-f_{B}\r|^2\,d\mu(y)+
\lf|f_{B}-f_{Q}\r|^2\\
\lesssim&\frac{1}{\mu(Q)}\int_{Q}\lf|f(y)-f_{B}\r|^2\,d\mu(y)\\
\lesssim&\frac{1}{\mu(B)}\int_{B}\lf|f(y)-f_{B}\r|^2\,d\mu(y).
\end{align*}
From this, we conclude that
$$\max_{1\leq i\leq I_0}\lf\{\|f\|_{L_p\mathcal{MO}^{c,\mathcal{Q}^{(i)}}(\lm)}\r\}\lesssim
\|f\|_{L_p\mathcal{MO}^c(\bx,\,\cm)}.$$
This completes the proof of  Lemma \ref{l6.2}.
\end{proof}
By the duality theory of the non-commutative martingale Hardy spaces (see \cite{jx03,px97}), we obtain the following conclusion immediately.
\begin{lemma}\label{l6.3}
Let $p\in(2,\,\infty)$. Then
$$L_p\mathcal{MO}^{cr}(\bx,\,\cm)=L_p(\lm)$$
with equivalent norms.
\end{lemma}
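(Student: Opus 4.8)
\textbf{Proof plan for Lemma \ref{l6.3}.}
The plan is to reduce the statement to the known duality theory of non-commutative martingale Hardy spaces by identifying $L_p\mathcal{MO}^{cr}(\bx,\,\cm)$ with a martingale BMO space via Lemma \ref{l6.2}. First I would fix one of the adjacent dyadic systems $\mathcal{Q}^{(i)}$ from Lemma \ref{l6.1} and recall that, as noted right after Lemma \ref{l6.1}, the increasing filtration $\{L_\infty(\bx,\,\sigma_k^{(i)},\,d\mu)\overline{\otimes}\cm\}_{k\in\zz}$ is a filtration of von Neumann subalgebras of $\lm$ whose union is $w^*$-dense in $\lm$, with conditional expectations $\mathbb{E}_k^{(i)}$. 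For a function $f$ with values in $L_p(\cm)$, the associated martingale is $f_k:=\mathbb{E}_k^{(i)}(f)$, and a direct computation shows that the dyadic oscillation $f^{\sharp}_{\mathcal{Q}^{(i)}_k}(x)=\mathbb{E}_k^{(i)}\big(|f-\mathbb{E}_k^{(i)}(f)|^2\big)(x)$ is exactly the $k$-th term whose supremum defines the column martingale BMO norm (in the sense of Junge--Xu \cite{jx03} / Pisier--Xu \cite{px97}); hence $L_p\mathcal{MO}^{c,\mathcal{Q}^{(i)}}(\lm)=\mathrm{bmo}_p^c$ of this martingale, and likewise for the row and mixture versions.

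Next I would invoke the established duality/interpolation theory for non-commutative martingale $L_p$-spaces: for $p\in(2,\,\infty)$ the mixture space $\mathrm{bmo}_p^{cr}$ of a regular filtration coincides with $L_p(\lm)$ with equivalent norms — this is a consequence of the Burkholder--Gundy and dual Doob inequalities together with the identification of $L_p(\lm)$ with the martingale Hardy space $\mathcal H_p$ for $2<p<\infty$, exactly as recorded in \cite{jx03,px97}. Combining this with Lemma \ref{l6.2}, which gives $L_p\mathcal{MO}^{cr}(\bx,\,\cm)=\bigcap_{i=1}^{I_0}L_p\mathcal{MO}^{cr,\mathcal{Q}^{(i)}}(\lm)$ with equivalent norms, yields
\begin{align*}
L_p\mathcal{MO}^{cr}(\bx,\,\cm)=\bigcap_{i=1}^{I_0}L_p\mathcal{MO}^{cr,\mathcal{Q}^{(i)}}(\lm)
=\bigcap_{i=1}^{I_0}L_p(\lm)=L_p(\lm),
\end{align*}
with two-sided norm estimates, since $I_0$ is a fixed finite number depending only on $(\bx,\,d,\,\mu)$.

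One point that needs care is the verification that each filtration $\{\sigma_k^{(i)}\}$ is \emph{regular} (equivalently, that $\mathbb{E}_{k}^{(i)}\le C\,\mathbb{E}_{k-1}^{(i)}$ on positive elements), which is what makes the martingale $\mathrm{bmo}$–$L_p$ identification applicable and is also implicitly used when passing between the ``$\sup$ over all balls'' formulation and the dyadic one in Lemma \ref{l6.2}; this regularity follows from property (i) of Lemma \ref{l6.1} (each $\mathcal{Q}^{(i)}_{k+1}$ refines $\mathcal{Q}^{(i)}_k$) together with the doubling property \eqref{e2.1} and the comparability of dyadic cubes with balls in part (ii) of Lemma \ref{l6.1}. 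I expect this regularity check to be the only genuinely non-trivial step; once it is in place, the proof is a one-line assembly of Lemma \ref{l6.2} and the cited martingale duality theorems. I would also remark that the endpoint $p=\infty$ is excluded here precisely because $\mathrm{bmo}^{cr}=L_\infty$ fails, consistently with the classical picture.
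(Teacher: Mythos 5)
Your proposal is correct and follows essentially the same route as the paper: it identifies the dyadic $L_p\mathcal{MO}^{\dagger,\mathcal{Q}^{(i)}}(\lm)$ spaces with non-commutative martingale $L_pMO$ spaces, invokes the Junge--Xu identity $L_pMO^{c}\cap L_pMO^{r}=L_p$ for $p\in(2,\infty)$ (the paper cites (4.5) and (4.7) of \cite{jx03}), and assembles the result over the $I_0$ adjacent dyadic systems via Lemma \ref{l6.2}. The one place you diverge is in singling out filtration regularity as the essential non-trivial ingredient; the paper never invokes regularity and for $p\in(2,\infty)$ it is in fact dispensable, since the difference between $\mathbb{E}_k(|f-\mathbb{E}_kf|^2)$ (the paper's dyadic oscillation) and $\mathbb{E}_k(|f-\mathbb{E}_{k-1}f|^2)$ (the Pisier--Xu/Junge--Xu martingale $L_pMO$ quantity) is the single term $|df_k|^2$, whose $L_{p/2}(\lm;\,\ell_\infty)$-norm is trivially dominated by the column square function, and Lemma \ref{l6.2} only uses the ball--cube sandwiching of Lemma \ref{l6.1}, not regularity.
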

\begin{proof}
From \cite[(4.5) and (4.7)]{jx03}, we conclude that, for any $1\leq i\leq I_0$,
$$L_p\mathcal{MO}^{c,\mathcal{Q}^{(i)}}(\lm)\cap
L_p\mathcal{MO}^{r,\mathcal{Q}^{(i)}}(\lm)=L_p(\lm)$$
with equivalent norms. This, in combination with Lemma \ref{l6.2}, yields
\begin{align*}
L_p\mathcal{MO}^{cr}(\bx,\,\cm)
&=L_p\mathcal{MO}^c(\bx,\,\cm)\cap L_p\mathcal{MO}^r(\bx,\,\cm)\\
&=\bigcap_{i=1}^{I_0}\lf[L_p\mathcal{MO}^{c,\mathcal{Q}^{(i)}}(\lm)\cap L_p\mathcal{MO}^{r,\mathcal{Q}^{(i)}}(\lm)\r]\\
&=L_p(\lm)
\end{align*}
with equivalent norms. This ends the proof of Lemma \ref{l6.3}.
\end{proof}

\begin{lemma}\label{l6.3}
Let
$p\in(1,\,2)$. Then
$$\mathcal{H}_p^{c}(\bx,\,\cm)=\sum_{i=1}^{I_0} \mathcal{H}_p^{c,\mathcal{Q}^{(i)}}(\lm).$$
Let
$p\in(2,\,\infty)$. Then
$$\mathcal{H}_p^{c}(\bx,\,\cm)=\bigcap_{i=1}^{I_0} \mathcal{H}_p^{c,\mathcal{Q}^{(i)}}(\lm)$$
with equivalent norms.
Similar results also hold true for $\mathcal{H}_p^{r}(\bx,\,\cm)$ and $\mathcal{H}_p^{cr}(\bx,\,\cm)$.
\end{lemma}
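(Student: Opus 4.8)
The plan is to deduce this lemma by pairing the continuous-parameter objects with their dyadic counterparts through the duality theorems already established, exactly in the spirit of Lemma~\ref{l6.2}. First I would set up the dyadic Hardy spaces: for each $i\in\{1,\dots,I_0\}$ and each $k\in\zz$, let $\mathbb{E}_k^{(i)}$ be the conditional expectation onto $L_\infty(\bx,\sigma_k^{(i)},d\mu)\overline{\otimes}\cm$ and let $\mathbf{d}_k^{(i)}:=\mathbb{E}_k^{(i)}-\mathbb{E}_{k-1}^{(i)}$ be the associated martingale difference. The space $\mathcal{H}_p^{c,\mathcal{Q}^{(i)}}(\lm)$ is the column non-commutative martingale Hardy space built from the square function $(\sum_k |\mathbf{d}_k^{(i)}(f)|^2)^{1/2}$, and for $p\in(2,\infty)$ its dual is $L_p\mathcal{MO}^{c,\mathcal{Q}^{(i)}}(\lm)$ while for $p\in(1,2)$ its dual is $L_{p'}\mathcal{MO}^{c,\mathcal{Q}^{(i)}}(\lm)$; these are the martingale facts from \cite{jx03,px97} invoked before Lemma~\ref{l6.3}.

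For the case $p\in(2,\infty)$ I would argue by duality. By Theorem~\ref{t4.1} together with Theorem~\ref{t5.1}, $(\mathcal{H}_{p}^{c}(\bx,\cm))$ is the dual of $\mathcal{H}_{p'}^{c}(\bx,\cm)=L_{p}\mathcal{MO}^{c}(\bx,\cm)$ is not quite the right framing; instead I would directly combine Theorem~\ref{t5.1}, which identifies $\mathcal{H}_p^{c}(\bx,\cm)$ with $L_p\mathcal{MO}^{c}(\bx,\cm)$, with Lemma~\ref{l6.2}, which identifies $L_p\mathcal{MO}^{c}(\bx,\cm)$ with $\bigcap_{i=1}^{I_0}L_p\mathcal{MO}^{c,\mathcal{Q}^{(i)}}(\lm)$, and finally with the martingale identity $L_p\mathcal{MO}^{c,\mathcal{Q}^{(i)}}(\lm)=\mathcal{H}_p^{c,\mathcal{Q}^{(i)}}(\lm)$ valid for $p\in(2,\infty)$ (this is the dyadic analogue of Theorem~\ref{t5.1}, and follows from the duality $(\mathcal{H}_{p'}^{c,\mathcal{Q}^{(i)}})^*=\mathcal{H}_p^{c,\mathcal{Q}^{(i)}}$ of \cite{jx03} plus the fact that the conditional expectations are bounded on $L_p(\lm)$). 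Chaining these three identifications gives $\mathcal{H}_p^{c}(\bx,\cm)=\bigcap_{i=1}^{I_0}\mathcal{H}_p^{c,\mathcal{Q}^{(i)}}(\lm)$ with equivalent norms. For $p\in(1,2)$ I would take preduals: the predual of $\bigcap_i L_{p'}\mathcal{MO}^{c,\mathcal{Q}^{(i)}}(\lm)$ is $\sum_i \mathcal{H}_p^{c,\mathcal{Q}^{(i)}}(\lm)$ (sum of Banach spaces being the predual of an intersection of the dual spaces), while by Corollary~\ref{corrrrr} and Lemma~\ref{l6.2} the predual of $L_{p'}\mathcal{MO}^{c}(\bx,\cm)=\bigcap_i L_{p'}\mathcal{MO}^{c,\mathcal{Q}^{(i)}}(\lm)$ is $\mathcal{H}_p^{c}(\bx,\cm)$; uniqueness of preduals (or a direct comparison of norms) then yields $\mathcal{H}_p^{c}(\bx,\cm)=\sum_{i=1}^{I_0}\mathcal{H}_p^{c,\mathcal{Q}^{(i)}}(\lm)$. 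The row and mixture cases follow by taking adjoints and by the definition of the mixture spaces as sums (for $p<2$) or intersections (for $p>2$), commuting the two finite operations of "$\sum_i$ over adjacent systems" and "$c+r$" or "$c\cap r$" appropriately.

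The main obstacle I anticipate is making the duality bracket match up across the three identifications so that the various preduals can legitimately be identified. Concretely, the functional $\mathcal{L}_g(f)=\tau\int_\bx f g^*\,d\mu$ appearing in Theorems~\ref{t3.1}, \ref{t4.1} and in the martingale duality of \cite{jx03} must be the \emph{same} bracket; this is true because in every case it is the restriction of the standard $L_2(\lm)$ pairing to simple functions, but one should check that simple functions are dense in all the spaces involved (which is where one uses the density statement proved at the end of Theorem~\ref{t5.1} and the analogous, easier density of simple functions in the dyadic Hardy spaces). A secondary technical point is to verify that the intersection/sum norms behave well under the finitely many adjacent systems $i=1,\dots,I_0$, but since $I_0$ is a fixed finite number this only affects implicit constants and not the space identities. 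Once the brackets are aligned, the rest is a formal consequence of the already-proved continuous duality theorems, Lemma~\ref{l6.2}, and the classical non-commutative martingale duality.
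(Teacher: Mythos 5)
The paper provides no proof of this lemma --- both it and the preceding lemma on $L_p\mathcal{MO}^{cr}$ carry the same label, and the $\mathcal{H}_p$ statement is invoked in the proof of Theorem \ref{t6.1} with no argument supplied --- so there is no authors' route to compare against. Your reconstruction is the natural one given the surrounding infrastructure and appears correct: for $p\in(2,\infty)$, chaining Theorem \ref{t5.1}, Lemma \ref{l6.2}, and the martingale analogue of Theorem \ref{t5.1} from \cite{jx03} gives the intersection identity; for $p\in(1,2)$, dualizing via Theorem \ref{t4.1}, Lemma \ref{l6.2}, the corresponding martingale duality, and the Bergh--L\"ofstr\"om identity $(\sum_i Y_i)^*=\bigcap_i Y_i^*$ (valid since simple functions are dense in each $Y_i$) yields the sum identity; the $r$ and $cr$ cases then follow by adjoints and the elementary commutation of finite sums and intersections, as you indicate.

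Two points to tighten. First, ``uniqueness of preduals'' is not a general fact and should not be what carries the $p<2$ step; what actually closes the argument is your parenthetical alternative: both $\mathcal{H}_p^c(\bx,\cm)$ and $\sum_i\mathcal{H}_p^{c,\mathcal{Q}^{(i)}}(\lm)$ are completions of the $S_\cm$-valued simple functions, and on that common dense set each norm is comparable, via the \emph{same} bracket $\tau\int fg^*\,d\mu$ used in both duality theorems, to the supremum of the pairing against the unit ball of $\bigcap_i L_{p'}\mathcal{MO}^{c,\mathcal{Q}^{(i)}}(\lm)$; hence the two norms agree on simple functions and the completions coincide. This should be made the explicit mechanism, and it is exactly where the density claims and the alignment of brackets you rightly flag become load-bearing. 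Second, your set-up sentence mis-states the $p>2$ martingale fact (you want $\mathcal{H}_p^{c,\mathcal{Q}^{(i)}}$ to \emph{equal} $L_p\mathcal{MO}^{c,\mathcal{Q}^{(i)}}$ for $p>2$, not to have the latter as its dual); you pivot to the correct framing in the next sentence, but the slip should be removed. Since the paper never defines $\mathcal{H}_p^{c,\mathcal{Q}^{(i)}}(\lm)$, it is also worth stating explicitly that the square function underlying it must be chosen so that the citation to \cite{jx03} matches the conditional-mean-oscillation definition of $L_p\mathcal{MO}^{c,\mathcal{Q}^{(i)}}$ in Definition 6.1.
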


\begin{theorem}\label{t6.1}
Let $p\in(1,\,\infty)$. Then we have
$$\mathcal{H}_p^{cr}(\bx,\, \cm)=L_p(\lm)$$
with equivalent norms.
\end{theorem}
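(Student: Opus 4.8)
\textbf{Proof proposal for Theorem \ref{t6.1}.}

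The plan is to split into the two ranges $p\in(2,\infty)$ and $p\in(1,2)$, and to handle the endpoint-free interior by a combination of the duality results already established and a duality/interpolation-free reduction to the dyadic martingale theory via Lemmas \ref{l6.2}, \ref{l6.3}. For $p\in(2,\infty)$, I would simply chain the identifications already at hand: by Theorem \ref{t5.1} one has $\mathcal{H}_p^{cr}(\bx,\cm)=L_p\mathcal{MO}^{cr}(\bx,\cm)$ with equivalent norms, and by Lemma \ref{l6.3} (the one asserting $L_p\mathcal{MO}^{cr}(\bx,\cm)=L_p(\lm)$, which in turn rests on Lemma \ref{l6.2} and the martingale duality identities $(4.5)$ and $(4.7)$ of \cite{jx03}) one has $L_p\mathcal{MO}^{cr}(\bx,\cm)=L_p(\lm)$. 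Composing these two isomorphisms gives $\mathcal{H}_p^{cr}(\bx,\cm)=L_p(\lm)$ for $p\in(2,\infty)$ with equivalent norms. The case $p=2$ is already recorded in Remark \ref{r2.9}(iii).

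For $p\in(1,2)$ I would argue by duality. Fix $p\in(1,2)$, so that $p'\in(2,\infty)$. On the one hand, for $p\in[1,2)$ the mixture Hardy space is defined as the \emph{sum} $\mathcal{H}_p^{cr}(\bx,\cm)=\mathcal{H}_p^{c}(\bx,\cm)+\mathcal{H}_p^{r}(\bx,\cm)$; on the other hand, for $p'\in(2,\infty)$ it is the \emph{intersection} $\mathcal{H}_{p'}^{cr}(\bx,\cm)=\mathcal{H}_{p'}^{c}(\bx,\cm)\cap\mathcal{H}_{p'}^{r}(\bx,\cm)$. By Corollary \ref{corrrrr} we have $(\mathcal{H}_{p'}^{c}(\bx,\cm))^{*}\backsimeq\mathcal{H}_{p}^{c}(\bx,\cm)$ and likewise for the row spaces, and the standard duality between intersection and sum of a compatible couple then yields
\begin{align*}
(\mathcal{H}_{p'}^{cr}(\bx,\cm))^{*}=\bigl(\mathcal{H}_{p'}^{c}(\bx,\cm)\cap\mathcal{H}_{p'}^{r}(\bx,\cm)\bigr)^{*}
\backsimeq (\mathcal{H}_{p'}^{c}(\bx,\cm))^{*}+(\mathcal{H}_{p'}^{r}(\bx,\cm))^{*}
\backsimeq \mathcal{H}_{p}^{c}(\bx,\cm)+\mathcal{H}_{p}^{r}(\bx,\cm)=\mathcal{H}_{p}^{cr}(\bx,\cm).
\end{align*}
Since $\mathcal{H}_{p'}^{cr}(\bx,\cm)=L_{p'}(\lm)$ by the case already treated, and since $L_{p'}(\lm)^{*}=L_p(\lm)$ with matching duality bracket $\langle f,g\rangle=\tau\int f g^{*}\,d\mu$, we conclude $\mathcal{H}_{p}^{cr}(\bx,\cm)=L_p(\lm)$ with equivalent norms. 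One small point to verify is that the duality bracket realizing $(\mathcal{H}_{p'}^{cr})^{*}\backsimeq\mathcal{H}_p^{cr}$ coincides, on $S_\cm$-valued simple functions, with $\tau\int f g^{*}\,d\mu$; this is already the form of the pairing in Corollary \ref{corrrrr} (inherited from Theorems \ref{t3.1}, \ref{t4.1}), so the identification is isometric up to constants and compatible on the dense subspace of simple functions, which suffices since simple functions are dense in all spaces involved.

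The main obstacle is the $p\in(2,\infty)$ half, specifically the honest verification of Lemma \ref{l6.3} ($L_p\mathcal{MO}^{cr}=L_p(\lm)$): one must check that the adjacent dyadic systems $\mathcal{Q}^{(i)}$ of Lemma \ref{l6.1} really do give, for each fixed $i$, a filtration whose associated martingale BMO spaces satisfy Junge--Xu's identities $L_p\mathcal{MO}^{c,\mathcal{Q}^{(i)}}(\lm)\cap L_p\mathcal{MO}^{r,\mathcal{Q}^{(i)}}(\lm)=L_p(\lm)$, and then that intersecting over the finitely many $i$ (via Lemma \ref{l6.2}) does not change the space — the latter is immediate since $L_p(\lm)$ is trivially contained in each factor and Lemma \ref{l6.2} gives the reverse inclusion. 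A secondary, purely bookkeeping obstacle in the $p\in(1,2)$ half is to make sure the finite-intersection/finite-sum duality is applied to a genuinely compatible couple inside a common ambient space (e.g. $L_1(\lm)+L_2(\lm)$ or the appropriate tent-space ambient), and that the norm equivalence constants from Corollary \ref{corrrrr} are tracked through the intersection-sum duality; both are routine once the ambient space is fixed. No use of non-commutative martingale \emph{convergence} is needed beyond what Lemma \ref{l6.3} already imports.
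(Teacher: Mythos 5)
Your proposal is correct but proceeds along a genuinely different route from the paper. The paper's proof of Theorem \ref{t6.1} passes \emph{directly} through the dyadic Hardy spaces: it invokes a lemma identifying $\mathcal{H}_p^{\dagger}(\bx,\cm)$ with the sum (for $p<2$) or intersection (for $p>2$) of the dyadic $\mathcal{H}_p^{\dagger,\mathcal{Q}^{(i)}}(\lm)$ over the $I_0$ adjacent grids, and then applies Junge--Xu's identity $\mathcal{H}_p^{cr,\mathcal{Q}^{(i)}}(\lm)=L_p(\lm)$ for each $i$. By contrast, you stay at the level of $L_p\mathcal{MO}$: for $p\in(2,\infty)$ you chain Theorem \ref{t5.1} (which gives $\mathcal{H}_p^{cr}=L_p\mathcal{MO}^{cr}$, proved using the non-commutative Hardy--Littlewood maximal inequality) with the proven version of the lemma stating $L_p\mathcal{MO}^{cr}=L_p(\lm)$ (which rests on Lemma \ref{l6.2} and \cite[(4.5),(4.7)]{jx03}); for $p\in(1,2)$ you dualize, using $(\mathcal{H}_{p'}^{cr})^*\backsimeq\mathcal{H}_p^{cr}$ from Corollary \ref{corrrrr}. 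One remark simplifies your $p<2$ half: Corollary \ref{corrrrr} is stated for all $\dagger\in\{c,r,cr\}$, so you need not re-derive the $cr$ duality from the sum/intersection duality of the compatible couple $(\mathcal{H}_{p'}^c,\mathcal{H}_{p'}^r)$ --- the corollary already hands you $(\mathcal{H}_{p'}^{cr})^*\backsimeq\mathcal{H}_p^{cr}$ with the correct bracket. What your route buys is that it leans only on results the paper fully proves (Theorem \ref{t5.1}, the $L_p\mathcal{MO}^{cr}=L_p$ identity, Corollary \ref{corrrrr}), whereas the paper's route rests on the statement identifying the continuous $\mathcal{H}_p^{c}(\bx,\cm)$ with sums/intersections of dyadic $\mathcal{H}_p^{c,\mathcal{Q}^{(i)}}(\lm)$, which is given in the text without a proof; in that sense your argument is more self-contained within the logic of the paper. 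What the paper's route buys, had its dyadic-vs-continuous lemma been established, is a single unified argument for all $p\in(1,\infty)$ without an explicit duality step.
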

\begin{proof}[Proof of Theorem \ref{t6.1}]
Recall from \cite{jx03} that
$$\mathcal{H}_p^{cr,\mathcal{Q}^{(i)}}(\lm)
=L_p(\lm).$$  This, in combination with Lemma \ref{l6.3}, yields that when $1<p<2$,
\begin{align*}
\mathcal{H}_p^{cr}(\bx,\, \cm)=\sum_{i=1}^{I_0}\mathcal{H}_p^{c,\mathcal{Q}^{(i)}}(\lm)+
\sum_{i=1}^{I_0}\mathcal{H}_p^{r,\mathcal{Q}^{(i)}}(\lm)=\sum_{i=1}^{I_0}\mathcal{H}_p^{cr,\mathcal{Q}^{(i)}}(\lm)
=L_p(\lm).
\end{align*}
Moreover, when $p\in[2,\,\infty)$,
\begin{align*}
\mathcal{H}_p^{cr}(\bx,\, \cm)
=\lf[\bigcap_{i=1}^{I_0}\mathcal{H}_p^{c,\mathcal{Q}^{(i)}}(\lm)\r]
\bigcap
\lf[\bigcap_{i=1}^{I_0}\mathcal{H}_p^{r,\mathcal{Q}^{(i)}}(\lm)\r]=\bigcap_{i=1}^{I_0}\mathcal{H}_p^{cr,\mathcal{Q}^{(i)}}(\lm)
=L_p(\lm).
\end{align*}
This completes the proof of Theorem \ref{t6.1}.
\end{proof}
\bigskip
\section{Interpolation}\label{s7}
\setcounter{equation}{0}

In this section, we shall establish two
interpolation theorems between the operator-valued BMO spaces and Hardy spaces.

The first main result of this subsection is the following.
\begin{theorem}\label{t7.1}
Let $1\leq q<p<\infty$. Then
\begin{align}\label{e7.1}
\lf[\mathcal{BMO}^{c}(\bx,\,\cm),\,
\mathcal{H}_q^{c}(\bx,\,\cm)\r]_{\frac{q}{p}}=
\mathcal{H}_p^{c}(\bx,\,\cm)
\end{align}
and
\begin{align}\label{e7.2}
\lf[\mathcal{X},\,\mathcal{Y}\r]_{\frac{1}{p}}=
L_p(\lm),
\end{align}
where
$\mathcal{X}=\mathcal{BMO}^{cr}(\bx,\,\cm)$ or $L_{\infty}(\lm)$,
$\mathcal{Y}=\mathcal{H}_{1}^{cr}(\bx,\,\cm)$ or $L_1(\lm)$.
\end{theorem}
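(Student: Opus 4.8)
\textbf{Proof plan for Theorem \ref{t7.1}.}
The plan is to reduce both interpolation identities to known duality and equivalence theorems established earlier in the paper, together with the standard Wolff-type reiteration and the classical fact that complex interpolation commutes with duality for reflexive (or more generally suitable) couples. First, for the column identity \eqref{e7.1}, I would proceed in two stages. When $1<q<p<\infty$, Corollary \ref{corrrrr} identifies $(\mathcal{H}_{r}^{c}(\bx,\,\cm))^{*}\simeq\mathcal{H}_{r'}^{c}(\bx,\,\cm)$ and Theorem \ref{t3.1} gives $(\mathcal{H}_{1}^{c}(\bx,\,\cm))^{*}\simeq\mathcal{BMO}^{c}(\bx,\,\cm)$; combined with the reflexivity of $\mathcal{H}_{r}^{c}(\bx,\,\cm)$ for $1<r<\infty$ (which follows from Corollary \ref{corrrrr} and Theorem \ref{t5.1}), the duality theorem for complex interpolation reduces \eqref{e7.1} to the dual statement $[\mathcal{H}_{1}^{c}(\bx,\,\cm),\,\mathcal{H}_{q'}^{c}(\bx,\,\cm)]_{\theta}=\mathcal{H}_{p'}^{c}(\bx,\,\cm)$ for an appropriate $\theta$. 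Thus it suffices to prove the case $q=1$, i.e. $[\mathcal{BMO}^{c}(\bx,\,\cm),\,\mathcal{H}_{1}^{c}(\bx,\,\cm)]_{1/p}=\mathcal{H}_{p}^{c}(\bx,\,\cm)$.

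For the base case I would transport the interpolation problem through the maps $\Phi$ and $\Psi$ of Lemma \ref{d3.1}. The operator $\Phi$ embeds $\mathcal{H}_{p}^{c}(\bx,\,\cm)$ isometrically into $L_p(\lm;\,L_2^c(\bx\times\rr_+,\,d\mu(y)dt))$ (Lemma \ref{d3.1}(1)), and $\Psi$ maps $L_p(\lm;\,L_2^c)$ boundedly onto $\mathcal{H}_p^c(\bx,\,\cm)$ for $1<p<\infty$ (Corollary \ref{io44}) and onto $\mathcal{BMO}^c(\bx,\,\cm)$ when $p=\infty$ (Lemma \ref{l3.2}), with $\Psi\Phi=\mathrm{id}$ on $\mathcal{H}_2^c$. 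Hence $\mathcal{H}_{p}^{c}(\bx,\,\cm)$ and $\mathcal{BMO}^{c}(\bx,\,\cm)$ are, respectively, complemented subspaces (retracts) of $L_p(\lm;\,L_2^c)$ and $L_\infty(\lm;\,L_2^c)$, with a single common retraction pair $(\Phi,\Psi)$ that does not depend on $p$. Since complex interpolation passes to complemented subspaces via a common projection, the identity \eqref{e7.1} with $q=1$ follows once one knows
\begin{align*}
\lf[L_\infty(\lm;\,L_2^c(\bx\times\rr_+,\,d\mu(y)dt)),\,L_1(\lm;\,L_2^c(\bx\times\rr_+,\,d\mu(y)dt))\r]_{1/p}
=L_p(\lm;\,L_2^c(\bx\times\rr_+,\,d\mu(y)dt)),
\end{align*}
which is the standard interpolation of column $L_p(\cm;L_2^c)$-spaces (Pisier, or the vector-valued non-commutative interpolation theory underlying \cite{m07}). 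The real-interpolation statement with second index $p$ is then obtained by the Wolff reiteration argument exactly as in \cite[\S2]{m07}, using that the endpoint complex interpolation spaces are already identified.

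For \eqref{e7.2} the plan is to combine \eqref{e7.1} (and its row analogue) with Theorem \ref{t6.1}. By Theorem \ref{t6.1}, $\mathcal{H}_p^{cr}(\bx,\,\cm)=L_p(\lm)$ for $1<p<\infty$, and for $p\in[2,\infty)$ one also has $\mathcal{BMO}^{cr}(\bx,\,\cm)\hookrightarrow L_p\mathcal{MO}^{cr}(\bx,\,\cm)=L_p(\lm)$ by Theorem \ref{t5.1}; intersecting the column and row versions of \eqref{e7.1} and invoking the stability of complex interpolation under finite intersections (with compatible norming functionals), one gets $[\mathcal{BMO}^{cr}(\bx,\,\cm),\,\mathcal{H}_1^{cr}(\bx,\,\cm)]_{1/p}=\mathcal{H}_p^{cr}(\bx,\,\cm)=L_p(\lm)$. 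Replacing $\mathcal{BMO}^{cr}$ by $L_\infty(\lm)$ and $\mathcal{H}_1^{cr}$ by $L_1(\lm)$ on the endpoints is justified because these couples differ only by equivalent renormings on a common interpolation scale — concretely, $L_\infty(\lm)\subset\mathcal{BMO}^{cr}(\bx,\,\cm)$ and $\mathcal{H}_1^{cr}(\bx,\,\cm)\subset L_1(\lm)$ with the reverse inclusions holding after interpolation by a density/duality argument — so all four choices of $(\mathcal{X},\mathcal{Y})$ yield the same interpolation space, and the real interpolation identity follows again by Wolff reiteration.

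The main obstacle I anticipate is the passage to complemented subspaces in the non-commutative column-space setting: one must check carefully that $\Psi$ is bounded \emph{simultaneously} at both endpoints $p=1$ (into $\mathcal{BMO}^c$, Lemma \ref{l3.2}) and $p=\infty$-type targets, that $\Phi$ embeds compatibly for the whole couple $(\mathcal{BMO}^c,\mathcal{H}_1^c)$ — in particular that the embedding of $\mathcal{BMO}^c(\bx,\,\cm)$ into $L_\infty(\lm;L_2^c)$ provided by the duality proof of Theorem \ref{t3.1}(ii) is the ``same'' embedding, up to the retraction, as $\Phi$ on $\mathcal{H}_1^c$ — and that the composition $\Psi\Phi$ acts as the identity on a dense subspace of each interpolation space, not merely on $\mathcal{H}_2^c$. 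Handling the $\mathcal{BMO}$ endpoint, where $\Phi$ is not literally an isometry and $w^*$-type subtleties enter, is where the ``direct proof without non-commutative martingales'' advertised in the introduction will require the most care; the replacement of $\mathcal{BMO}^{cr}$ by $L_\infty(\lm)$ in \eqref{e7.2} is comparatively soft once Theorem \ref{t6.1} is in hand.
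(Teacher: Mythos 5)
Your reduction strategy for \eqref{e7.1} has a genuine gap at the $\mathcal{BMO}$ endpoint, and it is the gap you yourself flagged as the ``main obstacle'': the pair $(\Phi,\,\Psi)$ is \emph{not} a common retraction for the couple $(\mathcal{BMO}^c(\bx,\,\cm),\,\mathcal{H}_1^c(\bx,\,\cm))$. While $\Psi$ maps $L_\infty(\lm;\,L_2^c)$ into $\mathcal{BMO}^c(\bx,\,\cm)$ (Lemma \ref{l3.2}), the map $\Phi$ does \emph{not} send $\mathcal{BMO}^c(\bx,\,\cm)$ into $L_\infty(\lm;\,L_2^c(\bx\times\rr_+,\,d\mu(y)dt))$: by definition $\|\Phi(g)\|_{L_\infty(\lm;L_2^c)}$ is the $\cm$-norm of the full Lusin area function $\cs^c(g)$, and for a general $g\in\mathcal{BMO}^c$ this is infinite --- the Carleson condition in Theorem \ref{carl} controls only the truncated tent integral $\mu_{g,B}^\sharp$, not $\cs^c(g)$ itself (already in the scalar case $g=\log|x|$ one sees the issue). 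Hence the statement that $\mathcal{BMO}^c$ is a complemented subspace of $L_\infty(\lm;L_2^c)$ via $(\Phi,\Psi)$ is false, and the appeal to interpolation of complemented subspaces cannot be made as stated; there is no way to ``patch'' this by choosing a different embedding compatible with $\Phi$ because the obstruction is structural, not cosmetic.

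The paper sidesteps this precisely by switching tools at the $\mathcal{BMO}$ side. After reducing to $2\leq q<p<\infty$ (Lemma \ref{l7.1}) and trading $\mathcal{H}_p^c$ for $L_p\mathcal{MO}^c$ (Theorem \ref{t5.1}), the one inclusion $[\mathcal{BMO}^c,\,L_p\mathcal{MO}^c]_{p/q}\subseteq L_q\mathcal{MO}^c$ is obtained by the Hilbert--$\cm$-module embedding $\mathfrak{R}$ of $L_s\mathcal{MO}^c$ into the maximal space $L_s(\lm\overline{\otimes}B(\ell_2);\,\ell_\infty^c)$, which is an \emph{isometry simultaneously for all} $s\in(2,\,\infty]$ and interpolates by Musat's Lemma \ref{l7.1x0}; only the reverse inclusion \eqref{e7.65} is proved by $\Phi$-retraction (and there $\Phi$ is legitimately isometric since one stays inside the $\mathcal{H}_s^c$, $s<\infty$, scale), followed by duality. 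So the correct proof uses two \emph{different} embeddings, one per inclusion, rather than a single retraction pair. Your duality-based reduction to $q=1$ and your outline for \eqref{e7.2} (intersection of column and row, then trading $\mathcal{BMO}^{cr}\supset L_\infty$ via Wolff and reiteration) are both sound in spirit, and broadly match Cases 3--4 in the paper, but they depend on first establishing \eqref{e7.1} by the above two-embedding mechanism.
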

To prove the main result, we need some technical lemmas as follows. The following lemma is just the complex interpolation of
the column and row non-commutative $L_p$-spaces.
\begin{lemma}{\rm\cite[p.9]{m07}}\label{l7.0}
Let $0<\theta<1$ and $1\leq p_0,\,p_1,\,p_{\theta}\leq\fz$ with
$\frac{1}{p_{\theta}}=\frac{1-\theta}{p_0}+\frac{\theta}{p_1}$. Then we have
$$ \lf[L_{p_0}(\cm;\,L_2^c(\Omega)),\,L_{p_1}(\cm;\,L_2^c(\Omega))\r]_{\theta}
=L_{p_{\theta}}(\cm;\,L_2^c(\Omega)).$$
Similar result also holds for row spaces.
\end{lemma}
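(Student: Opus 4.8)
The plan is to identify each column space $L_p(\cm;\,L_2^c(\Omega))$ with one and the same complemented subspace of the non-commutative $L_p$-scale of an auxiliary von Neumann algebra, and then combine the known interpolation theorem for that scale with the stability of complex interpolation under complementation. Set $H:=L_2(\Omega,\,d\nu)$ and let $\mathcal{R}:=B(H)\,\overline{\otimes}\,\cm$, equipped with the normal semifinite faithful trace $\mathrm{Tr}_{B(H)}\otimes\tau$, so that $L_p(\mathcal{R})$ is defined for every $p\in[1,\,\infty]$ with $L_\infty(\mathcal{R})=\mathcal{R}$. Fix a unit vector $\xi_0\in H$, let $q_0\in B(H)$ be the rank-one projection onto $\CC\xi_0$, and regard $q_0$ as $q_0\otimes 1\in\mathcal{R}$. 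By the very definition of the module spaces $L_p(\cm;\,L_2^c(\Omega))$ (see \cite{m07,PX}), the map $J$ that places $f(\cdot)$ in the ``$\xi_0$-column'' is, for every $p\in[1,\,\infty]$, an isometry of $L_p(\cm;\,L_2^c(\Omega))$ onto the subspace $L_p(\mathcal{R})q_0$, with inverse implemented by right multiplication by $q_0$; indeed $J(f)^{*}J(f)=q_0\otimes\int_\Omega f^{*}f\,d\nu$ and $\mathrm{Tr}(q_0)=1$ force $\|J(f)\|_{L_p(\mathcal{R})}=\big\|\big(\int_\Omega f^{*}f\,d\nu\big)^{1/2}\big\|_{\cm}=\|f\|_{L_p(\cm;\,L_2^c(\Omega))}$. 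The crucial point is that neither $J$ nor the projection $x\mapsto xq_0$ depends on $p$, so for $p_0,p_1$ as in the statement the embeddings $L_{p_0}(\cm;\,L_2^c(\Omega))\hookrightarrow L_{p_0}(\mathcal{R})$ and $L_{p_1}(\cm;\,L_2^c(\Omega))\hookrightarrow L_{p_1}(\mathcal{R})$ are compatible (they agree on the intersection, inside the space of $\tau$-measurable operators), and so are the projections.

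Next I would invoke two standard facts. First, the complex interpolation scale for non-commutative $L_p$-spaces over a fixed von Neumann algebra with normal semifinite faithful trace: $[L_{p_0}(\mathcal{R}),\,L_{p_1}(\mathcal{R})]_{\theta}=L_{p_\theta}(\mathcal{R})$ isometrically whenever $1\leq p_0,\,p_1\leq\infty$ and $\tfrac1{p_\theta}=\tfrac{1-\theta}{p_0}+\tfrac{\theta}{p_1}$ (Kosaki in the tracial state case, Pisier--Xu in general; see \cite{PX}). Second, the retract principle for complex interpolation: if $(X_0,\,X_1)$ is a compatible couple, $Y_i\subset X_i$ are closed subspaces admitting a bounded projection $P$ and inclusion $\iota$ acting consistently on both endpoints, then $[Y_0,\,Y_1]_{\theta}=P\big([X_0,\,X_1]_{\theta}\big)$ with the induced norm, and this is isometric when $P,\iota$ are contractions (see \cite{bl76}). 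Applying the retract principle with $X_i=L_{p_i}(\mathcal{R})$, $Y_i=L_{p_i}(\mathcal{R})q_0\cong L_{p_i}(\cm;\,L_2^c(\Omega))$ and $P(x)=xq_0$, and then the interpolation of the scale $L_p(\mathcal{R})$, one obtains that $\lf[L_{p_0}(\cm;\,L_2^c(\Omega)),\,L_{p_1}(\cm;\,L_2^c(\Omega))\r]_{\theta}$ is the image of $[L_{p_0}(\mathcal{R}),\,L_{p_1}(\mathcal{R})]_{\theta}=L_{p_\theta}(\mathcal{R})$ under $x\mapsto xq_0$, hence equals $L_{p_\theta}(\mathcal{R})q_0\cong L_{p_\theta}(\cm;\,L_2^c(\Omega))$, which is precisely the asserted identity. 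The row case follows by applying the conjugate-linear isometric involution $x\mapsto x^{*}$, which carries $L_p(\cm;\,L_2^c(\Omega))$ onto $L_p(\cm;\,L_2^r(\Omega))$ for every $p$ and commutes with complex interpolation up to complex conjugation of the ambient scalars.

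The main obstacle I anticipate is purely the bookkeeping at the endpoint, i.e.\ when $p_0$ or $p_1$ equals $\infty$ (and hence $L_\infty(\cm;\,L_2^c(\Omega))$ enters, defined through $\|f\|=\big\|\big(\int_\Omega f^{*}f\,d\nu\big)^{1/2}\big\|_{\cm}$, so that $\int_\Omega f^{*}f\,d\nu$ is required to be a bounded operator rather than merely measurable). One must verify that $J$ remains an isometry, and a $w^{*}$-homeomorphism, onto $\mathcal{R}q_0=L_\infty(\mathcal{R})q_0$ in this case, and that the interpolation theorem for the scale $L_p(\mathcal{R})$ cited above genuinely includes the $L_\infty$ endpoint; both are part of the content of \cite{PX}, but the definitions must be handled with some care since ``$L_\infty$ of a module space'' is not literally an $L_\infty$-space. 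Once the $p$-uniform identification with the complemented subspace $L_p(\mathcal{R})q_0$ is in place, the remainder of the argument is routine.
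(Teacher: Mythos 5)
Your argument is correct, and it is essentially the standard one: the paper itself gives no proof of this lemma but simply quotes it from Mei \cite[p.9]{m07}, and the proof behind that citation is exactly your identification of $L_p(\cm;\,L_2^c(\Omega))$ with the $1$-complemented corner $L_p\big(B(H)\overline{\otimes}\cm\big)q_0$ (the same device, via $e=1\otimes e_{11}$, that the authors use later in the proof of Theorem \ref{t7.1}), combined with Pisier--Xu/Kosaki interpolation of the ambient noncommutative $L_p$-scale and the retract principle for the complex method. The endpoint issue you flag is harmless: the projection $x\mapsto xq_0$ and the embedding are defined and contractive uniformly in $p$ including $p=\infty$, the interpolation $[L_{p_0}(\mathcal{R}),L_{p_1}(\mathcal{R})]_\theta=L_{p_\theta}(\mathcal{R})$ does cover the $L_\infty$ endpoint, and any discrepancy between the completion-of-simple-functions definition and the full corner at $p=\infty$ does not affect $[\,\cdot\,,\,\cdot\,]_\theta$, since the complex method only sees the closure of the intersection of the couple.
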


The following lemma is the Wolff interpolation theorem.
\begin{lemma}{\rm{\cite{w94}}}\label{l7.0x}
Let $\mathcal{X}_0,\,\mathcal{X}_1, \,\mathcal{X}_2$ and $\mathcal{X}_3$ be  four Banach spaces such that $\mathcal{X}_0\cap \mathcal{X}_3$  is a
dense subspace of $\mathcal{X}_1$ and of $\mathcal{X}_2$. Suppose $$[\mathcal{X}_1,\,\mathcal{X}_3]_{\theta}=\mathcal{X}_2 \ \ and \ \
[\mathcal{X}_0,\,\mathcal{X}_2]_{\eta}=\mathcal{X}_1$$
with
$0<\theta,\,\eta<1$.
Then we have $[\mathcal{X}_0,\,\mathcal{X}_3]_{\widetilde{\theta}}=\mathcal{X}_1$,
$[\mathcal{X}_0,\,\mathcal{X}_3]_{\widetilde{\eta}}=\mathcal{X}_2$, where $\widetilde{\theta}=\frac{\eta\theta}{1-\eta+\eta\theta}$ and $\widetilde{\eta}=
\frac{\theta}{1-\eta+\eta\theta}$.
\end{lemma}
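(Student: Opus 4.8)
The statement is Wolff's gluing theorem for the complex method, so the plan is to organize the argument of \cite{w94} in three stages. First I would \emph{reduce the two identities to one}. By the Calder\'{o}n reiteration theorem for the complex method — legitimate here since $\mathcal{X}_0\cap\mathcal{X}_3$ is dense in $[\mathcal{X}_0,\mathcal{X}_3]_\lambda$ for every $\lambda\in(0,1)$ and, by hypothesis, in $\mathcal{X}_1$ and $\mathcal{X}_2$ — once $[\mathcal{X}_0,\mathcal{X}_3]_{\widetilde\eta}=\mathcal{X}_2$ is proved, substituting it into $\mathcal{X}_1=[\mathcal{X}_0,\mathcal{X}_2]_\eta$ gives $\mathcal{X}_1=[\mathcal{X}_0,[\mathcal{X}_0,\mathcal{X}_3]_{\widetilde\eta}]_\eta=[\mathcal{X}_0,\mathcal{X}_3]_{\eta\widetilde\eta}=[\mathcal{X}_0,\mathcal{X}_3]_{\widetilde\theta}$, using $\widetilde\theta=\eta\widetilde\eta$. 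So it suffices to show $\mathcal{X}_2=[\mathcal{X}_0,\mathcal{X}_3]_{\widetilde\eta}$ with equivalent norms.

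Second, I would extract the \emph{seed inequality}. On the dense subspace $\mathcal{X}_0\cap\mathcal{X}_3$ — which, as a subspace of each of the four spaces, lies in all of them — log-convexity of the complex norm applied to $\mathcal{X}_2=[\mathcal{X}_1,\mathcal{X}_3]_\theta$ and to $\mathcal{X}_1=[\mathcal{X}_0,\mathcal{X}_2]_\eta$ gives $\|x\|_{\mathcal{X}_2}\lesssim\|x\|_{\mathcal{X}_1}^{1-\theta}\|x\|_{\mathcal{X}_3}^{\theta}$ and $\|x\|_{\mathcal{X}_1}\lesssim\|x\|_{\mathcal{X}_0}^{1-\eta}\|x\|_{\mathcal{X}_2}^{\eta}$; eliminating $\|x\|_{\mathcal{X}_1}$ and then $\|x\|_{\mathcal{X}_2}$ yields $\|x\|_{\mathcal{X}_2}\lesssim\|x\|_{\mathcal{X}_0}^{1-\widetilde\eta}\|x\|_{\mathcal{X}_3}^{\widetilde\eta}$. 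A standard $J$-functional computation promotes this to the continuous embedding $(\mathcal{X}_0,\mathcal{X}_3)_{\widetilde\eta,1}\hookrightarrow\mathcal{X}_2$. However $(\mathcal{X}_0,\mathcal{X}_3)_{\widetilde\eta,1}$ is strictly smaller than $[\mathcal{X}_0,\mathcal{X}_3]_{\widetilde\eta}$, so this by itself proves neither inclusion of the target identity; it serves only as input to the next stage.

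Third, I would run the \emph{self-improving iteration}, which is the real content. Using the two hypotheses one recomposes the complex functor: starting from a function in $\mathcal{F}(\mathcal{X}_1,\mathcal{X}_3)$ that represents $x\in\mathcal{X}_2$ at the point $z=\theta$, one invokes $\mathcal{X}_1=[\mathcal{X}_0,\mathcal{X}_2]_\eta$ to split the $\mathcal{X}_1$-edge into an $\mathcal{X}_0$-part and an $\mathcal{X}_2$-part, controls the $\mathcal{X}_2$-part by the seed inequality, and iterates the replacement. This produces, for each $n$, a representation of $x$ adapted to the couple $(\mathcal{X}_0,\mathcal{X}_3)$ at a point $\lambda_n$, where $(\lambda_n)_n$ satisfies a recursion with attracting fixed point $\widetilde\eta$; letting $n\to\infty$ and passing the analytic functions to a limit (a normal-families/weak-$\ast$ compactness argument) gives $x\in[\mathcal{X}_0,\mathcal{X}_3]_{\widetilde\eta}$ with norm $\lesssim\|x\|_{\mathcal{X}_2}$, and a symmetric construction gives the reverse embedding $[\mathcal{X}_0,\mathcal{X}_3]_{\widetilde\eta}\hookrightarrow\mathcal{X}_2$. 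I expect the main obstacle to be the bookkeeping of constants: each step of the iteration multiplies the interpolation constant by a factor depending on the constants in the hypotheses and on the step length $|\lambda_{n+1}-\lambda_n|$, and the scheme must be set up so that these factors form a convergent infinite product; controlling this geometric series and making the limiting passage of the analytic functions rigorous is the technical heart. Once $\mathcal{X}_2=[\mathcal{X}_0,\mathcal{X}_3]_{\widetilde\eta}$ is in hand, the reiteration of the first stage delivers $\mathcal{X}_1=[\mathcal{X}_0,\mathcal{X}_3]_{\widetilde\theta}$, completing the proof.
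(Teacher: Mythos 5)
You should first be aware that the paper contains no proof of Lemma \ref{l7.0x}: it is Wolff's interpolation theorem, imported verbatim from \cite{w94} and used as a black box (together with the reiteration theorem, Lemma \ref{l7.3}) in Lemma \ref{l7.1} and Theorem \ref{t7.1}. So your proposal can only be measured against Wolff's original argument. On that scale, your first two stages are essentially correct: indeed $\widetilde\theta=\eta\widetilde\eta$, so the identity for $\mathcal{X}_1$ follows from the one for $\mathcal{X}_2$ by reiteration, and eliminating $\|x\|_{\mathcal{X}_1}$ between the two interpolation inequalities gives $\|x\|_{\mathcal{X}_2}\lesssim\|x\|_{\mathcal{X}_0}^{1-\widetilde\eta}\|x\|_{\mathcal{X}_3}^{\widetilde\eta}$ on $\mathcal{X}_0\cap\mathcal{X}_3$; you are also right that this only yields $(\mathcal{X}_0,\mathcal{X}_3)_{\widetilde\eta,1}\hookrightarrow\mathcal{X}_2$ and cannot by itself prove either inclusion of the complex identity. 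One caveat already at stage 1: the reiteration $[\mathcal{X}_0,[\mathcal{X}_0,\mathcal{X}_3]_{\widetilde\eta}]_\eta=[\mathcal{X}_0,\mathcal{X}_3]_{\eta\widetilde\eta}$ involves the endpoint exponent $0$, and in that case Calder\'on's theorem (Lemma \ref{l7.3}) also requires density of $\mathcal{X}_0\cap\mathcal{X}_3$ in $\mathcal{X}_0$, which is not among the hypotheses; the density of the intersection in $[\mathcal{X}_0,\mathcal{X}_3]_\lambda$ that you cite is not the relevant condition. Wolff sidesteps this by extracting both identities from the same iteration.

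The genuine gap is stage 3, which is a description of Wolff's iteration rather than an execution of it, and that is where all the content of the lemma lies. Concretely: (i) the ``splitting of the $\mathcal{X}_1$-edge'' cannot be done pointwise in the boundary parameter; to substitute analytic families coherently one must first approximate the admissible function by finite sums of the form $\sum_k e^{\delta z^2+\lambda_k z}x_k$ with $x_k\in\mathcal{X}_0\cap\mathcal{X}_3$ (this is precisely where the density hypotheses are consumed), and then perform a gluing/composition producing a new admissible function for $(\mathcal{X}_0,\mathcal{X}_3)$ — none of this is set up. (ii) What closes the loop is not the seed inequality but a re-application of $\mathcal{X}_2=[\mathcal{X}_1,\mathcal{X}_3]_\theta$ (equivalently, interpolation of inclusion maps), which is what produces the recursion $\lambda_{n+1}=\theta+(1-\theta)\eta\lambda_n$ with attracting fixed point $\widetilde\eta$; moreover you never say where the iteration starts, i.e.\ for which $\lambda_0$ an admissible representation or inclusion is available, and this starting-point issue is a real one in the inclusion-constant formulation. (iii) The limiting passage is not routine: for a general Banach couple there is no weak-$\ast$ or normal-families compactness to invoke in $\mathcal{F}(\mathcal{X}_0,\mathcal{X}_3)$, and $[\mathcal{X}_0,\mathcal{X}_3]_{\lambda}$ does not depend continuously on $\lambda$, so representations ``at $\lambda_n$ with $\lambda_n\to\widetilde\eta$'' do not by themselves yield one at $\widetilde\eta$; Wolff instead constructs a single admissible function as a convergent infinite gluing with explicit control of the product of constants. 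As it stands, then, your text is a faithful plan of \cite{w94} with correct index bookkeeping, but not yet a proof; since the paper itself only cites the result, completing these three points (or simply keeping the citation) is what is required.
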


\begin{lemma}\label{l7.1}
If \eqref{e7.1} holds true for any $2< q<p<\infty$, then
\eqref{e7.1} also holds true for any $1\leq q<p<\infty$.
\end{lemma}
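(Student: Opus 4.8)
\textbf{Proof proposal for Lemma \ref{l7.1}.}

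The plan is to bootstrap from the range $2<q<p<\infty$ down to the full range $1\le q<p<\infty$ via the Wolff interpolation theorem (Lemma \ref{l7.0x}), using the known duality and equivalence results as anchors. First I would fix $1\le q<p<\infty$ and choose an auxiliary exponent $r$ with $\max\{2,q\}<r$; actually the cleanest choice is to interpolate through the space $L_\infty(\lm)$ or, more precisely, through $\mathcal{BMO}^c(\bx,\,\cm)$ itself, which plays the role of the ``$H_\infty$'' endpoint. The four-space chain for Wolff's theorem would be
\[
\mathcal{X}_0=\mathcal{BMO}^c(\bx,\,\cm),\quad \mathcal{X}_1=\mathcal{H}_{r}^c(\bx,\,\cm),\quad \mathcal{X}_2=\mathcal{H}_{s}^c(\bx,\,\cm),\quad \mathcal{X}_3=\mathcal{H}_q^c(\bx,\,\cm),
\]
where $q<s<r$ are chosen so that one of the two required complex-interpolation identities, namely $[\mathcal{X}_1,\,\mathcal{X}_3]_\theta=\mathcal{X}_2$ with $s,r>2$ or $s,r$ straddling $2$, already follows from the hypothesis (the case $2<q<p<\infty$) or from Remark \ref{r2.9}(iii) together with Lemma \ref{l7.0} at the $H_2=L_2$ endpoint, while the other identity $[\mathcal{X}_0,\,\mathcal{X}_2]_\eta=\mathcal{X}_1$ is again an instance of the hypothesis. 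Running Wolff's theorem then yields $[\mathcal{BMO}^c(\bx,\,\cm),\,\mathcal{H}_q^c(\bx,\,\cm)]_{\widetilde\eta}=\mathcal{H}_p^c(\bx,\,\cm)$ for the appropriate $\widetilde\eta$, and solving $\widetilde\eta=\tfrac{\theta}{1-\eta+\eta\theta}$ for the parameters shows every value of $q/p$ in $(0,1)$ is attainable.

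More concretely, the key steps in order are: (1) reduce to the column case, since the row case is obtained by taking adjoints and the mixture case follows from the column/row cases together with the description of $\mathcal{H}_p^{cr}$ and $\mathcal{BMO}^{cr}$ as sums/intersections; (2) treat the endpoint $q=2$ first, i.e.\ show $[\mathcal{BMO}^c(\bx,\,\cm),\,\mathcal{H}_2^c(\bx,\,\cm)]_{2/p}=\mathcal{H}_p^c(\bx,\,\cm)$ for $2<p<\infty$, which one can get by combining the hypothesis for $2<q<p$ with the identity $\mathcal{H}_2^c=L_2(\lm)$ and a second application of Wolff, or directly from Theorems \ref{t4.1}, \ref{t5.1} expressing $\mathcal{H}_p^c$ as an $L_p$-type space and invoking Lemma \ref{l7.0}; (3) with $q=2$ available, apply Wolff with $\mathcal{X}_0=\mathcal{BMO}^c$, $\mathcal{X}_3=\mathcal{H}_q^c$ (now $1\le q<2$), $\mathcal{X}_1=\mathcal{H}_{p}^c$ for the target $p\in(q,2]$, and $\mathcal{X}_2=\mathcal{H}_2^c$, using $[\mathcal{H}_p^c,\,\mathcal{H}_q^c]_\theta$-type identities at the lower end that reduce to Lemma \ref{l7.0} once one knows $\mathcal{H}_p^c$ embeds isometrically into $L_p(\lm;L_2^c)$ via $\Phi$ (Lemma \ref{d3.1}) as a complemented subspace — complementation coming from $\Psi$ being bounded $L_p\to\mathcal{H}_p^c$ for all $1<p<\infty$ by Corollary \ref{io44}; (4) check the density hypothesis $\mathcal{X}_0\cap\mathcal{X}_3$ dense in $\mathcal{X}_1,\mathcal{X}_2$, which holds because $S_\cm$-valued simple functions are dense in all the $\mathcal{H}_p^c$ spaces and lie in $\mathcal{BMO}^c$; (5) finally read off the parameter $\widetilde\eta=q/p$ from Wolff's formula and also verify the real-interpolation identity $[\cdot,\cdot]_{q/p,p}=\mathcal{H}_p^c$ by the same scheme, since Wolff's theorem has a real-method analogue or one can use reiteration.

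The main obstacle I expect is step (3): pushing \emph{below} $q=2$ requires a complex-interpolation identity of the form $[\mathcal{H}_{p_0}^c(\bx,\,\cm),\,\mathcal{H}_{p_1}^c(\bx,\,\cm)]_\theta=\mathcal{H}_{p_\theta}^c(\bx,\,\cm)$ for exponents straddling or below $2$, and this is exactly the kind of statement that, in Mei's Euclidean treatment, rests on the non-commutative martingale machinery the authors are trying to avoid. The way around it is to exploit that $\Phi$ realizes $\mathcal{H}_p^c(\bx,\,\cm)$ as a \emph{norm-complemented} subspace of $L_p(\lm;L_2^c(\bx\times\mathbb R_+))$ for every $p\in(1,\infty)$ — the projection being $\Phi\Psi$, bounded by Corollary \ref{io44} — so that complex interpolation of complemented subspaces (Calderón's result on retracts) transfers the identity of Lemma \ref{l7.0} from the big $L_p(\lm;L_2^c)$ scale down to the $\mathcal{H}_p^c$ scale, including the range $1<p\le 2$. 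Once this retract argument is in place the rest is bookkeeping with Wolff's parameters; the delicate point is checking that $\Phi\Psi$ is a genuine bounded projection on the $p=1$ endpoint replacement $\mathcal{BMO}^c$ as well, which is handled by Lemma \ref{l3.2} together with the duality Theorem \ref{t3.1}.
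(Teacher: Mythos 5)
Your overall strategy — Wolff's theorem combined with the retract argument that transfers Lemma \ref{l7.0} from the $L_p(\lm;L_2^c(\bx\times\mathbb{R}_+))$ scale to the $\mathcal{H}_p^c$ scale via $\Phi$, $\Psi$ and Corollary \ref{io44} — is exactly the paper's approach, and your diagnosis of the key technical point (complementation for $1<p<\infty$ avoiding martingale machinery) is correct. However, the concrete Wolff chain you propose in step (3) cannot work. You set $\mathcal{X}_0=\mathcal{BMO}^c$, $\mathcal{X}_1=\mathcal{H}_p^c$ with $p\in(q,2]$, $\mathcal{X}_2=\mathcal{H}_2^c$, $\mathcal{X}_3=\mathcal{H}_q^c$ with $q<p\le 2$. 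Wolff requires $[\mathcal{X}_1,\mathcal{X}_3]_\theta=\mathcal{X}_2$, i.e.\ $[\mathcal{H}_p^c,\mathcal{H}_q^c]_\theta=\mathcal{H}_2^c$ for some $\theta\in(0,1)$; but since $1/p\ge 1/2$ and $1/q>1/2$, the convex combination $\frac{1-\theta}{p}+\frac{\theta}{q}$ is strictly $>\frac12$ for $\theta\in(0,1)$ unless $p=2$, so $\mathcal{H}_2^c$ is never a proper intermediate space of $\mathcal{H}_p^c$ and $\mathcal{H}_q^c$. The second hypothesis $[\mathcal{BMO}^c,\mathcal{H}_2^c]_\eta=\mathcal{H}_p^c$ fails for the same reason: an interpolation space of $\mathcal{BMO}^c$ (the $p=\infty$ endpoint) and $\mathcal{H}_2^c$ must have exponent strictly between $2$ and $\infty$. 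The fix, which is what the paper does, is to anchor on the outside with $\mathcal{H}_s^c$ for some $s>2$ rather than with $\mathcal{H}_2^c$: take $\mathcal{X}_0=\mathcal{BMO}^c$, $\mathcal{X}_1=\mathcal{H}_s^c$ with $s$ large, $\mathcal{X}_2=\mathcal{H}_p^c$ (the target), $\mathcal{X}_3=\mathcal{H}_q^c$; the first hypothesis is then the retract identity for exponents $q<p<s$, and the second is an instance of (7.1) that is either the given hypothesis (when $p>2$) or follows from the already-established subcase $2<\text{target}<\infty$ (when $p<2$).

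The $q=1$ endpoint needs separate care and you have not resolved it: the retract argument uses Corollary \ref{io44}, which only gives boundedness of $\Psi$ from $L_p(\lm;L_2^c)$ to $\mathcal{H}_p^c$ for $1<p<\infty$, so $[\mathcal{H}_s^c,\mathcal{H}_1^c]_\theta=\mathcal{H}_p^c$ does \emph{not} follow from Lemma \ref{l7.0} by complementation. Your closing remark that the delicate point is whether ``$\Phi\Psi$ is a genuine bounded projection on $\mathcal{BMO}^c$'' misidentifies the obstruction — the missing ingredient is not a projection on $\mathcal{BMO}^c$ (which is not used in the chain) but rather the absence of $L_1\to\mathcal{H}_1^c$ boundedness of $\Psi$. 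The paper circumvents this by first establishing Case 1 ($1<q<p<\infty$) in full and then obtaining $[\mathcal{H}_s^c,\mathcal{H}_1^c]_\theta=\mathcal{H}_p^c$ by dualizing Case 1 via the $\mathcal{H}_1^c$--$\mathcal{BMO}^c$ duality of Theorem \ref{t3.1}, before a final application of Wolff.
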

\begin{proof}

{\bf{Case 1}}: When $1<q<p<\fz$, we first consider the subcase $2<p<\fz$. Let $\theta\in(0,\,1)$ and $s=\frac{p}{\theta}$, then there exists a number $\phi\in(0,\,1)$ such that $\frac{1}{p}=\frac{1-\phi}{s}+\frac{\phi}{q}$.
For any $p\in(1,\,\fz)$, $\mathcal{H}_p^{c}(\bx,\,\cm)$ is complemented in  $L_{p}(\lm,\,L_2^c(\bx\times \mathbb{R}_+,\,d\mu(y)dt))$ (see Lemma \ref{d3.1}). It follows from Lemma \ref{l7.0} that
\begin{align}\label{immmmm}
\mathcal{X}_2:=\mathcal{H}_p^c(\bx,\,\cm)
=[\mathcal{H}_s^c(\bx,\,\cm),\,\mathcal{H}_q^c(\bx,\,\cm)]_{\phi}
=:[\mathcal{X}_1,\,\mathcal{X}_3]_{\phi}.
\end{align}
By the assumption $p>2$, we obtain
$$\mathcal{X}_1:=\mathcal{H}_s^c(\bx,\,\cm)
=[\mathcal{BMO}^c(\bx,\,\cm),\,\mathcal{H}_p^c(\bx,\,\cm)]_{\theta}
=:[\mathcal{X}_0,\,\mathcal{X}_2]_{\theta}.$$
From the above two equalities and Lemma \ref{l7.0x}, we deduce that
\begin{align}\label{sgdg}
\mathcal{X}_2=\mathcal{H}_p^c(\bx,\,\cm)
=[\mathcal{X}_0,\,\mathcal{X}_3]_{\widetilde{\theta}}
=[\mathcal{BMO}^c(\bx,\,\cm),\,\mathcal{H}_q^c(\bx,\,\cm)]_{\widetilde{\theta}}
\end{align}
with $\widetilde{\theta}:=\frac{\phi}{1-\theta-\theta\phi}=\frac{q}{p}$.
Next, we consider the subcase $1<p<2$. In this case, the proof can be shown via \eqref{sgdg}, a duality version of \eqref{immmmm} and then Lemma \ref{l7.0x}.

{\bf{Case 2}}: When $1=q<p<\fz$, let $s>\max\{2,\,p\}$. Then there exists a number $\theta\in(0,\,1)$ such that $\frac{1-\theta}{s}+\theta=\frac{1}{p}$. By the above conclusion and duality, we get
\begin{align*}
\mathcal{X}_2:=&\mathcal{H}_p^c(\bx,\,\cm)
=[\mathcal{H}_s^c(\bx,\,\cm),\,\mathcal{H}_1^c(\bx,\,\cm)]_{\theta}
=:[\mathcal{X}_1,\,\mathcal{X}_3]_{\theta}.
\end{align*}
Let $\phi=\frac{p}{s}$. It follows from {\bf{Case 1}} that
$$\mathcal{X}_1:=\mathcal{H}_s^c(\bx,\,\cm)
=[\mathcal{BMO}^c(\bx,\,\cm),\,\mathcal{H}_p^c(\bx,\,\cm)]_{\phi}
=:[\mathcal{X}_0,\,\mathcal{X}_2]_{\phi}.$$
From the above two equalities and Lemma \ref{l7.0x}, we deduce that
$$\mathcal{H}_p^c(\bx,\,\cm)=\mathcal{X}_2=[\mathcal{X}_0,\,\mathcal{X}_3]
_{\widetilde{\phi}}
=[\mathcal{BMO}^c(\bx,\,\cm),\,\mathcal{H}_1^c(\bx,\,\cm)]_{\widetilde{\phi}}
$$
with $\widetilde{\phi}:=\frac{\phi}{1-\theta-\theta\phi}=\frac{1}{p}$.
\end{proof}

\begin{lemma}{\rm \cite[Proposition 3.7]{m03}}\label{l7.1x0}
Let $2\leq p,\,q,\,s<\fz$ and $\theta\in(0,\,1)$ with $\frac{1}{s}=\frac{1-\theta}{p}+\frac{\theta}{q}$. Then
$$[L_p(\cm;\,\ell_{\fz}^c),\,L_q(\cm;\,\ell_{\fz}^c)]_{\theta}=L_s(\cm;\,\ell_{\fz}^c).$$
Similar result also holds for row spaces.
\end{lemma}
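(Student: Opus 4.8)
\textbf{Proof plan for Lemma \ref{l7.1x0}.}

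The plan is to reduce the statement for the column maximal spaces $L_p(\cm;\,\ell_\infty^c)$ to the known complex interpolation identity for the maximal spaces $L_p(\cm;\,\ell_\infty)$. Recall the defining identity $\|\{x_k\}_k\|_{L_p(\cm;\,\ell_\infty^c)}=\|\{x_k^*x_k\}_k\|_{L_{p/2}(\cm;\,\ell_\infty)}^{1/2}$ from the paragraph preceding Definition \ref{d4.1}. First I would set up the standard description of $L_p(\cm;\,\ell_\infty^c)$ as a quotient: the norm is $\inf\{\|a\|_{L_p(\cm)}:\ x_k=y_ka,\ \sup_k\|y_k\|_{\cm}\le1\}$, which realizes $L_p(\cm;\,\ell_\infty^c)$ isometrically as the quotient of $L_\infty(\cm;\,\ell_\infty)\,\bar\otimes\,L_p(\cm)$ (diagonal multiplication) by the kernel of the factorization map, or alternatively as the range of a bounded projection inside a space to which Lemma \ref{l7.0}-type reasoning applies. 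Given the compatible couple $(L_p(\cm;\,\ell_\infty^c),\,L_q(\cm;\,\ell_\infty^c))$ with $2\le p,q<\infty$, the embedding into the couple $(L_p(\cm;\,\ell_\infty),\,L_q(\cm;\,\ell_\infty))$ via $\{x_k\}_k\mapsto\{x_k^*x_k\}_k$ is not linear, so one cannot simply invoke functoriality; instead I would follow Mei's argument in \cite{m03}, which is exactly the cited source, and reproduce it.

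The key steps, in order, are: (1) show the easy inclusion $L_s(\cm;\,\ell_\infty^c)\hookrightarrow[L_p(\cm;\,\ell_\infty^c),\,L_q(\cm;\,\ell_\infty^c)]_\theta$ by taking an element $\{x_k\}_k$ with a near-optimal factorization $x_k=y_ka$, $\|a\|_{L_s(\cm)}\le1+\varepsilon$, $\sup_k\|y_k\|_{\cm}\le1$, writing $a=|a|^{s/p(1-\theta)}u\,|a|^{s/q\theta}$ via the polar decomposition and a Stein-type analytic family $a_z$ with $a_{(1-\theta)/p\cdot s}\cdot\text{(something)}$, and inserting the analytic family $z\mapsto\{y_k a_z\}_k$ to produce an admissible function for the complex method; (2) for the reverse inclusion, take $\{x_k\}_k$ in the interpolation space with an admissible analytic function $F(z)=\{f_k(z)\}_k$, $F(\theta)=\{x_k\}_k$, bounded into $L_p(\cm;\,\ell_\infty^c)$ on $\Re z=0$ and into $L_q(\cm;\,\ell_\infty^c)$ on $\Re z=1$; on each boundary line use the factorization $f_k(j+it)=y_k(j+it)a_j(t)$ and the known boundary behaviour of $L_{p/2},L_{q/2}(\cm;\,\ell_\infty)$-valued analytic functions (Lemma \ref{l7.1x0} is the square of the statement for $L_{p/2}(\cm;\,\ell_\infty)$ interpolating with $L_{q/2}(\cm;\,\ell_\infty)$, which is due to Junge--Xu); (3) combine to get the factorization of $\{x_k\}_k$ with the $L_s(\cm)$-norm control, i.e. $\{x_k\}_k\in L_s(\cm;\,\ell_\infty^c)$ with the right norm bound. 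Finally, (4) note that the row case is obtained by applying the column result to the adjoints $\{x_k^*\}_k$, since $L_p(\cm;\,\ell_\infty^r)$ is defined through $\{x_k^*\}_k\in L_p(\cm;\,\ell_\infty^c)$.

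The main obstacle is step (2): the complex interpolation of the column maximal spaces genuinely requires an operator-valued (analytic) Stein interpolation argument for the $\ell_\infty$-direction, because the $\sup_k\|y_k\|_{\cm}$ piece does not interpolate by naive duality, and one must carefully track the measurability and analyticity of the factorizing families $y_k(z)$ and $a(z)$ along the strip. Since this is precisely \cite[Proposition 3.7]{m03}, the cleanest route is to cite that result directly and remark that the proof there transfers verbatim to our setting (the underlying von Neumann algebra $\cm$ and the index set $\NN$ play no special role), with the row statement following by taking adjoints; this is what the statement of the lemma already does, so the "proof" here is essentially the reduction and the observation that nothing in \cite{m03} uses any structure absent in the present context.
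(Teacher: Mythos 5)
The paper gives no proof of this lemma; it is cited verbatim from \cite[Proposition 3.7]{m03}, and your final conclusion---to defer to that reference after observing that nothing in Musat's argument uses structure absent here, with the row case following by taking adjoints---is precisely the paper's move. Your sketch of Musat's Stein-interpolation argument is a fair outline (for the column case the analytic family in step (1) should come from the one-sided factorization $a_z = u\,|a|^{s(\frac{1-z}{p}+\frac{z}{q})}$ applied to the right factor $a$ in $x_k=y_ka$, rather than the two-sided split around $u$ you wrote), but since the paper itself supplies no proof there is nothing further to compare.
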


\begin{lemma}{\rm \cite[Theorem 4.2.2]{bl76}}\label{l7.1x}
Let $\theta\in(0,\,1)$ and $(\mathcal{X}_0,\,\mathcal{Y})$ be a
compatible couple of Banach spaces. Then
$\mathcal{X}\cap\mathcal{Y}$ is dense in $[\mathcal{X},\,\mathcal{Y}]_{\theta}$.
\end{lemma}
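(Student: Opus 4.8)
The plan is to reduce the assertion to a function-theoretic density statement on the closed strip and then prove the latter by a Fourier-analytic approximation. Recall that, with $\bar S:=\{z\in\mathbb{C}:0\le\operatorname{Re}z\le1\}$ and $S^{\circ}$ its interior, $[\mathcal{X},\mathcal{Y}]_{\theta}$ is by definition $\{f(\theta):f\in\mathcal{F}(\mathcal{X},\mathcal{Y})\}$ with the quotient norm, where $\mathcal{F}(\mathcal{X},\mathcal{Y})$ is the Banach space of bounded continuous $f:\bar S\to\mathcal{X}+\mathcal{Y}$, analytic on $S^{\circ}$, whose traces $t\mapsto f(it)$ and $t\mapsto f(1+it)$ are continuous, bounded and vanishing at infinity from $\mathbb{R}$ into $\mathcal{X}$, resp. $\mathcal{Y}$, with $\|f\|_{\mathcal{F}}:=\max\{\sup_{t}\|f(it)\|_{\mathcal{X}},\ \sup_{t}\|f(1+it)\|_{\mathcal{Y}}\}$. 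Since the evaluation $f\mapsto f(\theta)$ is a metric surjection of $\mathcal{F}(\mathcal{X},\mathcal{Y})$ onto $[\mathcal{X},\mathcal{Y}]_{\theta}$, it suffices to find a subspace $\mathcal{D}\subset\mathcal{F}(\mathcal{X},\mathcal{Y})$ which is dense and whose image under $f\mapsto f(\theta)$ is contained in $\mathcal{X}\cap\mathcal{Y}$. I would take $\mathcal{D}$ to be the linear span of the functions $z\mapsto e^{\delta z^{2}+\lambda z}a$ with $\delta>0$, $\lambda\in\mathbb{R}$ and $a\in\mathcal{X}\cap\mathcal{Y}$; each lies in $\mathcal{F}(\mathcal{X},\mathcal{Y})$ since $|e^{\delta z^{2}}|=e^{\delta(\operatorname{Re}z)^{2}-\delta(\operatorname{Im}z)^{2}}$ supplies decay at infinity along the two boundary lines, and at $z=\theta$ it equals $e^{\delta\theta^{2}+\lambda\theta}a\in\mathcal{X}\cap\mathcal{Y}$. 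Thus the whole problem is the density of $\mathcal{D}$ in $\mathcal{F}(\mathcal{X},\mathcal{Y})$.

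For the density, fix $f\in\mathcal{F}(\mathcal{X},\mathcal{Y})$. First I would replace $f$ by $e^{\delta_{0}z^{2}}f$, $\delta_{0}>0$ small; as $f$ vanishes at infinity on the two boundary lines this converges to $f$ in $\mathcal{F}$-norm, so one may assume $\|f(x+it)\|_{\mathcal{X}+\mathcal{Y}}\le Ce^{-\delta_{0}t^{2}}$ uniformly on $\bar S$ (by the three-lines lemma). Next I would mollify in the imaginary direction, replacing $f(z)$ by $(\pi h^{2})^{-1/2}\int_{\mathbb{R}}f(z+is)e^{-s^{2}/h^{2}}\,ds$: this stays in $\mathcal{F}(\mathcal{X},\mathcal{Y})$, retains Gaussian decay, converges to $f$ in $\mathcal{F}$-norm as $h\downarrow0$, and now the two traces are $C^{\infty}$ in $t$ with rapidly decreasing derivatives. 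With these reductions in force, set $G(\xi;x):=\tfrac{1}{2\pi}\int_{\mathbb{R}}f(x+it)e^{-i\xi t}\,dt$ for $x\in[0,1]$, $\xi\in\mathbb{R}$; differentiating under the integral and using the Cauchy--Riemann equations and an integration by parts (the boundary terms vanish thanks to the decay) gives $\partial_{x}G(\xi;x)=\xi\,G(\xi;x)$, hence $G(\xi;x)=e^{\xi x}a(\xi)$ with $a(\xi):=G(\xi;0)$. Reading this at $x=0$ shows $a(\xi)\in\mathcal{X}$, at $x=1$ that $e^{\xi}a(\xi)=G(\xi;1)\in\mathcal{Y}$; so $a(\xi)\in\mathcal{X}\cap\mathcal{Y}$ for every $\xi$, the map $\xi\mapsto a(\xi)$ is continuous and rapidly decreasing in $\mathcal{X}$, and $\xi\mapsto e^{\xi}a(\xi)$ is continuous and rapidly decreasing in $\mathcal{Y}$. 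Fourier inversion on the two boundary lines, together with analyticity, then yields the representation $f(w)=\int_{\mathbb{R}}a(\xi)e^{\xi w}\,d\xi$ for all $w\in\bar S$, the integral being absolutely convergent in $\mathcal{X}+\mathcal{Y}$ (split at $\xi=0$ and use $a(\xi)\in\mathcal{X}$ for $\xi\le0$, $e^{\xi}a(\xi)\in\mathcal{Y}$ for $\xi\ge0$).

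The final step turns this representation into an element of $\mathcal{D}$. Convolving $a$ with the Gaussian $g_{c}(\xi):=\sqrt{c/\pi}\,e^{-c\xi^{2}}$, which is an approximate identity simultaneously for $L^{1}(d\xi;\mathcal{X})$ and, since $e^{\xi}g_{c}$ concentrates near the origin up to the scalar $e^{1/(4c)}\to1$, for $L^{1}(e^{\xi}d\xi;\mathcal{Y})$, and then discretizing and truncating the convolution integral, one produces
\[
\widetilde a(\xi):=\eta\sqrt{c/\pi}\sum_{|k|\le N}a(k\eta)\,e^{-c(\xi-k\eta)^{2}}
\]
with $a(k\eta)\in\mathcal{X}\cap\mathcal{Y}$, so that $\widetilde a\to a$ both in $L^{1}(d\xi;\mathcal{X})$ and in $L^{1}(e^{\xi}d\xi;\mathcal{Y})$ as $c\to\infty$, $\eta\downarrow0$, $N\to\infty$ in the appropriate order. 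Using the elementary identity $\int_{\mathbb{R}}e^{-c(\xi-\lambda)^{2}+\xi w}\,d\xi=\sqrt{\pi/c}\,e^{w^{2}/4c+\lambda w}$ one gets
\[
h(w):=\int_{\mathbb{R}}\widetilde a(\xi)e^{\xi w}\,d\xi=\eta\sum_{|k|\le N}e^{w^{2}/4c+k\eta w}\,a(k\eta)\in\mathcal{D},
\]
and on the boundary lines $\|h(it)-f(it)\|_{\mathcal{X}}\le\int_{\mathbb{R}}\|\widetilde a(\xi)-a(\xi)\|_{\mathcal{X}}\,d\xi$ and $\|h(1+it)-f(1+it)\|_{\mathcal{Y}}\le\int_{\mathbb{R}}\|\widetilde a(\xi)-a(\xi)\|_{\mathcal{Y}}\,e^{\xi}\,d\xi$, both uniformly in $t$; hence $\|h-f\|_{\mathcal{F}}$ is as small as we please. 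This shows $\mathcal{D}$ is dense in $\mathcal{F}(\mathcal{X},\mathcal{Y})$ and, by the first paragraph, completes the proof.

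I expect the genuine obstacle to be the middle step: proving that $a(\xi)=G(\xi;0)$ actually lands in $\mathcal{X}\cap\mathcal{Y}$ and that $f(w)=\int_{\mathbb{R}}a(\xi)e^{\xi w}\,d\xi$ is a valid Bochner-absolutely-convergent identity on the strip. This is where analyticity and the two separate boundary conditions truly interact (through the contour-shift identity $\partial_{x}G=\xi G$), and it is precisely why the two preliminary reductions---Gaussian multiplier for decay, mollification for smoothness---are needed to legitimize the integration by parts and Fourier inversion; the discretization in the last step, by contrast, is routine once one works in the weighted $L^{1}$-norms in the $\xi$-variable, which automatically deliver estimates uniform in $t$.
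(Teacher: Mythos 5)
The paper does not prove this lemma; it is quoted from Bergh--L\"ofstr\"om [Theorem 4.2.2], so the only comparison is with that classical argument. Your proof is correct and is essentially that same proof: reduce to the density in the Calder\'on space $\mathcal{F}(\mathcal{X},\mathcal{Y})$ of the span of $z\mapsto e^{\delta z^{2}+\lambda z}a$ with $a\in\mathcal{X}\cap\mathcal{Y}$, via the Gaussian-multiplier and mollification reductions and the contour-shift identity showing the ``Fourier data'' lie in $\mathcal{X}\cap\mathcal{Y}$; your only (cosmetic) deviation is using the continuous Fourier transform $G(\xi;x)=e^{\xi x}a(\xi)$ with a Gaussian discretization where Bergh--L\"ofstr\"om periodize in the imaginary direction and expand in Fourier series.
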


The following Lemma is an elementary fact from the interpolation theory.
\begin{lemma}\label{l7.2}
Let $\theta\in(0,\,1)$, $(\mathcal{X}_0,\,\mathcal{Y}_0)$ and $(\mathcal{X}_1,\,\mathcal{Y}_1)$ be
compatible couples of Banach spaces. Then
$$[\mathcal{X}_0\cap \mathcal{Y}_0,\,\mathcal{X}_1\cap \mathcal{Y}_1]_{\theta}\subset[\mathcal{X}_0,\, \mathcal{Y}_0]_{\theta}\cap[\mathcal{X}_1,\, \mathcal{Y}_1]_{\theta}.$$
\end{lemma}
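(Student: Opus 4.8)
The asserted embedding is a soft fact about the complex interpolation functor, and the plan is to read it off directly from the definition of $[\,\cdot\,,\cdot\,]_\theta$ in terms of admissible analytic functions on the open strip $S=\{z\in\mathbb{C}:0<\mathrm{Re}\,z<1\}$; no interpolation machinery beyond that definition is needed. Throughout, all four spaces $\mathcal{X}_0,\mathcal{Y}_0,\mathcal{X}_1,\mathcal{Y}_1$ are continuously embedded in one Hausdorff topological vector space, $(\mathcal{X}_0,\mathcal{Y}_0)$ and $(\mathcal{X}_1,\mathcal{Y}_1)$ are compatible couples, and each intersection $\mathcal{X}_j\cap\mathcal{Y}_j$ carries the norm $\max\{\|\cdot\|_{\mathcal{X}_j},\|\cdot\|_{\mathcal{Y}_j}\}$, so that $(\mathcal{X}_0\cap\mathcal{Y}_0,\mathcal{X}_1\cap\mathcal{Y}_1)$ is again a compatible couple and the right-hand side is meaningful. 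Recall that, for a compatible couple $(\mathcal{A}_0,\mathcal{A}_1)$, an element $h$ lies in $[\mathcal{A}_0,\mathcal{A}_1]_\theta$ exactly when there is a function $F$, analytic on $S$ and bounded and continuous on $\overline{S}$ with values in $\mathcal{A}_0+\mathcal{A}_1$, such that $F(\theta)=h$, $F(it)\in\mathcal{A}_0$ and $F(1+it)\in\mathcal{A}_1$ for every $t\in\mathbb{R}$, and that $\|h\|_{[\mathcal{A}_0,\mathcal{A}_1]_\theta}$ equals the infimum of $\max\{\sup_t\|F(it)\|_{\mathcal{A}_0},\sup_t\|F(1+it)\|_{\mathcal{A}_1}\}$ over all such admissible $F$.

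The argument proper would run as follows. Fix $f\in[\mathcal{X}_0\cap\mathcal{Y}_0,\mathcal{X}_1\cap\mathcal{Y}_1]_\theta$ and $\varepsilon>0$, and choose an admissible function $F$ for the couple $(\mathcal{X}_0\cap\mathcal{Y}_0,\mathcal{X}_1\cap\mathcal{Y}_1)$ with $F(\theta)=f$ and $\max\{\sup_t\|F(it)\|_{\mathcal{X}_0\cap\mathcal{Y}_0},\sup_t\|F(1+it)\|_{\mathcal{X}_1\cap\mathcal{Y}_1}\}\le\|f\|_{[\mathcal{X}_0\cap\mathcal{Y}_0,\mathcal{X}_1\cap\mathcal{Y}_1]_\theta}+\varepsilon$. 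The one thing to observe is that, since an intersection is normed by the maximum, the inclusion of an intersection into each of the spaces constituting it is a contraction; feeding this into the two boundary bounds above shows that the very same $F$ is simultaneously an admissible function for the couple $(\mathcal{X}_0,\mathcal{Y}_0)$ and for the couple $(\mathcal{X}_1,\mathcal{Y}_1)$, with boundary bound no larger in either case. Consequently $f\in[\mathcal{X}_0,\mathcal{Y}_0]_\theta$ and $f\in[\mathcal{X}_1,\mathcal{Y}_1]_\theta$, each time with norm $\le\|f\|_{[\mathcal{X}_0\cap\mathcal{Y}_0,\mathcal{X}_1\cap\mathcal{Y}_1]_\theta}+\varepsilon$. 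Letting $\varepsilon\downarrow0$ and intersecting the two conclusions yields $f\in[\mathcal{X}_0,\mathcal{Y}_0]_\theta\cap[\mathcal{X}_1,\mathcal{Y}_1]_\theta$ together with the bound $\|f\|_{[\mathcal{X}_0,\mathcal{Y}_0]_\theta\cap[\mathcal{X}_1,\mathcal{Y}_1]_\theta}\le\|f\|_{[\mathcal{X}_0\cap\mathcal{Y}_0,\mathcal{X}_1\cap\mathcal{Y}_1]_\theta}$, which is precisely the asserted continuous inclusion.

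I do not anticipate any genuine obstacle: the statement expresses only the monotonicity of the complex interpolation functor under contractive inclusions of the underlying couples, and the sole technical point — that an admissible analytic function for the couple of intersections remains admissible, with no loss in the boundary bounds, for each of the two ambient couples — is immediate from the definition of the intersection norm. An alternative, equivalent bookkeeping is to invoke the interpolation property of $[\,\cdot\,,\cdot\,]_\theta$ applied to the identity map, regarded as a norm-one operator of compatible couples $(\mathcal{X}_0\cap\mathcal{Y}_0,\mathcal{X}_1\cap\mathcal{Y}_1)\to(\mathcal{X}_0,\mathcal{Y}_0)$ and $(\mathcal{X}_0\cap\mathcal{Y}_0,\mathcal{X}_1\cap\mathcal{Y}_1)\to(\mathcal{X}_1,\mathcal{Y}_1)$. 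This is the form in which the lemma will be used, in tandem with the density Lemma \ref{l7.1x} and the Wolff reiteration Lemma \ref{l7.0x}, to match the mixture Hardy and $L_p\mathcal{MO}$ interpolation scales against $L_p(\lm)$ in this section.
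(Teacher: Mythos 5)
The paper gives no proof of this lemma (it is simply asserted as an elementary fact), so the only issue is whether your argument stands on its own, and as written it does not. The failing step is the claim that the admissible function $F$ for the couple $(\mathcal{X}_0\cap\mathcal{Y}_0,\,\mathcal{X}_1\cap\mathcal{Y}_1)$ "is simultaneously an admissible function for the couple $(\mathcal{X}_0,\mathcal{Y}_0)$ and for the couple $(\mathcal{X}_1,\mathcal{Y}_1)$": admissibility for $(\mathcal{X}_0,\mathcal{Y}_0)$ requires $F(1+it)\in\mathcal{Y}_0$, whereas all you know is $F(1+it)\in\mathcal{X}_1\cap\mathcal{Y}_1$, and nothing in the hypotheses embeds $\mathcal{X}_1\cap\mathcal{Y}_1$ into $\mathcal{Y}_0$ (symmetrically, admissibility for $(\mathcal{X}_1,\mathcal{Y}_1)$ would need $F(it)\in\mathcal{X}_1$). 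The same objection defeats your "alternative bookkeeping": the identity is not a morphism of couples $(\mathcal{X}_0\cap\mathcal{Y}_0,\,\mathcal{X}_1\cap\mathcal{Y}_1)\to(\mathcal{X}_0,\,\mathcal{Y}_0)$, since its second coordinate would have to map $\mathcal{X}_1\cap\mathcal{Y}_1$ boundedly into $\mathcal{Y}_0$. This is not a repairable slip in your write-up alone, because the statement with this pairing is false: taking $\mathcal{X}_0=\mathcal{Y}_0=L_1$ and $\mathcal{X}_1=\mathcal{Y}_1=L_\infty$ it would assert $[L_1,L_\infty]_{\theta}\subset[L_1,L_1]_{\theta}\cap[L_\infty,L_\infty]_{\theta}=L_1\cap L_\infty$, which fails.

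What your strip-function computation does prove, correctly and with the contractivity of the inclusions $\mathcal{X}_j\cap\mathcal{Y}_j\hookrightarrow\mathcal{X}_j$ and $\mathcal{X}_j\cap\mathcal{Y}_j\hookrightarrow\mathcal{Y}_j$ used at the matching boundary line, is the re-paired inclusion $[\mathcal{X}_0\cap\mathcal{Y}_0,\,\mathcal{X}_1\cap\mathcal{Y}_1]_{\theta}\subset[\mathcal{X}_0,\,\mathcal{X}_1]_{\theta}\cap[\mathcal{Y}_0,\,\mathcal{Y}_1]_{\theta}$, valid when $(\mathcal{X}_0,\mathcal{X}_1)$ and $(\mathcal{Y}_0,\mathcal{Y}_1)$ are compatible couples and all four spaces sit in a common ambient space (so the left-hand couple is compatible). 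This is evidently what the lemma intends — the displayed pairing is a mis-indexing — and it is the form actually invoked in the proof of Theorem \ref{t7.1}, where the couples are $(\mathcal{BMO}^c(\bx,\,\cm),\,\mathcal{H}_2^c(\bx,\,\cm))$ and $(\mathcal{BMO}^r(\bx,\,\cm),\,\mathcal{H}_2^r(\bx,\,\cm))$ and the intersections are the mixture spaces. So your underlying idea (monotonicity of the complex method under contractive inclusions of couples) is the right one; restate the target with the couples $(\mathcal{X}_0,\mathcal{X}_1)$ and $(\mathcal{Y}_0,\mathcal{Y}_1)$ on the right-hand side, check the boundary conditions against those couples, and your argument goes through verbatim.
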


The following lemma is the reiteration theorem of the complex interpolation method.
\begin{lemma}{\rm{\cite[Theorem 4.6.1]{bl76}}}\label{l7.3}
Let $(\mathcal{X}_0,\,\mathcal{X}_1)$ be a
compatible couple of Banach spaces and
$$\mathcal{Y}_0= [\mathcal{X}_0,\,\mathcal{X}_1]_{\theta_0}\ \ and \ \
\mathcal{Y}_1=[\mathcal{X}_0,\,\mathcal{X}_1]_{\theta_1},\ \ \ (0\leq\theta_0,\,\theta_1\leq1).$$
Assume that $\mathcal{X}_0\cap \mathcal{X}_1$  is a
dense subspace of $\mathcal{X}_0$, $\mathcal{X}_1$ and
of $\mathcal{Y}_0\cap\mathcal{Y}_1$.
Then for any $\eta\in[0,\,1]$,
$$[\mathcal{Y}_0,\,\mathcal{Y}_1]_{\eta}
=[\mathcal{X}_0,\,\mathcal{X}_1]_{(1-\eta)\theta_0+\eta\theta_1}$$
with equal norms.
\end{lemma}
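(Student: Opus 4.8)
In the stated generality this is the classical reiteration theorem for Calder\'{o}n's complex interpolation method, \cite[Theorem 4.6.1]{bl76}, so it suffices to invoke that reference; I sketch the argument. Write $\bar{\mathcal{X}}=(\mathcal{X}_0,\mathcal{X}_1)$, $\bar{\mathcal{Y}}=(\mathcal{Y}_0,\mathcal{Y}_1)$, and recall that $[\bar{\mathcal{X}}]_\theta=\{f(\theta):f\in\mathcal{F}(\bar{\mathcal{X}})\}$ with the quotient norm, where $\mathcal{F}(\bar{\mathcal{X}})$ is the Calder\'{o}n space of bounded continuous $(\mathcal{X}_0+\mathcal{X}_1)$-valued functions on the closed strip $\bar S=\{0\le\Re z\le1\}$, analytic inside, with boundary traces $t\mapsto f(j+it)$ continuous into $\mathcal{X}_j$, normed by $\|f\|_{\mathcal{F}(\bar{\mathcal{X}})}=\max_j\sup_t\|f(j+it)\|_{\mathcal{X}_j}$; recall also that on the dense subspace $\mathcal{X}_0\cap\mathcal{X}_1$ this norm is computed equally well by the elementary subclass $\mathcal{F}_0(\bar{\mathcal{X}})$ of functions $f(z)=\sum_k e^{\delta z^2+\lambda_k z}a_k$ with $\delta>0$, $\lambda_k\in\mathbb{C}$ and $a_k\in\mathcal{X}_0\cap\mathcal{X}_1$. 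Put $\theta:=(1-\eta)\theta_0+\eta\theta_1$. The degenerate case $\theta_0=\theta_1$ (where $\mathcal{Y}_0=\mathcal{Y}_1$ and one only needs $[\mathcal{Z},\mathcal{Z}]_\eta=\mathcal{Z}$ isometrically) is trivial, so assume $\theta_0<\theta_1$; then $\theta\in[\theta_0,\theta_1]\subseteq[0,1]$, and $\mathcal{X}_0\cap\mathcal{X}_1\subseteq[\bar{\mathcal{X}}]_{\theta_j}=\mathcal{Y}_j$ for $j=0,1$, hence $\mathcal{X}_0\cap\mathcal{X}_1\subseteq\mathcal{Y}_0\cap\mathcal{Y}_1$. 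The plan is to prove the two norm-one inclusions $[\bar{\mathcal{X}}]_\theta\hookrightarrow[\bar{\mathcal{Y}}]_\eta$ and $[\bar{\mathcal{Y}}]_\eta\hookrightarrow[\bar{\mathcal{X}}]_\theta$ on $\mathcal{X}_0\cap\mathcal{X}_1$, and then to upgrade the resulting norm identity to equality of spaces using that $\mathcal{X}_0\cap\mathcal{X}_1$ is dense in $[\bar{\mathcal{X}}]_\theta$ and, via the density hypotheses, in $[\bar{\mathcal{Y}}]_\eta$ as well; the endpoint cases $\theta,\eta\in\{0,1\}$ are exactly where the hypotheses that $\mathcal{X}_0\cap\mathcal{X}_1$ be dense in $\mathcal{X}_0$, in $\mathcal{X}_1$, and in $\mathcal{Y}_0\cap\mathcal{Y}_1$ are used.

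\textbf{Forward inclusion.} Let $\psi(z)=\theta_0+(\theta_1-\theta_0)z$, the affine bijection of $\bar S$ onto the substrip $\{\theta_0\le\Re z\le\theta_1\}$ sending $\{\Re z=0\}$ to $\{\Re z=\theta_0\}$, $\{\Re z=1\}$ to $\{\Re z=\theta_1\}$, and $\eta$ to $\theta$. Given $a\in\mathcal{X}_0\cap\mathcal{X}_1$ and $f\in\mathcal{F}_0(\bar{\mathcal{X}})$ with $f(\theta)=a$, set $g:=f\circ\psi$; expanding $\psi(z)^2$ shows $g(z)=\sum_k e^{\delta(\theta_1-\theta_0)^2 z^2+\mu_k z}a_k'$ with $a_k'\in\mathcal{X}_0\cap\mathcal{X}_1\subseteq\mathcal{Y}_0\cap\mathcal{Y}_1$, so $g\in\mathcal{F}_0(\bar{\mathcal{Y}})$. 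On $\{\Re z=0\}$, $g(it)=f(\theta_0+i(\theta_1-\theta_0)t)$, and since $z\mapsto f(z+is)$ remains in $\mathcal{F}_0(\bar{\mathcal{X}})$ with unchanged norm, $\|g(it)\|_{\mathcal{Y}_0}=\|f(\theta_0+i(\theta_1-\theta_0)t)\|_{[\bar{\mathcal{X}}]_{\theta_0}}\le\|f\|_{\mathcal{F}_0(\bar{\mathcal{X}})}$; similarly $\|g(1+it)\|_{\mathcal{Y}_1}\le\|f\|_{\mathcal{F}_0(\bar{\mathcal{X}})}$. Hence $\|g\|_{\mathcal{F}_0(\bar{\mathcal{Y}})}\le\|f\|_{\mathcal{F}_0(\bar{\mathcal{X}})}$ and $g(\eta)=f(\theta)=a$, so $\|a\|_{[\bar{\mathcal{Y}}]_\eta}\le\|a\|_{[\bar{\mathcal{X}}]_\theta}$ after taking the infimum over $f$.

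\textbf{Reverse inclusion (the main obstacle).} Let $a\in\mathcal{X}_0\cap\mathcal{X}_1$ with $\|a\|_{[\bar{\mathcal{Y}}]_\eta}<1$ and pick $G\in\mathcal{F}(\bar{\mathcal{Y}})$ with $G(\eta)=a$, $\|G\|_{\mathcal{F}(\bar{\mathcal{Y}})}<1$. One cannot simply set $F:=G\circ\psi^{-1}$: this produces the correct value $a$ at $\theta$, but it evaluates $G$ outside $\bar S$ on the lines $\{\Re z=0\}$ and $\{\Re z=1\}$, so the trace norms in $\mathcal{X}_0,\mathcal{X}_1$ are no longer controlled by $\|G\|_{\mathcal{F}(\bar{\mathcal{Y}})}$. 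Instead one must rebuild a representative in $\mathcal{F}(\bar{\mathcal{X}})$: for each $t$, $G(it)\in\mathcal{Y}_0=[\bar{\mathcal{X}}]_{\theta_0}$ has norm $<1$, so equals $g_t(\theta_0)$ for some $g_t\in\mathcal{F}(\bar{\mathcal{X}})$ of norm $<1$, and $G(1+it)\in\mathcal{Y}_1=[\bar{\mathcal{X}}]_{\theta_1}$ equals $h_t(\theta_1)$ for some $h_t\in\mathcal{F}(\bar{\mathcal{X}})$ of norm $<1$; one chooses $\{g_t\},\{h_t\}$ measurably in $t$ (and, after mollification, analytically), then splices $G$ reparametrized on the middle substrip $\{\theta_0\le\Re w\le\theta_1\}$ with the $g_t$'s on $\{0\le\Re w\le\theta_0\}$ and the $h_t$'s on $\{\theta_1\le\Re w\le1\}$ by Poisson-kernel reproduction on the strip, obtaining $F\in\mathcal{F}(\bar{\mathcal{X}})$ with $F(\theta)=a$ and $\|F\|_{\mathcal{F}(\bar{\mathcal{X}})}\le\|G\|_{\mathcal{F}(\bar{\mathcal{Y}})}+\epsilon$. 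This gives $\|a\|_{[\bar{\mathcal{X}}]_\theta}\le\|a\|_{[\bar{\mathcal{Y}}]_\eta}$. The measurable/analytic selection of the boundary representatives and their Poisson-type gluing is the technical heart of the theorem and is carried out exactly as in \cite[Theorem 4.6.1]{bl76}. Combining the two inclusions, $\|a\|_{[\bar{\mathcal{X}}]_\theta}=\|a\|_{[\bar{\mathcal{Y}}]_\eta}$ for all $a\in\mathcal{X}_0\cap\mathcal{X}_1$; since $\mathcal{X}_0\cap\mathcal{X}_1$ is dense in both interpolation spaces and both embed continuously into the common Hausdorff space $\mathcal{X}_0+\mathcal{X}_1$, this identity extends to $[\mathcal{Y}_0,\mathcal{Y}_1]_\eta=[\mathcal{X}_0,\mathcal{X}_1]_{(1-\eta)\theta_0+\eta\theta_1}$ with equal norms.
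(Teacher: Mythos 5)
The paper gives no proof of this lemma at all—it is quoted verbatim from Bergh--L\"ofstr\"om \cite[Theorem 4.6.1]{bl76}—and your proposal rests on exactly that citation, supplementing it with a correct sketch of the classical reiteration argument (the affine reparametrization $\psi(z)=\theta_0+(\theta_1-\theta_0)z$ for the easy inclusion, the boundary-splicing construction for the hard one, and the density hypotheses to pass from $\mathcal{X}_0\cap\mathcal{X}_1$ to the full spaces). So your proposal is correct and takes essentially the same approach as the paper.
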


\begin{proof} [Proof of Theorem \ref{t7.1}]
We first prove \eqref{e7.1}.
To prove this, by Theorem \ref{t5.1} and Lemma \ref{l7.1}, we only need to show that, for any
 $2\leq q<p<\infty$,
\begin{align}\label{e7.61}
\lf[\mathcal{BMO}^{c}(\bx,\,\cm),\,
L_q\mathcal{MO}^{c}(\bx,\,\cm)\r]_{\frac{q}{p}}=
L_p\mathcal{MO}^{c}(\bx,\,\cm).
\end{align}
In order to show this, we first find an isometry embedding $\mathfrak{R}$ from $L_s\mathcal{MO}^c(\bx,\,\cm)$ to $L_s(\lm\overline{\otimes}B(\ell_2);\,\ell_{\fz}^c)$. To this end,
for any ball $B\subset\bx$, let $H_{\cm}:=L_{\fz}(B)\overline{\otimes}\cm$ and $$\langle f,\,g\rangle_{H_{\cm}}
:=\fint_Bf(x)^*g(x)d\mu(x),$$ where $f,\,g:B\rightarrow \cm$ are two operator-valued functions such that the above integral makes sense. It follows from the definition that for any $a,\,b\in\cm$,
$\langle fa,\,gb\rangle_{H_{\cm}}=a^*\langle f,\,g\rangle_{H_{\cm}}b$.
Therefore, $H_{\cm}$ is a Hilbert $\cm$-module equipped with an $\cm$-valued inner product $\langle \cdot,\,\cdot\rangle_{H_{\cm}}$.
Let $e_{11}$ be the matrix unit in $B(\ell_2)$ with a unique nonzero entry at the position $(1,1)$, which is equal to $1$. Denote by $e:=1\otimes e_{11}\in \cm\overline{\otimes}B(\ell_2)$ and $C(\cm):=(\cm\overline{\otimes}B(\ell_2))e$, where $1$ is the unit of $\mathcal{M}$. Then from \cite{j02}, we know that there exists an $\cm$-module isomorphism
$u_B:H_{\cm}\rightarrow C(\cm)$ such that for any $f,\,g\in H_{\cm}$,
$$\langle u_B(f),\,u_B(g)\rangle_{C(\cm)}=\langle f,\,g\rangle_{H_{\cm}},$$
that is, $u_B(f)^*u_B(g)=\fint_Bf^*g \otimes e_{11}$.
For any $s\in(2,\,\fz]$, we define the map $\mathfrak{R}$ from
$L_s\mathcal{MO}^c(\bx,\,\cm)$ to $L_s(\lm\overline{\otimes}B(\ell_2);\,\ell_{\fz}^c)$ as follows:
$$\mathfrak{R}(f)(x):=\lf\{u_B(f-f_B)(x)\chi_B(x)\r\}_B.$$
Then
for any $f\in L_s\mathcal{MO}^c(\bx,\,\cm)$,
\begin{align*}
\|\mathfrak{R}(f)\|_{L_s(\lm\overline{\otimes}B(\ell_2);\,\ell_{\fz}^c)}
=&\lf\|\lf|u_B(f-f_B)\chi_B\r|^2\r\|^{1/2}_{L_\frac{s}{2}(\lm\overline{\otimes}B(\ell_2);\,\ell_{\fz})}\\
=&\lf\|\sup_{x\in B\subset\bx}^{\ \ \ \ \ +}\lf|u_B(f-f_B)\chi_B\r|^2\r\|_{L_{\frac{s}{2}}(\lm\overline{\otimes}B(\ell_2))}^{1/2}\\
=&\lf\|\sup_{x\in B\subset\bx}^{\ \ \ \ \ +}
\fint_B|f(y)-f_B|^2\,d\mu(y)\r\|_{L_{\frac{s}{2}}(\lm)}^{1/2}\\
=&\|f\|_{L_s\mathcal{MO}^c(\bx,\,\cm)}.
\end{align*}
This, in combination with Lemma \ref{l7.1x0} and interpolation, yields
$$\lf\|\mathfrak{R}:[\mathcal{BMO}^c(\bx,\,\cm),\,L_p\mathcal{MO}^c(\bx,\,\cm)]_{\frac{p}{q}}
\rightarrow L_q(\lm\overline{\otimes}B(\ell_2);\,\ell_{\fz}^c)\r\|\leq1.$$
Therefore, for any $f\in\mathcal{BMO}^c(\bx,\,\cm)\cap L_p\mathcal{MO}^c(\bx,\,\cm)$,
\begin{align*}
\|f\|_{L_q\mathcal{MO}^c(\bx,\,\cm)}
=\|\mathfrak{R}(f)\|_{L_q(\lm\overline{\otimes}B(\ell_2);\,\ell_{\fz}^c)}
\leq\|f\|_
{[\mathcal{BMO}^c(\bx,\,\cm),\,L_p\mathcal{MO}^c(\bx,\,\cm)]_{\frac{p}{q}}}.
\end{align*}
From this and Lemma \ref{l7.1x}, we further conclude that
\begin{align*}
[\mathcal{BMO}^c(\bx,\,\cm),\,L_p\mathcal{MO}^c(\bx,\,\cm)]_{\frac{p}{q}}
\subseteq L_q\mathcal{MO}^c(\bx,\,\cm).
\end{align*}

Now we claim that
\begin{align}\label{e7.65}
[\mathcal{H}^c_1(\bx,\,\cm),\,\mathcal{H}_{p'}^c(\bx,\,\cm)]_{\frac{p}{q}}
\subseteq \mathcal{H}_{q'}^c(\bx,\,\cm).
\end{align}
In fact, it follows from the definition of $\Phi$ (see Lemma \ref{d3.1}) that  for any
$s\in(1,\,\fz)$, $\Phi$ is an isometric embedding from $\mathcal{H}_{s}^c(\bx,\,\cm)$ to
$L_{s}(\lm,\,L_2^c(\bx\times \mathbb{R}_+,\,d\mu(y)dt))$. By this, Lemma \ref{l7.0} and the interpolation,
we have
$$\lf\|\Phi:[\mathcal{H}_{1}^c(\bx,\,\cm),\,\mathcal{H}_{p'}^c(\bx,\,\cm)]
_{\frac{p}{q}}\rightarrow L_{q'}(\lm,\,L_2^c(\bx\times \mathbb{R}_+,\,d\mu(y)dt))\r\|\leq1.$$
Thus,
for any $f\in\mathcal{H}_{1}^c(\bx,\,\cm)\cap\mathcal{H}_{p'}^c(\bx,\,\cm)$,
$$\|f\|_{\mathcal{H}_{q'}^c(\bx,\,\cm)}
=\|\Phi(f)\|_{L_{q'}(\lm,\,L_2^c(\bx\times \mathbb{R}_+,\,d\mu(y)dt))}
\leq\|f\|_{[\mathcal{H}_{1}^c(\bx,\,\cm),\,\mathcal{H}_{p'}^c(\bx,\,\cm)]
_{\frac{p}{q}}}.$$
This, together with Lemma \ref{l7.1x}, yields \eqref{e7.65}.
From the claim \eqref{e7.65}, the duality and Theorem \ref{t5.1}, we deduce that
\begin{align*}
[\mathcal{BMO}^c(\bx,\,\cm),\,L_p\mathcal{MO}^c(\bx,\,\cm)]_{\frac{p}{q}}
\supseteq L_q\mathcal{MO}^c(\bx,\,\cm).
\end{align*}
This completes the proof of \eqref{e7.61}.

Next we show \eqref{e7.2}, which will be divided into two cases: $\mathcal{Y}=L_1(\lm)$
 and $\mathcal{Y}=\mathcal{H}_1^{cr}(\bx,\,\cm)$.

\textbf{Case 3:} $\mathcal{Y}=L_1(\lm)$. In this case, if $\mathcal{X}=L_{\fz}(\lm)$, the conclusion is well-known.
If $\mathcal{X}=\mathcal{BMO}^{cr}(\bx,\,\cm)$, then we first assume $2\leq p<\fz$. Under this assumption, by Lemma \ref{l7.2}, Remark \ref{r2.9} (iii) and \eqref{e7.1}, we get
\begin{align*}
&\lf[\mathcal{BMO}^{cr}(\bx,\,\cm),\,L_2(\lm)\r]_{\frac{2}{p}}\\
=&[\mathcal{BMO}^{cr}(\bx,\,\cm),\,\mathcal{H}_2^c(\bx,\,\cm)\cap \mathcal{H}_2^r(\bx,\,\cm)]_{\frac{2}{p}}\\
\subseteq&[\mathcal{BMO}^c(\bx,\,\cm),\,\mathcal{H}_2^c(\bx,\,\cm)]_{\frac{2}{p}}
\cap[\mathcal{BMO}^r(\bx,\,\cm),\, \mathcal{H}_2^r(\bx,\,\cm)]_{\frac{2}{p}}\\
=&\mathcal{H}_p^c(\bx,\,\cm)\cap\mathcal{H}_p^r(\bx,\,\cm)\\
=&\mathcal{H}_p^{cr}(\bx,\,\cm)=L_p(\lm).
\end{align*}
Conversely, since $\mathcal{BMO}^{cr}(\bx,\,\cm)\supseteq L_{\infty}(\lm)$, we have
\begin{align*}
\lf[\mathcal{BMO}^{cr}(\bx,\,\cm),\,L_2(\lm)\r]_{\frac{2}{p}}
\supseteq\lf[L_{\infty}(\lm),\,L_2(\lm)\r]
_{\frac{2}{p}}=L_p(\lm).
\end{align*}
From the above conclusions, we conclude that for any $p\in[2,\,\fz)$,
\begin{align}\label{e7.6}
\mathcal{X}_1:=L_p(\lm)
=\lf[\mathcal{BMO}^{cr}(\bx,\,\cm),\,L_2(\lm)\r]_{\frac{2}{p}}
=:[\mathcal{X}_0,\,\mathcal{X}_2]_{\frac{2}{p}}.
\end{align}
It is obvious that, for $p\in[2,\,\fz)$,
\begin{align}\label{e7.7}
\mathcal{X}_2:=L_2(\lm)
=\lf[L_p(\lm),\,L_1(\lm)\r]_{\frac{p-2}{2(p-1)}}
=:[\mathcal{X}_1,\,\mathcal{X}_3]_{\frac{p-2}{2(p-1)}}.
\end{align}
By \eqref{e7.6}, \eqref{e7.7} and Lemma \ref{l7.0x}, we conclude that for any $p\in[2,\,\fz)$,
\begin{align*}
L_p(\lm)=[\mathcal{BMO}^{cr}(\bx,\,\cm),\,L_1(\lm)]_{\frac{1}{p}}.
\end{align*}
For $p\in[1,\,2)$, it follows from Lemma \ref{l7.3} that
\begin{align*}
L_p(\lm)
=&[L_2(\lm),\,L_1(\lm)]_{\frac{2-p}{p}}\\
=&\lf[\lf[\mathcal{BMO}^{cr}(\bx,\,\cm),\,L_1(\lm)\r]_{\frac{1}{2}}
,\,[\mathcal{BMO}^{cr}(\bx,\,\cm),\,L_1(\lm)]_{1}\r]_{\frac{2-p}{p}}\\
=&[\mathcal{BMO}^{cr}(\bx,\,\cm),\,L_1(\lm)]_{\frac{1}{p}}.
\end{align*}

\textbf{Case 4:} $\mathcal{Y}=\mathcal{H}_1^{cr}(\bx,\,\cm)$.
Let $q\in(1,\,p)$.
Then
\begin{align}\label{e7.8}
[\mathcal{H}_1^{cr}(\bx,\,\cm),\,L_p(\lm)]_{\frac{p'}{q'}}=L_q(\lm).
\end{align}
If $\mathcal{X}=L_{\fz}(\lm)$, then
 \begin{align*}
\widetilde{\mathcal{X}}_1:=L_{p}(\lm)
=[L_{\fz}(\lm),\,L_q(\lm)]_{\frac{q}{p}}
=:\lf[\widetilde{\mathcal{X}}_0,\,\widetilde{\mathcal{X}}_2\r]_{\frac{q}{p}}.
\end{align*}
Moreover, it follows from \eqref{e7.8} that
 \begin{align}\label{e7.9}
\widetilde{\mathcal{X}}_2:=L_{q}(\lm)
=[L_{p}(\lm),\,\mathcal{H}_1^{cr}(\bx,\,\cm)]_{1-\frac{p'}{q'}}
=:\lf[\widetilde{\mathcal{X}}_1,\,\widetilde{\mathcal{X}}_3\r]_{1-\frac{p'}{q'}}.
\end{align}
Therefore, by Lemma \ref{l7.0x}, we have
$$L_{p}(\lm)=\lf[L_{\fz}(\lm),\,\mathcal{H}_1^{cr}(\bx,\,\cm)\r]_{\frac{1}{p}}.
$$
If $\mathcal{X}=\mathcal{BMO}(\bx,\,\cm)$, by \textbf{Case 3} and Lemma \ref{l7.3}, we get
 \begin{align*}
\widetilde{\mathcal{X}}_1:&=L_{p}(\lm)\\
&=[\mathcal{BMO}^{cr}(\bx,\,\cm),\,L_1(\lm)]_{\frac{1}{p}}\\
&=\lf[[\mathcal{BMO}^{cr}(\bx,\,\cm),\,L_1(\lm)]_{0},\,
[\mathcal{BMO}^{cr}(\bx,\,\cm),\,L_1(\lm)]_{\frac{1}{q}}\r]_{\frac{q}{p}}\\
&=[\mathcal{BMO}^{cr}(\bx,\,\cm),\,L_q(\lm)]_{\frac{q}{p}}\\
&=:\lf[\widetilde{\mathcal{X}}_0,\,\widetilde{\mathcal{X}}_2\r]_{\frac{q}{p}}.
\end{align*}
From this, \eqref{e7.9} and Lemma \ref{l7.0x}, we conclude that, for any $p\in(1,\,\fz)$,
$$L_{p}(\lm)=\lf[\mathcal{BMO}^{cr}(\bx,\,\cm),\,\mathcal{H}_1^{cr}(\bx,\,\cm)\r]_{\frac{1}{p}}.
$$
This completes the proof of \textbf{Case 4} and hence the proof of Theorem \ref{t7.1}.
\end{proof}
The second main result of this subsection is the following.
\begin{theorem}\label{t7.2x}
Let $1\leq q<p<\infty$. Then
$$\lf[\mathcal{BMO}^{c}(\bx,\,\cm),\,\mathcal{H}_q^c(\bx,\,\cm)\r]_{\frac{q}{p},\,p}=
\mathcal{H}_p^c(\bx,\,\cm)$$
with equivalent norms. Similar result also holds between row BMO and Hardy spaces.
\end{theorem}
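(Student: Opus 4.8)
\textbf{Proof plan for Theorem \ref{t7.2x}.}

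The plan is to deduce the real interpolation identity from the complex interpolation identity \eqref{e7.1} already established in Theorem \ref{t7.1}, by exploiting the self-improving "flexibility" of the scale $\{\mathcal{H}_p^c(\bx,\,\cm)\}$ together with the Wolff-type reiteration already available. The key point is that once one knows $[\mathcal{BMO}^c(\bx,\,\cm),\,\mathcal{H}_q^c(\bx,\,\cm)]_{\theta}=\mathcal{H}_p^c(\bx,\,\cm)$ for all admissible exponents, the whole family $\{\mathcal{H}_p^c(\bx,\,\cm)\}_{1\le p\le\infty}$ (with $\mathcal{H}_\infty^c:=\mathcal{BMO}^c$) behaves like a complex interpolation scale, and a general principle (essentially the equivalence of real and complex methods for such "nice" scales, in the spirit of \cite[Theorem 4.6.1, Theorem 4.7.2]{bl76}) will upgrade the complex statement to the real one with the Lorentz index matching the underlying $L_p$. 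Concretely, I would first record that $\mathcal{H}_p^c(\bx,\,\cm)$ is, via the isometric embedding $\Phi$ of Lemma \ref{d3.1} and the bounded projection $\Psi\circ\iota$ (Corollary \ref{io44}), a complemented subspace of $L_p(\lm;\,L_2^c(\bx\times\mathbb{R}_+,\,d\mu(y)dt))$ for $1<p<\infty$; hence real and complex interpolation of the Hardy scale can be transferred from the corresponding statements for the column $L_p$-spaces, for which $[L_{p_0}(\cm;L_2^c),\,L_{p_1}(\cm;L_2^c)]_{\theta,\,p_\theta}=L_{p_\theta}(\cm;L_2^c)$ is standard (combine Lemma \ref{l7.0} with the general fact that real and complex methods coincide on $L_p$-scales).

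The steps, in order, would be: (1) reduce to $2\le q<p<\infty$ exactly as in Lemma \ref{l7.1}, using a Wolff/reiteration argument to descend to $1\le q<p$ once the high-exponent case is known — here one needs the real-interpolation analogue of Lemma \ref{l7.0x}, which is the classical Wolff theorem and applies verbatim to the real method, together with the reiteration Lemma \ref{l7.3} in its real form; (2) for $2\le q<p<\infty$, transfer to the $L_p\mathcal{MO}^c$ picture via Theorem \ref{t5.1}, and then use the isometric embedding $\mathfrak{R}$ of $L_s\mathcal{MO}^c(\bx,\,\cm)$ into $L_s(\lm\bar\otimes B(\ell_2);\ell_\infty^c)$ constructed in the proof of Theorem \ref{t7.1}, upgrading Lemma \ref{l7.1x0} to its real-interpolation counterpart $[L_p(\cm;\ell_\infty^c),\,L_q(\cm;\ell_\infty^c)]_{\theta,\,s}=L_s(\cm;\ell_\infty^c)$ (this is again standard for column maximal spaces); (3) combine the embedding $\mathfrak{R}$ with the embedding $\Phi$ and the duality between $\mathcal{H}_1^c$/$\mathcal{H}_{p'}^c$ and $\mathcal{BMO}^c$/$L_p\mathcal{MO}^c$ (Theorem \ref{t3.1}, Theorem \ref{t4.1}) to get the two inclusions, precisely mirroring the chain of inclusions in the proof of \eqref{e7.61}, but now with the extra Lorentz parameter $p$ carried along; (4) finally run the Wolff/reiteration descent of step (1) to reach $q=1$, and take adjoints to obtain the row statement.

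The main obstacle is step (2)–(3): one must verify that the real interpolation space $[\mathcal{BMO}^c(\bx,\,\cm),\,\mathcal{H}_q^c(\bx,\,\cm)]_{q/p,\,p}$ sits between $\mathcal{H}_q^c$ and $\mathcal{H}_p^c$ with the \emph{correct} secondary index $p$ (rather than some other $r$), and this requires knowing that the auxiliary spaces $L_s(\cm;\ell_\infty^c)$ and $L_s(\cm;L_2^c)$ form genuine real interpolation scales with index $s$ — i.e. the $K$-functional of the pair computes the $L_s$-norm up to the expected quasi-norm equivalence. For the column maximal spaces this is where a little care is needed, since $L_s(\cm;\ell_\infty^c)$ is not reflexive when $s=\infty$ and the real method is sensitive to endpoints; the resolution is to stay away from the endpoint (we only ever need $2\le s<\infty$ on the "$\mathcal{H}_q$" side and treat $\mathcal{BMO}^c$ purely as the complex-interpolation endpoint, then invoke the equivalence of the methods \emph{off} the endpoint via Lemma \ref{l7.3}). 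Apart from this bookkeeping, every estimate needed has already been produced in the proofs of Theorems \ref{t3.1}, \ref{t4.1}, \ref{t5.1} and \ref{t7.1}, so the argument is essentially a reorganization of those ingredients with the real-method reiteration machinery of \cite{bl76} in place of the complex-method one.
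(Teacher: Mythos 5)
Your core intuition -- transfer the complex interpolation identity of Theorem~\ref{t7.1} to the real method via reiteration, staying off the $\mathcal{BMO}$ endpoint -- is exactly the paper's strategy, and the ``resolution'' in your final paragraph already contains the key idea. But the concrete plan you lay out in steps~(2)--(3) is a substantial detour. You propose to re-run the entire chain of hard embeddings from the proof of Theorem~\ref{t7.1} (the $\mathfrak{R}$-embedding of $L_s\mathcal{MO}^c$ into $L_s(\lm\overline{\otimes}B(\ell_2);\ell_\infty^c)$, the $\Phi$-embedding, and the $H_1$--$BMO$ / $H_p$--$L_{p'}MO$ dualities) with a Lorentz index carried along. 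That requires, in particular, the real interpolation identity $[L_{p}(\cm;\ell_\infty^c),L_{q}(\cm;\ell_\infty^c)]_{\theta,s}=L_s(\cm;\ell_\infty^c)$, which is nowhere in the paper -- only the complex version (Lemma~\ref{l7.1x0}) is cited -- and whose derivation would itself rely on the same reiteration machinery that gives the theorem directly. Likewise a real-method Wolff theorem, while classical, is not invoked by the paper and is not needed here.

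The paper's proof is a short, soft argument that avoids all of this. It first records the \emph{interior} real interpolation identity
\begin{align*}
\lf[\mathcal{H}_u^c(\bx,\cm),\,\mathcal{H}_s^c(\bx,\cm)\r]_{\eta,p}=\mathcal{H}_p^c(\bx,\cm),\qquad 1<u,s,p<\infty,\ \tfrac{1}{p}=\tfrac{1-\eta}{u}+\tfrac{\eta}{s},
\end{align*}
using only the isometric embedding $\Phi$ and complementation (Lemma~\ref{d3.1}, Corollary~\ref{io44}) together with the standard real interpolation of column $L_p$-spaces (the real analogue of Lemma~\ref{l7.0}). It then picks $0<\theta_0,\theta_1<1$ near $\theta=q/p$ and applies Lemma~\ref{l7.21} (Bergh--L\"ofstr\"om 4.7.2) to write
\begin{align*}
[\mathcal{BMO}^c,\mathcal{H}_q^c]_{\theta,p}
=\lf[\,[\mathcal{BMO}^c,\mathcal{H}_q^c]_{\theta_0},\,[\mathcal{BMO}^c,\mathcal{H}_q^c]_{\theta_1}\r]_{\eta,p},
\end{align*}
so that the two inner spaces are, by Theorem~\ref{t7.1}, the \emph{finite-exponent} Hardy spaces $\mathcal{H}_{q/\theta_0}^c$ and $\mathcal{H}_{q/\theta_1}^c$; the interior identity above then finishes the proof, and the row case follows by taking adjoints. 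No reduction to $2\leq q<p$, no $\mathfrak{R}$, no $L_p\mathcal{MO}$ and no Wolff step are needed at this stage -- all of that was spent on Theorem~\ref{t7.1}, and the present statement is a genuine corollary of it via a single application of Lemma~\ref{l7.21}.
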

The following lemma establish the connection
between the complex and real interpolation.
\begin{lemma}{\rm{\cite[Theorem 4.7.2]{bl76}}}\label{l7.21}
Let $0<p\leq\fz$, $0<\theta_0<\theta_1<1$, $\theta>\theta_0$ and $(\mathcal{X}_0,\,\mathcal{X}_1)$ be a
compatible couple of Banach spaces. Then we have
$$[\mathcal{X}_0,\,\mathcal{X}_1]_{\theta,\,p}
=\lf[[\mathcal{X}_0,\,\mathcal{X}_1]_{\theta_0}
,\,[\mathcal{X}_0,\,\mathcal{X}_1]_{\theta_1}\r]_{\frac{\theta-\theta_0}
{\theta_1-\theta_0},\,p}$$
with equivalent norms.
\end{lemma}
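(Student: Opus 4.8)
The plan is to deduce the statement from the stability (reiteration) theorem for the real interpolation method, once we know that the complex interpolation spaces $[\mathcal{X}_0,\mathcal{X}_1]_{\theta_0}$ and $[\mathcal{X}_0,\mathcal{X}_1]_{\theta_1}$ are intermediate spaces of the expected exponents for the couple $\bar{\mathcal{X}}:=(\mathcal{X}_0,\mathcal{X}_1)$. We may assume $\theta_0<\theta<\theta_1$, which is what makes the right-hand side meaningful, so that $\eta:=\tfrac{\theta-\theta_0}{\theta_1-\theta_0}\in(0,1)$. Recall that, writing $K(t,\cdot\,;\bar{\mathcal{X}})$ and $J(t,\cdot\,;\bar{\mathcal{X}})$ for the Peetre functionals, an intermediate space $\mathcal{A}$ of $\bar{\mathcal{X}}$ is of \emph{class $\mathcal{C}(\vartheta;\bar{\mathcal{X}})$} if there is a constant $C$ with $K(t,a;\bar{\mathcal{X}})\le Ct^{\vartheta}\|a\|_{\mathcal{A}}$ for all $a\in\mathcal{A}$, $t>0$, and $\|a\|_{\mathcal{A}}\le Ct^{-\vartheta}J(t,a;\bar{\mathcal{X}})$ for all $a\in\mathcal{X}_0\cap\mathcal{X}_1$, $t>0$.

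I would then verify that $[\mathcal{X}_0,\mathcal{X}_1]_{\vartheta}\in\mathcal{C}(\vartheta;\bar{\mathcal{X}})$ for every $\vartheta\in(0,1)$. The $J$-inequality is immediate: for $a\in\mathcal{X}_0\cap\mathcal{X}_1$ and $t>0$ the function $F(z):=t^{z-\vartheta}a$ belongs to the Calder\'on space $\mathcal{F}(\bar{\mathcal{X}})$, satisfies $F(\vartheta)=a$, and obeys $\|F\|_{\mathcal{F}(\bar{\mathcal{X}})}=t^{-\vartheta}\max\{\|a\|_{\mathcal{X}_0},\,t\|a\|_{\mathcal{X}_1}\}=t^{-\vartheta}J(t,a;\bar{\mathcal{X}})$, whence $\|a\|_{[\mathcal{X}_0,\mathcal{X}_1]_{\vartheta}}\le t^{-\vartheta}J(t,a;\bar{\mathcal{X}})$. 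The $K$-inequality is the continuous embedding $[\mathcal{X}_0,\mathcal{X}_1]_{\vartheta}\hookrightarrow\bar{\mathcal{X}}_{\vartheta,\infty}$ (equivalently, the lower half of $\bar{\mathcal{X}}_{\vartheta,1}\hookrightarrow[\mathcal{X}_0,\mathcal{X}_1]_{\vartheta}\hookrightarrow\bar{\mathcal{X}}_{\vartheta,\infty}$); it is proved by taking, for $a\in[\mathcal{X}_0,\mathcal{X}_1]_{\vartheta}$, a function $F\in\mathcal{F}(\bar{\mathcal{X}})$ with $F(\vartheta)=a$ and $\|F\|_{\mathcal{F}}$ close to $\|a\|_{[\mathcal{X}_0,\mathcal{X}_1]_{\vartheta}}$, transporting $F$ to the unit disc by a conformal map of the strip, and using the Poisson representation on the two boundary lines to manufacture, for each $t>0$, a decomposition $a=a_{0,t}+a_{1,t}$ with $\|a_{0,t}\|_{\mathcal{X}_0}+t\,\|a_{1,t}\|_{\mathcal{X}_1}\lesssim t^{\vartheta}\|a\|_{[\mathcal{X}_0,\mathcal{X}_1]_{\vartheta}}$. (Both facts are classical; cf. \cite[Theorems 4.1.2 and 4.7.1]{bl76}.)

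Granting this, the proof concludes by applying the reiteration theorem for the real method \cite[Theorem 3.5.3]{bl76} to the two intermediate spaces $\mathcal{A}_0:=[\mathcal{X}_0,\mathcal{X}_1]_{\theta_0}\in\mathcal{C}(\theta_0;\bar{\mathcal{X}})$ and $\mathcal{A}_1:=[\mathcal{X}_0,\mathcal{X}_1]_{\theta_1}\in\mathcal{C}(\theta_1;\bar{\mathcal{X}})$, with $\theta_0\neq\theta_1$: for every $p\in(0,\infty]$ and the above $\eta$ one obtains
$$\big[[\mathcal{X}_0,\mathcal{X}_1]_{\theta_0},\,[\mathcal{X}_0,\mathcal{X}_1]_{\theta_1}\big]_{\eta,\,p}=\bar{\mathcal{X}}_{(1-\eta)\theta_0+\eta\theta_1,\,p}=\bar{\mathcal{X}}_{\theta,\,p}=[\mathcal{X}_0,\mathcal{X}_1]_{\theta,\,p}$$
with equivalent norms, since $(1-\eta)\theta_0+\eta\theta_1=\theta_0+\eta(\theta_1-\theta_0)=\theta$ by the choice of $\eta=\tfrac{\theta-\theta_0}{\theta_1-\theta_0}$; this is exactly the asserted identity. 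The only step that is not purely formal is the $K$-functional bound $[\mathcal{X}_0,\mathcal{X}_1]_{\vartheta}\hookrightarrow\bar{\mathcal{X}}_{\vartheta,\infty}$, where the analytic structure of $\mathcal{F}(\bar{\mathcal{X}})$ is genuinely used; it is either imported from \cite{bl76} or carried out by hand via the conformal map and Poisson kernel of the strip (with the usual $e^{\delta(z^2-\vartheta^2)}$ regularization to secure convergence on the boundary lines). The $J$-bound and the abstract reiteration theorem then do the rest.
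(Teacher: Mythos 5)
Your proposal is correct and is essentially the same argument the paper relies on: the paper offers no proof of this lemma but cites \cite[Theorem 4.7.2]{bl76}, whose standard proof is exactly what you write, namely that $[\mathcal{X}_0,\mathcal{X}_1]_{\vartheta}$ is of class $\mathcal{C}(\vartheta;\bar{\mathcal{X}})$ (the $J$-bound via $F(z)=t^{z-\vartheta}a$ and the $K$-bound $[\mathcal{X}_0,\mathcal{X}_1]_{\vartheta}\hookrightarrow\bar{\mathcal{X}}_{\vartheta,\infty}$, i.e.\ \cite[Theorems 4.7.1, 4.1.2]{bl76}) followed by the real reiteration theorem \cite[Theorem 3.5.3]{bl76}. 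Your tacit restriction to $\theta_0<\theta<\theta_1$ (so that $\eta=\frac{\theta-\theta_0}{\theta_1-\theta_0}\in(0,1)$) is the correct reading of the loosely stated hypothesis $\theta>\theta_0$.
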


\begin{proof} [Proof of Theorem \ref{t7.2x}]
For any $1<p,\,u,\,s<\fz$ and $\eta\in(0,\,1)$ such that
$$\frac{1}{p}=\frac{1-\eta}{u}+\frac{\eta}{s},$$
 by Lemma \ref{l7.0} and a similar argument as above for the complex method, we obtain
\begin{align}\label{e7.21}
\lf[\mathcal{H}_u^c(\bx,\,\cm),\,\mathcal{H}_s^c(\bx,\,\cm)\r]_{\eta,\,p}
=\mathcal{H}_p^c(\bx,\,\cm).
\end{align}
Let $\theta:=\frac{q}{p}$, $0<\theta_0<\frac{\theta}{2}<1$ and $\theta_1=\theta-\theta_0$. Then there exists a number $\eta$ such that
$\theta=(1-\eta)\theta_0+\eta\theta_1$.
From this, Lemma \ref{l7.21}, Theorem \ref{t7.1} and \eqref{e7.21}, we deduce that
\begin{align*}
&[\mathcal{BMO}^c(\bx,\,\cm),\,\mathcal{H}_q^c(\bx,\,\cm)]_{\theta,\,p}\\
=&\lf[[\mathcal{BMO}^c(\bx,\,\cm),\,\mathcal{H}_q^c(\bx,\,\cm)]_{\theta_0}
,\,[\mathcal{BMO}^c(\bx,\,\cm),\,\mathcal{H}_q^c(\bx,\,\cm)]_{\theta_1}\r]
_{\eta,\,p}\\
=&\lf[\mathcal{H}_{\frac{q}{\theta_0}}^c(\bx,\,\cm)
,\,\mathcal{H}_{\frac{q}{\theta_1}}^c(\bx,\,\cm)\r]
_{\eta,\,p}\\
=&\mathcal{H}_{p}^c(\bx,\,\cm).
\end{align*}
with equivalent norms.
This finishes the proof of Theorem \ref{t7.2x}.
\end{proof}

\begin{theorem}\label{t7.2}
Let $p\in[1,\,\infty)$. Then
$$\lf[\mathcal{X}_0,\,\mathcal{X}_1\r]_{\frac{1}{p},\,p}=
L_p(\lm),$$
where
$\mathcal{X}_0=\mathcal{BMO}^{cr}(\bx,\,\cm)$ or $L_{\infty}(\lm)$,
$\mathcal{X}_1=\mathcal{H}_{1}^{cr}(\bx,\,\cm)$ or $L_1(\lm)$.
\end{theorem}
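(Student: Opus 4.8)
The plan is to deduce this real-interpolation identity from its complex analogue in Theorem \ref{t7.1} (formula \eqref{e7.2}) via the reiteration Lemma \ref{l7.21}, in exact parallel with the way Theorem \ref{t7.2x} was obtained from Theorem \ref{t7.1} for the column and row Hardy spaces. The only additional ingredient is the classical real-interpolation identity for non-commutative $L_p$-spaces: for $p_0,p_1\in[1,\infty]$ with $p_0\neq p_1$, for $\eta\in(0,1)$, and with $\frac{1}{p}=\frac{1-\eta}{p_0}+\frac{\eta}{p_1}$, one has $[L_{p_0}(\lm),\,L_{p_1}(\lm)]_{\eta,\,p}=L_p(\lm)$ with equivalent norms (see \cite{PX}); for $\dagger=cr$ this plays the role that identity \eqref{e7.21} plays in the proof of Theorem \ref{t7.2x}.

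Fix $p\in(1,\infty)$ and set $\theta:=\frac{1}{p}\in(0,1)$. Choose $\theta_0,\theta_1$ with $0<\theta_0<\theta<\theta_1<1$ and put $\eta:=\frac{\theta-\theta_0}{\theta_1-\theta_0}\in(0,1)$, so that $\theta=(1-\eta)\theta_0+\eta\theta_1$. By Theorem \ref{t7.1} applied with the exponents $1/\theta_0,1/\theta_1\in(1,\infty)$, we have $[\mathcal{X}_0,\,\mathcal{X}_1]_{\theta_0}=L_{1/\theta_0}(\lm)$ and $[\mathcal{X}_0,\,\mathcal{X}_1]_{\theta_1}=L_{1/\theta_1}(\lm)$ for each of the four admissible couples $(\mathcal{X}_0,\mathcal{X}_1)$. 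Feeding these two complex-interpolation identities into Lemma \ref{l7.21} gives
\begin{align*}
[\mathcal{X}_0,\,\mathcal{X}_1]_{\frac{1}{p},\,p}
=\bigl[L_{1/\theta_0}(\lm),\,L_{1/\theta_1}(\lm)\bigr]_{\eta,\,p},
\end{align*}
and since $\frac{1}{p}=\frac{1-\eta}{1/\theta_0}+\frac{\eta}{1/\theta_1}$, the non-commutative $L_p$-interpolation identity recalled above identifies the right-hand side with $L_p(\lm)$. This settles the range $p\in(1,\infty)$.

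The genuinely delicate point---and the step I expect to be the main obstacle---is the endpoint $p=1$, where $\theta=1$ falls outside the range $\theta_0<\theta<\theta_1<1$ permitted by Lemma \ref{l7.21}, so the reduction above does not apply verbatim. Here I would argue as in Cases 3 and 4 of the proof of Theorem \ref{t7.1}: combine Lemma \ref{l7.21} (used with $\theta\in(0,1)$, keeping one of the two endpoint spaces equal to $\mathcal{X}_1=L_1(\lm)$ or $\mathcal{H}_1^{cr}(\bx,\,\cm)$ itself) with the reiteration Lemma \ref{l7.3} and the density Lemma \ref{l7.1x}. The $\theta=1$ endpoint of the real method must then be handled with care (for instance through the $J$-functional), and the argument ultimately reduces to checking the corresponding endpoint identity $[L_\infty(\lm),\,L_1(\lm)]_{1,\,1}=L_1(\lm)$ together with the density of $\mathcal{X}_0\cap\mathcal{X}_1$ in $\mathcal{X}_1$, so that the identity transfers from the pair $(L_\infty(\lm),L_1(\lm))$ to the pair $(\mathcal{BMO}^{cr}(\bx,\cm),\mathcal{H}_1^{cr}(\bx,\cm))$; this density is already available from earlier results, such as the atomic decomposition of Theorem \ref{t3.2}.
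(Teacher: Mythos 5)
Your argument for $p\in(1,\infty)$ is essentially the paper's own: you pick two interior exponents $\theta_0,\theta_1\in(0,1)$ straddling $\theta=1/p$, apply identity \eqref{e7.2} of Theorem \ref{t7.1} to identify $[\mathcal{X}_0,\mathcal{X}_1]_{\theta_i}$ with $L_{1/\theta_i}(\lm)$, invoke Lemma \ref{l7.21} to reduce $[\mathcal{X}_0,\mathcal{X}_1]_{\frac{1}{p},\,p}$ to $[L_{1/\theta_0}(\lm),\,L_{1/\theta_1}(\lm)]_{\eta,\,p}$, and conclude via the classical real interpolation of non-commutative $L_p$-spaces; you are in fact slightly more careful than the printed proof, which writes $1\leq p_1<p$ even though $p_1>1$ is needed for Lemma \ref{l7.21} to apply. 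Your worry about the endpoint $p=1$ is legitimate and points to a gap in the paper itself rather than in your reasoning: the paper's proof opens with ``For any $p\in(1,\infty)$'' and never returns to $p=1$, so the claimed range $p\in[1,\infty)$ is not actually justified by the printed argument, and since the $\theta=1$ boundary of the real method is genuinely anomalous, it cannot be reached by the Wolff-type reiteration scheme alone, as your (appropriately tentative) sketch correctly anticipates.
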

\begin{proof} [Proof of Theorem \ref{t7.2}]
For any $p\in(1,\,\fz)$, we choose $p_1$ and $\,p_2$ such that $1\leq p_1<p<p_2<\fz$.
Then there exists a number $\theta\in(0,\,1)$ such that
$\frac{1-\theta}{p_1}+\frac{\theta}{p_2}=\frac{1}{p}$.
This, together with Lemma \ref{l7.21} and Theorem \ref{t7.1}, implies
\begin{align*}
\lf[\mathcal{X}_0,\,\mathcal{X}_1\r]_{\frac{1}{p},\,p}
&=\lf[\lf[\mathcal{X}_0,\,\mathcal{X}_1\r]_{\frac{1}{p_1}}
,\,\lf[\mathcal{X}_0,\,\mathcal{X}_1\r]_{\frac{1}{p_2}}\r]_{\theta,\,p}\\
&=\lf[L_{p_1}(\lm)
,\,L_{p_2}(\lm)\r]_{\theta,\,p}\\
&=L_p(\lm).
\end{align*}
This completes the proof of Theorem \ref{t7.2}.
\end{proof}


\bigskip

\section{Application}\label{s8}
\hskip\parindent
In this section, we obtain the $L_p(\mathcal{N})$-boundedness of non-commutative Calder\'{o}n-Zygmund operators, with operator-valued kernel, by establishing $L_\infty(\mathcal{N})\rightarrow \mathcal{BMO}^{c}(\bx,\,\cm)$ boundedness for such kinds of operators and then applying interpolation theory established in Section \ref{s7}.
\begin{definition}
We say that $T^c$ (resp. $T^r$) is a {left (resp. right) $\mathcal{M}$-valued Calder\'on-Zygmund operators} on $(\bx,\,d,\,\mu)$ for some $\vartheta\in (0,1)$  if $T^c$ (resp. $T^r$) is bounded on $L_2(\mathcal{N})$ and has the associated left (resp. right) $\mathcal{M}$-valued kernel $\mathcal{K}(x,\,y)$ such that for locally integrable function $f:\bx\rightarrow \mathcal{S}_\mathcal{M}$ and $x\notin\supp f$,
$$T^c(f)(x):=\int_{\bx}\mathcal{K}(x,\,y)f(y)\,d\mu(y),\ \ \ x\in\bx,$$
$$T^r(f)(x):=\int_{\bx}f(y)\mathcal{K}(x,\,y)\,d\mu(y),\ \ \ x\in\bx,$$
respectively, and $\mathcal{K}(x,\,y)$ satisfies the following estimates: for any $x,\,y\in \bx$ with $x\neq y$,
$$\lf\|\mathcal{K}(x,\,y)\r\|_{\cm}\leq \frac{C}{V(x,\,y)},$$
and for any $x,\,x',\,y\in\bx$ with $d(x',\,x)<\frac{d(x,\,y)}{2}$,
$$\lf\|\mathcal{K}(x',\,y)-\mathcal{K}(x,\,y)\r\|_{\cm}
+\lf\|\mathcal{K}(y,\,x')-\mathcal{K}(y,\,x)\r\|_{\cm}\leq C\frac{d(x',\,x)^{\vartheta}}{V(x,\,y)d(x,\,y)^{\vartheta}}.$$
\end{definition}
\begin{theorem}\label{t8.1}
Assume that $T^c$ is a {\it $\mathcal{M}$-valued left-Calder\'on-Zygmund operator}  on $(\bx,\,d,\,\mu)$.
Then $T^c$ is bounded from $L_\infty(\mathcal{N})$ to $\mathcal{BMO}^{c}(\bx,\,\cm)$.
Furthermore, $T^c$ is bounded on $L_p(\mathcal{N})$ for all $1<p<\infty$.
Similar result also holds for {\it $\mathcal{M}$-valued right-Calder\'on-Zygmund operator $T^r$}.
\end{theorem}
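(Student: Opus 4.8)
The plan is to establish the endpoint bound $T^c\colon L_\infty(\mathcal{N})\to\mathcal{BMO}^c(\bx,\cm)$ first, and then deduce the $L_p(\mathcal{N})$-boundedness by interpolation with the assumed $L_2(\mathcal{N})$-bound using the machinery of Section~\ref{s7}. For the endpoint bound, fix $f\in L_\infty(\mathcal{N})$ with $\|f\|_{L_\infty(\mathcal{N})}\le 1$ and a ball $B=B_d(x_B,r_B)$. I would write $f=f\chi_{2B}+f\chi_{(2B)^\complement}=:f_1+f_2$ and take as the "average" the constant $c_B:=T^c(f_2)(x_B)$, which is meaningful since $x_B\notin\supp f_2$; although $T^cf$ need not be globally defined, the difference $T^cf-c_B=T^cf_1+(T^cf_2-c_B)$ is a well-defined locally $L_2$ function on $B$. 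Since $\fint_B|g-g_B|^2\,d\mu\le\fint_B|g-c|^2\,d\mu$ as operators in $\cm$ for every $c\in\cm$, it suffices to bound $\big\|\fint_B|T^cf-c_B|^2\,d\mu\big\|_{\cm}$, and by $|a+b|^2\le 2|a|^2+2|b|^2$ this splits into a local term in $f_1$ and a far term in $f_2$.

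For the local term the key point is to upgrade the \emph{scalar} $L_2(\mathcal{N})$-bound of $T^c$ to a $\cm$-valued (column) bound, and this is where I would exploit that a left $\cm$-valued Calder\'on--Zygmund operator commutes with right multiplication by $\cm$, i.e.\ $T^c(g\,a)=T^c(g)\,a$ for $a\in\cm$. Testing $\big\|\int_{\bx}|T^cf_1(x)|^2\,d\mu(x)\big\|_{\cm}$ against an arbitrary positive $\xi\in L_1(\cm)$ with $\|\xi\|_{L_1(\cm)}\le 1$, one uses the traciality of $\mathrm{Tr}$ to write $\tau\big(\xi\,|T^cf_1(x)|^2\big)=\tau\big(|T^cf_1(x)\xi^{1/2}|^2\big)=\tau\big(|T^c(f_1\xi^{1/2})(x)|^2\big)$; integrating in $x$ and applying the $L_2(\mathcal{N})$-bound gives $\le C\,\mathrm{Tr}\big(|f_1\xi^{1/2}|^2\big)=C\,\tau\big(\xi\int_{2B}|f(y)|^2\,d\mu(y)\big)\le C\,\mu(2B)$. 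Taking the supremum over $\xi$ and invoking doubling yields $\big\|\fint_B|T^cf_1|^2\,d\mu\big\|_{\cm}\lesssim 1$.

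For the far term I would use the kernel regularity in operator norm: for $x\in B$ and $y\in(2B)^\complement$ one has $\|\mathcal{K}(x,y)-\mathcal{K}(x_B,y)\|_{\cm}\lesssim r_B^{\vartheta}V(x_B,y)^{-1}d(x_B,y)^{-\vartheta}$ by the quasi-metric inequality and Lemma~\ref{l2.0}. Hence $\|T^cf_2(x)-c_B\|_{\cm}\le\int_{(2B)^\complement}\|\mathcal{K}(x,y)-\mathcal{K}(x_B,y)\|_{\cm}\|f(y)\|_{\cm}\,d\mu(y)$, and decomposing $(2B)^\complement$ into the annuli $U_j(B)$ of \eqref{UjB}, on which $V(x_B,y)\thicksim\mu(2^jB)$ and $d(x_B,y)\thicksim 2^jr_B$, gives $\|T^cf_2(x)-c_B\|_{\cm}\lesssim\sum_{j\ge 1}2^{-j\vartheta}\lesssim 1$ uniformly in $x\in B$, so $\big\|\fint_B|T^cf_2-c_B|^2\,d\mu\big\|_{\cm}\le\sup_{x\in B}\|T^cf_2(x)-c_B\|_{\cm}^2\lesssim 1$. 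Combining the two terms gives $\|T^cf\|_{\mathcal{BMO}^c(\bx,\cm)}\lesssim\|f\|_{L_\infty(\mathcal{N})}$; the statement for $T^r$ is symmetric, with $\mathcal{BMO}^c$ replaced by $\mathcal{BMO}^r$ and right-modularity by left-modularity.

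Finally, for $L_p(\mathcal{N})$-boundedness: by Remark~\ref{r2.9}(iii) the hypothesis $T^c\colon L_2(\mathcal{N})\to L_2(\mathcal{N})$ reads $T^c\colon\mathcal{H}_2^c(\bx,\cm)\to\mathcal{H}_2^c(\bx,\cm)$, so interpolating the endpoint estimate $T^c\colon L_\infty(\mathcal{N})\to\mathcal{BMO}^c(\bx,\cm)$ against it by Theorems~\ref{t7.1} and~\ref{t7.2} (and $[L_\infty(\mathcal{N}),L_2(\mathcal{N})]_{2/p}=L_p(\mathcal{N})$) controls $T^c$ on $L_p(\mathcal{N})$ for $2<p<\infty$, while the range $1<p<2$ follows by duality once one notes that the trace-pairing adjoint of $T^c$ has kernel $\mathcal{K}(y,x)^{\ast}$ and is therefore again a left $\cm$-valued Calder\'on--Zygmund operator, the size and regularity hypotheses on $\mathcal{K}$ being symmetric in $x$ and $y$. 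I expect the main obstacle to be exactly the local estimate above: $L_2(\mathcal{N})$-boundedness by itself does \emph{not} control the $\cm$-valued average $\fint_B|T^cf_1|^2$, and it is the one-sided $\cm$-modularity of $T^c$ that makes the $\xi^{1/2}$-insertion legitimate and turns the scalar bound into the required operator bound — this is also precisely what makes the column BMO (rather than the mixture one) the natural target on the left-operator side.
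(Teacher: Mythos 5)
Your endpoint argument $T^c\colon L_\infty(\mathcal{N})\to\mathcal{BMO}^c(\bx,\cm)$ is correct and follows a slightly different, in fact cleaner, route than the paper's. The local term is handled the same way in both proofs — inserting $\xi^{1/2}$ (or, in the paper, a unit vector $u\in L_2(\cm)$) and using right $\cm$-modularity of $T^c$ to transfer the scalar $L_2(\mathcal{N})$-bound onto the $\cm$-valued square average $\fint_B|T^cf_1|^2$; you correctly identify this as the genuinely non-commutative step. The decompositions differ, though. The paper writes $f=(f-f_{4B})\chi_{4B}+(f-f_{4B})\chi_{(4B)^\complement}+f_{4B}$, estimates the first two pieces in terms of $\|f\|_{\mathcal{BMO}^c(\bx,\cm)}$, and must then dispose of the constant piece $f_{4B}$ by invoking the nontrivial fact that $T^c(\chi_\bx)\in\mathcal{BMO}^c(\bx,\cm)$, for which it merely cites \cite{xxx16}. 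Your split $f=f\chi_{2B}+f\chi_{(2B)^\complement}$, with the formal compensating constant $c_B=T^cf_2(x_B)$, never produces a constant-function piece, works directly from $\|f\|_{L_\infty(\mathcal{N})}$ (which is all the statement demands), and so avoids that extra ingredient altogether. Your far-term estimate is also more elementary: you bound $\|T^cf_2(x)-c_B\|_{\cm}\lesssim\sum_{j\ge 1}2^{-j\vartheta}\|f\|_{L_\infty(\mathcal{N})}$ pointwise in operator norm and pass to the square average via the inequality $|a|^2\le\|a\|_{\cm}^2\cdot 1$, whereas the paper uses Kadison--Schwarz followed by an annular decomposition of the oscillation $|f-f_{4B}|^2$ with a logarithmic growth estimate. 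One caveat, which is a shortcoming of the paper's proof as much as of yours: interpolating $L_\infty\to\mathcal{BMO}^c$ against $L_2\to L_2$ via Theorem~\ref{t7.1}(1) with $q=2$ lands in $\mathcal{H}_p^c(\bx,\cm)=L_p\mathcal{MO}^c(\bx,\cm)$, which for $p>2$ strictly contains $L_p(\mathcal{N})=\mathcal{H}_p^{cr}(\bx,\cm)=\mathcal{H}_p^c\cap\mathcal{H}_p^r$; the one-sided $c$-type endpoint estimate (which is all a \emph{left} CZO provides) therefore interpolates only to $T^c\colon L_p\to\mathcal{H}_p^c$, and the passage to the claimed two-sided bound $T^c\colon L_p\to L_p$ is not spelled out in either proof and deserves closer scrutiny before you finalize the write-up.
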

\begin{proof} [Proof of Theorem \ref{t8.1}]
Let $f\in L_\infty(\mathcal{N})$ and ball $B:=B(x_B,\,r_B)\subset\bx$ with $x_B\in\bx$ and $r_B>0$.
Then we can write
\begin{align*}
f&=(f-f_{4B})\chi_{4B}+(f-f_{4B})
\chi_{(4B)^{\complement}}
+f_{4B}\\
&=:f_1+f_2+f_3.
\end{align*}
Set
$$\mathfrak{C}_{4B}:=\int_{(4B)^{\complement}}
\mathcal{K}(x_B,\,y)\lf[f(y)-f_{4B}\r]\,d\mu(y).$$
Therefore, it is easy to see that
\begin{align*}
&\fint_{B}\lf|T^c(f_1+f_2)(x)-\mathfrak{C}_{4B}\r|^2\,d\mu(x)\\
\leq&2\fint_{B}\lf|T^c(f_1)(x)\r|^2\,d\mu(x)
 +2\fint_{B}
\lf|\int_{(4B)^\complement}\lf[\mathcal{K}(x,\,y)-\mathcal{K}(x_B,\,y)\r]f_2(y)
\,d\mu(y)\r|^2\,d\mu(x)\\
=&:\mathrm{N}_1+\mathrm{N}_2.
\end{align*}
For the term $\mathrm{N}_1$, from the duality theory, \eqref{e2.1} and the boundedness of $T^c$ on
$L_2(\lm)$, we see that
\begin{align*}
\|\mathrm{N}_1\|_{\cm}^{1/2}\lesssim&\lf\|\fint_{B}\lf|T^c(f_1)(x)\r|^2\,d\mu(x)\r\|_{\cm}^{1/2}\\
\thicksim&\mu(B)^{-1/2}\sup_{\|u\|_{L_2(\cm)=1}}\lf[\int_{\bx}
\lf\langle\lf|T^c(f_1)(x)\r|^2u,\,u\r\rangle_{L_2(\cm)}\,d\mu(x)\r]^{1/2}\\
\thicksim&\mu(B)^{-1/2}\sup_{\|u\|_{L_2(\cm)=1}}\lf[\int_{\bx}
\lf\|T^c(f_1u)(x)\r\|^2_{L_2(\cm)}\,d\mu(x)\r]^{1/2}\\
\lesssim&\mu(B)^{-1/2}\sup_{\|u\|_{L_2(\cm)=1}}\lf[\int_{\bx}
\lf\|f_1(x)u\r\|^2_{L_2(\cm)}\,d\mu(x)\r]^{1/2}\\
\thicksim&\mu(B)^{-1/2}\lf\|\int_{\bx}\lf|f_1(x)\r|^2\,d\mu(x)\r\|_{\cm}^{1/2}\\
\lesssim&\lf\|\fint_{4B}\lf|f(x)-
f_{4B}\r|^2\,d\mu(x)\r\|_{\cm}^{1/2}\\
\lesssim&\|f\|_{\mathcal{BMO}^{c}(\bx,\,\cm)}.
\end{align*}
For the term $\mathrm{N}_2$, from Lemma \ref{e2.5} and \eqref{e2.1}, we conclude that
\begin{align*}
\mathrm{N}_2\lesssim&\fint_{B}
\lf|\int_{\bx}\lf[\mathcal{K}(x,\,y)-\mathcal{K}(x_B,\,y)\r]f_2(y)\,d\mu(y)\r|^2\,d\mu(x)\\
\leq&\fint_{B}
\int_{(4B)^{\complement}}
\lf\|\mathcal{K}(x,\,y)-\mathcal{K}(x_B,\,y)\r\|_{\cm}\,d\mu(y)\\
&\ \ \ \times\int_{(4B)^{\complement}}
\lf\|\mathcal{K}(x,\,y)-\mathcal{K}(x_B,\,y)\r\|_{\cm}^{-1}
\lf|\lf[\mathcal{K}(x,\,y)-\mathcal{K}(x_B,\,y)\r]f_2(y)\r|^2\,d\mu(y)d\mu(x)\\
\lesssim& \fint_{B}
\int_{(4B)^{\complement}}
\frac{[d(x,\,x_B)]^{\vartheta}}{V(y,\,x_B)[d(y,\,x_B)]^{\vartheta}}\,d\mu(y)
\int_{(4B)^{\complement}}
\frac{[d(x,\,x_B)]^{\vartheta}}{V(y,\,x_B)[d(y,\,x_B)]^{\vartheta}}|f_2(y)|^2\,d\mu(y)d\mu(x)\\
\lesssim&\sum_{j=0}^{\infty}\frac{1}{2^{\delta j}\mu(B(x_B,\,2^{j+1}r_B))}\int_{2^{j+3}B\setminus
  2^{j+2}B}
\lf|f(y)-f_{4B}\r|^2\,d\mu(y)\\
\lesssim&\sum_{j=0}^{\infty}\frac{1}{2^{\delta j}\mu(B(x_B,\,2^{j+1}r_B))}\int_{2^{j+3}B\setminus
  2^{j+2}B}
\lf|f(y)-f_{2^{j+3}B}\r|^2+\lf|f_{2^{j+3}B}-f_{4B}\r|^2\,d\mu(y)\\
\lesssim&\|f\|_{\mathcal{BMO}^{c}(\bx,\,\cm)}^2,
\end{align*}
where the last inequality used a standard fact that $|f_{2^{j}B}-f_{4B}|\lesssim\log(j+1)\|f\|_{\mathcal{BMO}^{c}(\bx,\,\cm)}$.
Combining the above two estimates yields $\|T^c(f_1+f_2)\|_{\mathcal{BMO}^{c}(\bx,\,\cm)}\lesssim \|f\|_{\mathcal{BMO}^{c}(\bx,\,\cm)}\lesssim \|f\|_{L_\infty(\mathcal{N})}.$ It remains to show that $\|T^c(f_3)\|_{\mathcal{BMO}^{c}(\bx,\,\cm)}\lesssim \|f\|_{L_\infty(\mathcal{N})}.$ Indeed, following the proof of \cite[p.194]{xxx16}, one can show that $T^c(\chi_\bx)$ can be naturally defined as a function in $\mathcal{BMO}^{c}(\bx,\,\cm)$, which implies that
$$\|T^c(f_3)\|_{\mathcal{BMO}^{c}(\bx,\,\cm)}\lesssim \|T^c(\chi_\bx)\|_{\mathcal{BMO}^{c}(\bx,\,\cm)}\|f_B\|_{L_\infty(\mathcal{M})}\lesssim \|f\|_{L_\infty(\mathcal{N})}.$$
Combining the above facts imply that $T^c$ is bounded from $L_\infty(\mathcal{N})$ to $\mathcal{BMO}^{c}(\bx,\,\cm)$. Furthermore, a direct calculation see that $(T^c)^*$ is also a {\it $\mathcal{M}$-valued left-Calder\'on-Zygmund operator} with kernel $\mathcal{K}(y,\,x)^*$. Then, a standard interpolation together with duality argument deduce the $L_p(\mathcal{N})$-boundedness of the targeted operator. This ends the proof of Theorem \ref{t8.1}.
\end{proof}


\textbf{Acknowledgements.} The authors are supported by National Natural Science Foundation of China (No. 12071355, No. 12325105) and Fundamental Research Funds for the Central Universities (No. 2042022kf1185).


\bigskip

\end{document}